\renewcommand{\m}{\mathfrak{m}}
\renewcommand{\n}{\mathfrak{n}}
\renewcommand{\fram}{\mathfrak{m}}
\numberwithin{equation}{theorem}
\newcommand{\mytau}{{\uptau}}
\renewcommand{\O}{\mathcal O}
\newcommand{\BCMReg}[1]{{$\textnormal{BCM}_{#1}$-regular}}
\newcommand{\BCMRat}[1]{{$\textnormal{BCM}_{#1}$-rational}}
\newcommand{\BCM}{\textnormal{BCM}{}}
\begin{document}

\title{Singularities in mixed characteristic via perfectoid big Cohen-Macaulay algebras}
\author{Linquan Ma and Karl Schwede}
\address{Department of Mathematics\\ Purdue University\\  West Lafayette\\ IN 47907}
\address{Department of Mathematics\\ University of Utah\\ Salt Lake City\\ UT 84112}
\email{ma326@purdue.edu}
\email{schwede@math.utah.edu}

\thanks{The first named author was supported in part by NSF Grant \#1836867/1600198, and was supported in part by NSF CAREER Grant DMS \#1252860/1501102 when preparing this article.}
\thanks{The second named author was supported in part by NSF CAREER Grant DMS \#1252860/1501102 and NSF grant \#1801849.}
\thanks{This material is partially based upon work supported by the National Science Foundation under Grant No. DMS-1440140 while the authors were in residence at the Mathematical Sciences Research Institute in Berkeley California during the Spring 2019 semester.}
\maketitle

\begin{abstract}
We utilize recent results of Andr\'e and Gabber on the existence of weakly functorial integral perfectoid big Cohen-Macaulay (\BCM{}) algebras to study singularities of local rings in mixed characteristic.  In particular, we introduce a mixed characteristic \BCM{}-variant of rational/$F$-rational singularities, of log terminal/$F$-regular singularities and of multiplier/test ideals of divisor pairs. We prove a number of results about these objects including a restriction theorem for perfectoid \BCM{} multiplier/test ideals and deformation statements for perfectoid \BCMReg{} and \BCMRat{} singularities. As an application, we obtain results on the behavior of $F$-regular and $F$-rational singularities in arithmetic families.
\end{abstract}

\setcounter{tocdepth}{1}
\tableofcontents


\section{Introduction}

For the past nearly 40 years, researchers have known about a connection between classes of singularities defined by resolution of singularities and connected to Kodaira-type vanishing theorems, with singularities defined by Frobenius in positive characteristic \cite{FedderFPureRat,MehtaRamanathanFrobeniusSplittingAndCohomologyVanishing,HochsterHunekeTC1,FedderWatanabe,SmithFRatImpliesRat,HaraRatImpliesFRat,MehtaSrinivasRatImpliesFRat,SmithMultiplierTestIdeals,HaraInterpretation,TakagiInterpretationOfMultiplierIdeals,HaraYoshidaGeneralizationOfTightClosure}.
For instance, a variety $X$ has rational singularities in characteristic zero if and only if its modulo $p$ reductions have $F$-rational singularities for $p \gg 0$.  Analogous statements hold for KLT singularities and $F$-regular singularities, and also with the multiplier ideals and test ideals.

In this paper, we begin to extend this story to mixed characteristic.  Using Andr\'e's recent breakthrough result on the existence of weakly functorial integral perfectoid big Cohen-Macaulay $R^+$-algebras  \cite{AndreWeaklyFunctorialBigCM}\footnote{The existence of weakly functorial big Cohen-Macaulay algebras was also obtained independently by Gabber \cite{GabberMSRINotes,GabberRameroFoundationsAlmostRingTheory}.} (also see \cite{AndrePerfectoidAbhyankarLemma,AndreDirectsummandconjecture,BhattDirectsummandandDerivedvariant,HeitmannMaBigCohenMacaulayAlgebraVanishingofTor}), we produce mixed characteristic analogs of
\begin{itemize}
\item{} log terminal / strongly $F$-regular singularities, which we call \emph{\BCMReg{B}}.
\item{} rational / $F$-rational singularities, which we call \emph{\BCMRat{B}}.
\item{} multiplier / test ideals, which we denote by $\mytau_B(R, \Delta)$.
\item{} Grauert-Riemenschneider / parameter test submodules, which we denote by $\mytau_B(\omega_R)$.
\end{itemize}
Here $B$ is any fixed (usually integral perfectoid) big Cohen-Macaulay $R^+$-algebra. In fact our first main theorem is that the choice of this $B$ is not so important.  From our perspective the role of $B$ is analogous to a resolution of singularities in characteristic zero or the perfection (up to a small perturbation) in characteristic $p > 0$.

\begin{theoremA*}[\autoref{prop.tau=tauB}, \autoref{prop.testidealsmallperturb}]
Let $(R,\m)$ be a complete local domain of mixed characteristic $(0,p)$. The objects $\mytau_B(\omega_R)$ and $\mytau_B(R, \Delta)$ (and hence the notions of \BCMReg{B} and \BCMRat{B} singularities) are independent of the choice of integral perfectoid big Cohen-Macaulay $R^+$-algebra $B$ as long as $B$ is chosen to be sufficiently large.
\end{theoremA*}

Note that similar definitions were produced by \cite{PerezRG} who also emphasized a more ideal/module closure theoretic approach.  On the other hand, the two authors of this paper previously defined in \cite{MaSchwedePerfectoidTestideal} a closely related multiplier/test ideal of pairs $(R, \fra^t)$ in the case that $R$ is a complete regular local ring of mixed characteristic.

We prove a number of results about these objects, which we now describe.
We first point out that \BCMReg{B} singularities are always \BCMRat{B} (and in particular they are Cohen-Macaulay), see \autoref{thm.BCMRegularImpliesCM}. Moreover, in characteristic $p>0$, \BCMRat{B}, \BCMReg{B} singularities and \BCM{} test ideals are \emph{exactly} the same as $F$-rational, $F$-regular and test ideals. See \autoref{prop: big CM rational=F-rational} and \autoref{cor.TauBCMVsTestIdeal}.

\begin{theoremB*}[Comparison with characteristic zero definitions]
Suppose that $(R, \fram)$ is a complete local domain of mixed characteristic $(0, p)$.
\begin{enumerate}
\item If $R$ is \BCMRat{B} for all (or even some sufficiently large) big Cohen-Macaulay algebras $B$, then $R$ is pseudo-rational.  See \autoref{prop: big CM rational implies pseudo-rational} and \autoref{prop.tau=tauB}.
    \label{itm.BCMRatImpliesRat}
\item For all sufficiently large integral perfectoid big Cohen-Macaulay $R^+$-algebras $B$, $\mytau_B(\omega_R) \subseteq \pi_* \omega_Y$ for all proper birational maps $\pi : Y \to X = \Spec R$.  See \autoref{prop: big CM test ideal birational}.
    \label{itm.BCMTauOmegaVsMultOmega}
\item If $R$ is normal and $\Delta \geq 0$ is an effective $\bQ$-divisor such that $K_R + \Delta$ is $\bQ$-Cartier, and if $(R, \Delta)$ is \BCMReg{B} for all (or even some sufficiently large) integral perfectoid big Cohen-Macaulay $R^+$-algebras $B$, then $(R, \Delta)$ is \emph{KLT}.  See \autoref{cor.BCMRegImpliesKLT} and \autoref{prop.testidealsmallperturb}.
    \label{itm.BCMRegImpliesKLT}
\item Suppose $R$ is normal and $\Delta \geq 0$ is an effective $\bQ$-divisor such that $K_R + \Delta$ is $\bQ$-Cartier.  Then for all sufficiently large integral perfectoid big Cohen-Macaulay $R^+$-algebras $B$, we have $\mytau_B(R, \Delta) \subseteq  \pi_* \O_Y(\lceil K_Y - \pi^* (K_X + \Delta)\rceil)$ for all proper birational maps $\pi : Y \to X = \Spec R$.  In other words:
    \[
        \mytau_B(R, \Delta) \subseteq \mJ(R, \Delta),
    \]
    the perfectoid \BCM{} test ideal is contained in the usual multiplier ideal (if resolutions of singularities exist or if one runs over all proper birational maps).  See \autoref{thm.TauBCMvsMult}.
    \label{itm.BCMTauDeltaVsMultDelta}
\end{enumerate}
\end{theoremB*}

Here \autoref{itm.BCMRatImpliesRat} and \autoref{itm.BCMTauOmegaVsMultOmega} should be viewed as the mixed characteristic analog of the main result of \cite{SmithFRatImpliesRat}.  Likewise \autoref{itm.BCMRegImpliesKLT} is analogous to the main result of \cite{HaraWatanabeFRegFPure}, whereas the characteristic $p > 0$ analog of \autoref{itm.BCMTauDeltaVsMultDelta} appeared essentially in \cite[Theorem 2.13]{TakagiInterpretationOfMultiplierIdeals}.  We should note that our strategy when proving these results is quite distinct from the characteristic $p > 0$ case; we are inspired by the ideas from \cite{MaThevanishingconjectureformapsofTorandderivedsplinters}.  A related mixed characteristic result was also proved in \cite[Lemma 5.6]{MaSchwedePerfectoidTestideal} but again using a different strategy.

We show that our perfectoid \BCM{} test ideals are stable under small perturbations as long as $B$ is large enough. This is analogous to multiplier ideals and test ideals.

\begin{theoremC*}[\autoref{prop.testidealsmallperturb}]
Suppose $R$ is a complete normal local domain of mixed characteristic $(0,p)$ and $\Delta \geq 0$ is an effective $\bQ$-divisor such that $K_R + \Delta$ is $\bQ$-Cartier. Then for all sufficiently large integral perfectoid big Cohen-Macaulay $R^+$-algebras $B$, we have
$$\mytau_B(R, \Delta)=\mytau_B(R, \Delta+\varepsilon\Div_R(g))$$
for any $0 \neq g\in R$ and any rational number $0 < \varepsilon\ll 1$.
\end{theoremC*}

We also obtain transformation rules for $\mytau_B(R, \Delta)$ under finite maps analogous to the main result of \cite{SchwedeTuckerTestIdealFiniteMaps}.

\begin{theoremD*}[Transformation under finite maps, \autoref{theorem: finite mapPairsCanonical}]
Suppose that $R \subseteq S$ is a finite extension of complete normal local domains of mixed characteristic $(0, p)$ with induced $\phi : \Spec S \to \Spec R$. Further assume that $\Delta \geq 0$ is a $\bQ$-divisor on $\Spec R$ such that $K_R + \Delta$ is $\bQ$-Cartier and such that $\phi^* \Delta \geq \Ram$, the ramification divisor.  Then:
\begin{equation}
\label{eq.TransRuleR}
\mytau_B(R, \Delta) = \Tr( \mytau_B(S, \phi^* \Delta - \Ram) ).
\end{equation}
\end{theoremD*}

One concern the reader may have at this point is that our $\mytau_B(\omega_R)$ and $\mytau_B(R, \Delta)$ are too small to be useful.  We prove the following result which should be viewed as an analog of \cite[Theorem 6.13]{HochsterHunekeTC1}.  

\begin{theoremE*}[\autoref{theorem: uniform annihilation}]
Let $(A,\m_A)\to (R,\m)$ be a module-finite extension such that $A$ is a complete regular local ring of mixed characteristic $(0,p)$ and $R$ is a complete local domain. Suppose $h\in A$ is such that $A_h\to R_h$ is finite \'{e}tale. Then there exists an integer $N$ such that $h^N \omega_R \subseteq \mytau_B(\omega_R)$ for every integral perfectoid big Cohen-Macaulay $R$-algebra $B$.
\end{theoremE*}

We also show that the ideal generated by all those $h$ such that $h^N\omega_R \subseteq \mytau_B(\omega_R)$ contains the defining ideal of the singular locus of $R/p$, see \autoref{thm.GenSingularlocus}. This gives us a weak analog of \cite[Theorem 3.4]{HochsterHunekeTightClosureAndStrongFRegularity}.

The previous result can be viewed as showing that certain elements are contained in $\mytau_B(R, \Delta)$.  Another way we can obtain similar results is to prove a restriction theorem. The following is one of the most useful results of this paper (in our opinion).  In particular, we are proving a form of inversion of adjunction in mixed characteristic.  Compare with for instance \cite[Proposition 2.12(1)]{TakagiInterpretationOfMultiplierIdeals}, \cite[Theorem 4.1]{HaraYoshidaGeneralizationOfTightClosure}, \cite{EsnaultViehwegLecturesOnVanishing}, \cite{KollarMori} and \cite[Theorem 9.5.1]{LazarsfeldPositivity2}.

\begin{theoremF*}[Restriction theorem, \autoref{thm.TestRestriction}, \autoref{cor.FregDeforms}]
Let $(R,\m)$ be a complete normal local domain of mixed characteristic $(0,p)$ and fix $\Delta \geq 0$ a $\bQ$-divisor on $R$ such that $K_R + \Delta$ is $\bQ$-Cartier with index not divisible by $p$.  Choose $0 \neq h \in R$ such that $R/hR$ is normal, and that $V(h)$ and $\Delta$ have no common components.  Then for any integral perfectoid big Cohen-Macaulay $R^+$-algebra $B$ and any $1 > \varepsilon > 0$, there exists a big Cohen-Macaulay $({R}/h{R})^+$-algebra $C$ (that can be chosen to be integral perfectoid if $R/hR$ has mixed characteristic) together with a compatible map $B \to C$ so that:
\[
\mytau_{C}(R/hR, \Delta|_{R/hR}) \subseteq \mytau_B(R, \Delta + (1-\varepsilon)\Div_R(h)) \cdot (R/hR).
\]

As a consequence, if $R/hR$ is of characteristic $p > 0$ and $(R/hR, \Delta|_{R/hR})$ is strongly $F$-regular, then $(R, \Delta+(1-\varepsilon)\Div_R(h))$ is \emph{KLT} and thus $(R,\Delta+\Div_R(h))$ is log canonical.
\end{theoremF*}
We also obtain a parameter test submodule version of the same result in \autoref{theorem: big CM restriction}; hence we obtain the analog of one of the main results of \cite{ElkikDeformationsOfRational} and of \cite{FedderWatanabe}.

Our key technical result which lurks behind many of the aformentioned theorems is that, in mixed characteristic, we prove any set of integral perfectoid big Cohen-Macaulay $R$-algebras can be mapped to another one. This is the mixed characteristic perfectoid analog of the main result of \cite{DietzBCMSeeds} and answers a question of Shimomoto \cite[Problem 1]{ShimomotoIntegralperfectoidbigCMviaAndre}. Our method of proving this result is largely inspired by the ideas from \cite{AndreWeaklyFunctorialBigCM}, and this is the only place in this paper that we use the machinery of perfectoid algebras and spaces.

\begin{theoremG*}[\autoref{thm.DominateAnySetOfBCM}]
Let $(R, \fram)$ be a complete local domain of mixed characteristic $(0, p)$. Let $\{ B_{\gamma} \}_{\gamma \in \Gamma}$ be any set of integral perfectoid big Cohen-Macaulay $R$-algebras (resp. $R^+$-algebras). Then there is an integral perfectoid big Cohen-Macaulay $R^+$-algebra $B$ such that the map $R\to B$ (resp. $R^+\to B$) factors through all the maps $R\to B_{\gamma}$ (resp. $R^+\to B_\gamma$).

\end{theoremG*}

As an application of our theory and the main result of \cite{HaraRatImpliesFRat,MehtaSrinivasRatImpliesFRat,TakagiInterpretationOfMultiplierIdeals} we obtain the following result, again compare with \cite{ElkikDeformationsOfRational} in characteristic zero and also \cite{HashimotoCMFinjectiveHoms,PatakfalviSchwedeZhangFsingularitiesFamilies} in characteristic $p > 0$.

\begin{theoremH*}[\autoref{thm.ProjectiveFamilyFratModPImpliesChar0}, \autoref{thm.ProjectiveFamilyOneFRegImpliesAlmostall}]
Let $X \to U \subseteq \Spec \bZ$ be a proper flat family (resp. let $(X,\Delta\geq 0) \to U\subseteq \Spec \bZ$ be a proper flat family of pairs such that $K_X+\Delta$ is $\bQ$-Cartier of index $N$). Suppose the fiber $X_p$ is $F$-rational for some point $p \in U$ (resp. $(X_p,\Delta_p)$ is strongly $F$-regular for some $p$ whose residual characteristic does not divide $N$). Then the general fiber $X_{\bQ}$ has rational singularities (resp. the general fiber $(X_{\bQ},\Delta_{\bQ})$ is \emph{KLT}). Furthermore $X_q$ is $F$-rational (resp. $(X_q, \Delta_q)$ is strongly $F$-regular) for a Zariski dense and open set of closed points $q \in U$.
\end{theoremH*}

It is not hard to see that the statement is false for arbitrary affine schemes (\autoref{ex.BadAffineExample}).  However, a properly formulated \emph{local} version of it is true for both rational singularities (\autoref{thm.LocalOpenFrationalOverZ}) and for KLT singularities (\autoref{thm.LocalOpenFregularOverZ}).  The point is we must consider a family of \emph{local} rings over $\Spec \bZ$ (or more general bases).  Finally, we also point out that we can replace $\bZ$ with more general mixed characteristic Dedekind domains.

In \autoref{sec.AlgorithmicConsequences}, as an application of these ideas we obtain a new effective way of proving that a singularity in characteristic zero is rational or KLT.  In particular if one starts in characteristic $0$ with a singularity $(R, \fram)$ and if one spreads it out to a mixed characteristic domain $R_A$ which restricts to a characteristic $p > 0$ local ring $R_p$ with $F$-rational singularities, then $R$ has rational singularities.  The point is that there is no requirement that $p \gg 0$.  Note that in \cite{SmithFRatImpliesRat}, Smith observed that one needs to check only a single $R_p$, but $p > 0$ had to be large enough so that $\omega_A / \pi_* \omega_{Y_A}$ is $p$-torsion free (which essentially means one must already compute whether the singularity is rational or not).  We can avoid this restriction.  This idea also generalizes to KLT and strongly $F$-regular pairs (as long as $p$ does not divide the index of $K_R + \Delta$).  This is important because $F$-rationality and $F$-regularity checking are implemented in Macaulay2 \cite{M2} in \cite{TestIdealsPackage} for \emph{more general} cases than rational or log terminal checking is done in characteristic zero \cite{DmodulesSource}.  Finally, also compare with \cite{ZhuLCThresholdsInPositiveChar} where related results were proved for the log canonical threshold via jet schemes.

\vskip 9pt

We describe the organization of the paper.  In \autoref{sec.Prelim} we briefly discuss preliminaries and notations.  In \autoref{sec.BCMRational} we prove many of the main results of this paper for \BCMRat{B} singularities, the proofs of which motivate and are simple cases of many of the other main results in the paper.  In \autoref{sec.AndreWeaklyFunctorial} we review Andr\'e and Gabber's recent result on weakly functorial big Cohen-Macualay algebras and we  prove Theorem G above.  In \autoref{sec.BCMParamTest} we introduce $\mytau_B(\omega_R)$ and prove many of the results above in that setting and we prove Theorem E.  In \autoref{sec.BCMTestIdeals} we introduce $\mytau_B(R, \Delta)$ and when a pair $(R, \Delta)$ is \BCMReg{B}, and we prove Theorem C, Theorem D, and Theorem F. In \autoref{sec.ApplicationToArithmeticFamilies} we use these ideas to study $F$-singularities in families where the characteristic varies and we prove Theorem H. Finally in \autoref{sec.AlgorithmicConsequences} we discuss algorithmic consequences mentioned above.

\vskip 9pt

\emph{Acknowledgements:}  The authors thank Yves Andr\'e, Bhargav Bhatt, Paolo Cascini, Ofer Gabber, Srikanth Iyengar, Tiankai Liu, Zsolt Patakfalvi, Rebecca R.G., Anurag K. Singh, Kazuma Shimomoto, Shunsuke Takagi, Joe Waldron, and Jakub Witaszek for valuable and inspiring discussions.  Important progress was made when the authors were visiting MSRI in March 2018.  Additionally, the authors would like to thank Paolo Cascini, Zsolt Patakfalvi, and MSRI for hospitality when the authors were working on this paper.  We are particularly grateful to Ofer Gabber for pointing out gaps in Section 4 of this paper and also to Bhargav Bhatt for extended discussions about some of the arguments in this paper.  The first author also thanks Bernd Ulrich for valuable discussion on Flenner's local Bertini theorems. We thank all of the referees for numerous helpful comments.  Finally, we thank one of the referees and also Ray Heitmann for pointing out a mistake in our original version of what is now \autoref{lemma: Cohen-Gabber}.

\section{Preliminaries}
\label{sec.Prelim}

Throughout this paper, all rings will be commutative with unity.  {\it Local rings} $(R, \fram)$ are always assumed to be Noetherian, although we frequently will consider non-Noetherian rings (including those with unique maximal ideals).



\subsection{Perfectoid algebras and big Cohen-Macaulay algebras} Throughout this paper we will use the language of integral perfectoid algebras and almost mathematics as in \cite{AndreWeaklyFunctorialBigCM,BhattDirectsummandandDerivedvariant,ScholzePerfectoidspaces,GabberRameroAlmostringtheory}. We will work over a {\it fixed} perfectoid field $K=\widehat{\mathbb{Q}_p(p^{1/p^\infty})}$ and its ring of integers $K^\circ=\widehat{\mathbb{Z}_p[p^{1/p^\infty}]}$. We collect some definitions.

A {\it perfectoid $K$-algebra} is a Banach $K$-algebra $R$ such that the set of powerbounded elements $R^\circ\subseteq R$ is bounded and the Frobenius is surjective on $R^\circ/p$. A $K^\circ$-algebra $S$ is called {\it integral perfectoid} if it is $p$-adically complete, $p$-torsion free, and the Frobenius induces an isomorphism $S/p^{1/p}\to S/p$. If $R$ is a perfectoid $K$-algebra, then the ring of powerbounded elements $R^\circ$ is integral perfectoid, and if $S$ is integral perfectoid, then $S[1/p]$ is perfectoid (\cite[Theorem 5.2]{ScholzePerfectoidspaces}).

\begin{remark}
In \cite{BhattDirectsummandandDerivedvariant}, there is an extra condition in the definition of integral perfectoid algebra: one requires that $S=S_*=\{x\in S[1/p] \hspace{0.5em}  | \hspace{0.5em}  p^{1/p^n} x\in S \text{ for all } n \}$. However, in practice one can safely ignore the difference between $S$ and $S_*$ (or simply pass from $S$ to $S_*$) because they are $p^{1/p^\infty}$-almost isomorphic to each other. Our definitions of perfectoid and integral perfectoid algebras are the same as in \cite[Section 2.2]{AndreWeaklyFunctorialBigCM}.
\end{remark}

Let $(R,\m)$ be a local ring and let $\underline{x}=x_1,\dots,x_d$ be a system of parameters of $R$. Recall that an $R$-algebra $B$ is {\it big Cohen-Macaulay with respect to $\underline{x}$} if $\underline{x}$ is a regular sequence on $B$ (this means $(x_1,\dots,x_i):_Bx_{i+1}=(x_1,\dots,x_i)B$ and $B/(x_1,\dots,x_d)B\neq 0$), and $B$ is called a {\it (balanced) big Cohen-Macaulay algebra} if it is big Cohen-Macaulay with respect to $\underline{x}$ for every system of parameters $\underline{x}$. It is well known that if $B$ is big Cohen-Macaulay with respect to $\underline{x}$, then $\widehat{B}^\m$ is (balanced) big Cohen-Macaulay \cite[Corollary 8.5.3]{BrunsHerzog}. Big Cohen-Macaulay algebras always exist: in equal characteristic, this follows from \cite{HochsterHunekeInfiniteIntegralExtensionsAndBigCM} \cite{HochsterHunekeApplicationsofBigCM}, and in mixed characteristic, this is settled by Andr\'{e} in \cite{AndreDirectsummandconjecture} (see also \cite{HeitmannMaBigCohenMacaulayAlgebraVanishingofTor}).

Let $(R,\m)$ be a local ring of mixed characteristic $(0,p)$ or equal characteristic $p>0$, and let $\underline{x}=x_1,\dots, x_d$ be a system of parameters of $R$. Let $B$ be an $R$-algebra and $g$ be an element of $R$ such that $g$ has a compatible system of $p$-power roots $\{g^{1/p^e}\}_{e=1}^{\infty}$ in $B$. For example, this holds if $R$ is a domain and $B$ is an $R^+$-algebra, where $R^+$ denotes the absolute integral closure of $R$: this is the integral closure of $R$ inside an algebraic closure of its fraction field. We say $B$ is {\it $g^{1/p^\infty}$-almost big Cohen-Macaulay with respect to $\underline{x}$} if $\underline{x}$ is a $g^{1/p^\infty}$-almost regular sequence on $B$, that is, $\frac{(x_1,\dots,x_i):_Bx_{i+1}}{(x_1,\dots,x_i)B}$ is $g^{1/p^\infty}$-almost zero and $B/(x_1,\dots,x_d)B$ is {\it not} $g^{1/p^\infty}$-almost zero. This terminology is slightly misleading since ``big Cohen-Macaulay" does not formally imply ``almost big Cohen-Macaulay" because the last condition that $B/(x_1,\dots,x_d)B$ is not $g^{1/p^\infty}$-almost zero is stronger than $B/(x_1,\dots,x_d)B\neq 0$. However in most cases this is not an issue: see for example \cite[Proposition 2.5.1]{AndreWeaklyFunctorialBigCM} and \autoref{clm.AlmostCM}.

The importance of introducing this almost big Cohen-Macaulay notion is that in characteristic $p>0$, it is not hard to show that under mild assumptions, $R^{1/p^\infty}$ is $g^{1/p^\infty}$-almost big Cohen-Macaulay for suitable choice of $g$ (e.g., $R_g$ is Cohen-Macaulay) with respect to any $\underline{x}$. Hochster essentially proved that every almost big Cohen-Macaulay algebra can be mapped to a big Cohen-Macaulay algebra (see \cite{HochsterTopicsInTheHomologicalTheory} \cite{HochsterSolidClosure} \cite{HochsterBigCohen-Macaulayalgebrasindimensionthree} \cite{DietzBCMSeeds}).

Throughout this paper, we will often work with (integral perfectoid) big Cohen-Macaulay $R^+$-algebras for a complete local domain $(R,\m)$ of mixed characteristic $(0,p)$.  The existence of such algebras follows from recent work of Andr\'{e} \cite{AndreWeaklyFunctorialBigCM} and Shimomoto \cite{ShimomotoIntegralperfectoidbigCMviaAndre}. For the convenience of the reader we re-define integral perfectoid in this context (note that, since any $p$-adically complete $R^+$-algebra $B$ is automatically an algebra over $K^\circ$, the definition of integral perfectoid algebra below is compatible with our general definition above).

\begin{definition}
Let $(R,\m)$ be a complete local domain of mixed characteristic $(0,p)$. Fix an algebraic closure of its fraction field and so fix an $R^+$. Let $g$ be a nonzero element in $R^+$. Then an \emph{integral perfectoid $R^+$-algebra} (resp., a $(pg)^{1/p^\infty}$-almost integral perfectoid $R^+$-algebra) is an $R^+$-algebra $S$ such that
\begin{enumerate}
  \item $S$ is $p$-adically complete and $p$-torsion free;
  \item The Frobenius map $S/p^{1/p}\to S/p$ is an isomorphism (resp., the Frobenius map $S/p^{1/p}\to S/p$ is an injection and a $(pg)^{1/p^\infty}$-almost surjection).
\end{enumerate}
\end{definition}

Given a $(pg)^{1/p^\infty}$-almost integral perfectoid algebra $B$, we can tilt and then untilt to obtain an honest integral perfectoid algebra $B^\natural=(B^\flat)^\sharp$. Moreover, there always exists a natural map $B^\natural\to B$ that is injective and is $(pg)^{1/p^\infty}$-almost surjective, see \cite[2.3.1]{AndreWeaklyFunctorialBigCM} or \cite[Section 2]{AndrePerfectoidAbhyankarLemma}.

\subsection{Singularities coming from characteristic 0 and singularities in characteristic $p$} 
In characteristic zero, for higher dimensional varieties, we frequently measure singularities by considering a resolution of singularities $\pi : Y \to X$ and comparing top differential forms on $Y$ with those on $X$.  We begin with rational and log terminal singularities, noting some alternate definitions which hopefully are suggestive for some of our definitions later. We will not need all of these definitions but we include them for motivation. We refer the reader to \cite{HartshorneResidues} for standard notations and facts on local and Grothendieck duality. For a local ring $(R,\m)$, we use $E$ to denote the injective hull of $R/\fram$.

\begin{notation}
  Given a Weil divisor $D$ on an affine scheme $X = \Spec R$, we write
  \[
    R(D) = \Gamma(X, \cO_X(D)).
  \]
  In other words, it is the fractional ideal corresponding to $\cO_X(D)$.
\end{notation}

\begin{defprop}[Singularities in characteristic zero]
\label{defProp.Char0}
Suppose that $(R, \fram)$ is a $d$-dimensional local ring essentially of finite type over a field of characteristic zero.  Suppose that $\pi : Y \to X = \Spec R$ is a log resolution of singularities (all the definitions below are independent of the choice of resolution).
\begin{description}
\item[Rational singularities] We say that $R$ has \emph{rational singularities} if $R \qis \myR \Gamma(Y, \O_Y)$.  This is equivalent to
\begin{itemize}
\item[(a)]  $R$ is Cohen-Macaulay and one of the following equivalent conditions holds:
\item[(b)]  $\Gamma(Y, \omega_Y) \to \omega_R$ is surjective.
\item[(b$'$)]  $H^d_{\fram}(R) \to \bH^d_{\fram}(\myR \Gamma(Y, \O_Y))$ is injective. 
\end{itemize}
\item[Grauert-Riemenschneider (multiplier) submodules]  We next define $\mJ(\omega_R)$, called the \emph{Grauert Riemenschneider (multiplier) submodule of $R$}, to be $\Gamma(Y, \omega_Y) \subseteq \omega_R$. If further $\Lambda \geq 0$ is a $\bQ$-Cartier divisor on $X$ (see \autoref{rem.QDivisors}), then we define $\mJ(\omega_R, \Lambda)$ to be $\Gamma(Y, \omega_Y(-\lfloor \pi^* \Lambda\rfloor)) \subseteq \omega_R$. 
\item[Kawamata-log terminal singularities]  Suppose further that $R$ is normal and there exists a $\bQ$-divisor $\Delta \geq 0$ such that $K_R + \Delta$ is $\bQ$-Cartier.  We now require $Y$ to be a log resolution for $(X, \Delta)$.  Then we say that the pair $(R, \Delta)$ is \emph{Kawamata-log terminal} (or KLT) if the following equivalent conditions hold.
    \begin{itemize}
\item[(c)]  $\lceil K_Y - \pi^* (K_X + \Delta) \rceil \geq 0$. 
\item[(c$'$)]  $H^d_{\fram}(\omega_R) \to \bH^d_{\fram}(\myR\Gamma(Y,\O_Y(\lfloor \pi^*(K_X + \Delta)\rfloor)))$ is injective. 
\end{itemize}
\item[Multiplier ideals]  With the same assumptions as we had for KLT singularities, we define the \emph{multiplier ideal} $\mJ(X, \Delta)$ to be:
\begin{itemize}
\item[(d)] $\Gamma(Y, \O_Y(\lceil K_Y - \pi^* (K_X + \Delta)\rceil)) \subseteq R$.
\item[(d$'$)]  $\Ann_R \ker\Big( H^d_{\fram}(\omega_R) \to \bH^d_{\fram}\big(\myR\Gamma(Y,\O_Y(\lfloor \pi^*(K_X + \Delta)\rfloor))\big)\Big)$. 
\end{itemize}
Note that $(X, \Delta)$ is KLT if and only if $\mJ(X, \Delta) = R$.
\end{description}
\end{defprop}

\begin{remark}
  \label{rem.QDivisors}
  Note that when deal with $\bQ$-divisors that are not ($\bQ$-)Cartier, we \emph{always} assume that the ambient scheme is normal.   We also only round (coefficients of) divisors on normal schemes.   A $\bQ$-Cartier divisor on any affine scheme $\Spec R$ is nothing more than the data of a rational number (the coefficient), and a ideal that is locally free.
\end{remark}

\begin{proof}
Using local and Grothendieck duality, we prove the equivalence of (c) and (c$'$) below.  The equivalence of (d) and (d$'$) is essentially the same and the equivalence of (b) and (b$'$) also follows from local and Grothendieck duality, but it is less involved.  Therefore we omit those proofs.  For (c) and (c$'$), notice that the trace map
\begin{equation}
\label{eqn.TraceMapOfLogCanonical}
\myR \Gamma(Y, \O_Y(\lceil K_Y - \pi^* (K_X + \Delta)\rceil)) \to R
\end{equation}
is surjective on 0th-cohomology if and only if the Matlis dual
\[
\Hom_R\big(\myR \Gamma(Y, \O_Y(\lceil K_Y - \pi^* (K_X + \Delta)\rceil)), E\big) \leftarrow E = H^d_{\fram}(\omega_R).
\]
is injective.  By local and Grothendieck duality and using the fact that $Y$ is regular so that $\omega_Y = \omega_Y^{\mydot}[-d]$, we see that
\[
\begin{array}{rl}
& \Hom_R\big(\myR \Gamma(Y, \O_Y(\lceil K_Y - \pi^* (K_X + \Delta)\rceil)), E\big) \\
\qis & \Hom_R\big(\myR \Gamma(Y, \myR \sHom_{\O_Y}(\O_Y(\lfloor \pi^* (K_X + \Delta)\rfloor), \omega_Y)), E\big) \\
\qis & \Hom_R\big(\myR \Hom_{R}(\myR \Gamma(Y, \O_Y(\lfloor \pi^* (K_X + \Delta)\rfloor)), \omega_X^{\mydot}[-d])), E\big) \\
\qis & \myR\Gamma_{\fram}\big( \myR \Gamma(Y, \O_Y(\lfloor \pi^*(K_X + \Delta) \rfloor)) \big)[d].
\end{array}
\]
Hence $\Gamma(Y, \O_Y(\lceil K_Y - \pi^*(K_X + \Delta)\rceil))$ is Matlis dual to $\bH^d_{\fram}(\myR \Gamma(Y, \O_Y(\lfloor \pi^*(K_X + \Delta) \rfloor)))$.  The statement follows.  Notice that the Grothendieck dual to \autoref{eqn.TraceMapOfLogCanonical} is
\[
\omega_R[d] \to \omega_R^{\mydot} \to \myR \Gamma(Y, \O_Y(\lfloor \pi^* (K_X + \Delta) \rfloor))[d]
\]
which induces the inclusion in (c$'$).
\end{proof}

\begin{definition}[Pseudo-rational and KLT singularities in all characteristics]
If $(R, \fram)$ is a local ring with a dualizing complex then we say that $R$ has \emph{pseudo-rational} singularities if it satisfies the conditions (a) and (b) (or (b$'$)) in \autoref{defProp.Char0} for \emph{all} proper birational maps $\pi : Y \to X = \Spec R$ with $Y$ normal.

Additionally, if $R$ is normal and $K_R + \Delta$ is $\bQ$-Cartier for an effective $\bQ$-divisor $\Delta\geq 0$, then we say that $(R, \Delta)$ is KLT if it satisfies the equivalent conditions (c) or (c$'$) above in \autoref{defProp.Char0} for \emph{all} proper birational maps $\pi : Y \to X = \Spec R$ with $Y$ normal.

One can likewise define $\mJ(R, \Delta)$ to be the intersection of $\Gamma(Y, \O_Y(\lceil K_Y - \pi^* (K_X + \Delta)\rceil)) \subseteq R$ where $Y$ runs over all proper birational maps and define $\mJ(\omega_R)$ to be the intersection of $\Gamma(Y, \omega_Y) \subseteq \omega_R$ where $Y$ again runs over all proper birational maps.  Note that by Chow's Lemma, in any of these definitions, one can restrict attention to projective birational maps.
\end{definition}

We next move to positive characteristic and we record the definitions of $F$-rational and $F$-regular singularities. Recall that in characteristic $p>0$, the Frobenius map $R\to F^e_*R$ induces a natural Frobenius action $F^e$ on the local cohomology module $H_\m^d(R)$ (which can be identified with $H_\m^d(R)\to H_\m^d(F_*^eR)$).

\begin{defprop}[Singularities in characteristic $p > 0$]
\label{defprop.CharpSings}
Suppose $(R, \fram)$ is an excellent $d$-dimensional local domain of characteristic $p > 0$ with normalized dualizing complex $\omega_R^{\mydot}$ and canonical module $\omega_R = h^{-d}(\omega_R^{\mydot})$.
\begin{description}
\item[$F$-rational singularities] We say that $R$ has $F$-rational singularities if
\begin{itemize}
\item[(a)]  $R$ is Cohen-Macaulay and one of the following equivalent conditions holds:
\item[(b)]  If $N \subseteq H^d_{\fram}(R)$ is such that $F(N) \subseteq N$, then $N = 0$ or $N = H^d_{\fram}(R)$.
\item[(b$'$)]  For any $0 \neq c \in R$, there exists an integer $e > 0$ so that $H^d_{\fram}(R) \xrightarrow{1 \mapsto F^e_* c} H^d_{\fram}(F^e_* R)$ is injective.
\item[(b$''$)]  If $R^+$ is the absolute integral closure of $R$ and $B$ is any big Cohen-Macaulay $R^+$-algebra, then $H^d_{\fram}(R) \to H^d_{\fram}(B)$ is injective.
\end{itemize}
\item[Parameter test submodules]  The \emph{parameter test submodule} $\tau(\omega_R)$ is the $\omega_R$-annihilator of the kernel of $H^d_{\fram}(R) \to H^d_{\fram}(R^+)$. Further, if $\Lambda \geq 0$ is a $\bQ$-Cartier divisor with $n \Lambda = \Div(f)$, then we define $\tau(\omega_R, \Lambda)$ to be the $\omega_R$-annihilator of the kernel of $H^d_{\fram}(R) \xrightarrow{1 \mapsto f^{1/n}} H^d_{\fram}(R^+)$.
\item[Strongly $F$-regular singularities]
Further assume that $R$ is normal and $\Delta \geq 0$ is an effective $\bQ$-divisor on $\Spec R$.  
Fix a choice of $K_R \geq 0$ and suppose $K_R + \Delta$ is $\bQ$-Cartier with $n(K_R + \Delta) = \Div_R(f)$ for some $f \in R$ and $n > 0$. Let $B$ be a big Cohen-Macaulay $R^+$-algebra (and hence contains a copy of $f^{1/n}$ coming from $R^+$). Notice that the map $H^d_{\fram}(R) \xrightarrow{f^{1/n}} H^d_{\fram}(B)$ factors through $H^d_{\fram}(R(K_R)) \cong E$.
\[
\xymatrix{
H^d_{\fram}(R) \ar[dr] \ar[rr]^{\cdot f^{1/n}} & & H^d_{\fram}(B)\\
& H^d_{\fram}(R(K_R)) \ar[ur]_{\psi}
}
\]
In this case, we say that $(R, \Delta)$ has \emph{strongly $F$-regular singularities} if the induced map \[
\psi : H^d_{\fram}(R(K_R)) \to H^d_{\fram}(B)
\]
is injective (we will prove that this is independent of the choice of $B$).
\item[Test ideals]  With the same assumptions as we had for strongly $F$-regular singularities, we define the \emph{test ideal} $\tau(R,\Delta)$ to be
    \[
    \Ann_R\big( \ker (H^d_{\fram}(R(K_R)) \to H^d_{\fram}(B)) \big).
    \]
    This is also independent of the choice of $B$.
\end{description}
\end{defprop}
\begin{proof}
The equivalence of (b) and (b$'$) is \cite[Theorem 2.6]{SmithFRatImpliesRat}.  The equivalence of (b$''$) essentially follows from \cite[Section 5]{SmithTightClosureParameter}.  See the proof of \autoref{prop: big CM rational=F-rational}.


We first establish the factorization in the diagram before the definition of strongly $F$-regularity.  Notice that for any finite extension $R \subseteq S \subseteq R^+$ with $S$ normal and containing $f^{1/n}$ and with induced map $\rho : \Spec S \to \Spec R$, we have that $S \xrightarrow{\cdot f^{1/n}} S$ factors as $S \hookrightarrow S(\rho^* K_R) \xrightarrow{\nu} S$.  This is because $(1/n) \Div(f) \geq \rho^* K_R$ by construction.  But $R \hookrightarrow S(\rho^* K_R)$ factors through $R \hookrightarrow R(K_R)$.  Applying $H^d_{\fram}(-)$ then gives us our desired factorization.

It remains to prove that strong $F$-regularity of pairs and the test ideal are independent of the choice of $B$.  With $S$ as above, we have the factorization
\[
\xymatrix{
H^d_{\fram}(R(K_R))\ar[r]& H^d_{\fram}(S(\rho^* K_R)) \ar[r]^-{\nu} & H^d_{\fram}(S) \ar[r] & H^d_{\fram}(B).\\
}
\]
Note the map $H^d_{\fram}(S) \to H^d_{\fram}(B)$ is induced by $S \to B$.
Now, $B$ is a big Cohen-Macaulay $R^+$-algebra, hence the kernel of $H^d_{\fram}(S) \to H^d_{\fram}(B)$ is independent of $B$ by \cite[Section 5]{SmithTightClosureParameter} (see also \autoref{prop: big CM test ideal char p}). Thus so is the kernel of the composition.  This proves the independence of the choice of $B$ for both strong $F$-regularity and for the test ideal.
\end{proof}

Our definition of strong $F$-regularity and test ideal for pairs $(R, \Delta)$ is \emph{not} the usual definition (we made our choice for motivational purposes as an analogous definition will be made in the mixed characteristic setting). However, it is equivalent to the usual definitions.  Note that $F$-singularities of pairs in the literature have typically (perhaps nearly always) been defined in the $F$-finite setting, see for instance \cite{HaraWatanabeFRegFPure,HaraTakagiOnAGeneralizationOfTestIdeals,TakagiInterpretationOfMultiplierIdeals}.  In that case, $(R, \Delta \geq 0)$ is called \emph{strongly $F$-regular} if for every $0 \neq c \in R$, there exists an $e > 0$ so that the canonical map $R \to F^e_* R(\lceil (p^e - 1) \Delta + \Div_R(c) \rceil)$ splits (this definition holds without the condition that $K_R + \Delta$ is $\bQ$-Cartier).

\begin{proposition}
\label{prop.DefFregularSameClassical}
Our definition of strongly $F$-regular pairs and the test ideal from \autoref{defprop.CharpSings} agrees with the classical definition of \cite{HaraWatanabeFRegFPure,BlickleSchwedeTuckerTestAlterations} if $R$ is $F$-finite and $K_R + \Delta$ is $\bQ$-Cartier.
\end{proposition}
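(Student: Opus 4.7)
The plan is to match both definitions by translating through the local cohomology module $H^d_{\fram}(R(K_R))$ and identifying the relevant submodule as a $\Delta$-twisted tight closure of zero.

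First, I would unwind the paper's definition. The factorization
\[
H^d_{\fram}(R) \to H^d_{\fram}(R(K_R)) \xrightarrow{\psi} H^d_{\fram}(B)
\]
constructed in the proof of Definition-Proposition 2.3 identifies $\psi$, after passing through $H^d_{\fram}(R^+) \to H^d_{\fram}(B)$, with multiplication by $f^{1/n}$ on local cohomology. By Smith's theorem, for any module-finite normal extension $R \subseteq S \subseteq R^+$ the kernel of $H^d_{\fram}(S) \to H^d_{\fram}(R^+) = H^d_{\fram}(B)$ is exactly the tight closure of zero $0^*_{H^d_{\fram}(S)}$, and is independent of the choice of big Cohen-Macaulay $R^+$-algebra $B$ (this was the input used in the proof of Definition-Proposition 2.3). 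Taking the colimit over module-finite normal $S$ containing $f^{1/n}$, and pulling back along the factorization $H^d_{\fram}(R(K_R)) \hookrightarrow H^d_{\fram}(S(\rho^*K_R)) \xrightarrow{\cdot f^{1/n}} H^d_{\fram}(S)$, one identifies $\ker \psi$ with the $\Delta$-tight closure of zero $0^{*\Delta}_{H^d_{\fram}(R(K_R))}$ in the classical sense.

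Next I would invoke the standard duality description of the classical test ideal. In the $F$-finite setting with $K_R + \Delta$ being $\bQ$-Cartier, the Hara-Takagi / Hara-Yoshida theory of test ideals for pairs (together with Matlis duality, as in the parameter case treated by Smith and Hochster-Huneke) gives $\tau_{\mathrm{cl}}(R,\Delta) = \Ann_R 0^{*\Delta}_{H^d_{\fram}(R(K_R))}$. Concretely, splittings $R \to F^e_* R(\lceil (p^e-1)\Delta + \Div_R(c)\rceil)$ correspond under Grothendieck/Matlis duality to maps of top local cohomology, and the classical test ideal, defined as the smallest ideal detected by all such maps for varying $c \in R^\circ$, is dual to the $\Delta$-tight closure of zero. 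Combining with the first step,
\[
\tau_B(R,\Delta) \;=\; \Ann_R \ker \psi \;=\; \Ann_R 0^{*\Delta}_{H^d_{\fram}(R(K_R))} \;=\; \tau_{\mathrm{cl}}(R,\Delta).
\]
Strong $F$-regularity in either sense then says precisely that this common ideal equals $R$, proving the proposition.

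The main point requiring care is the bookkeeping of the $\bQ$-Cartier twist: one must verify that the ``multiplication by $f^{1/n}$'' factorization used to define $\psi$ implements exactly the $\Delta$-twist appearing both in the definition of the $\Delta$-tight closure on the left-hand side and in the $\lceil (p^e-1)\Delta + \Div_R(c)\rceil$-twisted Frobenius splittings on the right-hand side. Once this matching is set up (fixing a consistent choice of $K_R \geq 0$ and a generator $f$ of $\O_R(n(K_R + \Delta))$, and noting that varying $c \in R^\circ$ on the classical side corresponds to varying the module-finite extension $S$ on the BCM side), the equivalence reduces to the standard $F$-finite tight closure machinery for pairs.
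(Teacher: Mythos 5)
There is a genuine gap, and it sits exactly where your proposal delegates the work to ``bookkeeping.'' Your first step claims that Smith's theorem for each module-finite normal $R \subseteq S \subseteq R^+$, plus a colimit and a pullback along $H^d_{\fram}(R(K_R)) \hookrightarrow H^d_{\fram}(S(\rho^*K_R)) \xrightarrow{\cdot f^{1/n}} H^d_{\fram}(S)$, identifies $\ker \psi$ with the classical $\Delta$-tight closure $0^{*\Delta}_{E}$. But Smith's theorem \cite{SmithTightClosureParameter} only identifies $\ker\big(H^d_{\fram}(S) \to H^d_{\fram}(S^+)\big)$ with $0^*_{H^d_{\fram}(S)}$, the \emph{untwisted} tight closure computed over $S$. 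Converting that, uniformly in $S$, into the $R$-linear, $\Delta$-twisted condition defining $0^{*\Delta}_E$ (the $\lceil (p^e-1)\Delta + \Div_R(c)\rceil$-twisted Frobenius splittings) is precisely the content of the transformation rules for test ideals under finite maps, i.e.\ \cite{SchwedeTuckerTestIdealFiniteMaps} or \cite[Theorem 4.6]{BlickleSchwedeTuckerTestAlterations}; it is not a formal consequence of the untwisted statement. In particular the inclusion $0^{*\Delta}_E \subseteq \ker\psi$ is the hard, Smith-type direction for pairs: the untwisted proof (the kernel is a proper $F$-stable submodule and $0^*_{H^d_{\fram}(R)}$ is the largest such, as in \autoref{prop: big CM test ideal char p}) has no direct analogue for the twisted module $E = H^d_{\fram}(R(K_R))$, so this containment needs its own argument rather than a citation to the parameter case. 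A smaller slip in the same step: $H^d_{\fram}(R^+)$ is not equal to $H^d_{\fram}(B)$ for a general big Cohen--Macaulay $R^+$-algebra $B$; the independence of the kernel from $B$ is what the proof of \autoref{defprop.CharpSings} supplies, and you should route through it rather than assert equality.

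By contrast, the paper avoids twisted tight closure altogether: it quotes \cite[Theorem 4.6]{BlickleSchwedeTuckerTestAlterations}, which says $\tau(R,\Delta) = \Image\big(\Tr : S(\lceil K_S - \rho^*(K_R+\Delta)\rceil) \to R\big)$ for all sufficiently large finite covers $S \ni f^{1/n}$, then applies Matlis duality to rewrite this as $\Ann_R \ker\big(E \to H^d_{\fram}(S)\big)$, takes the limit over $S$ to reach $H^d_{\fram}(R^+)$, and uses that $R^+$ is itself big Cohen--Macaulay together with the $B$-independence already established. If you want to salvage your route, you must either import the same finite-cover theorem (at which point your argument collapses into the paper's), or separately prove the pair analogue of Smith's theorem, namely $0^{*\Delta}_E = \ker\big(E \xrightarrow{f^{1/n}} H^d_{\fram}(R^+)\big)$, in addition to the Hara--Yoshida/Takagi identification $\tau(R,\Delta) = \Ann_R 0^{*\Delta}_E$ \cite{HaraYoshidaGeneralizationOfTightClosure,TakagiInterpretationOfMultiplierIdeals}. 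As written, the proposal asserts that equality of submodules of $E$ rather than proving it.
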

\begin{proof}[Proof sketch]
We prove the statement for the test ideal since strong $F$-regularity is just the condition that $\tau(R, \Delta) = R$. From now on till the end of the proof $\tau(R,\Delta)$ refers to test ideal as in \cite[Definition 2.37]{BlickleSchwedeTuckerTestAlterations}.  Again write that $n(K_R + \Delta) = \Div_R(f)$.  By \cite[Theorem 4.6]{BlickleSchwedeTuckerTestAlterations}, we know that $\tau(R, \Delta) = \Image(\Tr : S(\lceil K_S - \rho^* (K_R + \Delta)\rceil) \to R)$ for any sufficiently larger finite extension $R \subseteq S$ with induced $\rho : \Spec S \to \Spec R$ (here $\Tr$ is the Grothendieck trace).  We may assume that $f^{1/n} \in S$ so that $\Div_S(f^{1/n}) = \rho^*(K_R + \Delta)$.  Thus $\tau(R, \Delta) = \Image(\Tr : f^{1/n} \cdot S(K_S) \to R)$.  We observe that $\Hom_R(S(K_S), E) = \Hom_R(\Hom_R(S, \omega_R), E) = H^d_{\fram}(S)$
Applying Matlis duality, we then have
\[
\tau(R, \Delta) = \Ann_R(\ker(E \to H^d_{\fram}(S)))
\]
and it is easy to see that the map $E \to H^d_{\fram}(S)$ is a factor of $H^d_{\fram}(R) \xrightarrow{f^{1/n}} H^d_{\fram}(S)$ as in \autoref{defprop.CharpSings}.  Because we chose $S$ large enough, the kernel is independent of the choice of $S$ and hence taking a limit we see that
\[
\tau(R, \Delta) = \Ann_R(\ker(E \to H^d_{\fram}(R^+)))
\]
where the map $E\to H_\m^d(R^+)$ is the map in the diagram of \autoref{defprop.CharpSings} (one can take $B=R^+$ since $R^+$ itself is big Cohen-Macaulay by \cite{HochsterHunekeInfiniteIntegralExtensionsAndBigCM}).
\end{proof}

Another advantage of our definition of test ideals is that it makes transformation rules under finite maps quite transparent (see \autoref{sec.TransformationRulesUnderFiniteMaps}).

\begin{remark}
It is natural to expect that, even without the $F$-finite hypothesis, we have that our definition of $\tau(R, \Delta)$ is equal to the annihilator of $0^{* \Delta}_{E}$, which can be defined to be
\[
\big\{ \eta \in E \;|\; \text{ there exists nonzero } c \in R \text{ such that } \forall e > 0,  0 = \eta \otimes c^{1/p^e} \in E \otimes_R (R(\lfloor p^e \Delta \rfloor))^{1/p^e} \big\}.
\]
We believe this is straightforward but we do not work it out here since the literature on $F$-singularities of pairs includes the $F$-finite hypothesis.
\end{remark}

\section{Big Cohen-Macaulay rational singularities}
\label{sec.BCMRational}
In this section we prove our result on rational and $F$-rational singularities. Most results of this section, at least when $R$ is a complete normal local domain, will also follow from our more general results in \autoref{sec.BCMParamTest}. However, our treatment here is less technical and it only requires the version of weakly functorial big Cohen-Macaulay algebras established by Heitmann and the first author in \cite{HeitmannMaBigCohenMacaulayAlgebraVanishingofTor}\footnote{The version of weakly functorial big Cohen-Macaulay algebras established in \cite{HeitmannMaBigCohenMacaulayAlgebraVanishingofTor} is weaker, but the method is substantially shorter than \cite{AndreWeaklyFunctorialBigCM}, which requires \cite{AndrePerfectoidAbhyankarLemma}.}.  On the other hand, those later arguments in many cases are simply jazzed up versions of what is done in this section.

\begin{definition}
\label{def.BCMrat}
Let $(R,\m)$ be an excellent local ring of dimension $d$ and let $B$ be a big Cohen-Macaulay $R$-algebra. We say $R$ is {\it big Cohen-Macaulay-rational with respect to $B$} (or simply \emph{\BCMRat{B}}) if
\begin{itemize}
\item{} $R$ is Cohen-Macaulay and
\item{} $H_\m^d(R)\to H_\m^d(B)$ is injective.
\end{itemize}
We say $R$ is \emph{\BCMRat{}} if $R$ is \emph{\BCMRat{B}} for all big Cohen-Macaulay $R$-algebras $B$.
\end{definition}

\begin{remark}
\label{rem.BCMrationalcompletion}
$R$ is \emph{\BCMRat{B}} if and only if $\widehat{R}$ is \emph{\BCMRat{\widehat{B}}}. This is because $\widehat{B}$ is a big Cohen-Macaulay $\widehat{R}$-algebra \cite[Corollary 8.5.3]{BrunsHerzog} and $H^d_{\fram}(M) \cong H^d_{\fram}(\widehat{M})$ for any $R$-module $M$.  As a consequence, $R$ is \emph{\BCMRat{}} if and only if $\widehat{R}$ is \emph{\BCMRat{}} since every big Cohen-Macaulay $\widehat{R}$-algebra is certainly a big Cohen-Macaulay $R$-algebra.
\end{remark}

\begin{lemma}
\label{lem.BCMratNormal}
If $R$ is \emph{\BCMRat{}}, then $\widehat{R}$ is a normal domain.
\end{lemma}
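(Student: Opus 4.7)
We may assume $R$ is complete by \autoref{rem.BCMrationalcompletion}, at which point $R$ is a $d$-dimensional Cohen-Macaulay local ring with canonical module $\omega_R$. The plan is to show that $R$ is first a domain and then normal, in each case by transferring BCM-rationality across Matlis duality applied to a carefully chosen big Cohen-Macaulay algebra.

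For the domain property, fix any minimal prime $\frp \subset R$.  Since $R$ is Cohen-Macaulay (hence equidimensional), $\dim R/\frp = d$, so a big Cohen-Macaulay $R/\frp$-algebra $B$ exists; because a system of parameters of $R$ projects to one of $R/\frp$, such $B$ is automatically a big Cohen-Macaulay $R$-algebra. The composition $H^d_\fram(R) \to H^d_\fram(R/\frp) \to H^d_\fram(B)$ is injective by BCM-rationality of $R$, forcing the first arrow to be injective; it is also surjective (from $0 \to \frp \to R \to R/\frp \to 0$), hence an isomorphism. Matlis-dualizing over the complete Cohen-Macaulay ring $R$ via local duality turns this isomorphism into $\Hom_R(R/\frp, \omega_R) \xrightarrow{\sim} \omega_R$, i.e., $\frp \omega_R = 0$. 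Localizing at $\frp$, and using that $(\omega_R)_\frp$ is the canonical module of the Artinian local ring $R_\frp$, namely the faithful injective hull $E_{R_\frp}(\kappa(\frp))$, we conclude $\frp R_\frp = 0$, so $R_\frp$ is a field. Hence $R$ is \Rn{0}, and the Cohen-Macaulay property then gives $R$ reduced. It follows that $\Ann_R(\omega_R) \subseteq \sqrt{0} = 0$; since $\frp \subseteq \Ann_R(\omega_R)$ for every minimal prime, every minimal prime is the zero ideal, so $R$ is a domain.

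For normality, let $\tld R$ denote the integral closure of $R$ in $\Frac(R)$. Since $R$ is a complete local Nagata domain, $\tld R$ is module-finite over $R$; as a complete semilocal domain it has no nontrivial idempotents, so $\tld R$ is a complete local normal domain of dimension $d$. A big Cohen-Macaulay $\tld R$-algebra $B$ exists (by \cite{AndreWeaklyFunctorialBigCM} in mixed characteristic, or by \cite{HochsterHunekeInfiniteIntegralExtensionsAndBigCM} in equal characteristic) and is automatically a big Cohen-Macaulay $R$-algebra, so BCM-rationality of $R$ gives injectivity of $H^d_\fram(R) \to H^d_\fram(\tld R)$. The Matlis dual is the surjection $\Hom_R(\tld R, \omega_R) \twoheadrightarrow \omega_R$, $\phi \mapsto \phi(1)$. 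Since $\tld R$ and $\omega_R$ are rank-one torsion-free $R$-modules, every such $\phi$ is multiplication by $\phi(1) \in \Frac(R)$; after fixing an embedding $\omega_R \hookrightarrow \Frac(R)$, the image equals $\{x \in \omega_R : x \cdot \tld R \subseteq \omega_R\}$. Surjectivity is therefore equivalent to $\tld R \cdot \omega_R \subseteq \omega_R$, i.e., $\tld R \subseteq \Hom_R(\omega_R, \omega_R)$. The standard identity $\Hom_R(\omega_R, \omega_R) = R$ for Cohen-Macaulay local rings then forces $\tld R = R$, so $R$ is normal.

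The main technical obstacle is the reducedness step, which must produce the \Rn{0} property from BCM-rationality \emph{without} presupposing that $R$ has a unique minimal prime. The key ingredient is the faithfulness of the injective hull of the residue field over each Artinian localization $R_\frp$, which is what converts the vanishing $\frp \omega_R = 0$ into $\frp R_\frp = 0$.
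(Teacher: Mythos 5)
Your proof is correct, and while its skeleton matches the paper's (complete $R$; use a big Cohen--Macaulay algebra over $R/\frp$ to get injectivity of $H^d_\m(R)\to H^d_\m(R/\frp)$; use one over the normalization $\tld R$ to get injectivity of $H^d_\m(R)\to H^d_\m(\tld R)$), the way you finish each step is genuinely different. The paper stays elementary: for the domain statement it chases the square with $R/(x_1,\dots,x_d)\hookrightarrow H^d_\m(R)$ to conclude $P\subseteq(x_1,\dots,x_d)$ for every system of parameters, hence $P\subseteq\bigcap_t(x_1^t,\dots,x_d^t)=0$; for normality it shows directly that $y/x_1\in\tld R$ forces $y\in\bigcap_t(x_1,x_2^t,\dots,x_d^t)=(x_1)$, so only regular sequences and the Krull intersection theorem are needed. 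You instead dualize: observing that $H^d_\m(R)\to H^d_\m(R/\frp)$ is also surjective (since $H^{d+1}_\m(\frp)=0$) and that the resulting isomorphism dualizes to $\frp\,\omega_R=0$ is a clean mechanism the paper does not use, and your normality step via the surjection $\Hom_R(\tld R,\omega_R)\twoheadrightarrow\omega_R$ together with the identity $\Hom_R(\omega_R,\omega_R)=R$ is likewise valid; the price is invoking local duality over complete Cohen--Macaulay rings, localization of canonical modules, and that endomorphism identity, all standard but heavier than the paper's toolkit. Two small touch-ups: the assertion $\Ann_R(\omega_R)\subseteq\sqrt{0}$ is exactly $\Supp\omega_R=\Spec R$, which you should attribute to localization of canonical modules; in fact, once you know $\frp\,\omega_R=0$ for every minimal prime while $(\omega_R)_{\frq}\neq 0$ at every minimal prime $\frq$, two distinct minimal primes are immediately impossible, so the detour through $R_0$, reducedness, and Serre's criterion can be dropped. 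Also, in equal characteristic zero the existence of big Cohen--Macaulay algebras is due to \cite{HochsterHunekeApplicationsofBigCM} rather than \cite{HochsterHunekeInfiniteIntegralExtensionsAndBigCM}.
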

\begin{proof}
By \autoref{rem.BCMrationalcompletion}, we may assume $R$ is complete. Let $P$ be a minimal prime of $R$ such that $\dim R/P=\dim R$ and let $B$ be a big Cohen-Macaulay $R/P$-algebra (and hence a big Cohen-Macaulay $R$-algebra). Since $R$ is \BCMRat{}, the composition $H_\m^d(R)\to H_\m^d(R/P)\to H_m^d(B)$ is injective, thus $H_\m^d(R)\to H_\m^d(R/P)$ is injective. For every system of parameters $x_1,\dots,x_d$ of $R$, consider the following diagram:
\[\xymatrix{
H_\m^d(R) \ar@{^{(}->}[r] &  H_\m^d(R/P) \\
R/(x_1,\dots,x_d) \ar@{^{(}->}[u] \ar[r] & (R/P)/(x_1,\dots,x_d)(R/P) \ar[u]
}
\]
where the injectivity of the left vertical map is because $R$ is Cohen-Macaulay (by the definition of {\BCMRat{}}) and thus $x_1,\dots,x_d$ is a regular sequence on $R$. Chasing the diagram, we know that $R/(x_1,\dots,x_d)\to (R/P)/(x_1,\dots,x_d)(R/P)$ is injective. Thus $P\subseteq (x_1,\dots,x_d)$ for every system of parameters $x_1,\dots,x_d$ of $R$. Hence $P\subseteq \cap_t(x_1^t,\dots,x_d^t)=0$. This proves that $R$ is a domain.

Now let $R^{\textnormal{N}}$ denote the normalization of $R$, which is also a complete normal domain.  A big Cohen-Macaulay $R^{\textnormal{N}}$-algebra $C$ is still a big Cohen-Macaulay $R$-algebra, and hence it follows that $H^d_{\fram}(R) \to H^d_{\fram}(R^{\textnormal{N}})$ is injective.  Suppose $\frac{y}{x_1}\in R^{\textnormal{N}}$ and $x_1,\dots,x_d$ is a system of parameters of $R$. The injectivity of $H^d_{\fram}(R) \to H^d_{\fram}(R^{\textnormal{N}})$ yields $y\in \cap_t(x_1,x_2^t,\dots,x_d^t)=(x_1)$, thus $\frac{y}{x_1}\in R$. Thus $R$ is normal. 
\end{proof}

Our first observation is that, with the above definition, {\BCMRat{}} singularities deform in the following sense.
\begin{proposition}
\label{prop: big CM rational deform}
Let $(R,\m)$ be an excellent local ring of dimension $d$ and $x\in R$ be a nonzerodivisor. Let $B$ be a big Cohen-Macaulay $R$-algebra. If $R/xR$ is {\BCMRat{B/xB}}, then $R$ is {\BCMRat{B}}. In particular, if $R/xR$ is {\BCMRat{}}, then so is $R$.
\end{proposition}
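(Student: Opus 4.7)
The plan is to run a diagram chase comparing the local cohomology long exact sequences attached to $0\to R \xrightarrow{x} R \to R/xR \to 0$ over $R$ and over $B$. The only non-formal input, beyond the hypothesis, will be the vanishing $H^{d-1}_\m(B)=0$; once that is in hand the argument is essentially a five-term sequence chase.

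First I would establish that $R$ is Cohen-Macaulay of dimension $d$: indeed $R/xR$ is Cohen-Macaulay by hypothesis (being \BCMRat{B/xB}) and $x$ is a nonzerodivisor on $R$, so $R$ itself is Cohen-Macaulay. In particular $H^i_\m(R)=0$ for $i<d$, and $x=x_1$ extends to a full system of parameters $x_1,\dots,x_d$ of $R$. I would then observe the key vanishing $H^{d-1}_\m(B) = 0$: since $B$ is a (balanced) big Cohen-Macaulay $R$-algebra, the sequence $x_1^t,\dots,x_d^t$ is $B$-regular for every $t$, so the Koszul cohomology $H^i(x_1^t,\dots,x_d^t;B)$ vanishes for $i<d$; taking the direct limit over $t$ realizes $H^i_\m(B)$, which therefore vanishes for $i<d$ as well.

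With $H^{d-1}_\m(R)=0$ and $H^{d-1}_\m(B)=0$, the two long exact sequences yield injective connecting homomorphisms $\partial_R\colon H^{d-1}_\m(R/xR)\hookrightarrow H^d_\m(R)$ and $\partial_B\colon H^{d-1}_\m(B/xB)\hookrightarrow H^d_\m(B)$, fitting in a commutative square with vertical reduction maps $\alpha\colon H^{d-1}_\m(R/xR)\to H^{d-1}_\m(B/xB)$ and $\beta\colon H^d_\m(R)\to H^d_\m(B)$. Now suppose $0\neq\eta\in\ker\beta$. Since every element of $H^d_\m(R)$ is $x$-power torsion (each representative lives in some $R/(x_1^t,\dots,x_d^t)$), I pick $N$ minimal with $x^N\eta=0$ and set $\eta':=x^{N-1}\eta$, a nonzero element still in $\ker\beta$ satisfying $x\eta'=0$. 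By injectivity of $\partial_R$ there is a unique nonzero $\bar\eta'\in H^{d-1}_\m(R/xR)$ with $\partial_R(\bar\eta')=\eta'$. Commutativity of the square with injectivity of $\partial_B$ then forces $\alpha(\bar\eta')=0$; but by hypothesis $R/xR$ is \BCMRat{B/xB}, i.e.\ $\alpha$ is injective, so $\bar\eta'=0$, contradicting $\eta'\neq 0$. Hence $\ker\beta=0$.

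For the final ``in particular'' clause, given an arbitrary big Cohen-Macaulay $R$-algebra $B$, even if $B/xB$ is not itself a balanced big Cohen-Macaulay $R/xR$-algebra, its $\m$-adic completion $\widehat{B/xB}^{\m}$ is one over $\widehat{R/xR}$ by \cite[Corollary 8.5.3]{BrunsHerzog}. Since local cohomology is insensitive to $\m$-adic completion and $R/xR$ is BCM-rational iff $\widehat{R/xR}$ is (\autoref{rem.BCMrationalcompletion}), the BCM-rationality hypothesis on $R/xR$ applied to $\widehat{B/xB}^{\m}$ supplies the injectivity of $\alpha$ used above.
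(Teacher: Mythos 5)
Your proof is correct and takes essentially the same route as the paper's: the same comparison of the two long exact sequences coming from multiplication by $x$ on $R$ and on $B$, the same reduction via $x$-power torsion in $H^d_\m(R)$ to an element killed by $x$, and the same diagram chase using the injectivity of $H^{d-1}_\m(R/xR)\to H^{d-1}_\m(B/xB)$ supplied by the hypothesis (you merely make explicit the vanishing $H^{d-1}_\m(B)=0$ that the paper leaves implicit in its displayed diagram). The completion detour in your final paragraph is unnecessary but harmless: since $B$ is a balanced big Cohen-Macaulay $R$-algebra, $B/xB$ is already a balanced big Cohen-Macaulay $R/xR$-algebra (any system of parameters of $R/xR$, together with $x$, is a system of parameters of $R$), so the blanket hypothesis applies to it directly.
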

Compare our proof with the main argument of \cite{ElkikDeformationsOfRational}.
\begin{proof}
Since $R/xR$ is Cohen-Macaulay and $x$ is a nonzerodivisor, $R$ is also Cohen-Macaulay. It suffices to prove $H_\m^d(R)\to H_\m^d(B)$ is injective. The commutative diagram
\[
\xymatrix{
0 \ar[r] & R \ar[r]^{\cdot x} \ar[d] & R \ar[d] \ar[r] & R/xR \ar[r]\ar[d] & 0\\
0\ar[r] & B\ar[r]^{\cdot x} & B\ar[r] & B/xB \ar[r] & 0
}
\]
induces
\[
\xymatrix{
0 \ar[r] & H_\m^{d-1}(R/xR) \ar[r] \ar[d] & H_\m^d(R) \ar[d]^\alpha \ar[r]^{\cdot x} & H_\m^d(R) \ar[r]\ar[d]^\alpha & 0\\
0\ar[r] & H_\m^{d-1}(B/xB) \ar[r] & H_\m^d(B) \ar[r]^{\cdot x} & H_\m^d(B) \ar[r] & 0
}
\]
Since each element of $H_\m^d(R)$ is annihilated by a power of $x$, if $\ker\alpha\neq 0$, then there exists $0\neq\eta\in\ker\alpha$ such that $x\eta=0$. Then a diagram chase shows that $\alpha$ is injective provided $H_\m^{d-1}(R/xR) \to H_\m^{d-1}(B/xB)$ is injective. But this follows because $R/xR$ is {\BCMRat{B/xB}}.
\end{proof}

Next we prove that in characteristic $p>0$, {\BCMRat{}} singularities are the same as $F$-rational singularities. This should be well-known to experts in tight closure theory.

\begin{proposition}
\label{prop: big CM rational=F-rational}
Let $(R,\m)$ be an excellent local ring of characteristic $p>0$. Then $R$ is {\BCMRat{}} if and only if $R$ is $F$-rational.
\end{proposition}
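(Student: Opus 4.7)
The plan is to compare the two injectivity conditions in \autoref{def.BCMrat} and \autoref{defprop.CharpSings}, using (i) Hochster–Huneke's result that $R^+$ is a big Cohen-Macaulay $R$-algebra in characteristic $p>0$ \cite{HochsterHunekeInfiniteIntegralExtensionsAndBigCM}, and (ii) the automatic Frobenius-equivariance of any ring map between characteristic $p$ rings. Both \BCMRat{} and $F$-rationality include the Cohen-Macaulay hypothesis, and \autoref{lem.BCMratNormal} ensures that under the former $\widehat R$ (and therefore $R$ itself) is a normal domain, so $R^+$ is available in either direction.

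First I would handle \BCMRat{} $\Rightarrow$ $F$-rational by plugging $B=R^+$ into the \BCMRat{} hypothesis: since $R^+$ is a big Cohen-Macaulay $R$-algebra in characteristic $p$, the hypothesis gives that $H_\m^d(R)\to H_\m^d(R^+)$ is injective. Because $R^+$ is trivially a big Cohen-Macaulay $R^+$-algebra, this is precisely condition (b$''$) of \autoref{defprop.CharpSings} evaluated at $B=R^+$, and by the equivalence (b)$\Leftrightarrow$(b$'$)$\Leftrightarrow$(b$''$) recorded there, $R$ is $F$-rational.

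For the converse, let $B$ be any big Cohen-Macaulay $R$-algebra and set $\alpha\colon H_\m^d(R)\to H_\m^d(B)$. Since $R\to B$ commutes with Frobenius in characteristic $p$, $\alpha$ is $F$-equivariant and so $N:=\ker\alpha$ is an $F$-stable submodule of $H_\m^d(R)$. Fixing a system of parameters $\underline{x}=x_1,\ldots,x_d$ of $R$, the class $\eta_0=[1/(x_1\cdots x_d)]\in H_\m^d(R)$ maps under $\alpha$ to the class of $1\in B/(x_1,\ldots,x_d)B$. Because $\underline{x}$ is a regular sequence on $B$ and $B/\underline{x}B\neq 0$ by the definition of big Cohen-Macaulay, this image is nonzero, so $\eta_0\notin N$ and thus $N\subsetneq H_\m^d(R)$. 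Condition (b) of $F$-rationality then forces $N=0$, giving the required injectivity and hence \BCMRat{}.

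There is no substantive technical obstacle: the whole argument rests on the two observations above plus the previously recorded equivalence of (b), (b$'$), (b$''$) in \autoref{defprop.CharpSings}. In particular, no comparison between "big Cohen-Macaulay closure" and tight closure is needed for this statement, nor is any dominating big Cohen-Macaulay algebra construction; the only point requiring care is confirming that $R$ (equivalently $\widehat R$) is a normal domain so that $R^+$ may be formed in the first direction, which is immediate from \autoref{lem.BCMratNormal}.
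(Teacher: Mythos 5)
Your proposal is correct and takes essentially the same route as the paper: the forward direction specializes the \BCMRat{} hypothesis to $B=R^+$ (big Cohen--Macaulay by \cite{HochsterHunekeInfiniteIntegralExtensionsAndBigCM}) and concludes $F$-rationality from Smith's identification of the kernel of $H^d_\m(R)\to H^d_\m(R^+)$ with tight closure, while the converse uses $F$-stability of the kernel of $H^d_\m(R)\to H^d_\m(B)$ together with the nonvanishing of $[\frac{1}{x_1\cdots x_d}]$ in $H^d_\m(B)$, exactly as in the paper. The only small expository caveat is that you should cite \cite[Section 5]{SmithTightClosureParameter} directly rather than condition (b$''$) of \autoref{defprop.CharpSings}, since the paper's justification of that condition is itself deferred to this proposition.
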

\begin{proof}
Suppose $R$ is {\BCMRat{}}. It is enough to prove $\widehat{R}$ is $F$-rational. Thus by \autoref{rem.BCMrationalcompletion} and \autoref{lem.BCMratNormal}, we may assume $R$ is a complete local domain. By \cite{HochsterHunekeInfiniteIntegralExtensionsAndBigCM}, $R^+$ is a big Cohen-Macaulay algebra so $H_\m^d(R)\to H_\m^d(R^+)$ is injective. This implies $R$ is $F$-rational by \cite[Section 5]{SmithTightClosureParameter}.

Conversely, suppose $R$ is $F$-rational. Then by \autoref{defprop.CharpSings}, $H_\m^d(R)$ is simple in the category of $R$-modules with Frobenius action. Therefore the kernel of $H_\m^d(R)\to H_\m^d(B)$, being $F$-stable, is either $0$ or $H_\m^d(R)$. But the kernel cannot be $H_\m^d(R)$ because $B$ is big Cohen-Macaulay: pick any system of parameters $x_1,\dots,x_d$ of $R$, the class $[\frac{1}{x_1x_2\cdots x_d}]$ is not zero in $H_\m^d(B)$ since $x_1,\dots,x_d$ is a regular sequence on $B$.
\end{proof}

\begin{remark}
Let $(R,\m)$ be a complete local domain of characteristic $p>0$. The proof of \autoref{prop: big CM rational=F-rational} implies that $R$ is \BCMRat{} if and only if $R$ is \BCMRat{B} for one single  big Cohen-Macaulay $B$ (in fact, one can take $B=R^+$ or any big Cohen-Macaulay $R^+$-algebra $B$). We do not know whether this is true in characteristic $0$. However, we will prove in \autoref{prop.tau=tauB} a perfectoid version in mixed characteristic: $R$ is \BCMRat{B} for all integral perfectoid big Cohen-Macaulay $R$-algebras $B$ if and only if $R$ is \BCMRat{B} for one (large enough) integral perfectoid big Cohen-Macaulay $R^+$-algebra $B$.
\end{remark}

Next we show that {\BCMRat{}} singularities are pseudo-rational.

\begin{proposition}
\label{prop: big CM rational implies pseudo-rational}
Let $(R,\m)$ be an excellent local ring that is {\BCMRat{}}. Then $R$ is pesudo-rational. In particular, if $R$ is essentially of finite type over a field of characteristic $0$, then $R$ has rational singularities.
\end{proposition}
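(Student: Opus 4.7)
By \autoref{rem.BCMrationalcompletion} and \autoref{lem.BCMratNormal}, and since \BCMRat{} includes Cohen-Macaulayness by definition, I may reduce to the case where $R$ is a complete normal Cohen-Macaulay local domain of dimension $d$. Fix a proper birational morphism $\pi : Y \to X = \Spec R$ with $Y$ normal; it then suffices to show that the natural map
\[
H^d_\m(R) \longrightarrow \bH^d_\m(\myR\Gamma(Y, \O_Y))
\]
is injective.

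The plan is to factor this map through $H^d_\m(B)$ for a suitably chosen big Cohen-Macaulay $R$-algebra $B$, whereupon the composition is injective by \BCMRat{B} and hence the first arrow is injective. Concretely, pick a closed point $y \in \pi^{-1}(\m)$; there is a canonical morphism $\myR\Gamma(Y, \O_Y) \to \O_{Y,y}$ in $D(R)$ obtained by restricting to affine opens $U \ni y$ (on which derived and ordinary global sections agree) and passing to the filtered colimit. So it suffices to produce a big Cohen-Macaulay $R$-algebra $B$ admitting a compatible ring map $\O_{Y,y} \to B$, since then composition with the natural morphism produces the required derived lift $\myR\Gamma(Y, \O_Y) \to B$.

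To produce $B$, I would apply the weakly functorial big Cohen-Macaulay algebras of \cite{HeitmannMaBigCohenMacaulayAlgebraVanishingofTor} to the local homomorphism $R \to T := \widehat{\O_{Y,y}}$ between complete Noetherian local domains of dimension $d$: this machinery supplies a pair of compatible big Cohen-Macaulay algebras that can be combined to set up the desired factorization at the level of local cohomology. The resulting injection $H^d_\m(R) \hookrightarrow H^d_\m(B)$ then forces $H^d_\m(R) \hookrightarrow \bH^d_\m(\myR\Gamma(Y, \O_Y))$, proving pseudo-rationality. For the ``in particular'' part: when $R$ is essentially of finite type over a field of characteristic zero, resolutions of singularities exist and pseudo-rationality coincides with having rational singularities via the equivalences in \autoref{defProp.Char0}.

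The main obstacle lies in the construction of $B$ together with the compatible map $\O_{Y,y} \to B$. A system of parameters of $R$ need not map to a system of parameters of $\O_{Y,y}$ (one sees this already for the blowup of a two-dimensional regular local scheme at the closed point, where $(x,y)\O_{Y,y}$ can collapse to a height-one ideal), so one cannot simply take a big Cohen-Macaulay $\O_{Y,y}$-algebra and view it as big Cohen-Macaulay over $R$. The specific form of weak functoriality in \cite{HeitmannMaBigCohenMacaulayAlgebraVanishingofTor} is designed to navigate exactly this dimension-preserving incompatibility and constitutes the core technical input of the argument.
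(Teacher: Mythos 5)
The reduction and the ``sufficiency'' direction of your plan are fine, but the key step --- producing a single big Cohen-Macaulay $R$-algebra $B$ equipped with a compatible ring map $\O_{Y,y}\to B$ --- is precisely the point that fails, and weak functoriality does not supply it. What \cite{HeitmannMaBigCohenMacaulayAlgebraVanishingofTor} (or \autoref{theorem: Andre}) gives for a local map $R\to T:=\widehat{\O_{Y,y}}$ is a commutative square $B\to C$ in which $B$ is big Cohen-Macaulay over $R$ and $C$ is big Cohen-Macaulay over $T$; these are two \emph{different} algebras. From such a square you only obtain $H^d_\m(R)\to H^d_\m(B)\to H^d_\m(C)$ and $H^d_\m(R)\to H^d_\m(T)\to H^d_\m(C)$, and the \BCMRat{} hypothesis (injectivity into $H^d_\m(B)$) says nothing about the route through $T$; indeed $H^d_\m(C)=H^d_{\m T}(C)$ is typically zero because $\sqrt{\m T}$ has height $<d$, so the common target carries no information. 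Worse, the single algebra you need cannot exist in general: take $R=k\llbracket u,v\rrbracket$, $Y$ the blowup of $\m$, and $y$ a closed point of the exceptional divisor with $v/u\in\O_{Y,y}$. Then $v\in u\O_{Y,y}$, so for any $\O_{Y,y}$-algebra $B$ one has $v\cdot 1=0$ in $B/uB$; if $u,v$ were a regular sequence on $B$ with $B/(u,v)B\neq 0$ this forces $1\in uB$, a contradiction. Hence no big Cohen-Macaulay $R$-algebra admits a compatible $\O_{Y,y}$-algebra structure, and the same obstruction occurs at every closed point of a positive-dimensional fiber, which is the only interesting case. The collapse of a system of parameters that you yourself noticed is exactly this obstruction; it is not something Heitmann--Ma's weak functoriality ``navigates'' --- it is fatal to the proposed factorization.

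The paper's proof avoids this by never mapping a local ring of $Y$ into a big Cohen-Macaulay $R$-algebra. It writes $X=\Proj R[Jt]$, invokes the Sancho-de-Salas sequence $[H^d_{\m+Jt}(R[Jt])]_0\to H^d_\m(R)\to H^d_E(X,\O_X)$, and reduces to showing that $H^d_{\n}(S)\to H^d_\m(R)$ vanishes, where $S=R[Jt]$ and $\n=\m+Jt$. Weak functoriality is then applied to the \emph{surjection} $\widehat{S_\n}/P\twoheadrightarrow R$ (killing a height-one prime of the $(d+1)$-dimensional complete local domain $\widehat{S_\n}/P$), producing compatible big Cohen-Macaulay algebras $B\to C$ for $\widehat{S_\n}/P$ and $R$. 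The crucial mechanism is the dimension shift: $H^d_\n(B)=0$ because $\dim(\widehat{S_\n}/P)=d+1$, and then \BCMRat{B}-type injectivity of $H^d_\m(R)\to H^d_\m(C)$ forces the image of $H^d_\n(S)$ in $H^d_\m(R)$ to vanish. If you want to salvage your argument, this is the structure to reproduce: the Rees algebra supplies a ring of dimension $d+1$ surjecting onto $R$ whose big Cohen-Macaulay algebras have vanishing $H^d$, and that vanishing is what replaces the (impossible) direct map $\O_{Y,y}\to B$.
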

\begin{proof}
By \autoref{rem.BCMrationalcompletion}, we may assume $R$ is complete. We know $R$ is normal by \autoref{lem.BCMratNormal} and Cohen-Macaulay (by the definition of {\BCMRat{}}). Therefore to show $R$ is pseudo-rational, it suffices to prove that $H_\m^d(R)\to H_E^d(X, \cO_X)$ is injective for every projective birational map $\pi$: $X\to \Spec R$ with $E$ the pre-image of $\{\m\}$. Let $X=\Proj R[Jt]$ for some ideal $J\subseteq R$. By the Sancho-de-Salas exact sequence \cite{SanchodeSalasBlowingupmorphismswithCohenMacaulayassociatedgradedrings}, we have
$$
[H_{\m+Jt}^d(R[Jt])]_0\to H_\m^d(R)\to H_E^d(X, \cO_X).
$$
Thus in order to prove $H_\m^d(R)\to H_E^d(X, \cO_X)$ is injective, it is enough to show the natural map $H_{\m+Jt}^d(R[Jt])\to H_\m^d(R)$ vanishes.

We let $S=R[Jt]$ and let $\n\subseteq S$ denote the maximal ideal $\m+Jt$. We consider the surjective ring homomorphism $\widehat{S_\n}\to R$. Since $R$ is a domain this map factors through $\widehat{S_\n}/P$ for some minimal prime $P$ of $\widehat{S_\n}$. We note that $\dim(\widehat{S_\n}/P)=\dim R+1$ and hence $R$ is obtained from $\widehat{S_\n}/P$ by killing a height one prime $Q$. Since $R$ and $\widehat{S_\n}$ clearly have the same characteristic, by \cite[Theorem 3.9]{HochsterHunekeApplicationsofBigCM} in equal characteristic and \cite[Theorem 3.1]{HeitmannMaBigCohenMacaulayAlgebraVanishingofTor} in mixed characteristic, we have a commutative diagram:
\[
\xymatrix{
\widehat{S_\n}/P \ar[r] \ar[d] & R\ar[d] \\
B\ar[r] & C
}
\]
where $B$, $C$ are balanced big Cohen-Macaulay algebras for $\widehat{S_\n}/P$ and $R$ respectively. This induces a commutative diagram:
\[
\xymatrix{
H_{\m+Jt}^d(R[Jt])=H_{\n}^d(\widehat{S_\n}) \ar[r] & H_{\n}^d(\widehat{S_\n}/P) \ar[r] \ar[d] & H_\m^d(R)\ar[d]^\alpha \\
& 0=H_{\n}^d(B)\ar[r] & H_\m^d(C)
}
\]
where the bottom left $0$ is because $B$ is a balanced big Cohen-Macaulay $\widehat{S_\n}/P$-algebra and $\dim(\widehat{S_\n}/P)>d$. Chasing this diagram, to show the map $H_{\m+Jt}^d(R[Jt])\to H_\m^d(R)$ vanishes, it suffices to prove $\alpha$ is injective, which follows because $R$ is {\BCMRat{}}.
\end{proof}

Combining the above three Propositions, we have the following theorem:

\begin{theorem}
\label{theorem: F-ratPseudorat}
Let $(R,\m)$ be an excellent local ring of mixed characteristic $(0,p)$ and $x\in R$ be a nonzerodivisor such that $R/xR$ has equal characteristic $p>0$ (e.g. $R$ is an excellent local domain and $x=p$). If $R/xR$ is $F$-rational, then $R$ is pseudo-rational.
\end{theorem}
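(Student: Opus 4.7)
My plan is simply to chain together the three propositions proved earlier in this section, each of which covers one link in the implication chain
\[
R/xR \text{ is } F\text{-rational} \;\Longrightarrow\; R/xR \text{ is \BCMRat{}} \;\Longrightarrow\; R \text{ is \BCMRat{}} \;\Longrightarrow\; R \text{ is pseudo-rational}.
\]
So there is no new geometric or arithmetic input needed beyond checking that the hypotheses of each cited proposition are satisfied.

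First I would observe that $R/xR$ is an excellent local ring of characteristic $p>0$ (excellence descends to quotients of excellent rings), so \autoref{prop: big CM rational=F-rational} applies and gives that $R/xR$ is \BCMRat{}. Next, since $x \in R$ is assumed to be a nonzerodivisor, \autoref{prop: big CM rational deform} immediately promotes this to the statement that $R$ itself is \BCMRat{}. Finally, $R$ is excellent and \BCMRat{}, so \autoref{prop: big CM rational implies pseudo-rational} yields that $R$ is pseudo-rational.

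There is not really a ``hard step'' here — the conceptual work has all been done in the three preceding propositions, and this theorem is the headline corollary advertising how they fit together across the mixed characteristic barrier. The only thing worth pausing on is that the intermediate notion of \BCMRat{} (applied to \emph{all} big Cohen-Macaulay algebras) is exactly what makes the deformation step legitimate: in characteristic $p$ the algebra $(R/xR)^+$ supplies a single big Cohen-Macaulay algebra witnessing $F$-rationality via \cite{HochsterHunekeInfiniteIntegralExtensionsAndBigCM}, and this is enough by the remark following \autoref{prop: big CM rational=F-rational}, while in mixed characteristic we invoke the full \BCMRat{} property (obtained from \cite{HeitmannMaBigCohenMacaulayAlgebraVanishingofTor} in the guise of weakly functorial big Cohen-Macaulay algebras, used inside \autoref{prop: big CM rational implies pseudo-rational}). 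Thus the entire proof fits in a few lines.
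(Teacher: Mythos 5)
Your proof is correct and is exactly the paper's argument: chain \autoref{prop: big CM rational=F-rational}, \autoref{prop: big CM rational deform}, and \autoref{prop: big CM rational implies pseudo-rational} in that order. No differences worth noting.
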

\begin{proof}
Since $R/xR$ is $F$-rational, by \autoref{prop: big CM rational=F-rational}, $R/xR$ is {\BCMRat{}}. By \autoref{prop: big CM rational deform}, we know $R$ is {\BCMRat{}}. This implies $R$ is pseudo-rational by \autoref{prop: big CM rational implies pseudo-rational}.
\end{proof}

Motivated by \autoref{prop: big CM rational=F-rational} and \autoref{prop: big CM rational implies pseudo-rational}, we conjecture the following:
\begin{conjecture}
Let $(R,\m)$ be a local ring essentially of finite type over a field of characteristic $0$. Then $R$ is {\BCMRat{}} if and only if $R$ has rational singularities.
\end{conjecture}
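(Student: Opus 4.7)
The conjectured equivalence splits into two implications. The forward direction, that \BCMRat{} implies rational singularities, is already established: \autoref{prop: big CM rational implies pseudo-rational} shows \BCMRat{} implies pseudo-rational, and for local rings essentially of finite type over a field of characteristic zero, pseudo-rationality coincides with rational singularities by the work of Lipman and Teissier. So only the reverse implication needs attention.

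For rational singularities $\Rightarrow$ \BCMRat{}, I would first reduce to the complete local domain case, which is permissible by \autoref{rem.BCMrationalcompletion} together with the fact that completion preserves rational singularities for excellent rings. Given a resolution $\pi: Y \to X = \Spec R$, rationality yields $R \qis \myR\pi_*\O_Y$ and in particular that $R$ is Cohen-Macaulay. For a big Cohen-Macaulay $R$-algebra $B$, injectivity of $H_\m^d(R) \to H_\m^d(B)$ is equivalent---via representing cohomology classes as $[y/(x_1 \cdots x_d)]$ for a system of parameters---to the colon-capturing statement that $(x_1,\ldots,x_d)B \cap R = (x_1,\ldots,x_d)$ for every system of parameters $x_1,\ldots,x_d$ of $R$. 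This is precisely a characteristic-zero analog of the parameter ideal being tightly closed, so the natural route is reduction modulo $p$: spreading out $R$ over a finitely generated $\bZ$-algebra $A$, a Zariski-dense set of fibers $R_p$ are $F$-rational by \cite{HaraRatImpliesFRat, MehtaSrinivasRatImpliesFRat} and hence \BCMRat{} by \autoref{prop: big CM rational=F-rational}, and one would try to transfer this injectivity back to $R$ via an ultraproduct construction in the style of Schoutens or via an Artin-approximation/faithfully flat descent argument.

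The main obstacle is the poor functorial control of big Cohen-Macaulay algebras in characteristic zero. In characteristic $p$ one has $R^+$ as a universal big Cohen-Macaulay algebra intimately tied to the Frobenius, but in characteristic $0$ there is no such canonical choice, and it is not clear how to dominate a \emph{prescribed} big Cohen-Macaulay algebra $B$ by one coming from an ultraproduct of the $R_p^+$ or by a big Cohen-Macaulay algebra that visibly sees the resolution $Y$. An alternative, closely related strategy is to extend Boutot's splitting theorem from the setting of finite extensions---where characteristic-zero rational singularities are known to be splinters---to the setting of arbitrary big Cohen-Macaulay algebras, i.e., to show that $R$ remains a derived splinter under all big Cohen-Macaulay maps. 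Either approach confronts the same fundamental difficulty, namely the absence of a Frobenius action to pin down the kernel of $H_\m^d(R)\to H_\m^d(B)$, and this is where I expect the proof to be genuinely hard.
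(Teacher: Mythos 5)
The statement you are addressing is not proved in the paper: it is stated as a conjecture, motivated by \autoref{prop: big CM rational=F-rational} and \autoref{prop: big CM rational implies pseudo-rational}, so there is no argument of the authors to compare yours against. Your treatment of the forward implication is correct and is in fact already contained in the paper: \autoref{prop: big CM rational implies pseudo-rational} shows that a \BCMRat{} ring is pseudo-rational and explicitly records that, for local rings essentially of finite type over a field of characteristic $0$, this gives rational singularities (via Lipman--Teissier, as you say). So the only content of the conjecture is the reverse implication, and there you have correctly diagnosed, but not closed, the gap.

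Concretely, the step that is missing is the transfer of injectivity of $H_\m^d(R)\to H_\m^d(B)$ back from the fibers $R_p$ to a \emph{prescribed} big Cohen--Macaulay $R$-algebra $B$ in equal characteristic $0$. Knowing that a dense set of fibers $R_p$ is $F$-rational (hence \BCMRat{} by \autoref{prop: big CM rational=F-rational}) controls only those big Cohen--Macaulay algebras that arise from the fibers or their ultraproducts; an arbitrary $B$ (for instance one produced by a transfinite sequence of algebra modifications) admits no spreading-out over $\bZ$ and no a priori map from such an ultraproduct, so the mod-$p$ information says nothing about $\ker\big(H_\m^d(R)\to H_\m^d(B)\big)$. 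What would be needed is a characteristic-$0$ analogue of the domination results the paper proves in mixed characteristic (\autoref{thm.DominateAnySetOfBCM}, \autoref{prop.tau=tauB}), i.e.\ a way to test \BCMRat{} on a single sufficiently large, functorially controlled $B$; the remark following \autoref{prop: big CM rational=F-rational} states explicitly that the authors do not know whether this is possible in characteristic $0$. Your alternative route via extending Boutot-type splitting to all big Cohen--Macaulay maps runs into the same obstruction (no Frobenius, no canonical $R^+$-type object with the big Cohen--Macaulay property in characteristic $0$), so your proposal should be read as an accurate assessment of why the statement is a conjecture rather than as a proof of it.
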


\section{Dominating integral perfectoid big Cohen-Macaulay $R^+$-algebras}
\label{sec.AndreWeaklyFunctorial}

Our main goal in this section is to prove that, in mixed characteristic, given any set of integral perfectoid big Cohen-Macaulay $R$-algebras (resp. $R^+$-algebras), one can find an integral perfectoid big Cohen-Macaulay $R^+$-algebra that dominates all of them. See \autoref{thm.DominateAnySetOfBCM}. This can be viewed as a mixed characteristic perfectoid analog of the main result in \cite{DietzBCMSeeds}. Our method is largely inspired by Andr\'e's method in \cite{AndreWeaklyFunctorialBigCM}. We start by recalling the following important result of Andr\'{e} \cite[Theorem 3.1.1]{AndreWeaklyFunctorialBigCM} and Shimomoto \cite[Main Theorem 2]{ShimomotoIntegralperfectoidbigCMviaAndre}.

\begin{theorem}[Andr\'{e} \cite{AndreWeaklyFunctorialBigCM}, Shimomoto \cite{ShimomotoIntegralperfectoidbigCMviaAndre}]
\label{theorem: AndreShimomoto}
Let $(R,\m)$ be a complete local domain of mixed characteristic $(0,p)$ with $\underline{x}=p,x_1,\dots,x_{d-1}$ a system of parameters. Let $B$ be a $(pg)^{1/p^\infty}$-almost integral perfectoid $R$-algebra that is $(pg)^{1/p^\infty}$-almost big Cohen-Macaulay with respect to $\underline{x}$. Then
\begin{enumerate}
  \item There exists an integral perfectoid (balanced) big Cohen-Macaulay $R$-algebra $C$ and a morphism $B^{\natural}=(B^\flat)^\sharp\to C$.
  \item We may further map $C$ to an integral perfectoid (balanced) big Cohen-Macaulay $R^+$-algebra $C'$.
\end{enumerate}
\end{theorem}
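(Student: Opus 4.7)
The plan is to tilt, modify, and untilt for part (1), and then iterate through finite extensions inside $R^+$ for part (2).

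For (1), I would first pass to the tilt $B^\flat$, a perfect $\bF_p$-algebra. Since $B$ is $(pg)^{1/p^\infty}$-almost integral perfectoid, the tilting equivalence furnishes compatible systems of $p$-power roots of $p, x_1,\dots,x_{d-1}$ in $B^\flat$, and the $(pg)^{1/p^\infty}$-almost regular sequence property transports to a corresponding almost regular sequence property on $B^\flat$ (working in characteristic $p$ the almost mathematics becomes more tractable because of Frobenius). The idea is then to upgrade from almost Cohen--Macaulay to honest Cohen--Macaulay via Hochster--Huneke-style algebra modifications: whenever $\xi \in (x_1^\flat,\dots,x_i^\flat):_{B^\flat} x_{i+1}^\flat$ represents a nontrivial class in the colon quotient, enlarge $B^\flat$ by adjoining formal variables witnessing the relation and take the perfect closure to stay in the perfect world. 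The almost big Cohen--Macaulay hypothesis on $B$ gives a uniform quantitative bound (by a power of $(pg)^\flat$) on the degree to which the quotient $B^\flat/(p^\flat, x_1^\flat,\dots,x_{d-1}^\flat)$ can collapse under this transfinite process, ensuring that the iteration does not trivialize everything. This yields an honest big Cohen--Macaulay perfect $\bF_p$-algebra $C^\flat$. Untilting via the sharp functor and $p$-adically completing produces the desired integral perfectoid big Cohen--Macaulay $R$-algebra $C$, and the map $B^\natural \to C$ comes for free from the functoriality of the tilting equivalence; passing to the $\m$-adic completion if necessary produces a balanced version.

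For (2), I would enlarge $C$ so as to absorb $R^+$ by a transfinite iteration. Well-order the finite normal extensions $R \subseteq R'$ inside $R^+$. At each stage, base-change the current algebra to $R'$ and produce an integral perfectoid $(pg)^{1/p^\infty}$-almost big Cohen--Macaulay $R'$-algebra receiving the map from $C\otimes_R R'$; such an algebra can be obtained by adjoining suitable $p$-power roots of a defining equation for the ramification locus of $R \to R'$ and applying a perfectoid Abhyankar-type lemma to obtain the (almost) perfectoid structure. Then apply part (1) to upgrade to an honest integral perfectoid big Cohen--Macaulay $R'$-algebra. A transfinite colimit followed by $p$-adic completion (and one more $\m$-adic completion for balancedness) yields $C'$: the integral perfectoid property is preserved under $p$-completed filtered colimits of integral perfectoid algebras (possibly after passing to the $(-)_*$ functor), and the big Cohen--Macaulay property is maintained at each stage by construction.

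The main obstacle is in part (1): executing the modifications in the perfect/perfectoid world while keeping quantitative control on the almost regular sequence property. Each modification killing a bad colon relation at level $i$ can in principle create new bad relations at deeper levels, and without the almost big Cohen--Macaulay hypothesis one could collapse $B^\flat/(p^\flat, x_1^\flat,\dots,x_{d-1}^\flat)$ to zero. The $(pg)^{1/p^\infty}$-almost bound is precisely the input that allows the Hochster--Huneke modification machine, adapted by Andr\'e to the perfect/perfectoid context, to terminate with a nontrivial algebra; this is the technical heart of Andr\'e's weak functoriality theorem and is where the perfectoid Abhyankar lemma enters the argument.
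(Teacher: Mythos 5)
This statement is not proved in the paper at all: it is imported verbatim from Andr\'e \cite{AndreWeaklyFunctorialBigCM} and Shimomoto \cite{ShimomotoIntegralperfectoidbigCMviaAndre}, so there is no internal proof to compare with, and your sketch has to be measured against those arguments (whose key technique the paper does reuse later, in \autoref{clm.AlmostCM} and \autoref{lem.CanonicalBconstruction}). For part (1) your overall shape --- tilt, run algebra modifications in the perfect world, untilt and complete --- is indeed the Shimomoto/Andr\'e strategy, but the one step that constitutes the theorem is asserted rather than argued. The claim that the almost big Cohen--Macaulay hypothesis ``gives a uniform quantitative bound (by a power of $(pg)^\flat$) on the degree to which the quotient can collapse'' is not the actual mechanism and is not justified by anything you write: almost vanishing of the colon quotients by itself does not control what a transfinite sequence of modifications does to $B^\flat/(\underline{x}^\flat)$. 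The real argument runs Hochster's \emph{partial} algebra modifications, maps the whole finite chain to algebras of the form $B^\flat\cdot\frac{1}{((pg)^\flat)^{N_i/p^e}}$ with trivialization constants $N_1,\dots,N_r$ that are independent of $e$, concludes that a collapse would force $(pg)^{1/p^e}\in(p,x_1,\dots,x_{d-1})B$ for every $e$, and then descends to the Noetherian complete local domain $R$ via solidity/integral closure to contradict the hypothesis that $B/(\underline{x})B$ is not $(pg)^{1/p^\infty}$-almost zero --- exactly the argument the paper itself rehearses in \autoref{clm.AlmostCM} and \autoref{lem.CanonicalBconstruction}. Your sketch contains neither the independence-of-$e$ device nor the descent to $R$, and it also misplaces the perfectoid Abhyankar lemma: that lemma is used to \emph{produce} the almost Cohen--Macaulay perfectoid algebra in the first place, not to make the modification machine terminate.

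Part (2) has a structural gap as well. You propose to upgrade, for each finite extension $R\subseteq R'$ inside $R^+$, a base-changed algebra to an honest integral perfectoid big Cohen--Macaulay $R'$-algebra via part (1), and then take a transfinite colimit. But the output of part (1) is highly non-canonical (it involves choices of modifications), so the upgraded algebras at different stages do not come with compatible maps, and without a directed system there is no colimit to take. The standard fix, and the route this paper follows in its own analogous construction in Section 4, is to form the directed system at the level of the canonical almost/perfectoid-seed objects (which are functorial in $R'$), take the $p$-adically completed colimit, prove that a completed colimit of perfectoid seeds is again a perfectoid seed (Dietz's results after tilting, as in \autoref{lem.LimPerfectoidSeed}), and only then map \emph{once} to an integral perfectoid big Cohen--Macaulay algebra; balancedness is then obtained by $\m$-adic completion. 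As written, your order of operations (upgrade at every stage, then colimit) does not go through.
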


\begin{remark}
\label{rem.R^+structure}
We caution the reader that, in the above theorem, if $B$ is already an $R^+$-algebra, then the construction of \autoref{theorem: AndreShimomoto} (a) yields a map $R^+\to B^{\natural}\to C$ (i.e., $B^{\natural}\to C$ is $R^+$-linear). However, if we follow the construction of \autoref{theorem: AndreShimomoto} (b), we may result in a {\it different} $R^+$-structure on $C'$ (i.e., the map $B^\natural\to C'$ may not be $R^+$-linear).
\end{remark}

Using \autoref{theorem: AndreShimomoto}, Andr\'{e} proved the following theorem on the weak functoriality of big Cohen-Macaulay algebras \cite[Theorem 1.2.1, 4.1.1]{AndreWeaklyFunctorialBigCM}. Similar results were also obtained independently by Gabber \cite{GabberMSRINotes,GabberRameroFoundationsAlmostRingTheory}.

\begin{theorem}
\label{theorem: Andre}
Any local homomorphism $R\to R'$ of complete local domains, with $R$ of mixed characteristic $(0,p)$, fits into a commutative diagram:
\[\xymatrix{
R \ar[r] \ar[d] & R'\ar[d]\\
R^+ \ar[r] \ar[d] & R'^+\ar[d]\\
B \ar[r] & B'
}\]
where $B$ and $B'$ are integral perfectoid (balanced) big Cohen-Macaulay $R^+$-algebra and $R'^+$-algebra respectively (when $R'$ is of characteristic $p>0$, this means $B'$ is perfect and $p^\flat$-adically complete). Moreover, if $R\to R'$ is surjective, then $B$ can be given in advance.
\end{theorem}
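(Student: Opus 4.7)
My plan is to prove the theorem by reducing the general case to the surjective case via Cohen's structure theorem, and then handle the surjective case by constructing a $(pg)^{1/p^\infty}$-almost integral perfectoid, almost big Cohen-Macaulay $R'$-algebra by hand and perfectoidifying it via \autoref{theorem: AndreShimomoto}.

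For the reduction, I would use the standard consequence of Cohen's structure theorem that any local homomorphism $R\to R'$ of complete Noetherian local rings factors as $R\hookrightarrow S:=R[[T_1,\dots,T_n]]\twoheadrightarrow R'$, where the first map is the natural inclusion and the second is surjective; note that $S$ is again a complete local domain since $R$ is. Granting the surjective case (where $B$ can be specified in advance), it suffices to exhibit an integral perfectoid big Cohen-Macaulay $S^+$-algebra $C$ admitting a map from some integral perfectoid big Cohen-Macaulay $R^+$-algebra $B$. Starting from any such $B$, I would form the $p$-adically completed perfectoid polynomial algebra
\[
B\langle T_1^{1/p^\infty},\dots,T_n^{1/p^\infty}\rangle,
\]
which is integral perfectoid and almost big Cohen-Macaulay with respect to a sop of $S$ containing $T_1,\dots,T_n$; applying \autoref{theorem: AndreShimomoto} then produces the desired $C$, with an $S^+$-structure obtained from part (b).

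For the surjective case $R\twoheadrightarrow R/I=R'$ with $B$ given in advance, I would pick a sop $\underline{x'}=x'_1,\dots,x'_{d'}$ of $R'$ (with $x'_1=p$ in the mixed-characteristic case), lift it to $R$, and extend to a sop $\underline{x}=x'_1,\dots,x'_{d'},z_1,\dots,z_{d-d'}$ of $R$. I would then choose a prime $P\subset R^+$ minimal over $IR^+$; then $R^+/P$ is a domain integral over $R'$, hence admits an embedding into $(R')^+$. The quotient $B/PB$ inherits the compatible map from $B$ and is naturally an $(R^+/P)$-algebra, and remains almost integral perfectoid since $B$ is. Because $I+(\underline{x'})$ is $\fram$-primary in $R$, one has $z_i^N\in P+(\underline{x'})R^+$ for some $N$; combining this with the regularity of $\underline{x}$ on $B$ would let me verify that $\underline{x'}$ is a $(pg)^{1/p^\infty}$-almost regular sequence on $B/PB$ for a suitable nonzero $g$, and that $B/PB$ is not almost zero modulo $\underline{x'}$. \autoref{theorem: AndreShimomoto} then upgrades $B/PB$ to an integral perfectoid big Cohen-Macaulay $(R')^+$-algebra $B'$, completing the diagram. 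When $R'$ has equal characteristic $p>0$ (so $p\in I$), the construction in \autoref{theorem: AndreShimomoto} automatically outputs a perfect and $p^\flat$-adically complete algebra, matching the statement.

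The main obstacle is verifying the almost Cohen-Macaulay property of $B/PB$ in the surjective case: the prime $P$ must be chosen compatibly with the perfectoid structure on $B$ so that $B/PB$ remains almost integral perfectoid, and the almost-element $g$ must be pinned down so that the failure of $\underline{x'}$ to be an exact regular sequence on $B/PB$ is absorbed by $(pg)^{1/p^\infty}$-almost vanishing. This almost-mathematics bookkeeping is precisely what \autoref{theorem: AndreShimomoto} is engineered to accommodate, so I expect the verification to be delicate but ultimately tractable.
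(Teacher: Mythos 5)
First, note that the paper does not actually prove this statement: it is quoted verbatim from Andr\'e (Theorem 1.2.1 and 4.1.1 of his paper on weak functoriality), with Gabber credited independently, so there is no internal proof to compare yours against. Judged on its own terms, your reduction step is fine and standard: factoring $R\to R'$ as $R\hookrightarrow R\llbracket T_1,\dots,T_n\rrbracket\twoheadrightarrow R'$ and handling the power-series extension by passing from $B$ to the $p$-completed algebra $B\langle T_1^{1/p^\infty},\dots,T_n^{1/p^\infty}\rangle$ is plausible (though you should address the compatibility of $R^+$- versus $S^+$-structures when invoking \autoref{theorem: AndreShimomoto}(b); the paper itself warns in \autoref{rem.R^+structure} that this construction can change the $R^+$-structure, and the theorem you are proving demands that the square over $R^+\to R'^+$ commute).

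The genuine gap is the surjective case, which is where all the content of the theorem lives. You assert that $B/PB$, for $P\subseteq R^+$ a prime minimal over $IR^+$, ``remains almost integral perfectoid'' and that $\underline{x'}$ is a $(pg)^{1/p^\infty}$-almost regular sequence on it, but no mechanism is offered for either claim, and neither follows from what you cite. The ideal $PB$ is the extension of a huge, non-finitely-generated prime of $R^+$ that bears no relation to the regular-sequence structure of $B$; regularity of $\underline{x}$ on $B$ gives no control whatsoever over the colon ideals $\bigl((x'_1,\dots,x'_i)B+PB\bigr):_B x'_{i+1}$, which is what almost regularity requires. The observation $z_i^N\in P+(\underline{x'})R^+$ can at best help show that $B/(PB+(\underline{x'})B)$ is not (almost) zero; it says nothing about killing relations. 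Moreover, you never identify a candidate $g$, nor any source of $(pg)^{1/p^\infty}$-almost vanishing that would absorb the failure: in Andr\'e's actual argument that role is played by the perfectoid Abhyankar lemma (almost faithful flatness after adjoining $p$-power roots of a carefully chosen $g$), and this is precisely where the difficulty and the bulk of the work lie. A sanity check that something is missing: if simply passing to $B/PB$ worked, weak functoriality would be a formal consequence of the mere existence of big Cohen--Macaulay algebras; the characteristic-$p$ analogue ``$R^+/P=(R/I)^+$ is big Cohen--Macaulay'' already requires the deep Hochster--Huneke theorem, and its mixed-characteristic counterpart is not obtainable by such a quotient trick --- it is essentially equivalent to the theorem you are trying to prove. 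So the proposal, as written, assumes the key point rather than proving it.
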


\begin{remark}
\label{rem.functorialityOfNonReducedAndre}
With assumptions as otherwise in \autoref{theorem: Andre}, suppose that $S = R/I$ is not necessarily a domain but instead is reduced and equidimensional (we still assume $R$ is complete).  Let $Q_i \subseteq R$ be the minimal primes of $I$ so that $I = \bigcap_{i = 1}^n Q_i$ and set $S_i = R/Q_i$.  It follows that ${S^{+}} = (R/I)^+ = \prod_{i = 1}^n (R/Q_i)^+ = \prod_{i = 1}^n S^{+}_i$.  We can form $C_i$ for each $S^+_i$ as in \autoref{theorem: Andre} and set $C = \prod_{i = 1}^n C_i$.  It follows immediately that we still have a commutative diagram
\[
\xymatrix{
R \ar[r] \ar[d] & S\ar[d]\\
R^+ \ar[r] \ar[d] & S^+\ar[d]\\
B \ar[r] & C.
}
\]
Notice that $C$ is still an integral perfectoid big Cohen-Macaulay $S^+$-algebra and in particular $H^j_{\fram}(C)= 0$ for every $j < \dim S$.
\end{remark}

\autoref{theorem: Andre} and \autoref{rem.functorialityOfNonReducedAndre} will be crucial in later sections: the weak functoriality property, the fact that $B$, $B'$ are integral perfectoid algebras over $R^+$, $R'^+$ respectively, and the fact that $B$ can be chosen in advance for surjective $R\to R'$. We next begin to prove our domination result. The following is our key lemma. Throughout the rest of this section, $\widehat{\bullet}$ always denotes $p$-adic completion unless otherwise stated.

\begin{theorem}
\label{lemma: seed}
Let $(R,\m)$ be a complete local domain of mixed characteristic $(0,p)$ and let $B_1$, $B_2$ be two integral perfectoid big Cohen-Macaulay $R^+$-algebras. Then $B_1\otimes_{\widehat{{R^+}}}B_2$ maps $\widehat{R^+}$-linearly to another integral perfectoid big Cohen-Macaulay $R^+$-algebra $B$, i.e., we have a commutative diagram of $\widehat{R^+}$-algebras and $\widehat{R^+}$-linear maps
\[\xymatrix{
B_1 \ar[r] & B\\
\widehat{{R^+}}\ar[r] \ar[u] & B_2\ar[u]
}.
\]
\end{theorem}
\begin{proof}
By Cohen's structure theorem we have $(A,\m_A)\to (R,\m)$ that is a module-finite extension such that $A\cong C_k\llbracket x_1,\dots,x_{d-1}\rrbracket$ is a complete and unramified regular local ring, where we are abusing notation a bit and use $C_k$ to denote the unique complete DVR with uniformizer $p$ and residue field $k$. Then $C_{\overline{k}}$ (i.e., the unique complete DVR with uniformizer $p$ and residue field $\overline{k}$) is integral over $C_k$ up to $p$-adic completion. More precisely, we have $C_{\overline{k}} \cong \widehat{V_{\overline{k}}}$ such that $V_{\overline{k}} \subseteq R^+$ is a DVR with uniformizer $p$ and is integral over $C_k$. 
Thus we have natural maps $$C_{\overline{k}}\to \widehat{A^+}\to \widehat{A^+}^{\m_A}=\widehat{R^+}^{\m}.$$ This induces a map $$C_{\overline{k}}\otimes_{C_k}A\to \widehat{R^+}^{\m}.$$ We next $\m_A$-adically complete the above map and we obtain $$A_0:=C_{\overline{k}}\llbracket x_1,\dots,x_{d-1}\rrbracket\to \widehat{R^+}^{\m}.$$
Note that we can replace $B_1$, $B_2$ by $\widehat{B_1}^{\m}$, $\widehat{B_2}^{\m}$, which are integral perfectoid big Cohen-Macaulay algebras by \cite[Corollary 8.5.3]{BrunsHerzog} and \cite[Proposition 2.2.1]{AndreWeaklyFunctorialBigCM} (we are taking completion with respect to $\m$, which has the same effect as taking completion with respect to $(p,x_1,\dots,x_{d-1})$ so \cite[Proposition 2.2.1]{AndreWeaklyFunctorialBigCM} can be applied). Therefore without loss of generality, we may assume $B_1,B_2$ are $A_0$-algebras.

Let $A_{\infty,0}$ be the $p$-adic completion of $A_0[p^{1/p^\infty},x_1^{1/p^\infty},\dots,x_{d-1}^{1/p^\infty}]\subseteq A_0\otimes_AR^+\to
\widehat{R^+}^{\m}$ (here we are fixing once and for all choices of compatible roots of $p, x_1, \dots, x_{d-1}$ inside $\widehat{R^+}^{\m}$). This is an integral perfectoid algebra faithfully flat over $A_0$ (and hence over $A$). Because $B_1$ and $B_2$ are integral perfectoid $\widehat{R^+}^{\m}$-algebras, they are also $A_{\infty, 0}$-algebras.

We consider $B_1\widehat{\otimes}_{A_{\infty, 0}}B_2$. Since $B_i$ is a (balanced) big Cohen-Macaulay $A_0$-algebra and $A_0$ is regular, $B_i$ is faithfully flat over $A_0$ \cite[page 77]{HochsterHunekeInfiniteIntegralExtensionsAndBigCM}. The same reasoning shows that $B_i$ is faithfully flat over $A_0[p^{1/p^l},x_1^{1/p^l},\dots,x_{d-1}^{1/p^l}]$ for every $l$ and thus $B_i$ is faithfully flat over $A_0[p^{1/p^\infty},x_1^{1/p^\infty},\dots,x_{d-1}^{1/p^\infty}]$. This implies $B_i/p$ is faithfully flat over $A_{\infty, 0}/p$ and thus $(B_1\widehat{\otimes}_{A_{\infty, 0}}B_2)/p\cong B_1/p\otimes_{A_{\infty, 0}/p}B_2/p$ is faithfully flat over $A_{\infty, 0}/p$. Therefore $B_1\widehat{\otimes}_{A_{\infty, 0}}B_2$ is big Cohen-Macaulay with respect to $\underline{x}=p, x_1,\dots, x_{d-1}$. We consider
\[
B_1\widehat{\otimes}_{A_{\infty, 0}}B_2 \to \widetilde{B}=(B_1\widehat{\otimes}_{A_{\infty, 0}}B_2)[\frac{1}{p}]^\circ.
\]
Here $\widetilde{B}$ is an integral perfectoid $A_{\infty,0}$-algebra that is $p^{1/p^\infty}$-almost isomorphic to $B_1\widehat{\otimes}_{A_{\infty, 0}}B_2$ by \cite[Proposition 6.18]{ScholzePerfectoidspaces}. In particular, $\widetilde{B}$ is reduced, has no $p$-torsion, and is $p^{1/p^\infty}$-almost big Cohen-Macaulay with respect to $\underline{x}$.

We next consider the ring $R\otimes_AR$. This is a commutative ring and is a finite extension of $R$, and if we tensor with the fraction field $K$ of $A$, then this ring becomes
\[
(R\otimes_AR)\otimes_AK\cong L\otimes_KL,
\]
where $L$ is the fraction field of $R$. Since $K\to L$ is finite separable (as they have characteristic $0$), we know that $L\otimes_KL$ is reduced. Therefore the kernel of $R\otimes_AR\to (R\otimes_AR)\otimes_AK$ is a radical ideal of $R\otimes_AR$, which we call $J$. We now let $S=(R\otimes_AR)/J$, then $S$ is a reduced ring. Because $B_1$ and $B_2$ are faithfully flat over $A$, so is $B_1\otimes_AB_2$, in particular, $B_1\otimes_AB_2$ is torsion-free over $A$ thus $J$ maps to $0$ in $B_1\otimes_AB_2$. It follows that $J$ maps to $0$ in $\widetilde{B}$ as $R\otimes_AR\to \widetilde{B}$ factors through $B_1\otimes_AB_2$. Hence we have an induced map $S\to \widetilde{B}$. The multiplication map $S=(R\otimes_AR)/J\to R$ is surjective (the map is well-defined since $J$ maps to $0$ under the multiplication because $R$ is torsion-free over $A$), and since $R\otimes_AR$ is module-finite over $R$, $S$ has the same dimension as $R$. Therefore $R=S/P$ for a minimal prime $P$ of $S$ which defines the diagonal.  In summary:
\begin{itemize}
\item[(i)]  $S = (R \otimes_A R)/J$ is reduced.
\item[(ii)]  We have an induced map $S \to \widetilde{B}$.
\item[(iii)]  The multiplication map induces $S \twoheadrightarrow R$, the kernel of which is $P$.
\end{itemize}
Since $S$ is reduced and $P$ is a minimal prime of $S$, there exists an element $g \in S$ such that
\begin{itemize}
\item[(iv)] $g \notin P$ and
\item[(v)] $gP=0$.
\end{itemize}
We now set
\[
T=\widetilde{B}\langle g^{1/p^\infty}\rangle[\frac{1}{p}]^\circ,
\]
i.e., this is Andre's construction of adjoining $p^{1/p^\infty}$-root of $g$ to $\widetilde{B}$. By \cite{AndreDirectsummandconjecture} or \cite[Theorem 2.3]{BhattDirectsummandandDerivedvariant}, $T$ is an integral perfectoid $\widetilde{B}$-algebra (so in particular it is reduced) and is $p^{1/p^\infty}$-almost faithfully flat over $\widetilde{B}$ mod $p$. In particular,
 \begin{itemize}
 \item{} $T$ is $p^{1/p^\infty}$-almost big Cohen-Macaulay with respect to $\underline{x}$.
\end{itemize}

We next consider $g^{-1/p^\infty}T=\cap_e (g^{-1/p^e}T)$. By \cite[2.3.2]{AndreWeaklyFunctorialBigCM}, $g^{-1/p^\infty}T$ is a $(pg)^{1/p^\infty}$-almost integral perfectoid $R$-algebra. The key point here is that, since $T$ is reduced, any $g$-torsion in $T$ is automatically $g^{1/p^\infty}$-torsion and thus its image vanishes in $g^{-1/p^\infty}T$, and thus also in $(g^{-1/p^\infty}T)^\natural$. Therefore the map $S\to (g^{-1/p^\infty}T)^\natural$ factors through $R=S/P$ because $gP=0$ in $S$. In sum, we have a commutative diagram:
\[\xymatrix{
A \ar[d] \ar[r] & S=(R\otimes_AR)/J \ar[r] \ar[d] & S/P=R \ar[rd] \\
B_1\widehat{\otimes}_{A_{\infty, 0}}B_2 \ar[r] &\widetilde{B} \ar[r] & T \ar[r] & (g^{-1/p^\infty}T)^\natural \ar[r] & g^{-1/p^\infty}T
}
\]
We now come to the heart of the argument.
\begin{claim}
\label{clm.AlmostCM}
$g^{-1/p^\infty}T$ is a $(pg)^{1/p^\infty}$-almost big Cohen-Macaulay $R$-algebra with respect to $\underline{x}$.
\end{claim}
\begin{proof}[Proof of Claim]
Since $g^{-1/p^\infty}T$ is $g^{1/p^\infty}$-almost isomorphic to $T$ and $T$ is $p^{1/p^\infty}$-almost big Cohen-Macaulay with respect to $\underline{x}$, the colon ideal $$\frac{(p, x_1,\dots,x_{i-1}):_{g^{-1/p^\infty}T}x_i}{(p, x_1,\dots,x_{i-1})(g^{-1/p^\infty}T)}$$ is annihilated by $(pg)^{1/p^\infty}$ for every $i$. Therefore it is enough to prove that
$$\frac{g^{-1/p^\infty}T}{(p, x_1,\dots,x_{d-1})(g^{-1/p^\infty}T)}$$ is not $(pg)^{1/p^\infty}$-almost zero. If on the contrary, we have
$$(pg)^{1/p^e}\in (p, x_1,\dots,x_{d-1})(g^{-1/p^\infty}T)$$ for every $e$, it follows that $$pg\in (p,x_1,\dots,x_{d-1})^{p^e}(g^{-1/p^\infty}T)$$ for every $e$. Since $g^{-1/p^\infty}T$ is $g^{1/p^\infty}$-almost isomorphic to $T$, $T$ is $p^{1/p^\infty}$-almost faithfully flat over $\widetilde{B}$ mod $p^m$ for every $m$, and $\widetilde{B}$ is $p^{1/p^\infty}$-almost isomorphic to $B_1\widehat{\otimes}_{A_{\infty, 0}}B_2$, we can deduce that
\[
(pg)^2\in (p,x_1,\dots,x_{d-1})^{p^e}B_1\widehat{\otimes}_{A_{\infty, 0}}B_2
\]
for every $e$, where we are abusing notation and interpreting $g \in S$ as a lift to an element of $R\otimes_A R$. Next we note that modulo $p^m$, we have
$$(B_1\widehat{\otimes}_{A_{\infty, 0}}B_2)/p^m\cong (B_1\otimes_{A_0[p^{1/p^\infty},x_1^{1/p^\infty},\dots,x_{d-1}^{1/p^\infty}]}B_2)/p^m.$$ Therefore $(pg)^2\in (p,x_1,\dots,x_{d-1})^{p^e}(B_1\otimes_{A_0[p^{1/p^\infty},x_1^{1/p^\infty},\dots,x_{d-1}^{1/p^\infty}]}B_2)$ for every $e$. But then we know that there exists $A_n=A_0[p^{1/p^n},x_1^{1/p^n},\dots,x_{d-1}^{1/p^n}]$ for some $n\gg0$ depending on $e$ such that
$$(pg)^2\in (p,x_1,\dots,x_{d-1})^{p^e}(B_1\otimes_{A_n}B_2).$$
But since $A_0$ is the $(p,x_1,\dots,x_{d-1})$-adic completion of $C_{\overline{k}}\otimes_{C_k}A$ and $C_{\overline{k}}\cong \widehat{V_{\overline{k}}}$, we have
$$A_n/(p,x_1,\dots,x_{d-1})^{p^e}\cong (V_{\overline{k}}\otimes_{C_k}A[p^{1/p^n},x_1^{1/p^n},\dots,x_{d-1}^{1/p^n}])/(p,x_1,\dots,x_{d-1})^{p^e}.$$
Since $V_{\overline{k}}$ is integral over $C_k$, we have induced map $$V_{\overline{k}}\otimes_{C_k}A[p^{1/p^n},x_1^{1/p^n},\dots,x_{d-1}^{1/p^n}]\to A^+.$$
It follows that there exists a finite domain extension $A'$ of $A$ such that
$$(pg)^2\in (p,x_1,\dots,x_{d-1})^{p^e}(B_1\otimes_{A'}B_2).$$
At this point, we mod out by the prime ideal $P$ (again, abusing notation and identifying $P$ with the diagonal minimal prime ideal of $R\otimes_AR$ so that $(R\otimes_AR)/P=R$).
Since we have $$\frac{B_1\otimes_{A'}B_2}{P(B_1\otimes_{A'}B_2)}\to B_1\otimes_{A'[R]}B_2,$$ it follows that, after we mod out by $P$, we obtain\footnote{Here we are using the fact that both $A'$ and $R$ are inside $R^+$ and $B_1$, $B_2$ are $R^+$-algebras.}
\[
(pg)^2 \in \left((p,x_1,\dots,x_{d-1})^{p^e}(B_1\otimes_{A'[R]}B_2) \right)\cap R.
\]
Now $A'[R]$ is a module-finite domain extension of $R$, so it is a complete local domain. Since $B_1$, $B_2$ are big Cohen-Macaulay algebras over $A'[R]$, they are solid $A'[R]$-algebras by \cite[Corollary 2.4]{HochsterSolidClosure}. Thus $B_1\otimes_{A'[R]}B_2$ is a solid $A'[R]$-algebra by \cite[Proposition 2.1 (a)]{HochsterSolidClosure}, and hence $B_1\otimes_{A'[R]}B_2$ is a solid $R$-algebra by \cite[Corollary 2.3]{HochsterSolidClosure}. Therefore in $R$, we have $$(pg)^2\in ((p,x_1,\dots,x_{d-1})^{p^e})^\bigstar \subseteq \overline{(x_1,\dots,x_d)^{p^e}}$$ for every $e$, where $I^\bigstar$ denotes the solid closure of $I$, which is always contained inside the integral closure by \cite[Theorem 5.10]{HochsterSolidClosure}. However, since $g\notin P$, the element $(pg)^2$ is nonzero in $R$ and hence $$0\neq (pg)^2\in \cap_e\overline{(x_1,\dots,x_d)^{p^e}}=0$$ which is a contradiction.  This completes the proof of \autoref{clm.AlmostCM}.\footnote{There is an alternative approach to \autoref{clm.AlmostCM} suggested by the referee: we can first map $\widetilde{B}$ to an integral perfectoid big Cohen--Macaulay algebra $\widetilde{B}'$ (in order to show this, one needs to check the non-triviality condition for $\widetilde{B}$, which is essentially the second half of the current proof but with $p^{1/p^\infty}$ in place of $(pg)^{1/p^\infty}$), and then we use \cite[2.5.1]{AndreWeaklyFunctorialBigCM} to handle the $g^{1/p^\infty}$-almost issue.}
\end{proof}

By \autoref{clm.AlmostCM}, we can apply \autoref{theorem: AndreShimomoto} to the $(pg)^{1/p^\infty}$-almost integral perfectoid, $(pg)^{1/p^\infty}$-almost big Cohen-Macaulay $R$-algebra $g^{-1/p^\infty}T$, we know that there exists an integral perfectoid (balanced) big Cohen-Macaulay $(g^{-1/p^\infty}T)^{\natural}$-algebra $B'$ such that $B_1\otimes_RB_2$ maps to $B'$. This is because the kernel of $B_1\otimes_AB_2\to B_1\otimes_RB_2$ is precisely the extended ideal $P(B_1\otimes_AB_2)$, and we have shown that the image of $P$ vanishes in $(g^{-1/p^\infty}T)^{\natural}$. Next we need to refine $B'$ to an $R^+$-algebra $B$ such that we have an $R^+$-linear map $B_1\otimes_{R^+}B_2\to B$. This does not follow immediately from \autoref{theorem: AndreShimomoto} (for example see \autoref{rem.R^+structure}) so we proceed carefully below.

\subsection*{Making $B'$ a big Cohen-Macaulay $R^+$ algebra}
The argument below is completely parallel to \cite[4.3.5]{AndreWeaklyFunctorialBigCM}.  However, we include it for the convenience of the reader and since we are dealing with diagrams not exactly dealt with in {\it loc. cit.}
The rough idea is to run the above construction for all module-finite domain extensions of $R$ and take the direct limit. Below we give the details. We write $R^+=\varinjlim_\beta R_\beta$ for a directed system of normal module-finite domain extensions $\{R_\beta\}_\beta$ of $R$.

We let $S_\beta=(R_\beta\otimes_AR_\beta)/J_\beta$ and let $R_\beta=S_\beta/P_\beta$ for the minimal prime $P_\beta$ of $S_\beta$ defining the diagonal. By the same reasoning as above, there exists $g_{\beta} \in S_{\beta}$ such that $g_\beta\notin P_\beta$ and $g_\beta P_\beta=0$. Moreover, if $\alpha<\beta$, then it is straightforward to check that the image of $g_\alpha$ in $S_\beta/P_\beta$ is nonzero. Let $\underline{g}_\beta$ be a finite sequence of elements of $S_\beta$ such that each element in this sequence is the image of $g_\alpha$ for some $\alpha\leq \beta$ (so each element is not contained in $P_\beta$) and that $(\prod \underline{g}_\beta)P_\beta =0$ in $S_\beta$.
We recall that we have an almost isomorphism
\[
B_1\widehat{\otimes}_{A_{\infty, 0}}B_2 \to \widetilde{B}=(B_1\widehat{\otimes}_{A_{\infty, 0}}B_2)[\frac{1}{p}]^\circ
\]
where $\widetilde{B}$ is an integral perfectoid $A_{\infty,0}$-algebra. 

We next consider the directed system formed by tuples $\underline{\beta}=(\beta, \underline{g}_\beta)$, with order $(\beta, \underline{g}_\beta)\leq (\beta', \underline{g}_{\beta'})$ if $\beta\leq \beta'$ and the image of $\underline{g}_\beta$ forms part of the sequence $\underline{g}_{\beta'}$. Also note that $\{S_\beta\}_{\underline{\beta}}$ also forms a direct limit system, and all the $S_\beta$ map to $\widetilde{B}$ compatibly because $B_1$ and $B_2$ are $R^+$-algebras. For each $\underline{\beta}$, we set
\[
T_{\underline{\beta}}=\widetilde{B}\langle \underline{g}_\beta^{1/p^\infty} \rangle[\frac{1}{p}]^\circ.
\]
By \autoref{clm.AlmostCM}, $(\prod \underline{g}_\beta)^{-1/p^\infty}T_{\underline{\beta}}$ is a $(p\prod \underline{g}_\beta)^{1/p^\infty}$-almost integral perfectoid, $(p\prod \underline{g}_\beta)^{1/p^\infty}$-almost big Cohen-Macaulay (with respect to $\underline{x}$) $R$-algebra, and we have a commutative diagram for every $\underline{\beta}$:
\[
\xymatrix{
A\ar[r]\ar[d] & S_\beta \ar[r] \ar[d] & R_\beta \ar[rd] \\
B_1\widehat{\otimes}_{A_{\infty, 0}}B_2 \ar[r] & \widetilde{B} \ar[r] & T_{\underline{\beta}} \ar[r] & ((\prod \underline{g}_\beta)^{-1/p^\infty}T_{\underline{\beta}})^\natural \ar[r] & (\prod \underline{g}_\beta)^{-1/p^\infty}T_{\underline{\beta}}
}
\]
where the existence of the map $R_\beta\to ((\prod \underline{g}_\beta)^{-1/p^\infty}T_{\underline{\beta}})^\natural$ follows from $(\prod \underline{g}_\beta)P_\beta =0$. We set $B_{\underline{\beta}}=((\prod \underline{g}_\beta)^{-1/p^\infty}T_{\underline{\beta}})^\natural$ and by the same reasoning as above, we know that $B_{\underline{\beta}}$ is an integral perfectoid, $(p\prod \underline{g}_\beta)^{1/p^\infty}$-almost big Cohen-Macaulay (with respect to $\underline{x}$), $B_1\otimes_{R_\beta}B_2$-algebra. Moreover, the constructions are compatible in the sense that whenever $\beta\leq \beta'$ and $\underline{\beta}\leq\underline{\beta}'$, we have the following commutative diagram:
\[\xymatrix{
R_\beta \ar[r] \ar[d] & R_{\beta'} \ar[d] \\
B_{\underline{\beta}} \ar[r] & B_{\underline{\beta}'}
}
\]
Taking the $p$-adic completion of the direct limit, we obtain a map of integral perfectoid ${R^+}$-algebras $$\widehat{R^+}\to \widehat{\varinjlim_{\underline{\beta}} B_{\underline{\beta}}}.$$ Moreover, we know that this map factors through $B_1$ and $B_2$ since $B_1\otimes_{R_\beta}B_2$ maps $R_\beta$-linearly to $B_{\underline{\beta}}$ for every $\underline{\beta}$. Thus we have $B_1\otimes_{\widehat{{R^+}}}B_2\to \widehat{\varinjlim_{\underline{\beta}} B_{\underline{\beta}}}$.

Therefore it remains to prove that $\widehat{\varinjlim_{\underline{\beta}} B_{\underline{\beta}}}$ can be mapped to an integral perfectoid big Cohen-Macaulay ${R^+}$-algebra $B$. But since $B_{\underline{\beta}}$ is $(p\prod \underline{g}_\beta)^{1/p^\infty}$-almost big Cohen-Macaulay with respect to $\underline{x}=p, x_1,\dots,x_{d-1}$, $B_{\underline{\beta}}^\flat$ is $((p\prod \underline{g}_\beta)^\flat)^{1/p^\infty}$-almost big Cohen-Macaulay with respect to $\underline{x}^\flat=p^\flat, x_1^\flat,\dots, x_{d-1}^\flat$. This implies that each $B_{\underline{\beta}}^\flat$ maps to a balanced big Cohen-Macaulay algebra, and hence so does the direct limit $\varinjlim_{\underline{\beta}} B_{\underline{\beta}}^\flat$ by \cite[Lemma 3.2]{DietzBCMSeeds}.

Since we are in characteristic $p>0$, $\varinjlim_{\underline{\beta}} B_{\underline{\beta}}^\flat$ maps to a perfect (balanced) big Cohen-Macaulay algebra $C$ that is $(p^\flat, x_1^\flat,\dots, x_{d-1}^\flat)$-adically complete by \cite[Proposition 3.7]{DietzBCMSeeds}. Thus we have an induced map of perfect, $p^\flat$-adically complete rings of characteristic $p>0$: $$\widehat{\varinjlim_{\underline{\beta}} B_{\underline{\beta}}^\flat}^{p^\flat}\to C.$$ Untilting, we get: $$\widehat{\varinjlim_{\underline{\beta}} B_{\underline{\beta}}} \to C^\sharp \to B=\widehat{C^\sharp}^{\m}$$
Since $C$ is perfect, $p^\flat$-adically complete, and big Cohen-Macaulay with respect to $\underline{x}^\flat$, $C^\sharp$ is integral perfectoid and big Cohen-Macaulay with respect to $\underline{x}$. Therefore $B$ is balanced big Cohen-Macaulay by \cite[Corollary 8.5.3]{BrunsHerzog}. Since completion with respect to $\m$ is the same as completion with respect to $(p, x_1,\dots,x_{d-1})$ and the latter is a regular sequence on ${C^\sharp}$, we also have that $B$ is integral perfectoid by \cite[Proposition 2.2.1]{AndreWeaklyFunctorialBigCM}.
\end{proof}

In order to generalize the above result to an arbitrary set of integral perfectoid big Cohen-Macaulay algebras, we introduced the following definition which is a perfectoid version of \cite[Definition 3.1]{DietzBCMSeeds}.
\begin{definition}
Let $(R,\m)$ be a complete local domain of mixed characteristic $(0,p)$. An $R$-algebra $S$ is called a {\it perfectoid seed} if $S$ is integral perfectoid and it maps to an integral perfectoid (balanced) big Cohen-Macaulay $R$-algebra.
\end{definition}

Here is a perfectoid version of \cite[Lemma 3.2]{DietzBCMSeeds}

\begin{lemma}
\label{lem.LimPerfectoidSeed}
Let $(R,\m)$ be a complete local domain of mixed characteristic $(0,p)$. If $\{S_\lambda\}$ is a direct system of perfectoid seeds, then $\widehat{\varinjlim S_\lambda}$ is also a perfectoid seed.
\end{lemma}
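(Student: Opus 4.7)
The plan is to show two things: first that $\widehat{\varinjlim S_\lambda}$ is integral perfectoid, and second that there is an integral perfectoid big Cohen-Macaulay $R$-algebra it maps to.

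I would first verify the integral perfectoid condition for $\widehat{\varinjlim S_\lambda}$. Since each $S_\lambda$ is $p$-torsion free, so is $\varinjlim S_\lambda$, and this is preserved under $p$-adic completion because the colimit is also derived $p$-complete away from a bounded amount of torsion (one argues directly that any $p$-torsion class, being represented by a Cauchy sequence of elements killed by $p$, must already be zero after using $p$-torsion-freeness termwise). The Frobenius condition $\widehat{\varinjlim S_\lambda}/p^{1/p} \xrightarrow{\sim} \widehat{\varinjlim S_\lambda}/p$ then holds because modding out by $p$ or $p^{1/p}$ kills the completion, so this is just $\varinjlim(S_\lambda/p^{1/p}) \to \varinjlim(S_\lambda/p)$, a colimit of isomorphisms.

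Next I would construct an integral perfectoid big Cohen-Macaulay $R$-algebra $C$ receiving a map from $\widehat{\varinjlim S_\lambda}$. For each $\lambda$, pick (by the seed hypothesis) an integral perfectoid big Cohen-Macaulay $R$-algebra $B_\lambda$ with a map $S_\lambda \to B_\lambda$; by \autoref{theorem: AndreShimomoto}(b) we may assume $B_\lambda$ is even an $R^+$-algebra. These $B_\lambda$'s are not a priori compatible with the transition maps of the $S_\lambda$'s. To fix this, well-order the index set and, by transfinite induction on $\lambda$, build a directed system $\{C_\lambda\}$ of integral perfectoid big Cohen-Macaulay $R^+$-algebras fitted with maps $S_\lambda \to C_\lambda$ such that for $\mu \le \lambda$ the square with $S_\mu \to S_\lambda$ and $C_\mu \to C_\lambda$ commutes. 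At successor stages apply \autoref{lemma: seed} to $C_{\lambda-1}$ and $B_\lambda$ (regarded as two integral perfectoid big Cohen-Macaulay $R^+$-algebras) to produce $C_\lambda$ that receives $\widehat{R^+}$-linear maps from both; at limit stages set $C_\lambda = \widehat{\varinjlim_{\mu<\lambda} C_\mu}$.

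Finally I would take $C := \widehat{\varinjlim_\lambda C_\lambda}$. The same argument as in the first paragraph shows $C$ is integral perfectoid. To see that $C$ is a (balanced) big Cohen-Macaulay $R$-algebra, I would check that $\underline{x} = p, x_1, \dots, x_{d-1}$ is a regular sequence on $\varinjlim_\lambda C_\lambda$ (any colon relation would be witnessed in some $C_\lambda$, where it is trivial since $\underline{x}$ is regular there), that $\varinjlim_\lambda C_\lambda/\underline{x}(\varinjlim_\lambda C_\lambda) = \varinjlim_\lambda(C_\lambda/\underline{x}C_\lambda) \neq 0$ (as $1 \neq 0$ in each $C_\lambda/\underline{x}C_\lambda$), and then cite \cite[Corollary 8.5.3]{BrunsHerzog} to pass to the $p$-adic (equivalently $\fram$-adic, after the fact) completion and get balanced big Cohen-Macaulayness. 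The compatibility of the maps $S_\lambda \to C_\lambda$ yields in the limit a map $\widehat{\varinjlim S_\lambda} \to C$, as required.

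The main obstacle will be setting up the transfinite induction carefully at limit ordinals — one must ensure that the $p$-adic completed colimit of integral perfectoid big Cohen-Macaulay $R^+$-algebras remains integral perfectoid and big Cohen-Macaulay, so that \autoref{lemma: seed} remains applicable at the next successor step. Granted this, the construction proceeds formally.
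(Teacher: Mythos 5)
There is a genuine gap in the transfinite construction of the compatible system $\{C_\lambda\}$. At a successor stage you apply \autoref{lemma: seed} to $C_{\lambda-1}$ and $B_\lambda$, which produces an integral perfectoid big Cohen--Macaulay $R^+$-algebra $C_\lambda$ receiving $\widehat{R^+}$-linear maps from both factors, i.e.\ a map out of $C_{\lambda-1}\otimes_{\widehat{R^+}}B_\lambda$. But the commutativity you need is of the square built from $S_{\lambda-1}\to S_\lambda$: the map $S_{\lambda-1}\to S_\lambda\to B_\lambda\to C_\lambda$ must agree with $S_{\lambda-1}\to C_{\lambda-1}\to C_\lambda$. \autoref{lemma: seed} only identifies the two $\widehat{R^+}$-structures; it says nothing about two a priori different maps from the seed $S_{\lambda-1}$, which is merely an integral perfectoid $R$-algebra with no canonical relation to $R^+$. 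To force that agreement you would need an amalgamation of $C_{\lambda-1}$ and $B_\lambda$ \emph{over} $S_{\lambda-1}$ (a map out of $C_{\lambda-1}\otimes_{S_{\lambda-1}}B_\lambda$ to a big Cohen--Macaulay algebra), and that is not what \autoref{lemma: seed} provides --- its proof hinges on both factors being $R^+$-algebras (flatness over $A_{\infty,0}$, the diagonal prime in $R\otimes_A R$, solidity), so it cannot be rerun with an arbitrary seed as the base. A secondary problem of the same flavor: the index set of a direct system is only a directed poset, so after well-ordering, a ``new'' index $\lambda$ may dominate many previously handled indices $\mu_i$, and your $C_\lambda$ would have to be compatible with all of the existing maps $S_{\mu_i}\to C_{\mu_i}$ simultaneously --- again beyond what the $\widehat{R^+}$-amalgamation gives. (Your first paragraph, on $\widehat{\varinjlim S_\lambda}$ being integral perfectoid, and your observation that a directed colimit of big Cohen--Macaulay algebras along \emph{compatible} maps is big Cohen--Macaulay with respect to a fixed system of parameters, are fine; the problem is precisely that the compatible system cannot be manufactured this way.)

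This incompatibility of the chosen $B_\lambda$'s is exactly the difficulty the paper's proof is designed to sidestep: it tilts each $S_\lambda$ and notes that $S_\lambda^\flat$ is a seed in characteristic $p$ (over $\mathbb{F}_p[p^\flat,x_1^\flat,\dots,x_{d-1}^\flat]$), then invokes Dietz's theorem that a direct limit of seeds in characteristic $p$ is a seed --- a statement whose proof goes through Hochster's algebra-modification criterion, where any obstruction is witnessed by finitely much data and hence at a finite stage of the colimit, so no compatible choice of big Cohen--Macaulay algebras is ever needed --- and then completes, maps to a perfect $p^\flat$-adically complete big Cohen--Macaulay algebra, and untilts. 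If you want to salvage your strategy, you would essentially have to prove a mixed-characteristic analogue of ``seeds amalgamate over seeds,'' which is a significantly stronger input than \autoref{lemma: seed}; the tilting route exists precisely because that input is already available in characteristic $p$ through Dietz's work.
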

\begin{proof}
Fix a system of parameters $\underline{x}=p,x_1,\dots,x_{d-1}$ of $R$. Let $S_\lambda\to B_\lambda$ be such that $B_\lambda$ is an integral perfectoid big Cohen-Macaulay algebra (but $\{B_\lambda\}$ may not form a directed system as we are not assuming there are maps between the $B_\lambda$'s). Then $p^\flat, x_1^\flat,\dots,x_{d-1}^\flat$ is a regular sequence on $B_\lambda^\flat$ because $B_\lambda/p\cong B_\lambda^\flat/p^\flat$. Since $S_\lambda^\flat\to B_\lambda^\flat$, $S_\lambda^\flat$ is a seed in characteristic $p>0$.\footnote{Here we can just work over the ring $\mathbb{F}_p[p^\flat,x_1^\flat,\dots,x_{d-1}^\flat]$.} By \cite[Lemma 3.2]{DietzBCMSeeds}, $\varinjlim S_\lambda^\flat$ is also a seed in characteristic $p>0$. Therefore $\widehat{\varinjlim S_\lambda^\flat}^{p^\flat}$ maps to a perfect and $p^\flat$-adically complete (balanced) big Cohen-Macaulay algebra $B$ by \cite[Proposition 3.7]{DietzBCMSeeds}. Untilting, we have (using \cite[Remark 6.2.9]{BhattLectureNotesPerfectoidSpaces})
\[
\widehat{\varinjlim S_\lambda} \to B^\sharp\to \widehat{B^\sharp}^{\m}
 \]
where $\widehat{B^\sharp}^{\m}$ is an integral perfectoid balanced big Cohen-Macaulay algebra by \cite[Corollary 8.5.3]{BrunsHerzog} and \cite[Proposition 2.2.1]{AndreWeaklyFunctorialBigCM} as before.  This finishes the proof.
\end{proof}

The following is the main result of this section, which is the mixed characteristic perfectoid analog of \cite[Theorem 8.10]{DietzBCMSeeds}. We will use this result in \autoref{sec.BCMParamTest} and \autoref{sec.BCMTestIdeals}.

\begin{theorem}
\label{thm.DominateAnySetOfBCM}
Let $(R, \fram)$ be a complete local domain of mixed characteristic $(0, p)$. Let $\{ B_{\gamma} \}_{\gamma \in \Gamma}$ be any set of integral perfectoid big Cohen-Macaulay $R$-algebras (resp. $R^+$-algebras). Then there is an integral perfectoid big Cohen-Macaulay $R^+$-algebra $B$ such that the map $R\to B$ (resp. $R^+\to B$) factors through all the maps $R\to B_{\gamma}$ (resp. $R^+\to B_\gamma$).
\end{theorem}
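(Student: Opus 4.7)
The plan is to dominate the $B_\gamma$ one at a time using Lemma~\ref{lemma: seed} as the two-at-a-time combination tool, and to piece the intermediate algebras together by a transfinite recursion whose limit stages are handled by Lemma~\ref{lem.LimPerfectoidSeed}.

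First I would reduce the $R$-algebra case to the $R^+$-algebra case. Given a family $\{B_\gamma\}_{\gamma\in\Gamma}$ of integral perfectoid big Cohen-Macaulay $R$-algebras, for each $\gamma$ the algebra $B_\gamma$ is in particular $(pg)^{1/p^\infty}$-almost integral perfectoid and $(pg)^{1/p^\infty}$-almost big Cohen-Macaulay, so Theorem~\ref{theorem: AndreShimomoto}(b) produces an integral perfectoid big Cohen-Macaulay $R^+$-algebra $B_\gamma'$ with an $R$-linear map $B_\gamma\to B_\gamma'$. Any $R^+$-algebra dominating the family $\{B_\gamma'\}$ will then dominate $\{B_\gamma\}$ over $R$ by composition, so it suffices to treat the $R^+$-algebra statement.

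To prove the $R^+$-algebra statement, well-order $\Gamma = \{\gamma_\alpha\}_{\alpha<\kappa}$ and construct by transfinite recursion a directed system of integral perfectoid big Cohen-Macaulay $R^+$-algebras $\{C_\alpha\}_{\alpha\leq\kappa}$, with $R^+$-linear transition maps $C_\alpha\to C_\beta$ for $\alpha\leq\beta$, subject to the invariant that for every $\beta<\alpha$ the map $R^+\to C_\alpha$ factors $R^+$-linearly through $B_{\gamma_\beta}$ compatibly with the transition maps. For the base case $C_0$ take any fixed integral perfectoid big Cohen-Macaulay $R^+$-algebra (these exist by Andr\'e--Shimomoto). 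At a successor stage, apply Lemma~\ref{lemma: seed} to $C_\alpha$ and $B_{\gamma_\alpha}$ to produce $C_{\alpha+1}$ together with compatible $R^+$-linear maps from both; the compatibility with the maps from earlier $B_{\gamma_\beta}$ is automatic since those factor through $C_\alpha$. At a limit stage $\lambda$, set $D_\lambda = \widehat{\varinjlim_{\alpha<\lambda} C_\alpha}$, which is a perfectoid seed by Lemma~\ref{lem.LimPerfectoidSeed}; then map $D_\lambda$ to an integral perfectoid big Cohen-Macaulay $R^+$-algebra $C_\lambda$ using (the $R^+$-version of) Theorem~\ref{theorem: AndreShimomoto}(b). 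The output $B := C_\kappa$ then satisfies the conclusion of the theorem.

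The principal technical obstacle I anticipate is bookkeeping the $R^+$-linearity, particularly at the limit stages. The perfectoid seed framework of Lemma~\ref{lem.LimPerfectoidSeed} is stated for $R$-algebras, and Remark~\ref{rem.R^+structure} warns that passing through $B^\natural$ can alter the $R^+$-structure on the output of Theorem~\ref{theorem: AndreShimomoto}(b). Dealing with this will require a variant of the end of the proof of Lemma~\ref{lemma: seed}, namely tilting the colimit $\varinjlim_{\alpha<\lambda}C_\alpha^\flat$, applying \cite[Proposition~3.7]{DietzBCMSeeds} over the tilt $(R^+)^\flat$ rather than just $R^\flat$ to obtain a perfect, $(p^\flat, x_1^\flat,\dots,x_{d-1}^\flat)$-adically complete big Cohen-Macaulay $(R^+)^\flat$-algebra, and then untilting to preserve the $R^+$-algebra structure along the chain of transition maps constructed at earlier stages. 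A minor set-theoretic point, handled automatically by transfinite recursion, is that $\Gamma$ may have arbitrary cardinality; the recursion simply proceeds until every index has been processed.
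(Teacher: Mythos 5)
Your argument is correct, and it leans on the same three ingredients as the paper — the reduction to $R^+$-algebras via \autoref{theorem: AndreShimomoto}, the two-at-a-time domination of \autoref{lemma: seed}, and the perfectoid-seed colimit statement \autoref{lem.LimPerfectoidSeed} — but it organizes them differently. The paper avoids any well-ordering: for each \emph{finite} subset $\Lambda\subseteq\Gamma$ it forms the completed tensor product $B_\Lambda=B_{\gamma_1}\widehat{\otimes}_{\widehat{R^+}}\cdots\widehat{\otimes}_{\widehat{R^+}}B_{\gamma_n}$, replaces it by $\widetilde{B_\Lambda}=B_\Lambda[1/p]^\circ$ (integral perfectoid by Scholze), shows each $\widetilde{B_\Lambda}$ is a perfectoid seed by iterating \autoref{lemma: seed} and \autoref{theorem: AndreShimomoto}, and then applies \autoref{lem.LimPerfectoidSeed} \emph{once} to the canonical directed system $\{\widetilde{B_\Lambda}\}_{|\Lambda|<\infty}$, whose transition maps require no choices; the $R^+$-compatibility of all the factorizations is automatic because the tensor products are taken over $\widehat{R^+}$. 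Your transfinite recursion buys a small simplification — every intermediate algebra $C_\alpha$ is already an honest integral perfectoid big Cohen--Macaulay $R^+$-algebra, so you never need the $[1/p]^\circ$ almost-isomorphism step that the paper uses to repair multi-fold tensor products — at the cost of well-ordering $\Gamma$, stage-by-stage appeals to (global) choice to select outputs of \autoref{lemma: seed}, and the limit-stage bookkeeping. On that bookkeeping: the obstacle you anticipate is milder than you fear, and no $R^+$-linear refinement of \autoref{lem.LimPerfectoidSeed} or of Dietz's construction over $(R^+)^\flat$ is needed. Since big Cohen--Macaulayness refers only to the $R$-module structure, you may simply endow the limit-stage output $C_\lambda$ with the $R^+$-structure defined by the composite $R^+\to\widehat{\varinjlim_{\alpha<\lambda}C_\alpha}\to C_\lambda$; with that convention the inductive factorization invariant propagates for free, and the only place where genuine $R^+$-compatibility must be produced is the two-at-a-time step, which is exactly what \autoref{lemma: seed} supplies (this is also how the paper handles its final colimit). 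One small correction: you should not cite \autoref{theorem: AndreShimomoto}(b) to map $D_\lambda$ onward, since $D_\lambda$ is not known to be (almost) big Cohen--Macaulay; the map to an integral perfectoid big Cohen--Macaulay algebra is furnished directly by the perfectoid-seed property that \autoref{lem.LimPerfectoidSeed} gives you.
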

\begin{proof}
By \autoref{theorem: AndreShimomoto} every integral perfectoid big Cohen-Macaulay $R$-algebra $B_\gamma$ maps to an integral perfectoid big Cohen-Macaulay $R^+$-algebra. Therefore it is enough to prove the statement when all $B_\gamma$ are $R^+$-algebras. Now, for any finite subset $\Lambda=\{\gamma_1,\dots,\gamma_n\}\subseteq \Gamma$, we set $B_\Lambda=B_{\gamma_1}\widehat{\otimes}_{\widehat{R^+}}B_{\gamma_2}\widehat{\otimes}_{\widehat{R^+}}\cdots\widehat{\otimes}_{\widehat{R^+}}B_{\gamma_n}$ and set $\widetilde{B_\Lambda}=B_\Lambda[1/p]^\circ$. By \cite[Proposition 6.18]{ScholzePerfectoidspaces}, $\widetilde{{B_\Lambda}}$ is integral perfectoid. By repeatedly applying \autoref{lemma: seed}, $B_\Lambda$ maps to an integral perfectoid big Cohen-Macaulay $R^+$-algebra $C$. This induces $$\widetilde{B_\Lambda}\to \widetilde{C}:=C[1/p]^\circ.$$ Since $C$ is integral perfectoid, $\widetilde{C}$ is also integral perfectoid and is $p^{1/p^\infty}$-almost isomorphic to $C$ (in particular it is $p^{1/p^\infty}$-almost big Cohen-Macaulay). By \autoref{theorem: AndreShimomoto}, $\widetilde{C}$ maps to an integral perfectoid big Cohen-Macaulay algebra. Thus each $\widetilde{B_\Lambda}$ is a perfectoid seed. But clearly we have
$$\{\widetilde{B_\Lambda}\}_{\Lambda\subseteq \Gamma, |\Lambda|<\infty}$$
is a {\it directed set} of $R^+$-algebras (where the transition maps are the obvious ones).
Therefore by \autoref{lem.LimPerfectoidSeed}, $\widehat{\varinjlim \widetilde{B_\Lambda}}$ is also a perfectoid seed; so it maps to an integral perfectoid big Cohen-Macaulay algebra $B$. Since $R^+\to B$ factors through all $B_\gamma$, $B$ satisfies our conclusion. This finishes the proof.
\end{proof}

\begin{remark}
In the proof of \autoref{lemma: seed} and \autoref{thm.DominateAnySetOfBCM}, it is important that our big Cohen-Macaulay algebras are integral perfectoid. It is natural to ask whether any big Cohen-Macaulay algebra $B$ can be mapped to an integral perfectoid one. We do not know the answer. As suggested by the referee, we point out that by \cite[Tag 0DCR]{stacks-project}, we can always map $B$ to an absolutely integrally closed big Cohen-Macaulay algebra $C$, but it is not clear that the $p$-adic completion of $C$ is integral perfectoid.
\end{remark}

\begin{remark}
Recently, we have used the ``perfectoidization" functor of Bhatt and Scholze \cite{BhattScholzepPrismaticCohomology} to reprove and slightly generalize the results in \cite{AndreWeaklyFunctorialBigCM} and most of the results in this section, see \cite[Appendix A]{MSTWWAdjoint}. {\it Very recently}, Bhatt has announced that $\widehat{R^+}$ itself is an integral perfectoid big Cohen-Macaulay algebra in mixed characteristic \cite{BhattCohenMacaulaynessAbsoluteIntegralClosures}.  This big Cohen-Macaulay algebra is sufficient for most applications in this paper, but at the moment, results such as \autoref{lem.CanonicalBconstruction}, \autoref{prop.tau=tauB} or \autoref{prop.testidealsmallperturb} need the larger $B$s we consider in this paper.
\end{remark}

\section{Perfectoid big Cohen-Macaulay parameter test submodules}
\label{sec.BCMParamTest}

In this section we define perfectoid big Cohen-Macaulay parameter test submodule and prove many properties of it in analogy with the parameter test submodule in characteristic $p>0$ as well as the Grauert-Riemenschneider multiplier submodule  in characteristic $0$.

We recall that for a local ring $(R,\m)$, $\omega_R^\vee=\Hom_R(\omega_R, E)=H_\m^d(R)$ (and if $R$ is complete we also have $H_\m^d(R)^\vee=\omega_R$). So we can view $H_\m^d(R)$ as maps from $\omega_R$ to $E$. For a submodule $N$ of $H_\m^d(R)$, we define $\Ann_{\omega_R}N=\{z\in\omega_R \hspace{0.5em}| \hspace{0.5em} \eta(z)=0 \text{ for all } \eta\in N\}$.

\begin{definition}
\label{def.tauOmega}
Let $(R,\m)$ be an excellent local domain of dimension $d$ with a normalized dualizing complex $\omega_R^{\mydot}$ and canonical module $\omega_R$.  For every big Cohen-Macaulay $R$-algebra $B$ we define $0^B_{H_\m^d(R)}=\ker\big(H_\m^d(R)\to H_\m^d(B)\big)$. Furthermore, if $R$ has mixed characteristic $(0,p)$, we define
\[
\begin{array}{rl}
0^\scr{B}_{H_\m^d(R)}= \{ & \eta\in H_\m^d(R) \hspace{0.5em} |  \hspace{0.5em} \exists B \text{ an integral perfectoid big Cohen-Macaulay $R$-algebra} \\
      & \text{such that } \eta\in0^B_{H_\m^d(R)}\}.
\end{array}
\]
This is a submodule of $H_\m^d(R)$: if $\eta_1=0$ in $H_\m^d(B_1)$ and $\eta_2=0$ in $H_\m^d(B_2)$, take $B$ such that $B_1\otimes_R B_2\to B$ by \autoref{thm.DominateAnySetOfBCM}, then the image of both $\eta_1$ and $\eta_2$ are $0$ in $H_\m^d(B)$ so $\eta_1+\eta_2\in 0^\scr{B}_{H_\m^d(R)}$. We further define
\[
\mytau_B(\omega_R)=\Ann_{\omega_R}0^B_{H_\m^d(R)}\subseteq \omega_R, \text{ and } \mytau_{\scr{B}}(\omega_R)=\Ann_{\omega_R}0^\scr{B}_{H_\m^d(R)} \subseteq \omega_R.
\]
We call $\mytau_B(\omega_R)$ (resp. $\mytau_{\scr{B}}(\omega_R)$) the  \BCM{} test submodule of $\omega_R$ with respect to $B$ (resp. the perfectoid \BCM{} test submodule of $\omega_R$). We note that if $R$ is complete, then we have $$\mytau_B(\omega_R)=\left(H_\m^d(R)/0^B_{H_\m^d(R)} \right)^\vee \text{ and } \mytau_{\scr{B}}(\omega_R)=\left(H_\m^d(R)/0^\scr{B}_{H_\m^d(R)} \right)^\vee.$$
\end{definition}

\begin{remark}
\label{remark:completion}
\begin{enumerate}
  \item It is clear from \autoref{def.tauOmega} that in mixed characteristic, $0^B_{H_\m^d(R)}\subseteq 0^\scr{B}_{H_\m^d(R)}$ and $\mytau_{\scr{B}}(\omega_R)\subseteq \mytau_{B}(\omega_R)$ for all integral perfectoid big Cohen-Macaulay $R$-algebras $B$. We have $0^B_{H_\m^d(R)}=0^{\widehat{B}}_{H_\m^d(\widehat{R})}$ as in \autoref{rem.BCMrationalcompletion}. However, it is {\it not} clear that $\mytau_B(\omega_R)\otimes_R\widehat{R}=\mytau_{\widehat{B}}(\omega_{\widehat{R}})$. Because of this, we will mostly work with complete local rings.
  \item Since every integral perfectiod big Cohen-Macaulay $R$-algebra maps to an integral perfectoid big Cohen-Macaulay $R^+$-algebra by \autoref{theorem: AndreShimomoto}, we have
\[
\begin{array}{rl}
0^\scr{B}_{H_\m^d(R)}= \{ & \eta\in H_\m^d(R) \hspace{0.5em} |  \hspace{0.5em} \exists B \text{ an integral perfectoid big Cohen-Macaulay $R^+$-algebra} \\
      & \text{such that } \eta\in0^B_{H_\m^d(R)}\}.
\end{array}
\]
\end{enumerate}
\end{remark}

We start by proving that in characteristic $p>0$, $\mytau_B(\omega_R)$ is the same as the parameter test submodule for large enough $B$. This should be well-known to experts.

\begin{proposition}
\label{prop: big CM test ideal char p}
Let $R$ be a complete local domain of dimension $d$ and characteristic $p>0$. Then for every big Cohen-Macaulay $R$-algebra $B$, $\mytau_B(\omega_R)\supseteq \tau(\omega_R)$. Moreover, if $B$ is an $R^+$-algebra, then $\mytau_B(\omega_R)=\tau(\omega_R)$.
\end{proposition}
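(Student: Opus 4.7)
The plan is to prove the two inclusions separately, since the nontrivial direction uses a quite different ingredient than the reverse.

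For $\tau(\omega_R)\subseteq \mytau_B(\omega_R)$, both sides are $\omega_R$-annihilators of submodules of $H_\m^d(R)$, so by order reversal of annihilators it suffices to establish the inclusion of kernels
\[
0^B_{H_\m^d(R)}=\ker\!\big(H_\m^d(R)\to H_\m^d(B)\big)\subseteq \ker\!\big(H_\m^d(R)\to H_\m^d(R^+)\big).
\]
I would invoke Smith's theorem \cite[Section 5]{SmithTightClosureParameter} to rewrite the right-hand side as $0^*_{H_\m^d(R)}$, the tight closure of zero in $H_\m^d(R)$; this uses that for parameter submodules over a complete local domain in characteristic $p$, tight closure coincides with plus closure. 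What remains is the classical statement that for any big Cohen-Macaulay $R$-algebra $B$, $0^B_{H_\m^d(R)}\subseteq 0^*_{H_\m^d(R)}$. This is a form of the Hochster-Huneke colon-capturing property: an element $\eta=[z/\mathbf{x}]$ killed by the map to $H_\m^d(B)$ satisfies a relation $z\mathbf{x}^{N-1}\in (\mathbf{x}^N)B$; raising this relation to the $p^e$-th power using the (characteristic $p$) Frobenius on $B$ and then using a completely stable test element of $R$ to descend from a $B$-relation to an $R$-relation places $\eta$ into the tight closure of zero.

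For the equality when $B$ is an $R^+$-algebra, I would first observe that in characteristic $p>0$, $R^+$ is itself a big Cohen-Macaulay $R$-algebra \cite{HochsterHunekeInfiniteIntegralExtensionsAndBigCM}, so by the very definition in \autoref{defprop.CharpSings} we have $\tau(\omega_R)=\mytau_{R^+}(\omega_R)$. When $B$ is an $R^+$-algebra, the structure map factors as $R\to R^+\to B$ and induces $H_\m^d(R)\to H_\m^d(R^+)\to H_\m^d(B)$, giving $\ker(H_\m^d(R)\to H_\m^d(R^+))\subseteq \ker(H_\m^d(R)\to H_\m^d(B))$ and thus $\mytau_B(\omega_R)\subseteq \mytau_{R^+}(\omega_R)=\tau(\omega_R)$. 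Combined with the first inclusion, this gives equality.

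The main obstacle is the containment $0^B_{H_\m^d(R)}\subseteq 0^*_{H_\m^d(R)}$ for arbitrary big Cohen-Macaulay $B$, which is the only place that really uses the Frobenius of $B$ together with the existence of test elements for $R$; this is a classical repackaging of Hochster-Huneke that converts a $B$-relation back to an $R$-relation after multiplying by $c\in R^\circ$. Everything else is formal: factorization of structure maps for $R^+$-algebras, plus order reversal of annihilators.
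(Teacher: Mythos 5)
Your overall skeleton is right, and your second half is fine: since the paper defines $\tau(\omega_R)$ as $\Ann_{\omega_R}\ker\big(H_\m^d(R)\to H_\m^d(R^+)\big)$ and $R^+$ is big Cohen--Macaulay in characteristic $p$, the factorization $R\to R^+\to B$ immediately gives $0^+_{H_\m^d(R)}\subseteq 0^B_{H_\m^d(R)}$ and hence $\mytau_B(\omega_R)\subseteq\tau(\omega_R)$ when $B$ is an $R^+$-algebra; this matches the paper. The problem is in your key step for the first inclusion, namely the containment $0^B_{H_\m^d(R)}\subseteq 0^*_{H_\m^d(R)}$ for an \emph{arbitrary} big Cohen--Macaulay $R$-algebra $B$. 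From $\eta=0$ in $H_\m^d(B)$ you correctly get $z\in(x_1^N,\dots,x_d^N)B$, and Frobenius gives $z^{p^e}\in(x_1^{Np^e},\dots,x_d^{Np^e})B$ for all $e$; but a (completely stable) test element cannot ``descend this $B$-relation to an $R$-relation.'' A test element $c$ only multiplies tight closures of ideals of $R$ back into those ideals; to use it here you would first need to know $(x_1^{Np^e},\dots,x_d^{Np^e})B\cap R\subseteq\big((x_1^{Np^e},\dots,x_d^{Np^e})R\big)^*$, which is precisely the nontrivial assertion you are trying to prove (note that $B$ need not be a domain, need not embed in $R^+$, and is not integral over $R$, so no trace or plus-closure trick applies). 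The missing ingredient is a fixed nonzero $R$-linear functional $\phi\colon B\to R$: a big Cohen--Macaulay algebra over a complete local domain is a solid algebra by \cite[Corollary 2.4]{HochsterSolidClosure}, and applying such a $\phi$ with $c=\phi(1)\neq 0$ to the relations $z^{p^e}\in(x_1^{Np^e},\dots,x_d^{Np^e})B$ yields $cz^{p^e}\in(x_1^{Np^e},\dots,x_d^{Np^e})R$ for all $e$, i.e.\ $\eta\in 0^*_{H_\m^d(R)}$. Without solidity (or an equivalent device) your argument does not close.

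For comparison, the paper avoids element-chasing entirely: it observes that $0^B_{H_\m^d(R)}$ is an $F$-stable submodule of $H_\m^d(R)$ which is proper because $[\frac{1}{x_1\cdots x_d}]\neq 0$ in $H_\m^d(B)$ (as $x_1,\dots,x_d$ is a regular sequence on $B$), and then invokes the fact that for a complete local domain $0^*_{H_\m^d(R)}$ is the unique largest proper $F$-stable submodule \cite[Proposition 2.5]{SmithFRatImpliesRat}, \cite[Proposition 2.11]{EnescuHochsterTheFrobeniusStructureOfLocalCohomology}. Either route (solidity, or $F$-stability plus Smith's maximality) repairs your sketch; as written, the appeal to a test element is the wrong tool and leaves a genuine gap.
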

\begin{proof}
By local duality, it is enough to show that for every big Cohen-Macaulay $R$-algebra $B$, $0^B_{H_\m^d(R)}\subseteq 0^*_{H_\m^d(R)}$. However, $0^B_{H_\m^d(R)}=\ker(H_\m^d(R)\to H_\m^d(B))$ is a proper $F$-stable submodule of $H_\m^d(R)$ (to see it is proper, pick any system of parameters $x_1,\dots,x_d$ of $R$, the class $[\frac{1}{x_1x_2\cdots x_d}]$ is not zero in $H_\m^d(B)$ since $x_1,\dots,x_d$ is a regular sequence on $B$). But since $R$ is a complete local domain, $0^*_{H_\m^d(R)}$ is the unique largest proper $F$-stable submodule of $H_\m^d(R)$, see \cite[Proposition 2.5]{SmithFRatImpliesRat} or \cite[Proposition 2.11]{EnescuHochsterTheFrobeniusStructureOfLocalCohomology}. Thus $0^B_{H_\m^d(R)}\subseteq 0^*_{H_\m^d(R)}$.

Finally, when $B$ is an $R^+$-algebra, $0^B_{H_\m^d(R)} \supseteq 0^+_{H_\m^d(R)}=0^*_{H_\m^d(R)}$ by \cite[Theorem 5.1]{SmithTightClosureParameter} and so $\mytau_B(\omega_R)\subseteq \tau(\omega_R)$.
\end{proof}

\begin{remark}
The conclusion of \autoref{prop: big CM test ideal char p} is not true in general if we do not assume $R$ is a domain: for example take $R=k\llbracket x,y\rrbracket /(xy)$ and $B=k\llbracket x,y\rrbracket /(x)$, then $B$ is a big Cohen-Macaulay $R$-algebra but $0^B_{H_\m^1(R)}\nsubseteq 0^*_{H_\m^1(R)}$.
\end{remark}

Next we point out that, in mixed characteristic, $\mytau_\scr{B}(\omega_R)=\mytau_B(\omega_R)$ for all sufficiently large choices of integral perfectoid $B$. Therefore this suggests that $\mytau_\scr{B}(\omega_R)$ might be a good replacement of $\tau(\omega_R)$ in mixed characteristic. In fact, by the axiom of global choice (or by working in a fixed Grothendieck universe), we can pick an integral perfectoid big Cohen-Macaulay $R$-algebra $B_\eta$ for each element $\eta\in 0^\scr{B}_{H_\m^d(R)}$. Since $0^\scr{B}_{H_\m^d(R)}$ is a set, by \autoref{thm.DominateAnySetOfBCM} there exists an integral perfectoid big Cohen-Macaulay $R^+$-algebra $B$ that dominates every $B_\eta$. It follows that $0^\scr{B}_{H_\m^d(R)}=0^{B}_{H_\m^d(R)}$ and so $\mytau_\scr{B}(\omega_R)=\mytau_B(\omega_R)$.  

Our next goal is to prove a stronger result, which will be the key ingredient to show that our perfectoid parameter test submodule is stable under small perturbation.

\begin{definition}
\label{def.canonicalB}
Let $(R,\m)$ be a complete local domain of mixed characteristic $(0,p)$ and of dimension $d$. Let $B$ denote an integral perfectoid big Cohen-Macaulay $R^+$-algebra. We define
\[
\widetilde{0}^{B}_{H_\m^d(R)}=\{\eta\in H_\m^d(R) \hspace{0.5em} |  \hspace{0.5em} \exists \hspace{0.2em} 0\neq g\in R^+ \text{ such that } g^{1/p^\infty}\eta=0 \text{ in } H_\m^d(B)\}.
\]
\[
\widetilde{0}^{\scr{B}}_{H_\m^d(R)}=\{\eta\in H_\m^d(R) \hspace{0.5em} |  \hspace{0.5em} \exists \hspace{0.2em} 0\neq g\in R^+ \text{ and $\exists B$ such that } g^{1/p^\infty}\eta=0 \text{ in } H_\m^d(B)\}.
\footnote{Again, $\widetilde{0}^{\scr{B}}_{H_\m^d(R)}$ is a submodule: if $g^{1/p^\infty}\eta_1=0$ in $H_\m^d(B_1)$ and $h^{1/p^\infty}\eta_2=0$ in $H_\m^d(B_2)$, then take $B$ such that $B_1\otimes_{R^+}B_2\to B$ by \autoref{thm.DominateAnySetOfBCM}; we have $(gh)^{1/p^\infty}(\eta_1+\eta_2)=0$ in $H_\m^d(B)$ and hence $\eta_1+\eta_2\in \widetilde{0}^{\scr{B}}_{H_\m^d(R)}$.}
\]
\end{definition}



\begin{lemma}
\label{lem.CanonicalBconstruction}
Let $(R,\m)$ be a complete local domain of mixed characteristic $(0,p)$ and let $B$ be an integral perfectoid big Cohen-Macaulay $R^+$-algebra. Suppose $\eta\in H_\m^d(B)$ satisfies $g^{1/p^\infty}\eta=0$ in $H_\m^d(B)$ for some $0\neq g\in R^+$. Then there exists an integral perfectoid big Cohen-Macaulay $B$-algebra $B'$ such that $\eta=0$ in $H_\m^d(B')$.
\end{lemma}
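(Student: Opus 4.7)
The plan is a direct ``forcing algebra'' construction modeled on the proof of \autoref{lemma: seed}. First I would write $\eta=[b/(p x_1 \cdots x_{d-1})^t]$ for some $b\in B$ and $t\geq 1$, where $\underline{x}=p,x_1,\dots,x_{d-1}$ is a fixed system of parameters of $R$. Since $B$ is big Cohen-Macaulay with respect to $\underline{x}$, the Koszul/local-cohomology description turns the hypothesis $g^{1/p^\infty}\eta=0$ in $H^d_{\m}(B)$ into the assertion that for each integer $e\geq 0$ there exist $a_0^{(e)},\dots,a_{d-1}^{(e)}\in B$ with
\[
g^{1/p^e}\,b \;=\; p^{t} a_0^{(e)} + x_1^{t} a_1^{(e)} + \cdots + x_{d-1}^{t} a_{d-1}^{(e)}. \qquad(\star)
\]

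Next I would form the forcing algebra in a perfectoid guise. Let
\[
B_0 \;=\; B\bigl\langle Z_0^{1/p^\infty},Z_1^{1/p^\infty},\dots,Z_{d-1}^{1/p^\infty}\bigr\rangle,
\]
which is an integral perfectoid $B$-algebra that is faithfully flat over $B$ and so remains big Cohen-Macaulay over $R$ with respect to $\underline{x}$. Inside $B_0$ set
\[
f \;=\; p^{t} Z_0 + x_1^{t} Z_1 + \cdots + x_{d-1}^{t} Z_{d-1} - b.
\]
Using $(\star)$ one computes $g^{1/p^e} f=\sum_{i}x_i^{t}(g^{1/p^e} Z_i-a_i^{(e)})$, so
\[
g^{1/p^e} f \;\in\; (p^{t},x_1^{t},\dots,x_{d-1}^{t})\,B_0 \quad\text{for every }e\geq 0.
\]
This is the exact analog of the relation ``$g\cdot P=0$'' that drives the proof of \autoref{lemma: seed}.

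Following that template I would then adjoin $p$-power roots of $f$ \`a la Andr\'e, setting
\[
T \;=\; B_0\langle f^{1/p^\infty}\rangle[1/p]^{\circ},
\]
which by \cite{AndreDirectsummandconjecture} (or \cite[Theorem 2.3]{BhattDirectsummandandDerivedvariant}) is integral perfectoid and $p^{1/p^\infty}$-almost faithfully flat over $B_0$ modulo $p$, and then pass to
\[
T' \;=\; g^{-1/p^\infty}\!\bigl(T/(f^{1/p^\infty})^{-}\bigr),
\]
where $(f^{1/p^\infty})^{-}$ denotes the $p$-adic closure of the ideal generated by the $f^{1/p^{e}}$. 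By construction, $T'$ is a $(pg)^{1/p^\infty}$-almost integral perfectoid $B$-algebra in which the image of $f$ is zero; hence $b=\sum_i x_i^{t} Z_i$ lies in $(\underline{x}^{t})\, T'$, and so $\eta$ maps to $0$ in $H^d_{\m}(T')$.

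The heart of the plan---and the step I expect to be the main obstacle---is to verify that $T'$ is $(pg)^{1/p^\infty}$-almost big Cohen-Macaulay over $B$ (equivalently, over $R$) with respect to $\underline{x}$. Given the almost faithful flatness of $T\to T'$ afforded by the Andr\'e construction, this reduces to showing that $f$ is ``$g$-almost zero'' modulo the regular ideal $(\underline{x}^{t}) B_0$, which is precisely the content of $(\star)$. The actual verification runs along the same solid-closure/integral-closure contradiction as in the proof of \autoref{clm.AlmostCM}, with $f$ playing the role of the diagonal generator from that argument. Once this is in hand, \autoref{theorem: AndreShimomoto} applied to $T'$ produces an integral perfectoid big Cohen-Macaulay $B$-algebra $B'$ together with a $B$-linear map $(T')^{\natural}\to B'$; since $\eta$ already vanishes in $H^d_{\m}((T')^{\natural})$ via the injection $(T')^{\natural}\hookrightarrow T'$, it also vanishes in $H^d_{\m}(B')$, completing the proof.
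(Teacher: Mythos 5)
Your reduction to $(\star)$, the choice of forcing element $f$, and the final untilting step are all fine and in the spirit of the paper's Section 4 constructions, but there is a genuine gap at exactly the point you flag as ``the main obstacle,'' and the reduction you propose for it does not work. The Andr\'e-type almost faithful flatness is a property of $B_0\to T=B_0\langle f^{1/p^\infty}\rangle[1/p]^\circ$; the map $T\to T'=g^{-1/p^\infty}\bigl(T/(f^{1/p^\infty})^-\bigr)$ is a quotient (followed by almost elements) and has no flatness at all, so almost big Cohen-Macaulayness of $T'$ does not ``reduce to $(\star)$'' by flatness. The analogy with \autoref{clm.AlmostCM} also breaks structurally: there the offending element $P$ satisfies the exact equation $gP=0$ in $S$, so it dies in $g^{-1/p^\infty}T$ without taking any quotient, the almost regularity of $\underline{x}$ is inherited from (almost) flatness over the honestly big Cohen-Macaulay ring $B_1\widehat{\otimes}_{A_{\infty,0}}B_2$, and the solid-closure contradiction is only needed for the non-vanishing of the quotient modulo $(\underline{x})$. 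In your situation $f$ is not $g^{1/p^\infty}$-torsion --- $(\star)$ only says it is almost zero modulo $(\underline{x}^t)B_0$ --- so you genuinely must kill the ideal $(f^{1/p^\infty})$, and then both halves of almost big Cohen-Macaulayness (that $\underline{x}$ is an almost regular sequence on $T/(f^{1/p^\infty})^-$, and that the quotient modulo $(\underline{x})$ is not almost zero) are substantive claims for which no argument is given; note that $B_0/(f)$ is a forcing algebra, and forcing algebras over big Cohen-Macaulay algebras need not remain (almost) big Cohen-Macaulay, which is the whole difficulty. The obvious attempt to imitate the solidity contradiction also fails quantitatively: from $g^{1/p^e}f\in(p^t,x_1^t,\dots,x_{d-1}^t)B_0$ one only extracts, mod $p$, that $g^{1/p^{e+s}}f^{1/p^s}$ lies in the ideal generated by the tiny roots $p^{1/p^s},p^{t/p^s},x_i^{t/p^s}$, which is far too weak to put a fixed nonzero element of a Noetherian subring into $\overline{(\underline{x})^{p^e}}$ for all $e$.

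This missing uniformity is precisely what the paper supplies by a different route: it reduces to $p\eta=0$, tilts to $B^\flat$, and runs Hochster's partial algebra modifications on the forcing problem $z^\flat\in(p^\flat,x_1^\flat,\dots,x_{d-1}^\flat)B^\flat$, where the crucial point is that the bounds $N_1,\dots,N_r$ of \cite[Lemma 5.1]{HochsterBigCohen-Macaulayalgebrasindimensionthree} depend only on the modification sequence and \emph{not} on $e$; this converts the hypothesis ``$g^{1/p^e}$ kills the class for all $e$'' into ``$g^{1/p^e}\in(p,x_1,\dots,x_{d-1})B$ for all $e$,'' which solidity then contradicts. Your proposal has no analogue of this uniform bound, so the heart of the lemma remains unproved. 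Two smaller points: the vanishing of $\eta$ in $H^d_{\m}(B')$ should be argued by noting that the image of $f$ is zero in $(T')^\natural$ (because $(T')^\natural\hookrightarrow T'$ is injective and $f\mapsto 0$ in $T'$), rather than by pulling vanishing of a local cohomology class back along an injection; and the assertion that $T/(f^{1/p^\infty})^-$ is ($p$-torsion free and) almost integral perfectoid also deserves a word.
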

\begin{proof}
We fix a system of parameters $p,x_1\cdots,x_{d-1}$ of $R$. We first show that $p^t\eta=0$ in $H_\m^d(B)$ if and only if we can write $\eta=\frac{z}{p^tx_1^w\cdots x_{d-1}^w}$ for some $z\in B$. Suppose $\eta=\frac{z'}{p^wx_1^w\cdots x_{d-1}^w}$ for some $z'\in B$ and some $w\gg0$. Since $B$ is big Cohen-Macaulay and $p^t\eta=0$, we have $p^tz'\in (p^w,x_1^w,\dots,x_{d-1}^w)B$ and thus $z'\in (p^{w-t}, x_1^w,\dots,x_{d-1}^w)B$. This means $z'=p^{w-t}z+x_1^wy_1+\cdots +x_{d-1}^wy_{d-1}$, which implies $\eta=\frac{z}{p^tx_1^w\cdots x_{d-1}^w}$. The other direction is obvious.

We first prove the case that $p\eta=0$. By the discussion above and replacing $x_i$ by $x_i^w$, we can write $\eta=\frac{z}{px_1\cdots x_{d-1}}$ for some $z\in B$. Since $g^{1/p^\infty}\eta=0$ in $H_\m^d(B)$ and $B$ is big Cohen-Macaulay, we have
$$g^{1/p^\infty}z\in (p, x_1,\dots,x_{d-1})B.$$
This is equivalent to saying that
$$(g^\flat)^{1/p^\infty}z^\flat \in (p^\flat, x_1^\flat,\dots,x_{d-1}^\flat)B^\flat$$
since $B/p\cong B^\flat/p^\flat$. Here $g^\flat=\langle g, g^{1/p}, g^{1/p^2}, \dots \rangle$ and similarly for $x_i^\flat$ and $z^\flat$ ($z$ may not have a compatible system of $p$-power roots in $B$ since we are not assuming $z$ is in $R^+$, but we can choose an arbitrary expression of $z^\flat$ and replace $z$ by $(z^\flat)^\sharp$, this doesn't affect the containment because the ideal contains $p$).

Next we prove that there is a big Cohen-Macaulay $B^\flat$-algebra (in characteristic $p$) that forces $z^\flat$ in $(p^\flat, x_1^\flat,\dots,x_{d-1}^\flat)$. We consider the following sequence:
\begin{equation}
\label{equation--partial algebra modification}
B^\flat\to T_0=\Big(\frac{B^\flat[Y_0,\dots, Y_{d-1}]}{z^\flat-p^\flat Y_0-x_1^\flat Y_1-\cdots-x_{d-1}^\flat Y_{d-1}}\Big)_{\leq N}\to T_1\to T_2\to\cdots\to T_r
\end{equation}
where $T_{i+1}$ is a partial algebra modification of $T_i$ with respect to $p^\flat, x_1^\flat,\dots,x_{d-1}^\flat$ for $i\geq 0$ in the sense of \cite[Section 4]{HochsterBigCohen-Macaulayalgebrasindimensionthree}. Following \cite[Theorem 4.2]{HochsterBigCohen-Macaulayalgebrasindimensionthree} or \cite[Section 11]{HochsterSolidClosure}, it is enough to show that there is no such sequence where the image of $1$ in $T_r$ is in $(p^\flat,x_1^\flat,\dots,x_{d-1}^\flat) T_r$.

Now suppose $1\in (p^\flat,x_1^\flat,\dots,x_{d-1}^\flat) T_r$ in \autoref{equation--partial algebra modification}, and suppose $(g^\flat)^{1/p^e}z^\flat=a_0p^\flat +a_1x_1^\flat+\cdots +a_{d-1}x_{d-1}^\flat$. We look at the following commutative diagram:
\[
\xymatrix@C=15pt{
B^\flat\ar[r]\ar[d] & T_0 \ar[d]\ar[r]  & T_1\ar[r]\ar[d] &\cdots \ar[r]&T_r\ar[d] \\
B^\flat\ar[r]& B^\flat\cdot \frac{1}{((g^\flat)^{1/p^e})^N}\ar[r] & B^\flat\cdot \frac{1}{((g^\flat)^{1/p^e})^{N_1}} \ar[r]  &\cdots \ar[r]& B^\flat\cdot \frac{1}{((g^\flat)^{1/p^e})^{N_r}}  
}\]
We briefly explain why such a diagram exists. The first vertical map is the identity, the second vertical map sends $Y_i$ to $a_i/{(g^\flat)^{1/p^e}}$, one can check that this is well defined. The other vertical maps exist because $B^\flat$ is a big Cohen-Macaulay algebra with respect to $p^\flat, x_1^\flat,\dots,x_{d-1}^\flat$ (so any relation will be trivialized in $B^\flat$) and hence we can define these maps as in \cite[Proof of Lemma 5.1]{HochsterBigCohen-Macaulayalgebrasindimensionthree}. The key point is that, the numbers $N_1,\dots,N_r$ depend only on $N$ and \autoref{equation--partial algebra modification} but {\it not} on $e$ by \cite[Lemma 5.1]{HochsterBigCohen-Macaulayalgebrasindimensionthree}.

Tracing the image of $1\in B^\flat$ in the above commutative diagram in two different ways, we find that $1\in B^\flat\cdot\frac{1}{((g^\flat)^{1/p^e})^{N_r}}$ is inside $(p^\flat,x_1^\flat,\dots,x_{d-1}^\flat) B^\flat\cdot \frac{1}{((g^\flat)^{1/p^e})^{N_r}}$. This means $((g^\flat)^{1/p^e})^{N_r}\in(p^\flat,x_1^\flat,\dots,x_{d-1}^\flat) B^\flat$ for all $e$. Since $N_r$ does not depend on $e$, this implies $$(g^\flat)^{1/p^e}\in (p^\flat,x_1^\flat,\dots,x_{d-1}^\flat) B^\flat$$ for all $e$. Since $g^\flat$ can be expressed as $\langle g, g^{1/p},g^{1/p^2},\dots\rangle$, $(g^\flat)^{1/p^e}=(g^{1/p^e})^\flat$ and this is equivalent to saying that
$$g^{1/p^e}\in (p, x_1,\dots,x_{d-1})B$$
for all $e$ since $B/p=B^\flat/p^\flat$. Since $B$ is a big Cohen-Macaulay $R^+$-algebra, it is a solid $S$-algebra for every complete local domain $S$ that is module-finite over $R$ by \cite[Corollary 2.4]{HochsterSolidClosure}. Pick such an $S$ such that $g\in S$, we have $$g\in (p,x_1,\dots,x_{d-1})^{p^e}B\cap S\subseteq (\m^{p^e}S)^\bigstar$$ for every $e$, where $I^\bigstar$ denotes the solid closure, which is always contained inside the integral closure by \cite[Theorem 5.10]{HochsterSolidClosure}. But then we have $$0\neq g\in \cap_e \overline{\m^{p^e}S}=0$$ which is a contradiction.

At this point we have proved there exists a big Cohen-Macaulay $B^\flat$-algebra $C$ such that $z^\flat\in (p^\flat, x_1^\flat,\dots,x_{d-1}^\flat)C$. Since we are in characteristic $p$, we can further assume that $C$ is perfect and $p^\flat$-adically complete by \cite[Proposition 3.7]{DietzBCMSeeds}. Untilt, we have $B=(B^\flat)^\sharp\to C^\sharp$ such that
$$z\in (p, x_1,\dots,x_{d-1}) C^\sharp$$
because $C^\sharp/p=C/p^\flat$. Now $B'=\widehat{C^\sharp}$, the $\m$-adic completion of $C^\sharp$, is an integral perfectoid (balanced) big Cohen-Macaulay $B$-algebra by \cite[Proposition 2.2.1]{AndreWeaklyFunctorialBigCM} and \cite[Corollary 8.5.3]{BrunsHerzog}, and we have $z\in (p, x_1,\dots,x_{d-1})B'$, i.e., $\eta=0$ in $H_\m^d(B')$. This finishes the proof when $p\eta=0$.

Finally let us prove the general case. Suppose $p^t\eta=0$ for some $t\geq 1$. The $t=1$ case we just proved implies that there exists an integral perfectoid big Cohen-Macaulay $B$-algebra $B_1$ such that $p^{t-1}\eta=0$ in $H_\m^d(B_1)$ (and we still have $g^{1/p^\infty}\eta=0$ in $H_\m^d(B_1)$). We can repeat the above process to find an integral perfectoid big Cohen-Macaulay $B_1$-algebra $B_2$ such that $p^{t-2}\eta=0$ in $H_\m^d(B_2)$, keep going and eventually we get an integral perfectoid big Cohen-Macaulay $B$-algebra $B_t:=B'$ such that $\eta=0$ in $H_\m^d(B')$.
\end{proof}

\begin{proposition}
\label{prop.tau=tauB}
Let $(R,\m)$ be a complete local domain of mixed characteristic $(0,p)$. Then there exists an integral perfectoid big Cohen-Macaulay $R^+$-algebra $B$ such that $$0^{B}_{H_\m^d(R)}=\widetilde{0}^{B}_{H_\m^d(R)}=0^\scr{B}_{H_\m^d(R)}=\widetilde{0}^\scr{B}_{H_\m^d(R)}.$$ As a consequence, $\mytau_\scr{B}(\omega_R)=\mytau_B(\omega_R)$ (it follows that the same is true for all $B'$ such that $B\to B'$). 
\end{proposition}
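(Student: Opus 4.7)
The containments $0^B_{H_\m^d(R)} \subseteq \widetilde{0}^B_{H_\m^d(R)} \subseteq \widetilde{0}^\scr{B}_{H_\m^d(R)}$ and $0^B_{H_\m^d(R)} \subseteq 0^\scr{B}_{H_\m^d(R)} \subseteq \widetilde{0}^\scr{B}_{H_\m^d(R)}$ hold for every integral perfectoid big Cohen-Macaulay $R^+$-algebra $B$, directly from the definitions. Thus the plan is to produce a single such $B$ for which the outer inclusion $\widetilde{0}^\scr{B}_{H_\m^d(R)} \subseteq 0^B_{H_\m^d(R)}$ is forced, after which all four submodules must coincide. I will build $B$ in two stages, each an application of \autoref{thm.DominateAnySetOfBCM}.

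For the first stage, observe that $\widetilde{0}^\scr{B}_{H_\m^d(R)}$ is an honest set, being a submodule of $H_\m^d(R)$. For each $\eta \in \widetilde{0}^\scr{B}_{H_\m^d(R)}$ I choose (by global choice or in a fixed Grothendieck universe) a nonzero $g_\eta \in R^+$ together with an integral perfectoid big Cohen-Macaulay $R^+$-algebra $B_\eta$ witnessing $g_\eta^{1/p^\infty}\eta = 0$ in $H_\m^d(B_\eta)$. Applying \autoref{thm.DominateAnySetOfBCM} to the set $\{B_\eta\}$ yields an integral perfectoid big Cohen-Macaulay $R^+$-algebra $B_1$ receiving $R^+$-linear maps from every $B_\eta$. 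Because multiplication by each $g_\eta^{1/p^n}$ is part of the $R^+$-module structure, the $R^+$-linearity of the maps $B_\eta \to B_1$ propagates the vanishing: $g_\eta^{1/p^\infty}\eta = 0$ in $H_\m^d(B_1)$ for every $\eta$, and hence $\widetilde{0}^\scr{B}_{H_\m^d(R)} \subseteq \widetilde{0}^{B_1}_{H_\m^d(R)}$.

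For the second stage, note that every element of $H_\m^d(R)$ is $\fram$-power torsion, hence in particular $p$-power torsion, so the hypotheses of \autoref{lem.CanonicalBconstruction} apply to the image in $H_\m^d(B_1)$ of each $\eta \in \widetilde{0}^{B_1}_{H_\m^d(R)}$. The lemma furnishes, for each such $\eta$, an integral perfectoid big Cohen-Macaulay $B_1$-algebra $C_\eta$ in which that image vanishes; viewed via $R^+ \to B_1 \to C_\eta$ each $C_\eta$ is also an integral perfectoid big Cohen-Macaulay $R^+$-algebra. A second application of \autoref{thm.DominateAnySetOfBCM}, this time to the set $\{C_\eta\}$, produces an integral perfectoid big Cohen-Macaulay $R^+$-algebra $B$ through which every $R^+ \to C_\eta$ factors. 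Consequently the composite $H_\m^d(R) \to H_\m^d(C_\eta) \to H_\m^d(B)$ annihilates $\eta$, so $\widetilde{0}^{B_1}_{H_\m^d(R)} \subseteq 0^B_{H_\m^d(R)}$. Combining with stage one forces equality of all four submodules; Matlis duality then gives $\mytau_\scr{B}(\omega_R) = \mytau_B(\omega_R)$, and the same equalities persist for any $B'$ receiving an $R^+$-linear map $B \to B'$.

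The main technical obstacle I expect is the careful bookkeeping of $R^+$-structures: one must verify that the maps produced by \autoref{thm.DominateAnySetOfBCM} are genuinely $R^+$-linear so that both the almost-vanishing conditions (involving $g_\eta^{1/p^n} \in R^+$) and the identification of $\eta$ with its image in the various local cohomology modules are preserved across the two constructions. The reduction to $p$-power torsion needed to invoke \autoref{lem.CanonicalBconstruction} is automatic here, but without it one would need to extract compatible systems of $p$-power roots of the remaining parameters, which is not freely available in a general integral perfectoid algebra.
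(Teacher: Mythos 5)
Your proof is correct and follows essentially the same route as the paper: both arguments rest on \autoref{lem.CanonicalBconstruction}, the axiom of global choice, and \autoref{thm.DominateAnySetOfBCM}, the only difference being that the paper applies the lemma directly to each chosen witness $B_\eta$ and then dominates once, so your preliminary domination stage producing $B_1$ is redundant though harmless. The one small point to state carefully is the inclusion $0^{\scr{B}}_{H_\m^d(R)}\subseteq \widetilde{0}^{\scr{B}}_{H_\m^d(R)}$, which is not literally definitional since $0^{\scr{B}}$ allows integral perfectoid big Cohen--Macaulay $R$-algebra witnesses while $\widetilde{0}^{\scr{B}}$ requires $R^+$-algebras; one passes to an $R^+$-algebra via \autoref{theorem: AndreShimomoto}, as noted in \autoref{remark:completion}.
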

\begin{proof}
Since every integral perfectoid big Cohen-Macaulay $R$-algebra $B$ can be mapped to an integral perfectoid big Cohen-Macaulay $R^+$-algebra by \autoref{theorem: AndreShimomoto}, we have $0^{\scr{B}}_{H_\m^d(R)}\subseteq \widetilde{0}^\scr{B}_{H_\m^d(R)}$. Therefore it is enough to find $B$ such that $\widetilde{0}^\scr{B}_{H_\m^d(R)}\subseteq 0^{B}_{H_\m^d(R)}$. Now by \autoref{lem.CanonicalBconstruction} and the axiom of global choice (or if we restrict ourselves to a fixed Grothendieck universe), for every $\eta\in \widetilde{0}^\scr{B}_{H_\m^d(R)}$ we can choose an integral perfectoid big Cohen-Macaulay $R^+$-algebra $B_\eta$ such that $\eta\in 0^{B_{\eta}}_{H_\m^d(R)}$. Since $\{B_\eta\}_{\eta\in \widetilde{0}^\scr{B}_{H_\m^d(R)}}$ is a set of integral perfectoid big Cohen-Macaulay $R^+$-algebras, by \autoref{thm.DominateAnySetOfBCM} there exists an integral perfectoid big Cohen-Macaulay $R^+$-algebra $B$ such that $B_\eta\to B$ for all $\eta$. This implies $\eta\in 0^B_{H_\m^d(R)}$ for all $\eta\in \widetilde{0}^\scr{B}_{H_\m^d(R)}$ and so $\widetilde{0}^\scr{B}_{H_\m^d(R)}\subseteq 0^B_{H_\m^d(R)}$ as desired. 
\end{proof}

We have the following transformation rule of the {\BCM} parameter test submodules under finite maps.

\begin{lemma}
\label{lem.FiniteMapsTransformationForParamTestIdeals}
Suppose that $(R, \fram) \subseteq (S, \frn)$ is a finite extension of complete local domains and that $B$ is a big Cohen-Macaulay $S$-algebra (and hence also a big Cohen-Macaulay $R$-algebra). Then $\Tr(\mytau_B(\omega_S)) = \mytau_B(\omega_R)$ (here $\Tr$ is the Grothendieck trace which coincides with the field trace of $R \subseteq S$ if the map is generically separable). 
\end{lemma}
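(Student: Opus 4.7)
The plan is to reduce the statement to a direct Matlis duality computation over $R$, using that $B$ is simultaneously a big Cohen-Macaulay algebra over both $R$ and $S$.

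First, I would observe that since $R \subseteq S$ is finite, $\dim S = \dim R = d$, and the $\fram$-adic and $\frn$-adic topologies agree on any $S$-module, so $H^d_{\fram}(B) = H^d_{\frn}(B)$ and the canonical map $H^d_{\fram}(R) \to H^d_{\frn}(S) \to H^d_{\frn}(B)$ agrees with the map $H^d_{\fram}(R) \to H^d_{\fram}(B)$ coming from $R \to B$. It follows that $0^B_{H^d_{\fram}(R)}$ is precisely the preimage of $0^B_{H^d_{\frn}(S)}$ under the natural map $\iota: H^d_{\fram}(R) \to H^d_{\frn}(S)$; equivalently, there is an induced injection
\[
H^d_{\fram}(R) \big/ 0^B_{H^d_{\fram}(R)} \hookrightarrow H^d_{\frn}(S) \big/ 0^B_{H^d_{\frn}(S)}.
\]

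Next, I would apply Matlis duality over $R$. Because $R \subseteq S$ is a finite extension with $R$ complete, we have $E_S(S/\frn) \cong \Hom_R(S, E_R(R/\fram))$, and hence for any $S$-module $M$ one has $\Hom_S(M, E_S) \cong \Hom_R(M, E_R)$ canonically. In particular, Matlis-dualizing (over $R$) the surjection $H^d_{\frn}(S) \twoheadrightarrow H^d_{\frn}(S)/0^B_{H^d_{\frn}(S)}$ yields the inclusion $\mytau_B(\omega_S) \hookrightarrow \omega_S$ (and similarly over $R$). Dualizing the displayed injection therefore produces a surjection
\[
\mytau_B(\omega_S) \twoheadrightarrow \mytau_B(\omega_R),
\]
which is the restriction of the $R$-linear map $\omega_S \to \omega_R$ obtained by Matlis-dualizing $\iota$.

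The last ingredient is to identify that latter map with the Grothendieck trace $\Tr$. This is a standard consequence of local duality: under local/Grothendieck duality the natural map $H^d_{\fram}(R) \to H^d_{\frn}(S)$ on local cohomology is Matlis-dual over $R$ to the trace $\omega_S \to \omega_R$ (this is where, in the generically separable case, one recognizes the field trace). Combining this identification with the surjectivity established above gives $\Tr(\mytau_B(\omega_S)) = \mytau_B(\omega_R)$, as desired.

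The computation is essentially formal once these identifications are in hand; the only subtle point, and the one I would write out carefully, is the compatibility of Matlis duality over $R$ versus over $S$ for the finite module $H^d_{\frn}(S)$, together with the verification that the Matlis-dual of $\iota$ is genuinely the Grothendieck trace rather than merely some canonical $R$-linear map $\omega_S \to \omega_R$.
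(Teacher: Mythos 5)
Your proposal is correct and follows essentially the same route as the paper's proof: factor $H^d_{\fram}(R)\to H^d_{\frn}(S)\to H^d_{\frn}(B)$, observe the induced injection $H^d_{\fram}(R)/0^B_{H^d_{\fram}(R)}\hookrightarrow H^d_{\frn}(S)/0^B_{H^d_{\frn}(S)}$, and conclude by Matlis duality using that the first map is dual to $\Tr:\omega_S\to\omega_R$. You simply spell out the details (agreement of duality over $R$ and $S$, identification of the dual map with the Grothendieck trace) that the paper leaves implicit.
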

\begin{proof}
Observe that we have a factorization:
\[
H^d_{\fram}(R) \xrightarrow{\phi} H^d_{\n}(S) \to H^d_{\n}(B)
\]
and that $\phi$ is the Matlis dual of $\Tr : \omega_S \to \omega_R$.  We have that
\[
H^d_{\fram}(R) / 0^B_{H_\m^d(R)} \hookrightarrow H^d_{\n}(S) / 0^B_{H_\n^d(S)}
\]
is injective. The result follows by Matlis duality. 
\end{proof}

Now we prove our restriction type theorem for the {\BCM} test submodule.

\begin{theorem}
\label{theorem: big CM restriction}
Let $(R,\m)$ be a complete local domain of dimension $d$ and let $B$ be a big Cohen-Macaulay $R$-algebra. Then for every nonzerodivisor $x\in R$, the image of $\mytau_B(\omega_R)$ under the natural map $\omega_R\to \omega_R/x\omega_R\to\omega_{R/xR}$ equals $\mytau_{B/xB}(\omega_{R/xR})$.

In particular, if $C$ is any big Cohen-Macaulay $B/xB$-algebra, then we have that the image of $\mytau_B(\omega_R)$ under the natural map $\omega_R\to \omega_R/x\omega_R\to\omega_{R/xR}$ contains $\mytau_{C}(\omega_{R/xR})$.
\end{theorem}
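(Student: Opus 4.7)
The plan is to work entirely on the Matlis dual side via local duality, where $\mytau_B(\omega_R) = \Ann_{\omega_R}(0^B_{H_\m^d(R)})$ and $\mytau_{B/xB}(\omega_{R/xR}) = \Ann_{\omega_{R/xR}}(0^{B/xB}_{H_\m^{d-1}(R/xR)})$. First I would set up the framework: since $x$ is a nonzerodivisor on $R$ we have $\dim R/xR = d-1$, and extending $x$ to a system of parameters of $R$ shows that $x$ is also a nonzerodivisor on $B$ and that $B/xB$ is big Cohen-Macaulay over $R/xR$. The short exact sequences $0 \to R \xrightarrow{\cdot x} R \to R/xR \to 0$ and the analogous one for $B$ yield connecting maps $\delta_R : H_\m^{d-1}(R/xR) \to H_\m^d(R)$ and $\delta_B : H_\m^{d-1}(B/xB) \to H_\m^d(B)$ with $\im \delta_R = H_\m^d(R)[x]$; crucially, $H_\m^{d-1}(B) = 0$ since $B$ is big Cohen-Macaulay, so $\delta_B$ is injective. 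Matlis dualizing $\delta_R$ recovers the natural map $\phi : \omega_R \twoheadrightarrow \omega_R/x\omega_R \hookrightarrow \omega_{R/xR}$ appearing in the statement.

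The heart of the argument is then a diagram chase using the injectivity of $\delta_B$, which gives the identification
\[
0^{B/xB}_{H_\m^{d-1}(R/xR)} \;=\; \delta_R^{-1}\bigl( 0^B_{H_\m^d(R)} \cap H_\m^d(R)[x] \bigr).
\]
In particular $\ker \delta_R \subseteq 0^{B/xB}_{H_\m^{d-1}(R/xR)}$, and $\delta_R$ sends $0^{B/xB}_{H_\m^{d-1}(R/xR)}$ surjectively onto $0^B_{H_\m^d(R)} \cap H_\m^d(R)[x]$. Given this, the containment $\phi(\mytau_B(\omega_R)) \subseteq \mytau_{B/xB}(\omega_{R/xR})$ is immediate: if $z : H_\m^d(R) \to E$ kills $0^B_{H_\m^d(R)}$, then $\phi(z) = z \circ \delta_R$ kills $\delta_R^{-1}(0^B_{H_\m^d(R)}) \supseteq 0^{B/xB}_{H_\m^{d-1}(R/xR)}$.

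The harder direction is the reverse containment: given $z' \in \mytau_{B/xB}(\omega_{R/xR})$, viewed as a map $H_\m^{d-1}(R/xR) \to E$ annihilating $0^{B/xB}_{H_\m^{d-1}(R/xR)}$, I must construct $z \in \mytau_B(\omega_R)$ with $\phi(z) = z'$. Since $\ker \delta_R \subseteq 0^{B/xB}_{H_\m^{d-1}(R/xR)}$, the map $z'$ factors as $\bar{z}' : H_\m^d(R)[x] \to E$, and by the identification above $\bar{z}'$ vanishes on $H_\m^d(R)[x] \cap 0^B_{H_\m^d(R)}$. Hence it descends to a map on the submodule $H_\m^d(R)[x]/\bigl(H_\m^d(R)[x] \cap 0^B_{H_\m^d(R)}\bigr) \hookrightarrow H_\m^d(R)/0^B_{H_\m^d(R)}$. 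The main obstacle is to extend this to a map on all of $H_\m^d(R)/0^B_{H_\m^d(R)}$, which I would handle by invoking the injectivity of $E$ as an $R$-module; composing with the quotient $H_\m^d(R) \twoheadrightarrow H_\m^d(R)/0^B_{H_\m^d(R)}$ then yields the desired $z \in \mytau_B(\omega_R)$. Finally, the ``in particular'' clause is formal: any structure map $B/xB \to C$ of big Cohen-Macaulay algebras induces $0^{B/xB}_{H_\m^{d-1}(R/xR)} \subseteq 0^C_{H_\m^{d-1}(R/xR)}$, and hence $\mytau_C(\omega_{R/xR}) \subseteq \mytau_{B/xB}(\omega_{R/xR}) = \phi(\mytau_B(\omega_R))$.
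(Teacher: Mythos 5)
Your proof is correct and takes essentially the same approach as the paper: the same pair of short exact sequences, the same induced diagram of local cohomology, and the same crucial input that $H^{d-1}_{\m}(B)=0$ makes the connecting map for $B$ injective, followed by Matlis duality. The paper phrases the last step as dualizing the induced injection $\im(\beta)\hookrightarrow\im(\alpha)$ to a surjection $\mytau_B(\omega_R)\twoheadrightarrow\mytau_{B/xB}(\omega_{R/xR})$; your explicit lifting of $z'$ via the injectivity of $E$ is just that exactness of Matlis duality carried out by hand.
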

\begin{proof}
We consider the commutative diagram
\[
\xymatrix{
0 \ar[r] & R \ar[r]^{\cdot x} \ar[d] & R \ar[d] \ar[r] & R/xR \ar[r]\ar[d] & 0\\
0\ar[r] & B\ar[r]^{\cdot x} & B\ar[r] & B/xB \ar[r] & 0
}
\]
which induces
\[
\xymatrix{
{} & H_\m^{d-1}(R/xR) \ar[r] \ar[d]^\beta & H_\m^d(R) \ar[d]^{\alpha} \ar[r]^{\cdot x} & H_\m^d(R) \ar[r]\ar[d] & 0\\
0\ar[r] & H_\m^{d-1}(B/xB) \ar[r] & H_\m^d(B) \ar[r]^{\cdot x} & H_\m^d(B) \ar[r] & 0.
}
\]
Chasing this diagram we know that $\im(\beta) \hookrightarrow \im(\alpha)$.  But the Matlis dual of $\im(\beta)$ is $\mytau_{B/xB}(\omega_{R/xR})$ and the Matlis dual of $\im(\alpha)$ is $\mytau_B(\omega_R)$.  The result follows.
\end{proof}

\begin{corollary}
\label{cor.big CM restriction to char p}
Let $R$ be a complete local domain of mixed characteristic $(0,p)$. If $0\neq x\in R$ is such that $R/xR$ is reduced and of characteristic $p$, then for every integral perfectoid big Cohen-Macaulay $R^+$-algebra $B$, we have $\mytau_{B/xB}(\omega_{R/xR})\supseteq \tau(\omega_{R/xR})$.\footnote{Here $R/xR$ is complete but not necessarily a domain, we can still define $\tau(\omega_{R/xR})$ as in \cite{SmithTestIdeals}, which is the Matlis dual of $0^*_{H_\m^{d-1}(R/xR)}$.}
\end{corollary}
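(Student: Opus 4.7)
The plan is to reduce via Matlis duality to the containment $0^{B/xB}_{H^{d-1}_\m(R/xR)}\subseteq 0^*_{H^{d-1}_\m(R/xR)}$, then to combine the weak functoriality of integral perfectoid big Cohen-Macaulay algebras with \autoref{prop: big CM test ideal char p} applied minimal-prime by minimal-prime. Set $S:=R/xR$; since $R/xR$ has characteristic $p$ we have $p\in xR$, and since $x$ is a nonzerodivisor in the complete local domain $R$ (which is catenary), $S$ is a reduced equidimensional complete local ring of dimension $d-1$. Note first that $B/xB$ is itself a big Cohen-Macaulay $S$-algebra: extending any system of parameters $x_1,\dots,x_{d-1}$ of $S$ to a full system $x,x_1,\dots,x_{d-1}$ of parameters of $R$, its regularity on $B$ descends to regularity of $x_1,\dots,x_{d-1}$ on $B/xB$.

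Next I would apply \autoref{rem.functorialityOfNonReducedAndre} with $B$ specified in advance to the surjections $R\twoheadrightarrow S\twoheadrightarrow S/P_i$, where $P_1,\dots,P_n$ are the minimal primes of $S$. This yields, for each $i$, an integral perfectoid big Cohen-Macaulay $(S/P_i)^+$-algebra $C_i$ together with an $R$-algebra map $B\to C_i$; each such map kills $x$ and hence factors through $B/xB\to C_i$. Applying \autoref{prop: big CM test ideal char p} to each complete local domain $S/P_i$ together with its integral perfectoid big Cohen-Macaulay $(S/P_i)^+$-algebra $C_i$ gives $\ker(H^{d-1}_\m(S/P_i)\to H^{d-1}_\m(C_i))=0^*_{H^{d-1}_\m(S/P_i)}$. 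Factoring the composite $H^{d-1}_\m(S)\to H^{d-1}_\m(C_i)$ through the surjection $H^{d-1}_\m(S)\twoheadrightarrow H^{d-1}_\m(S/P_i)$, I conclude that every $\eta\in 0^{B/xB}_{H^{d-1}_\m(S)}$ satisfies $\eta\bmod P_i\in 0^*_{H^{d-1}_\m(S/P_i)}$ for each $i$.

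To upgrade this prime-by-prime information to membership in $0^*_{H^{d-1}_\m(S)}$, I would pick for each $i$ a witness $\bar c_i\in (S/P_i)^\circ$ such that $\bar c_iF^e(\eta\bmod P_i)=0$ in $F^e_*H^{d-1}_\m(S/P_i)$ for all $e\gg 0$, and lift to an element $\tilde c_i\in S\setminus P_i$. Then $c:=\tilde c_1\cdots \tilde c_n$ has the property that $cF^e(\eta)$ lies in $\ker\bigl(H^{d-1}_\m(S)\to H^{d-1}_\m(\bar S)\bigr)$ for $\bar S:=\prod_i S/P_i$, since in each product factor the slot $j$ contains the annihilating factor $\tilde c_j\bmod P_j=\bar c_j$. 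The long exact sequence associated to $0\to S\to \bar S\to \bar S/S\to 0$, combined with the vanishing $H^{d-1}_\m(\bar S/S)=0$ (because $\bar S_{P_i}=S_{P_i}$ at every minimal prime, so $\dim(\bar S/S)<d-1$), shows this kernel is a quotient of $H^{d-2}_\m(\bar S/S)$ and is therefore annihilated by the conductor $\mathfrak{c}:=\mathrm{Ann}_S(\bar S/S)$. The same localization argument forces $\mathfrak{c}\not\subseteq P_i$ for any $i$, so $\mathfrak{c}\cap S^\circ\neq\emptyset$; choosing any $d\in\mathfrak{c}\cap S^\circ$, together with prime-avoiding adjustments of the lifts $\tilde c_i$ guaranteeing $c\in S^\circ$, we get $(dc)F^e(\eta)=0$ in $F^e_*H^{d-1}_\m(S)$ for $e\gg 0$ with $dc\in S^\circ$, placing $\eta\in 0^*_{H^{d-1}_\m(S)}$.

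The main conceptual obstacle is the final step: the weak functoriality of \autoref{rem.functorialityOfNonReducedAndre} delivers everything cleanly on each domain component $S/P_i$, but passing from tight closure information in the various $H^{d-1}_\m(S/P_i)$ to tight closure in $H^{d-1}_\m(S)$ requires the conductor/test-element machinery for reduced equidimensional rings. Equidimensionality of $S$ (guaranteed by $R$ being a catenary complete local domain) is essential here, since it is what underlies both the pairwise incomparability of the $P_i$ used in prime avoidance and the fact that the conductor escapes every minimal prime.
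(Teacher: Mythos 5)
Your proof is correct and takes essentially the same route as the paper's: reduce by Matlis duality, use weak functoriality (\autoref{theorem: Andre}, in the form of \autoref{rem.functorialityOfNonReducedAndre}) to produce, for each minimal prime $Q_i$ of $(x)$, a big Cohen--Macaulay $(R/Q_i)^+$-algebra receiving a map from $B/xB$, and then apply \autoref{prop: big CM test ideal char p} component by component. The only difference is that where the paper concludes in one sentence from reducedness that $0^{B/xB}_{H^{d-1}_\m(R/xR)}\subseteq 0^*_{H^{d-1}_\m(R/xR)}$, you spell out that step explicitly via the conductor to $\prod_i R/Q_i$ together with prime avoidance, which is a correct justification of exactly the assertion the paper leaves implicit.
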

\begin{proof}
By \autoref{theorem: Andre}, for every minimal prime $Q$ of $(x)$, the surjective ring map $R\to R/xR \to  R/Q$ fits into a commutative diagram:
\[\xymatrix{
R \ar[r] \ar[d]& R/xR\ar[r] \ar[d] & R/Q \ar[d] \\
B\ar[r]& B/xB\ar[r] & B(Q)
}\]
where $B(Q)$ is a big Cohen-Macaulay $(R/Q)^+$-algebra. This induces a commutative diagram:
\[\xymatrix{
H_\m^{d-1}(R/xR)\ar[r] \ar[d] & H_\m^{d-1}(R/Q) \ar[d] \\
H_\m^{d-1}(B/xB)\ar[r] & H_\m^{d-1}(B(Q))
}\]
Chasing this diagram, it is easy to see that the image of $0^{B/xB}_{H_\m^{d-1}(R/xR)}$ in $H_\m^{d-1}(R/Q)$  is contained in $0^{B(Q)}_{H_\m^{d-1}(R/Q)}=0^*_{H_\m^{d-1}(R/Q)}$ by \autoref{prop: big CM test ideal char p}. Since this is true for every minimal prime $Q$ of $R/xR$ and $R/xR$ is reduced, it follows that $0^{B/xB}_{H_\m^{d-1}(R/xR)}\subseteq 0^*_{H_\m^{d-1}(R/xR)}$ and hence $\mytau_{B/xB}(\omega_{R/xR})\supseteq \tau(\omega_{R/xR})$ by applying Matlis duality.
\end{proof}

We now compare our parameter test module with the Grauert-Riemenschneider multiplier submodule producing a generalization of \autoref{prop: big CM rational implies pseudo-rational}. Let $(R,\m)$ be an excellent local domain and let $\pi$: $X\to \Spec R$ be a projective birational map with $E$ the pre-image of $\{\m\}$. Let $K$ be the kernel of the natural map $H_\m^d(R)\to H_E^d(X, \cO_X) = \myH^d(\myR \Gamma_m(\myR \pi_* \cO_X))$. Then by local and Grothendieck duality we know that $\Ann_{\omega_R}K$ can be identified with $\pi_*\omega_X$, see \autoref{defProp.Char0}.

\begin{proposition}
\label{prop: big CM test ideal birational}
Let $(R,\m)$ be a complete local domain of dimension $d$, and let $\pi$: $X\to \Spec R$ be a proper birational map. Then there exists a big Cohen-Macaulay $R^+$-algebra $C$ (that can be assumed to be integral perfectoid in mixed characteristic) such that $\pi_*\omega_X\supseteq \mytau_C(\omega_R)$. In particular, in mixed characteristic we have $\mytau_\scr{B}(\omega_R)\subseteq \pi_*\omega_X$ for all such $\pi$.
\end{proposition}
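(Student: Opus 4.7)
The plan is to Matlis dualize the desired containment and then use weak functoriality of big Cohen-Macaulay algebras together with the Sancho-de-Salas sequence, essentially mimicking the argument of \autoref{prop: big CM rational implies pseudo-rational} but extracting the kernel information rather than just its vanishing.

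First I would reduce to the case that $\pi$ is projective using Chow's lemma: given proper birational $\pi\colon X\to \Spec R$, there is a projective birational $\pi'\colon X'\to X$ with $X'$ normal such that $\pi\circ\pi'$ is projective, and since $(\pi\circ\pi')_*\omega_{X'}=\pi_*(\pi'_*\omega_{X'})\subseteq \pi_*\omega_X$, it suffices to prove $\mytau_C(\omega_R)\subseteq (\pi\circ\pi')_*\omega_{X'}$. So assume $X=\Proj R[Jt]$ for some ideal $J\subseteq R$. By local/Grothendieck duality (\autoref{defProp.Char0}), $\pi_*\omega_X=\Ann_{\omega_R}K$ where $K=\ker\!\bigl(H_\m^d(R)\to H_E^d(X,\O_X)\bigr)$, so by Matlis duality the desired containment $\mytau_C(\omega_R)\subseteq \pi_*\omega_X$ is equivalent to $K\subseteq 0^C_{H_\m^d(R)}$.

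Next I would produce $C$ using weakly functorial BCM algebras. Set $S=R[Jt]$, $\n=\m+Jt$, and factor the natural surjection $\widehat{S_\n}\twoheadrightarrow R$ through $\widehat{S_\n}/P$ for a minimal prime $P$ of $\widehat{S_\n}$ with $\dim(\widehat{S_\n}/P)=d+1$; then $R=(\widehat{S_\n}/P)/Q$ for some height-one prime $Q$. Since $R$ and $\widehat{S_\n}$ have the same residual characteristic, \autoref{theorem: Andre} produces a commutative diagram
\[
\xymatrix{
\widehat{S_\n}/P \ar[r] \ar[d] & R \ar[d] \\
B \ar[r] & C
}
\]
where $B$ is an integral perfectoid big Cohen-Macaulay $(\widehat{S_\n}/P)^+$-algebra and $C$ is an integral perfectoid big Cohen-Macaulay $R^+$-algebra (in equal characteristic we simply use the analogue from \cite{HochsterHunekeApplicationsofBigCM}, and the ``integral perfectoid'' clause is dropped).

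Now I would run the key diagram chase. By the Sancho-de-Salas exact sequence,
\[
[H_{\m+Jt}^d(R[Jt])]_0=H_\n^d(\widehat{S_\n}) \longrightarrow H_\m^d(R) \longrightarrow H_E^d(X,\O_X),
\]
so $K$ lies in the image of $H_\n^d(\widehat{S_\n})\to H_\m^d(R)$. Consider
\[
\xymatrix{
H_\n^d(\widehat{S_\n}) \ar[r] & H_\n^d(\widehat{S_\n}/P) \ar[d] \ar[r] & H_\m^d(R) \ar[d]^{\alpha} \\
 & 0=H_\n^d(B) \ar[r] & H_\m^d(C)
}
\]
The bottom-left vanishing holds because $B$ is a balanced big Cohen-Macaulay algebra over the $(d+1)$-dimensional ring $\widehat{S_\n}/P$. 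Chasing the diagram, every element of $K$ maps to zero in $H_\m^d(C)$, i.e.\ $K\subseteq 0^C_{H_\m^d(R)}$, so $\mytau_C(\omega_R)\subseteq \pi_*\omega_X$ by Matlis duality.

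The final assertion about $\mytau_\scr{B}(\omega_R)$ is then immediate: by \autoref{remark:completion}(1) we have $\mytau_\scr{B}(\omega_R)\subseteq \mytau_{C}(\omega_R)$ for every integral perfectoid big Cohen-Macaulay $R^+$-algebra $C$, so applying the previous step to any $C$ produced as above for a fixed $\pi$ yields $\mytau_\scr{B}(\omega_R)\subseteq \pi_*\omega_X$. The main obstacle, or at least the only non-formal input, is the availability of the weakly functorial integral perfectoid big Cohen-Macaulay algebras of \autoref{theorem: Andre}; the rest is duality and a short diagram chase modeled directly on \autoref{prop: big CM rational implies pseudo-rational}.
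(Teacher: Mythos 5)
Your proposal is correct and follows essentially the same route as the paper's proof: reduce to the projective case by Chow's lemma, dualize to the kernel $K$ of $H_\m^d(R)\to H_E^d(X,\O_X)$, use the Sancho-de-Salas sequence to place $K$ in the image of $H_\n^d(\widehat{S_\n})$, and kill it via the weakly functorial big Cohen-Macaulay algebras for $\widehat{S_\n}/P \to R$ from \autoref{theorem: Andre} (resp.\ Hochster--Huneke in equal characteristic). The only differences are cosmetic (your explicit justification of the Chow's lemma reduction and of the final $\mytau_\scr{B}$ assertion), so nothing further is needed.
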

\begin{proof}
Using Chow's lemma, it is harmless to assume that $\pi$ is projective.  Since $X\to \Spec R$ is projective and birational, $X=\Proj R[Jt]$ for some ideal $J\subseteq R$. We let $S=R[Jt]$, which is an excellent local domain. Let $\n\subseteq S$ denote the maximal ideal $\m+Jt$. Let
\[
K = \ker \big( H_\m^d(R)\to H_E^d(X, \cO_X) \big)
\]
with $E$ the pre-image of $\{\m\}$. By the Sancho-de-Salas exact sequence \cite{SanchodeSalasBlowingupmorphismswithCohenMacaulayassociatedgradedrings}, we have
\[
[H_{\n}^d(S)]_0\to H_\m^d(R)\to H_E^d(X, \cO_X).
\]
This sequence implies that
\begin{equation}
\label{equation: K in the image}
K \subseteq \Image\big(H_{\n}^d(S)\to H_\m^d(R)\big).
\end{equation}

Next we consider the surjective map $\widehat{S_\n}\to R$. Since $R$ is a domain this map factors through $\widehat{S_\n}/P$ for some minimal prime $P$ of $\widehat{S_\n}$. We note that $\dim(\widehat{S_\n}/P)=\dim R+1$ and hence $R$ is obtained from $\widehat{S_\n}/P$ by killing a height one prime $Q$. We now apply \cite[Theorem 3.9]{HochsterHunekeApplicationsofBigCM} in equal characteristic, and apply \autoref{theorem: Andre} in mixed characteristic to obtain a commutative diagram:
\[\xymatrix{
\widehat{S_\n}/P \ar[r] \ar[d] & R \ar[d] \\
B \ar[r] & C
}\]
where $B$, $C$ are big Cohen-Macaulay algebras over $(\widehat{S_\n}/P)^+$ and $R^+$ respectively (and in mixed characteristic $C$ is also integral perfectoid). This induces a commutative diagram of local cohomology:
\[\xymatrix{
H_\n^d(S)\cong H_\n^d(\widehat{S_\n}) \ar[r] & H_\n^d(\widehat{S_\n}/P) \ar[r] \ar[d] & H_\m^d(R) \ar[d] \\
{} & 0=H_\n^d(B) \ar[r] & H_\m^d(C)
}\]
By \autoref{equation: K in the image}, $K$ is in the image of $H_{\n}^d(S)\to H_\m^d(R)$. Thus chasing the diagram we find that $K\subseteq 0^C_{H_\m^d(R)}$. By local duality we find that $$\pi_*\omega_X=\Ann_{\omega_R}K\supseteq \mytau_C(\omega_R).$$ This finishes the proof.
\end{proof}

Combining the earlier results in the section, we obtain the following.
\begin{theorem}
\label{theorem: main}
Let $(R,\m)$ be an excellent analytically irreducible local domain of mixed characteristic $(0,p)$. Suppose $x\in R$ is a nonzerodivisor such that $R/xR$ is reduced and of characteristic $p$. Let $\pi$: $X\to \Spec R$ be a projective birational map. Then the image of $\pi_*\omega_X$ in $\omega_{R/xR}$ under the natural map $\pi_*\omega_X\to \omega_R\to\omega_R/x\omega_R\to\omega_{R/xR}$ contains $\tau(\omega_{R/xR})$. In particular, if $R/xR$ is $F$-rational, then $R$ is pseudo-rational.
\end{theorem}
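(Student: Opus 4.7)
The plan is to chain together three earlier results of the section to establish the containment, and then deduce pseudo-rationality by Nakayama. Since the relevant statements (\autoref{prop: big CM test ideal birational}, \autoref{theorem: big CM restriction}, \autoref{cor.big CM restriction to char p}) are formulated for complete local domains, I would first reduce to the complete case. Because $R$ is analytically irreducible, $\widehat{R}$ is still a local domain; $x$ remains a nonzerodivisor on $\widehat{R}$ by flatness, $\widehat{R}/x\widehat{R}\cong\widehat{R/xR}$ is still reduced of characteristic $p$ and (in the ``In particular'' part) $F$-rational because $F$-rationality is preserved under completion of excellent rings. The base change $\widehat\pi\:\widehat{X}\to\Spec\widehat{R}$ is again projective birational, and by proper base change $\widehat\pi_*\omega_{\widehat X}$ is the completion of $\pi_*\omega_X$, while $\omega_{\widehat R}\cong \widehat{\omega_R}$. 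Thus the desired containment (and later, equality) over $R$ can be verified after completion.

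Working over $\widehat{R}$, \autoref{prop: big CM test ideal birational} furnishes an integral perfectoid big Cohen-Macaulay $\widehat{R}^+$-algebra $C$ with $\widehat\pi_*\omega_{\widehat X}\supseteq \mytau_C(\omega_{\widehat R})$. The restriction-type result \autoref{theorem: big CM restriction} then identifies the image of $\mytau_C(\omega_{\widehat R})$ under $\omega_{\widehat R}\twoheadrightarrow \omega_{\widehat R/x\widehat R}$ with $\mytau_{C/xC}(\omega_{\widehat R/x\widehat R})$. Finally, since $\widehat R/x\widehat R$ is reduced of characteristic $p$ and $C/xC$ is a big Cohen-Macaulay $(\widehat R/x\widehat R)$-algebra obtained by modding out a nonzerodivisor from a perfectoid big Cohen-Macaulay $\widehat R^+$-algebra, \autoref{cor.big CM restriction to char p} gives $\mytau_{C/xC}(\omega_{\widehat R/x\widehat R})\supseteq \tau(\omega_{\widehat R/x\widehat R})$. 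Concatenating these three inclusions proves the first assertion.

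For the ``In particular'' statement, $F$-rationality of $R/xR$ implies $\tau(\omega_{R/xR})=\omega_{R/xR}$, so by what we just proved the image of $\pi_*\omega_X$ in $\omega_{R/xR}$ is all of $\omega_{R/xR}$. Moreover $R$ is Cohen-Macaulay because $R/xR$ is Cohen-Macaulay and $x$ is a nonzerodivisor, so $\omega_R/x\omega_R\cong \omega_{R/xR}$; this yields $\pi_*\omega_X+x\omega_R=\omega_R$. Since $\pi$ is proper, $\omega_R/\pi_*\omega_X$ is a finitely generated $R$-module, and $x\in\fram$ (as $R/xR$ has residue characteristic $p$ while $R$ has characteristic $0$), so Nakayama's lemma gives $\pi_*\omega_X=\omega_R$. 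This shows surjectivity of the trace for every projective birational $\pi$; the extension to arbitrary proper birational maps with $Y$ normal (as in the definition of pseudo-rational) is handled by dominating by a projective birational modification via Chow's lemma. The main technical subtlety, which I would have to argue carefully, is the reduction to complete rings and verifying that the results invoked behave well under base change to $\widehat R$; everything else is a clean diagram chase plus Nakayama.
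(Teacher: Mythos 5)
Your proposal is correct and follows essentially the same route as the paper: reduce to the complete case by flat base change, then chain \autoref{prop: big CM test ideal birational}, \autoref{theorem: big CM restriction}, and \autoref{cor.big CM restriction to char p}, and deduce pseudo-rationality from $F$-rationality via $\tau(\omega_{R/xR})=\omega_{R/xR}$, deformation of Cohen–Macaulayness, and Nakayama (with Chow's lemma to pass from projective to proper birational maps). The only differences are cosmetic — the paper leaves the Nakayama step and the completion compatibilities slightly more implicit than you do.
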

\begin{proof}
First of all we will replace $R$ by its completion $\widehat{R}$ and replace $X$ by $X\times _{\Spec R}\Spec \widehat{R}$. Since $R$ is excellent and $R/xR$ is reduced, $\widehat{R}/x\widehat{R}$ is still reduced. It is clear that under this flat base change $\pi_*\omega_X$ and $\tau(\omega_{R/xR})$ will be changed to $\pi_*\omega_X\otimes\widehat{R}$ and $\tau(\omega_{R/xR})\otimes \widehat{R}$. Therefore without loss of generality, we can assume $R$ is a complete local domain.

Applying \autoref{prop: big CM test ideal birational}, we know that there exists an integral perfectoid big Cohen-Macaulay $R^+$-algebra $B$ such that $\pi_*\omega_X\supseteq \mytau_{B}(\omega_R)$. By \autoref{theorem: big CM restriction}, we know that the image of $\mytau_{B}(\omega_R)$ under the natural map $\mytau_{B}(\omega_R)\to \omega_R\to\omega_R/x\omega_R\to\omega_{R/xR}$ contains $\mytau_{B/xB}(\omega_{R/xR})$. But then by \autoref{cor.big CM restriction to char p}, we know that $\mytau_{B/xB}(\omega_{R/xR})\supseteq \tau(\omega_{R/xR})$.

The last assertion is already proved in \autoref{theorem: F-ratPseudorat}, but we pointed out that it follows directly from the more general statement. So suppose $R/xR$ is $F$-rational, then $R/xR$ is normal and Cohen-Macaulay. We know that $R$ is normal and Cohen-Macaulay (because these two properties deform). Therefore to show $R$ is pseudo-rational, it suffices to show that $\pi_*\omega_X=\omega_R$ for every projective birational map $\pi$: $X\to\Spec R$. But we already know that the image of $\pi_*\omega_X$ in $\omega_R/x\omega_R=\omega_{R/xR}$ contains $\tau(\omega_{R/xR})=\omega_{R/xR}$ by $F$-rationality. Thus $\pi_*\omega_X=\omega_R$ as desired.
\end{proof}

\subsection{Big Cohen-Macaulay parameter test submodules and the singular locus}

In characteristic $p>0$, the parameter test submodule $\tau(\omega_R)$ basically measures how far $R$ is from being $F$-rational. In particular, for every $h$ such that $R_h$ is regular, $\tau(\omega_R)_h=\tau(\omega_{R_h})=R_h$ and hence a fixed power of $h$, $h^N$, multiplies $\omega_R$ into the parameter test submodule. It is natural to ask whether analogous results hold in mixed characteristic. In this subsection we will partially answer this question.

\begin{theorem}[Uniform annihilation]
\label{theorem: uniform annihilation}
Let $(A,\m_A)\to (R,\m)$ be a module-finite extension such that $A$ is a complete regular local ring of mixed characteristic $(0,p)$ and $R$ is a complete local domain. Suppose $h\in A$ is such that $A_h\to R_h$ is finite \'{e}tale. Then there exists an integer $N$ such that $h^N 0^{\scr{B}}_{H_\m^d(R)}=0$, or dually $h^N \omega_R \subseteq \mytau_\scr{B}(\omega_R)$.
\end{theorem}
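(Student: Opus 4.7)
The plan is to prove the equivalent dual statement $h^N \omega_R \subseteq \mytau_{\scr{B}}(\omega_R)$ via a trace formula coming from the étale hypothesis combined with reduction modulo $p$ and the classical characteristic $p$ test element theorem. First I would apply Cohen's structure theorem to reduce to the case $A = C(k)\llbracket x_1,\dots,x_{d-1}\rrbracket$, so that $p,x_1,\dots,x_{d-1}$ is a regular system of parameters of both $A$ and $R$. Since $A$ is regular, $\omega_A = A$ and $\omega_R \cong \Hom_A(R,A)$, with the Grothendieck trace $\Tr\colon \omega_R \to A$ given by $\phi \mapsto \phi(1)$. By \autoref{prop.tau=tauB}, it suffices to prove the statement for one sufficiently large integral perfectoid big Cohen-Macaulay $R^+$-algebra $B$ in place of the union over all $B$.

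The étaleness of $A_h \to R_h$ makes the trace pairing $R_h \times R_h \to A_h$ non-degenerate, so after clearing denominators one obtains elements $r_1,\dots,r_n$ and $s_1,\dots,s_n$ in $R$ (depending only on $A\to R$ and $h$) together with a trace identity
\[
h^{N_1}\, z \;=\; \sum_{i=1}^n \Tr_{R/A}(r_i z)\cdot s_i, \qquad \text{for all } z \in R.
\]
Applying this identity on local cohomology gives, for every $\eta \in H_\m^d(R) = H_{\m_A}^d(R)$,
\[
h^{N_1}\eta \;=\; \sum_i s_i \cdot \Phi_i(\eta),
\]
where $\Phi_i\colon H_{\m_A}^d(R) \to H_{\m_A}^d(A) = E_A$ is induced by the $A$-linear map $R \to A$, $z \mapsto \Tr(r_i z)$, and $s_i$ acts via $A \to R$, $a \mapsto a s_i$. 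Thus it is enough to find a uniform $N_0$ (depending only on $A\to R$ and $h$) with $h^{N_0}\Phi_i(\eta) = 0$ in $E_A$ for all $\eta \in 0^B_{H_\m^d(R)}$ and all $i$; then $N = N_0 + N_1$ works.

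The remaining claim is a statement inside $E_A = H_{\m_A}^d(A)$. Since $A$ is regular, $A \to B$ is faithfully flat, hence $E_A \hookrightarrow H_\m^d(B)$, and more generally $H_{\m_A}^{d-1}(A/p^sA) \hookrightarrow H_\m^{d-1}(B/p^sB)$; combined with $H_\m^{d-1}(B) = 0$ (as $B$ is big Cohen-Macaulay), all connecting homomorphisms from the exact sequences $0 \to -/p^{s-1} \to -/p^s \to -/p \to 0$ on the $B$-side are injective. To produce $N_0$, I reduce modulo $p$: the extension $A/pA \to R/pR$ is module-finite with $A/pA$ regular of dimension $d-1$, and remains finite étale after inverting $h$. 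The classical Hochster--Huneke test element theorem (\cite[Theorem~6.13]{HochsterHunekeTC1}), applied to each complete local domain quotient $R/(p,Q_i)$ of $R/pR$ and taking the maximum over minimal primes, yields an integer $N_0$ with $h^{N_0} \cdot 0^*_{H_\m^{d-1}(R/pR)} = 0$; combined with \autoref{cor.big CM restriction to char p} this gives $h^{N_0} \cdot 0^{B/pB}_{H_\m^{d-1}(R/pR)} = 0$.

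The main obstacle, and the key step, is to transfer this characteristic $p$ annihilation back to $E_A$ uniformly in the $p$-power order of $\eta$. Naively inducting on $s$ with $p^s \eta = 0$ produces a bound of the form $sN_0$ which is not uniform; this is overcome by observing that $\Phi_i$ is natural with respect to the long exact sequences in $p$, so the reduction $\overline{\Phi_i(\eta)} \in H_{\m_A}^{d-1}(A/pA)$ is the image under the analogous $\overline{\Phi_i}$ of the reduction of $\eta$ (or of a lift $\tilde\eta \in H_\m^{d-1}(R/p^sR)$ coming from the connecting homomorphism), which lies in $0^{B/pB}_{H_\m^{d-1}(R/pR)}$ by the same argument applied to $B/pB$. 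Combining this with the injectivity statements above and using \autoref{thm.DominateAnySetOfBCM} to absorb each inductive step into a single sufficiently large $B$, we conclude that $h^{N_0}\Phi_i(\eta) = 0$ in $E_A$ with $N_0$ independent of $s$, completing the proof.
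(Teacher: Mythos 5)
Your reduction to a single sufficiently large $B$ via \autoref{prop.tau=tauB} and the trace identity $h^{N_1} z=\sum_i \Tr(r_iz)s_i$ coming from \'etaleness are fine, and they correctly shift the problem to showing that some uniform power of $h$ kills $\Phi_i\big(0^{B}_{H_\m^d(R)}\big)$ inside $E_A$. But the route you propose for that key claim has two genuine gaps. First, the characteristic-$p$ input is not available under the hypotheses of \autoref{theorem: uniform annihilation}: \autoref{cor.big CM restriction to char p} requires $R/pR$ to be reduced, which is not assumed, and the Hochster--Huneke test element theorem requires $h$ to avoid the minimal primes of $pR$. The theorem explicitly allows, say, $h=p$ (the hypothesis is then just that $A[1/p]\to R[1/p]$ is finite \'etale); in that case $h=0$ in $R/pR$, the statement ``$h^{N_0}$ kills $0^{B/pB}_{H_\m^{d-1}(R/pR)}$'' is vacuous, and your transfer step would have to manufacture the nontrivial conclusion $p^N 0^{\scr{B}}_{H_\m^d(R)}=0$ from no information at all.

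Second, and more fundamentally, the transfer back to $E_A$ uniformly in the $p$-power order of $\eta$ is asserted but not proved. If $p^s\eta=0$, one can indeed lift $\eta$ to $\tilde\eta\in H_\m^{d-1}(R/p^sR)$, note that $\tilde\eta$ dies in $H_\m^{d-1}(B/p^sB)$ (because $H_\m^{j}(B/p^jB)$ vanishes below degree $d-1$), and use the injectivity of the connecting maps on the $A$- and $B$-sides; but the only way these facts combine with the mod-$p$ annihilation is the $p$-adic induction you yourself describe, which picks up a factor $h^{N_0}$ at each of the $s$ steps and gives the non-uniform bound $sN_0$. Naturality of $\Phi_i$ and those injectivity statements do not remove the per-step factor, and \autoref{thm.DominateAnySetOfBCM} is orthogonal to the problem: $B$ is already fixed, and the failure of uniformity is in the exponent of $h$, not in the choice of big Cohen--Macaulay algebra. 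So the uniform $N_0$ is precisely what remains to be established, and no mechanism is supplied. The paper obtains uniformity by a completely different device: it passes to the Galois closure $R'$ of $R$ over $A$, forms the Galois-symmetrized kernel $W=\sum_{\sigma'\in\scr{G}'}\sigma'\big(0^{B}_{H_\m^d(R')}\big)$, uses \autoref{thm.DominateAnySetOfBCM} to dominate all the Galois twists $B_\sigma$ by a single integral perfectoid big Cohen--Macaulay $R^+$-algebra $C$, deduces $W\cap H_\m^d(A)=0$ from faithful flatness of $C$ over the regular ring $A$, and then applies the Galois trace lemma (\autoref{lemma: Galois}) after inverting $h$ to conclude that $\big(0^{B}_{H_\m^d(R)}\big)^\vee_h=0$, i.e.\ a single power of $h$ annihilates $0^{B}_{H_\m^d(R)}$. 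Some substitute for this symmetrization-plus-flatness argument, or another genuine source of uniformity, is what your proposal is missing.
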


To prove this theorem we need a simple lemma on Galois theory of rings.
\begin{lemma}
\label{lemma: Galois}
Let $A\to R$ be a finite extension of Noetherian normal domains. Suppose the induced map on the fraction field $L_A\to L_R$ is Galois with Galois group $\scr{G}$. If $I\subseteq R$ is a $\scr{G}$-invariant ideal of $R$ such that $\Tr_{L_R/L_A}(I)=A$, then $I=R$.
\end{lemma}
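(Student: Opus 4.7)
The plan is to exploit the Galois action on $R$ and the $\scr{G}$-invariance of $I$ in the most direct way possible. The key observation is that, since $R$ is a normal domain finite over $A$ with fraction field $L_R$, it coincides with the integral closure of $A$ in $L_R$. Therefore every $\sigma\in\scr{G}$, which a priori acts only on $L_R$, must carry $R$ into itself (integrality over $A$ is preserved by $\sigma$ because $\sigma$ fixes $A$).

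Given this, for any $x\in I$ the elements $\sigma(x)$ all lie in $I$ by the $\scr{G}$-invariance hypothesis, so
\[
\Tr_{L_R/L_A}(x)=\sum_{\sigma\in\scr{G}}\sigma(x)\in I.
\]
Since the trace of an element of $R$ lands in $A$ (this is where normality of $A$ is used together with the fact that $R$ is integral over $A$), we obtain $\Tr_{L_R/L_A}(I)\subseteq I\cap A$. Combining with the hypothesis $\Tr_{L_R/L_A}(I)=A$, this gives $A\subseteq I$, and in particular $1\in I$, hence $I=R$.

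There is no serious obstacle; the only point to double check is that $\scr{G}$ really does act on $R$ (not merely on $L_R$), which requires the normality of $R$ as I noted above, and that Galois implies separable so that the trace form is non-degenerate (needed only to make the hypothesis $\Tr_{L_R/L_A}(I)=A$ non-vacuous). No additional machinery from the paper is required.
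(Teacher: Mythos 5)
Your argument is correct and is essentially the same as the paper's: use that the Galois trace is the sum over $\scr{G}$, that $\scr{G}$-invariance of $I$ forces $\Tr_{L_R/L_A}(I)\subseteq I\cap A$ (with normality of $A$ giving $\Tr_{L_R/L_A}(R)\subseteq A$), and then the hypothesis $\Tr_{L_R/L_A}(I)=A$ puts $1\in I$. Your explicit remark that $\scr{G}$ preserves $R$ (since $R$ is the integral closure of $A$ in $L_R$) is a point the paper leaves implicit, but it changes nothing substantive.
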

\begin{proof}
Since $L_A\to L_R$ is Galois, we know that $$\Tr_{L_R/L_A}(x)=\sum_{\sigma\in\scr{G}}\sigma(x)$$ for every $x\in L_R$. Since $A$ is normal, we know that $\Tr_{L_R/L_A}(R)\subseteq A$. Now suppose $z\in I$, since $I$ is $\scr{G}$-invariant it follows directly from the above formula that $\Tr_{L_R/L_A}(z)\in I$. Therefore $$\Tr_{L_R/L_A}(I)\subseteq A\cap I,$$ thus $\Tr_{L_R/L_A}(I)=A$ implies $I=R$.
\end{proof}

\begin{proof}[Proof of \autoref{theorem: uniform annihilation}]
  We pick a (sufficiently large) integral perfectoid big Cohen-Macaulay $R^+$-algebra $B$ that satisfies the conclusion of \autoref{prop.tau=tauB}.

First we show that we can reduce to the case that $R$ is normal.  Indeed, let $R^{\textnormal{N}}$ be the normalization of $R$, then $A_h \to R^{\textnormal{N}}_h$ is still \'etale and $R_h \to R^{\textnormal{N}}_h$ is an isomorphism.  Furthermore, by \autoref{lem.FiniteMapsTransformationForParamTestIdeals}, $\Tr(\tau_B(\omega_{R^{\textnormal{N}}})) = \tau_B(\omega_R)$ where in this case $\Tr : \omega_{R^{\textnormal{N}}} \hookrightarrow \omega_R$ is an inclusion which becomes an isomorphism after inverting $h$.  It is then clear we may assume that $R$ normal.

 It is enough to prove that there exists $N$ such that $h^N0^B_{H_\m^d(R)}=0$.

Let $L_{A}$ and $L_{R}$ be the fraction fields of $A$ and ${R}$ respectively. We let $L'$ be the Galois closure of $L_R$ over $L_A$ inside an algebraic closure $\overline{L}_A$ of $L_{A}$. We write $\scr{G}=\text{Gal}(\overline{L}_A/L_A)$, and write $\scr{G}'=\text{Gal}(L'/L_A)$. Note that $\scr{G}'=\scr{G}/\scr{H}$ for some normal subgroup $\scr{H}$ of $\scr{G}$. We further let $R'$ be the integral closure of ${R}$ in $L'$. Then $A$ is the ring of invariants of $R'$ under $\scr{G}'$ (and is the ring of invariants of $A^+=R^+$ under $\scr{G}$), and $R'$ is the ring of invariants of $A^+=R^+$ under $\scr{H}$. More importantly, since $A_h\to {R}_h$ is finite \'{e}tale, $A_h\to R'_h$ is also finite \'{e}tale. 
Since the $\scr{G}'$-action on $R'$ induces a $\scr{G}'$-action on $H_\m^d(R')$, we set
\[
W=\sum_{\sigma'\in\scr{G}'}\sigma'(0^B_{H_\m^d(R')}).
\]

\begin{claim}
\label{clm.ImageOfTildeWInAisZero}
We have $H_\m^d(A)\cap W=0$ in $H_\m^d(R')$. Consequently, the composite map
\begin{equation}
\label{equation: trace surjection}
\Ann_{\omega_{R'}}W\hookrightarrow \omega_{R'}\xrightarrow{\Tr} \omega_A
\end{equation}
is a surjection.
\end{claim}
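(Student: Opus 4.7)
The plan is to prove $H_\m^d(A)\cap W=0$ first, then deduce the trace surjectivity by Matlis dualizing over $A$. The key technical step is to realize each summand $\sigma'(0^B_{H_\m^d(R')})$ of $W$ as the vanishing kernel for some other integral perfectoid big Cohen-Macaulay $R^+$-algebra, so that \autoref{thm.DominateAnySetOfBCM} may be applied to the resulting finite family indexed by $\scr{G}'$.

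For each $\sigma'\in\scr{G}'$ I would lift to $\sigma\in\scr{G}$ and form $B^{(\sigma)}$, the underlying ring $B$ endowed with the new $R^+$-algebra structure $R^+\xrightarrow{\sigma^{-1}} R^+\xrightarrow{f} B$, where $f$ denotes the original structure map. Because $\sigma$ fixes $A$ pointwise, $B^{(\sigma)}$ has the same $A$-algebra structure as $B$; in particular a system of parameters $x_1,\dots,x_d$ of $A$ (which is also one for $R$ and $R'$) remains a regular sequence, so $B^{(\sigma)}$ is still an integral perfectoid big Cohen-Macaulay $R^+$-algebra. A direct check on the standard representatives $[z/x_1\cdots x_d]$ of local cohomology shows that the composition $R'\hookrightarrow R^+\to B^{(\sigma)}$ is $r\mapsto f((\sigma')^{-1}(r))$, so the induced map $H_\m^d(R')\to H_\m^d(B^{(\sigma)})$ sends $\sigma'(\eta)$ to $f_\ast(\eta)$: the $(\sigma')^{-1}$ from the twisted structure cancels the $\sigma'$ from the Galois action, using that the $x_i\in A$ are $\sigma'$-fixed. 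Therefore $\sigma'(0^B_{H_\m^d(R')})\subseteq 0^{B^{(\sigma)}}_{H_\m^d(R')}$. Now I would apply \autoref{thm.DominateAnySetOfBCM} to $\{B^{(\sigma)}\}_{\sigma'\in\scr{G}'}$ to obtain an integral perfectoid big Cohen-Macaulay $R^+$-algebra $B^\ast$ through which every $R^+\to B^{(\sigma)}$ factors; the intertwining relation $\psi\circ f\circ\sigma^{-1}=f^\ast$ for these $R^+$-algebra maps $\psi\colon B^{(\sigma)}\to B^\ast$ forces $\sigma'(0^B_{H_\m^d(R')})\subseteq 0^{B^\ast}_{H_\m^d(R')}$ for every $\sigma'$, and hence $W\subseteq 0^{B^\ast}_{H_\m^d(R')}$. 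Because $A$ is regular and $B^\ast$ is big Cohen-Macaulay over $A$, it is faithfully flat over $A$, so $H_\m^d(A)\to H_\m^d(B^\ast)$ is injective; as this factors through $H_\m^d(R')$, we conclude $H_\m^d(A)\cap W=0$.

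For the consequential surjectivity, Matlis dualize over the complete local ring $A$. The condition $H_\m^d(A)\cap W=0$ says that the composition $H_\m^d(A)\to H_\m^d(R')\twoheadrightarrow H_\m^d(R')/W$ is injective. Under the identifications $\omega_A=H_\m^d(A)^\vee$, $\omega_{R'}=\Hom_A(R',\omega_A)=H_\m^d(R')^\vee$, $\Ann_{\omega_{R'}}(W)=(H_\m^d(R')/W)^\vee$, and $\Tr$ equal to the Matlis dual of the natural map $H_\m^d(A)\to H_\m^d(R')$, this injection dualizes to precisely the composition in \autoref{equation: trace surjection}, which is therefore a surjection. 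The main obstacle is the twisting bookkeeping: verifying that $B^{(\sigma)}$ is genuinely an integral perfectoid big Cohen-Macaulay $R^+$-algebra (so that \autoref{thm.DominateAnySetOfBCM} applies) and tracking through how the intertwining of the dominating maps $B^{(\sigma)}\to B^\ast$ cancels the Galois twist on local cohomology; once this is set up carefully, the remainder is formal local duality.
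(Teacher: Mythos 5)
Your argument is correct and follows essentially the same route as the paper: you twist the $R^+$-structure of $B$ by (a lift of) each element of $\scr{G}'$ — your $B^{(\sigma)}$ is exactly the paper's $B_{\sigma^{-1}}$ — identify $\sigma'(0^B_{H_\m^d(R')})$ with the kernel for the twisted algebra, dominate the finite family by one integral perfectoid big Cohen--Macaulay $R^+$-algebra via \autoref{thm.DominateAnySetOfBCM}, and conclude from faithful flatness over the regular ring $A$ followed by Matlis duality. The only differences are cosmetic (indexing the twist by $\sigma^{-1}$ rather than $\sigma$, and writing out the duality identifications the paper leaves implicit).
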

\begin{proof}[Proof of Claim]
For every $\sigma'\in \scr{G}'$, we pick a lift $\sigma\in\scr{G}$ of $\sigma'$. Let $B_\sigma$ denote the integral perfectoid big Cohen-Macaulay $R^+$-algebra $B$ with its $R^+$-algebra structure coming from the composition $R^+\xrightarrow{\sigma} R^+\to B$. Since every $\eta\in H_\m^d(R')$ is fixed by $\scr{H}$, we have $\sigma'(\eta)=\sigma(\eta)$ as elements in $H_\m^d(R^+)$ and hence their images in $H_\m^d(B)$ are also the same. It follows that
\begin{equation}
\label{eq.SigmaW''EqualsKilledInBsigma}
\sigma'(0^B_{H_\m^d(R')})=0^{B_{\sigma^{-1}}}_{H_\m^d(R')}.
\end{equation}
Now applying \autoref{thm.DominateAnySetOfBCM}, we can find an integral perfectoid big Cohen-Macaulay $R^+$-algebra $C$ such that there exists an $R^+$-linear map $B_\sigma\to C$ for every $\sigma$. Since $A$ is regular and $C$ is a big Cohen-Macaulay algebra, $C$ is faithfully flat over $A$. In particular, we know that
\begin{equation}
\label{equation: A is pure inside}
0^C_{H_\m^d(A)}=\{\eta\in H_\m^d(A)\;\mid\;\eta=0 \text{ in }H_\m^d(C)\}=0.
\end{equation}
But since $B_\sigma$ maps to $C$ for every $\sigma$ (which is a fixed lift of $\sigma'$), it follows from \autoref{eq.SigmaW''EqualsKilledInBsigma} that
\[
\sigma'(0^B_{H_\m^d(R')})=0^{B_{\sigma^{-1}}}_{H_\m^d(R')}\subseteq 0^C_{H_\m^d(R')}
\]
for every $\sigma'$ and thus $W=\sum_{\sigma'\in\scr{G}'}\sigma'(0^B_{H_\m^d(R')})\subseteq 0^C_{H_\m^d(R')}$. Therefore it must intersect with $H_\m^d(A)$ trivially by \autoref{equation: A is pure inside}. This completes the proof of \autoref{clm.ImageOfTildeWInAisZero} by Matlis duality.
\end{proof}

Since $\omega_{R'}$ can be canonically viewed as $\Hom_A(R', A)$ by \cite[Theorem 3.3.7]{BrunsHerzog}, the $\scr{G}'$-action on $R'$ induces a natural $\scr{G}'$-action on $\omega_{R'}$. Since $W$ is $\scr{G}'$-invariant by construction, $\Ann_{\omega_{R'}}W\subseteq \omega_{R'}$ is a $\scr{G}'$-submodule. Now localizing \autoref{equation: trace surjection} at $h$, since $A_h\to R'_h$ is finite \'{e}tale, we can identify $(\omega_A)_h$ and $(\omega_{R'})_h$ with $A_h$ and $R'_h$ respectively. Thus we have:
\[\xymatrix{
(\Ann_{\omega_{R'}}W)_h \ar@{^{(}->}[r]\ar[d]^\cong & (\omega_{R'})_h \ar[r]^{\Tr} \ar[d]^\cong &  (\omega_A)_h \ar[d]^\cong \\
I \ar@{^{(}->}[r] & R'_h \ar[r]^{\Tr} & A_h
}
\]
where we identify $(\Ann_{\omega_{R'}}W)_h$ as a $\scr{G}'$-invariant ideal $I\subseteq R'_h$. Since the composite map $I\to R'_h\to A_h$ is surjective by \autoref{equation: trace surjection}, \autoref{lemma: Galois} (since the map is finite \'{e}tale after inverting $h$, the trace map is the same as the field trace map up to multiplication by a unit) yields $I=R'_h$, i.e., $(\Ann_{\omega_{R'}}W)_h=(\omega_{R'})_h$. This implies $(W^\vee)_h \cong (\omega_{R'}/\Ann_{\omega_{R'}}W)_h=0$. From this we know that the finitely generated module $W^\vee$ is annihilated by a power of $h$, thus so is $W$. Therefore $0^B_{H_\m^d(R')}\subseteq W$ is also annihilated by a power of $h$, and thus so is $(0^B_{H_\m^d(R')})^\vee$. This implies $(0^B_{H_\m^d(R')})^\vee_h=0$. Finally, we consider the commutative diagram with exact rows:
\[\xymatrix{
0 \ar[r] & 0^B_{H_\m^d(R)} \ar[r] \ar[d] & H_\m^d({R}) \ar[r]\ar[d] & H_\m^d(B) \ar[d]^= \\
0 \ar[r] & 0^B_{H_\m^d(R')}\ar[r] & H_\m^d(R') \ar[r] & H_\m^d(B)
}
\]
Taking the Matlis dual and localizing at $h$, the above diagram induces:
\[\xymatrix{
(0^B_{H_\m^d(R)})^\vee_h  & (\omega_{{R}})_h\cong\omega_{{R}_h}\ar@{->>}[l]  & H_\m^d(B)^\vee_h  \ar[l]\\
0=(0^B_{H_\m^d(R')})^\vee_h\ar[u] & (\omega_{R'})_h \cong\omega_{R'_h}\ar@{->>}[u]\ar@{->>}[l] & H_\m^d(B)^\vee_h \ar[u]^=\ar[l]
}
\]
where the middle surjectivity is because ${R}_h$ is regular (since it is finite \'{e}tale over $A_h$), hence $R_h\to R'_h$ splits.  Chasing this diagram it is easy to see that $(0^B_{H_\m^d(R)})^\vee_h=0$, thus the finitely generated module $(0^B_{H_\m^d(R)})^\vee$ is annihilated by $h^N$ for some $N\gg0$. Therefore $0^B_{H_\m^d(R)}$ is also annihilated by $h^N$.
\end{proof}

We end this section by proving that the ideal generated by all those $h$ satisfying \autoref{theorem: uniform annihilation} contains a power of the defining ideal of the singular locus of $R/p$. This gives us a weak analog of \cite[Theorem 3.4]{HochsterHunekeTightClosureAndStrongFRegularity}. We need a slight generalization of Flenner's local Bertini theorem \cite{FlennerLocalBertini}, which we expect is known to experts.

\begin{lemma}[Flenner]
\label{lem.FlennerBertini}
Let $(R,\m,k)$ be a complete local ring of equal characteristic with $k$ infinite. Let $I,I_0,\dots,I_m$ be ideals of $R$ such that $I\nsubseteq I_j$ for all $0\leq j\leq m$. Then there exists $x\in I$ such that
\begin{enumerate}
  \item $x\notin P^{(2)}$ for all $P\in\Spec R-V(I)$, and
  \item $x\notin I_j$ for all $0\leq j\leq m$.
\end{enumerate}
\end{lemma}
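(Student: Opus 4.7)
My plan is to reduce to Flenner's original local Bertini theorem (which delivers condition (1) for a generic $k$-linear combination) and then to superimpose a linear avoidance argument for the finitely many ideals $I_0,\dots,I_m$, exploiting that $k$ is infinite.

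First, fix generators $I=(a_1,\dots,a_n)$. Since $R$ is a complete equal-characteristic local ring, Cohen's structure theorem provides a coefficient field that we identify with $k\subseteq R$. For each $\lambda=(\lambda_1,\dots,\lambda_n)\in k^n$ set
\[
x_\lambda \;=\; \sum_{i=1}^n \lambda_i a_i \;\in\; I.
\]
Flenner's local Bertini theorem (Flenner, \emph{Die S\"atze von Bertini f\"ur lokale Ringe}, 1977), applied to the infinite field $k\subseteq R$ and the generating sequence $a_1,\dots,a_n$ of $I$, produces a Zariski-dense open subset $U\subseteq \mathbb{A}^n_k$ such that for every $\lambda\in U(k)$ the element $x_\lambda$ satisfies condition (1): $x_\lambda\notin P^{(2)}$ for every prime $P\in\Spec R\setminus V(I)$.

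Next I handle condition (2). For each $0\leq j\leq m$, consider the $k$-linear map
\[
\phi_j\colon k^n \longrightarrow R/I_j, \qquad (\lambda_1,\dots,\lambda_n)\longmapsto \sum_i \lambda_i a_i \bmod I_j.
\]
By hypothesis $I\not\subseteq I_j$, so some $a_i$ has nonzero image in $R/I_j$ and $\phi_j\not\equiv 0$. Hence $V_j:=\ker\phi_j$ is a \emph{proper} $k$-linear subspace of $k^n$. Moreover $x_\lambda\in I_j$ if and only if $\lambda\in V_j$, so condition (2) amounts to choosing $\lambda\notin V_0\cup\cdots\cup V_m$.

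Finally, since $k$ is infinite and $\mathbb{A}^n_k$ is irreducible, the set $k^n$ is not covered by the complement $\mathbb{A}^n_k\setminus U$ together with the proper closed subvarieties $V_0,\dots,V_m$ (a product of finitely many nonzero polynomials over an infinite field is again nonzero, hence nonvanishing at some $k$-point). Thus we can pick $\lambda\in U(k)\setminus\bigcup_{j=0}^m V_j$, and the element $x=x_\lambda\in I$ satisfies both (1) and (2). The only non-trivial ingredient is Flenner's theorem itself; the rest is a standard infinite-field avoidance argument, which is why I expect this to be the shortest step of the paper rather than a genuine obstacle.
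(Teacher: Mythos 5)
There is a genuine gap, and it sits exactly where you expect the step to be easiest: your first step attributes to Flenner a statement that is essentially condition (1) of the lemma itself. Flenner's results do not directly say that, for an \emph{arbitrary} generating set $a_1,\dots,a_n$ of an \emph{arbitrary} ideal $I$, a dense open set of coefficient vectors $\lambda$ gives $x_\lambda\notin P^{(2)}$ simultaneously for all $P\in\Spec R\setminus V(I)$; this is precisely why the paper presents the lemma as a ``slight generalization'' of Flenner and supplies a proof. The genericity one can actually extract from Flenner is his Satz 1.7, applied to $M=\widehat{\Omega}_{R/k}$: it produces a single nonzero polynomial $G$ such that $G(\lambda)\neq 0$ forces $d(\sum\lambda_i m_i)$ to be part of a minimal generating system of $M_P$ for the relevant primes $P$ --- but its hypothesis requires that the chosen elements $m_i$ generate $M_P$ at every such $P$. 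For an arbitrary generating set of $I$ the differentials $da_1,\dots,da_n$ typically do \emph{not} generate $(\widehat{\Omega}_{R/k})_P$, so Satz 1.7 simply does not apply to your $a_i$. The paper's proof fixes this with a concrete trick: writing $R=k\llbracket z_1,\dots,z_t\rrbracket/J$, it replaces the generators $f_i$ of $I$ by the enlarged set $\{f_i,\,f_iz_j\}$, whose differentials contain $f_i\,dz_j$ and hence do generate $(\widehat{\Omega}_{R/k})_P$ once some $f_i$ is a unit at $P\notin V(I)$; then Flenner's Lemma 2.2 converts ``$dx$ is part of a minimal generating system of $(\widehat{\Omega}_{R/k})_P$'' into ``$x\notin P^{(2)}$.'' Note also that the lemma's conclusion is only the existence of some $x\in I$, which is what this argument yields; your stronger claim that generic combinations of the \emph{original} generators work is not established even by the paper's proof (restricting a dense open set of the enlarged coefficient space to the subspace where the extra coefficients vanish need not leave anything).

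The reason you cannot sidestep this with a softer argument is that the bad locus $\bigcup_{P\notin V(I)}\{\lambda:\ x_\lambda\in P^{(2)}\}$ is a union over \emph{infinitely many} primes of proper linear subspaces. Without the uniform polynomial $G$ from Satz 1.7 there is no reason this union lies in a proper closed subset, and over a countably infinite residue field (which is allowed here, e.g.\ $k=\overline{\mathbb{F}}_p$ or $\mathbb{Q}$) a countable union of proper subspaces can even cover $k^n$, so no cardinality or prime-avoidance argument rescues the claim. Your second step --- avoiding the finitely many proper subspaces $\ker\phi_j$ coming from $I\nsubseteq I_j$ using that $k$ is infinite --- is correct and is the same as the paper's, but it is the easy half; the real content of the lemma is manufacturing the uniform open condition for (1), via the enlarged generating set and Flenner's Satz 1.7 and Lemma 2.2.
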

\begin{proof}
By Cohen's structure theorem, we can write $R=k\llbracket z_1,\dots, z_t\rrbracket/J$. Let $\widehat{\Omega}_{R/k}$ be the complete module of differentials of $R$ over $k$ (so it is generated by $dz_1,\dots,dz_t$). Fix a generating set $\{f_1,\dots,f_s\}$ of $I$ and consider the following set:
\[
\{g_1,\dots,g_n\}:= \{f_1,f_1z_1,\dots,f_1z_t, f_2, f_2z_1,\dots,f_2z_t,\dots,f_s,f_sz_1,\dots,f_sz_t\}.
\]
Then clearly $g_1,\dots,g_n$ is still a generating set of $I$. Since the submodule of $\widehat{\Omega}_{R/k}$ generated by $dg_1,\dots,dg_n$ contains $df_i$ and $d(f_iz_j)=z_jdf_i+f_idz_j$ for all $i,j$, it contains $f_idz_j$ for all $i,j$. If we localize at $P\in \Spec R-V(I)$, then at least one of the $f_i$ becomes a unit and thus $dg_1,\dots,dg_n$ generate $(\widehat{\Omega}_{R/k})_P$ for all $P\in \Spec R-V(I)$. Now we apply \cite[Satz 1.7]{FlennerLocalBertini} to $M=\widehat{\Omega}_{R/k}$ and $\{m_1,\dots,m_n\}=\{dg_1,\dots,dg_n\}$\footnote{Note that the equation in \cite[Satz 1.7]{FlennerLocalBertini} is satisfied by \cite[Lemma 2.6]{FlennerLocalBertini} and that $dg_1,\dots,dg_n$ generate $(\widehat{\Omega}_{R/k})_P$.}, we know that there exists a nonzero polynomial $G\in k[x_1,\dots,x_n]$ such that if $G(\lambda_1,\dots,\lambda_n)\neq 0$, then $d(\lambda_1g_1+\cdots+\lambda_ng_n)=\lambda_1dg_1+\cdots+\lambda_ndg_n$ is part of a minimal set of generators for $(\widehat{\Omega}_{R/k})_P$ for all $P\in \Spec R-V(I)$.

Since $k$ is infinite and $I\nsubseteq I_j$ for all $j$, a general linear combination $x=\lambda_1g_1+\cdots+\lambda_ng_n\in I$ (where $\lambda_j\in k$) is not contained in $I_0,\dots,I_m$. But a general choice of $\lambda_1,\dots,\lambda_n$ also satisfies $G(\lambda_1,\dots,\lambda_n)\neq 0$ and thus $dx=d(\lambda_1g_1+\cdots+\lambda_ng_n)$ is part of a minimal set of generators for $(\widehat{\Omega}_{R/k})_P$ by the above discussion. Therefore by \cite[Lemma 2.2]{FlennerLocalBertini}, this implies $x$ is not contained in $P^{(2)}$ for all $P\in\Spec R-V(I)$.
\end{proof}

\begin{lemma}
\label{lem.cutR_0}
Let $(R,\m,k)$ be a complete local ring of equal characteristic with $k$ infinite. Suppose $R$ is equidimensional and regular in codimension $0$. Let $Q\in \Spec R$ be a height $h$ prime such that $R_Q$ is regular. Then we can find a sequence of parameters $x_1,\dots,x_h$ in $Q$ such that $R/(x_1,\dots,x_h)$ is regular in codimension $0$.
\end{lemma}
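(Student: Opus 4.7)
The plan is to induct on $h$, with $h=0$ vacuous. For $h\geq 1$ I want to produce a good $x_1 \in Q$ via \autoref{lem.FlennerBertini}, reduce to $R_1 := R/(x_1)$, and check $R_1$ inherits all hypotheses of the lemma (complete local equal characteristic with infinite residue field, equidimensional, $(R_0)$, and $(R_1)_{Q/(x_1)}$ regular of height $h-1$).

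First I would set up the application of \autoref{lem.FlennerBertini}. Since $R$ satisfies $(R_0)$ and $\mathrm{ht}(Q)=h\geq 1$, the prime $Q$ is contained in no minimal prime of $R$. Since $R_Q$ is regular of positive dimension, Nakayama gives $QR_Q\neq (QR_Q)^2$, hence $Q\not\subseteq Q^{(2)}$. Apply the lemma with $I=Q$ and $\{I_j\}$ equal to the minimal primes of $R$ together with the single ideal $Q^{(2)}$, obtaining $x_1\in Q$ such that (a) $x_1$ avoids every minimal prime of $R$, so $\mathrm{ht}(x_1R)=1$ and (using that the complete local ring $R$ is catenary and equidimensional) $R_1$ is equidimensional of dimension $\dim R-1$; (b) $x_1\notin Q^{(2)}$, i.e.\ $x_1\notin (QR_Q)^2$, so by regularity of $R_Q$ the element $x_1$ is part of a regular system of parameters of $R_Q$, and $(R_1)_{Q/(x_1)}=R_Q/x_1R_Q$ is regular of dimension $h-1$; (c) $x_1\notin P^{(2)}$ for every prime $P$ of $R$ with $Q\not\subseteq P$.

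Next I would verify $R_1$ is $(R_0)$. A minimal prime of $R_1$ has the form $P/(x_1)$ with $P$ a height-one prime of $R$ minimal over $(x_1)$. If $h\geq 2$, then $\mathrm{ht}(P)=1<h=\mathrm{ht}(Q)$ forces $Q\not\subseteq P$, so condition (c) applies and the Bertini-style transversality $x_1\notin (PR_P)^2$ ensures $(R_1)_{P/(x_1)}=R_P/x_1R_P$ is a field, as in \cite{FlennerLocalBertini}. If $h=1$, either $P=Q$ (handled by (b), making $R_Q/x_1R_Q$ a field since $R_Q$ is a DVR and $x_1$ is a uniformizer), or $P\neq Q$, in which case again $P\not\supseteq Q$ and (c) applies. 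Thus $R_1$ is $(R_0)$.

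Finally, having verified that $R_1$ satisfies the hypotheses of the lemma with $Q/(x_1)$ of height $h-1$, the inductive hypothesis supplies $\bar x_2,\dots,\bar x_h\in Q/(x_1)$ forming a sequence of parameters with $R_1/(\bar x_2,\dots,\bar x_h)$ being $(R_0)$; lifting to $x_2,\dots,x_h\in Q$ yields the desired sequence. The main technical point — and the only place where the argument is not purely formal — is step (c) in the second paragraph: deducing $(R_0)$-preservation at height-one primes $P\not\supseteq Q$ from the single condition $x_1\notin P^{(2)}$. This is exactly the Bertini-type conclusion that \autoref{lem.FlennerBertini} is formulated to produce (rather than just prime avoidance), and it is the reason one invokes Flenner's argument rather than an elementary prime avoidance.
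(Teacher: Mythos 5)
There is a genuine gap, and it sits exactly at the step you single out as the main technical point. For a height-one prime $P$ minimal over $(x_1)$ with $Q\not\subseteq P$, the condition $x_1\notin P^{(2)}$ by itself does \emph{not} imply that $R_P/x_1R_P$ is a field: that implication needs $R_P$ to be regular to begin with. Since $R$ is only assumed regular in codimension $0$ (not in codimension $1$), the singular locus may contain height-one primes, and nothing in your choice of $x_1$ prevents $x_1$ from lying in such a prime $P$ — your avoidance list consists only of the minimal primes of $R$ and $Q^{(2)}$. If $R_P$ is a one-dimensional non-regular local ring, its maximal ideal needs at least two generators, so $R_P/x_1R_P$ is never a field no matter how "transversal" $x_1$ is (any $x_1\in PR_P\setminus (PR_P)^2$ still fails), and then $P/(x_1)$ is a minimal prime of $R/(x_1)$ at which the quotient is singular, breaking the induction. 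So the Bertini-type conclusion of \autoref{lem.FlennerBertini} is not, on its own, enough to preserve $(R_0)$.

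The paper's proof differs from yours precisely here: it lets $J$ be the defining ideal of the singular locus of $R$ and adds the \emph{height-one minimal primes of $J$} (together with the minimal primes of $R$) to the ideals $I_j$ in \autoref{lem.FlennerBertini}; this is legitimate since none of them can contain $Q$ (a height-one prime containing $Q$ would equal $Q$, contradicting regularity of $R_Q$). With $x_1$ avoiding these primes, any height-one prime $P$ minimal over $(x_1)$ with $R_P$ singular would contain $J+(x_1)$, which has height at least two — a contradiction — so every such $P$ automatically satisfies $R_P$ regular, and only then does $x_1\notin P^{(2)}$ yield regularity of $R_P/x_1R_P$. Your induction scheme, the use of $Q^{(2)}$, and the equidimensionality bookkeeping are all fine and match the paper; the missing ingredient is enlarging the avoidance list to rule out the height-one components of the singular locus before invoking the transversality condition.
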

\begin{proof}
We prove by induction on $h$. The case $h=0$ is trivial. So we assume $h\geq 1$ and let $J$ be the defining ideal of the singular locus of $R$, which has height at least one by assumption. Let $P_1,\dots,P_m$ be the height one minimal primes of $J$ together with the minimal primes of $R$ (if $J$ has height at least two then we only consider minimal primes of $R$). Note that none of the $P_i$ contains $Q$ since $Q$ has height at least one and $R_Q$ is regular. We apply \autoref{lem.FlennerBertini} with $I=Q$, $I_0=Q^{(2)}$, $I_j=P_j$ to find the element $x_1=x\in Q$. Then $R/(x)$ is equidimensional and $(R/(x))_Q$ is regular as $x\notin Q^{(2)}$. To complete the induction we only need to show $R/(x)$ is still regular in codimension $0$. Let $Q_1,\dots,Q_n$ be (height one) minimal primes of $(x)$ in $R$. If $R_{Q_i}$ is not regular, then $Q_i$ contains both $J$ and $(x)$. But then $Q_i$ contains $J+(x)$ which has height at least two as $x$ is not in any minimal prime of $J$, which is a contradiction. Thus $R_{Q_i}$ is regular for all $i$. But since $x$ is not in $Q_i^{(2)}$ by construction, $(R/(x))_{Q_i}$ is regular and hence $R/(x)$ is regular in codimension $0$.
\end{proof}

Combining the previous lemma with the Cohen-Gabber theorem, we have the following.

\begin{lemma}
\label{lemma: Cohen-Gabber}
Let $(R,\m,k)$ be a complete local domain of mixed characteristic $(0,p)$ with $k$ infinite, such that $R/p$ is regular in codimension $0$. Suppose $Q\in \Spec R$ such that $p\in Q$ and $(R/p)_Q$ is regular. Then there exists a complete and unramified regular local ring $A$ and an element $h\in A$, $h \notin Q$, such that $A\to R$ is a module-finite extension and that $A_h\to R_h$ is finite \'{e}tale.
\end{lemma}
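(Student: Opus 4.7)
The plan is to produce $A$ by combining \autoref{lem.cutR_0} with the Cohen--Gabber theorem. First I would apply \autoref{lem.cutR_0} to the ring $R/pR$---equidimensional of dimension $d-1$ since $R$ is a complete local domain (hence universally catenary) and $p\neq 0$, and regular in codimension $0$ by hypothesis---and to the height-$c$ prime $\bar Q := Q/pR$. This produces $\bar y_1,\dots,\bar y_c \in \bar Q$ with lifts $y_1,\dots,y_c \in Q$ such that $T := R/(p,y_1,\dots,y_c)$ is reduced and equidimensional. Inspecting the proof of \autoref{lem.cutR_0} yields two further properties essentially for free: (a) at each step the chosen element avoids every minimal prime of the current reduced quotient, so $p,y_1,\dots,y_c$ is a regular sequence in $R$; and (b) at each step the chosen element avoids the symbolic square of the image of $\bar Q$, so the $\bar y_i$'s generate the maximal ideal of the regular local ring $(R/p)_{\bar Q}$. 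Property (b) forces $\bar{\bar Q} := Q/(p,y_1,\dots,y_c)$ to be a minimal prime of $T$.

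Next I would invoke the Cohen--Gabber theorem for $T$: since $T$ is complete, reduced, equidimensional of equal characteristic $p>0$ with infinite residue field $k$, there exists a module-finite extension $S := k\llbracket z_1,\dots,z_{d-1-c}\rrbracket \hookrightarrow T$ with $S$ regular and the extension generically \'{e}tale, i.e., $T\otimes_S \Frac(S)$ is a finite product of separable field extensions of $\Frac(S)$. In particular $S\to T$ is \'{e}tale at \emph{every} minimal prime of $T$. Set $A := C_k\llbracket Y_1,\dots,Y_c, Z_1,\dots,Z_{d-1-c}\rrbracket$---the complete unramified regular local ring of dimension $d$ with residue field $k$---and define $A\to R$ by $p\mapsto p$, $Y_i\mapsto y_i$, $Z_j\mapsto z_j$ for any lifts $z_j\in R$ of the Cohen--Gabber elements. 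Since $R/\fram_A R \cong T/\fram_S T$ is a finite-dimensional $k$-vector space (as $S\to T$ is module-finite), the map $A\to R$ is module-finite.

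The crux is to show $A\to R$ is \'{e}tale at every prime $Q_i \subseteq R$ lying over $\frq := Q\cap A$. First, $\frq = (p,Y_1,\dots,Y_c)$: the containment $\supseteq$ is by construction, and $\frq/(p,Y)$ is the contraction of $\bar{\bar Q}$ to $S = A/(p,Y)$, which must be $(0)$ since $\bar{\bar Q}$ is a minimal prime of the finite $S$-algebra $T$ and $S$ is a domain. Thus the $Q_i$'s correspond to the minimal primes $\bar{\bar Q}_i$ of $T$. Fix such a $Q_i$. Universal catenaryness of $R$ and finiteness of $A\to R$ give $\height Q_i = d - \dim R/Q_i = d - \dim S = c+1$. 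By property (a), $p,y_1,\dots,y_c$ is a regular sequence in $R_{Q_i}$; it is $Q_i R_{Q_i}$-primary because $T_{\bar{\bar Q}_i}$ is a field (by reducedness of $T$ and minimality of $\bar{\bar Q}_i$), so it is a regular system of parameters for $R_{Q_i}$. Hence $R_{Q_i}$ is Cohen--Macaulay of dimension $c+1 = \dim A_\frq$, and miracle flatness gives flatness of $A_\frq\to R_{Q_i}$. Unramifiedness is immediate: $\frq R_{Q_i} = (p,y_1,\dots,y_c)R_{Q_i} = Q_i R_{Q_i}$, and the residue field extension $\Frac(S) = k(\frq)\hookrightarrow k(Q_i) = T_{\bar{\bar Q}_i}$ is finite separable by the generic \'{e}taleness of $S\to T$.

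Finally, since $A\to R$ is finite and \'{e}tale at every prime of $R$ over $\frq$, the non-\'{e}tale locus $Z\subseteq \Spec R$ is closed and its image in $\Spec A$ is closed (by finiteness) and does not contain $\frq$. Picking any $h\in A$ outside $\frq$ that vanishes on this image gives $A_h\to R_h$ finite \'{e}tale, with $h\notin Q$ because $h\notin\frq = Q\cap A$. The main technical subtlety---and what makes the naive approach fail---is that one cannot just handle $Q$ itself: one must check \'{e}taleness at \emph{every} prime of $R$ lying over $\frq$, and this is made possible by the regular-sequence property (a) from \autoref{lem.cutR_0} together with \'{e}taleness of the Cohen--Gabber map at \emph{all} minimal primes of $T$.
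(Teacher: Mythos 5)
Your proposal follows the same route as the paper's own proof: cut down $R/p$ along $Q/p$ using \autoref{lem.cutR_0}, apply the Cohen--Gabber theorem to $T=R/(p,y_1,\dots,y_c)$, assemble the module-finite map $A=C_k\llbracket \underline{Y},\underline{Z}\rrbracket\to R$, check \'etaleness at \emph{every} prime of $R$ lying over $(p,\underline{Y})$, and then pick $h\notin Q\cap A$; your final paragraph merely replaces the paper's choice of $h$ (a discriminant) by an element vanishing on the image of the closed non-\'etale locus, which is equivalent.

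One caution about what you claim to extract ``for free'' from the proof of \autoref{lem.cutR_0}: property (a) is not actually delivered by it. The elements chosen there avoid only \emph{minimal} primes, while $R/p$ and the intermediate quotients may have embedded primes (conceivably even ones containing $\bar Q$), so $p,y_1,\dots,y_c$ need not be a regular sequence in $R$; likewise $T$ is only guaranteed to be regular in codimension $0$, not reduced. Fortunately neither claim is needed: regularity in codimension $0$ of $T$ gives $Q_iR_{Q_i}=(p,y_1,\dots,y_c)R_{Q_i}$ at each minimal prime $Q_i$ of $T$, and since $\height Q_i=c+1$ this already forces $R_{Q_i}$ to be regular (hence Cohen--Macaulay, with these elements a regular system of parameters) --- which is exactly how the paper phrases this step --- and generic reducedness ($R_0$) of $T$ is all that the Cohen--Gabber separability argument uses. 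With those two justifications repaired in this way, your argument is correct and coincides with the paper's.
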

\begin{proof}
We apply \autoref{lem.cutR_0} to $R/p$ to find a parameter sequence $x_1,\dots,x_s$ such that $Q$ is a minimal prime of $(p,x_1,\dots,x_s)$ and $R/(p,x_1,\dots,x_s)$ is regular in codimension $0$. In other words, $p,x_1,\dots,x_s$ is a regular system of parameters in $R_{Q_i}$ for all minimal primes $Q=Q_1,Q_2,\dots,Q_n$ of $(p,x_1,\dots,x_s)$.

By the Cohen-Gabber theorem (see \cite[Theorem 1.1]{KuranoShimomotoCohenGabber}), there exists a choice of coefficient field $k$ of $R/p$ and $\overline{y}_1,\dots,\overline{y}_{t}\in R/(p,x_1,\dots,x_s)$ such that $k\llbracket \overline{y}_1,\dots,\overline{y}_{t}\rrbracket \to R/(p,x_1,\dots,x_s)$ is module-finite and $\text{Frac}(k\llbracket \overline{y}_1,\dots,\overline{y}_{t}\rrbracket)\to \text{Frac}(R/Q_i)$ is finite \'{e}tale for all $i$. We pick lifts $y_1,\dots,y_{t}$ in $R$ such that $p, x_1,\dots,x_s, y_1,\dots,y_t$ is a system of parameters on $R$.

By Cohen's structure theorem we have $A=C_k\llbracket x_1,\dots,x_s, y_1,\dots, y_{t}\rrbracket \to R$ is a module-finite extension. We claim that this $A\to R$ is the desired extension where we set $h$ to be the discriminant of the map. To see this, we note that the primes of $R$ that lie over the prime $(p,x_1,\dots,x_s)\subseteq A$ are precisely $Q=Q_1,Q_2,\dots,Q_n$. We claim that the map $A_{(p,x_1,\dots,x_s)}\to R_{Q_i}$ is essentially \'{e}tale for all $i$. This is because the map is flat, $(p, x_1,\dots,x_s)R_{Q_i}=Q_iR_{Q_i}$ (since $p,x_1,\dots,x_s$ is a regular system of parameters in $R_{Q_i}$), and the residue field extension is finite separable: $A_{(p,x_1,\dots,x_s)}/(p,x_1,\dots,x_s)A_{(p,x_1,\dots,x_s)}$ is the fraction field of $k\llbracket \overline{y}_1,\dots,\overline{y}_{t}\rrbracket $ while $R_{Q_i}/Q_iR_{Q_i}$ is the fraction field of $R/Q_i$, so this follows from our construction. Since the map $A_{(p,x_1,\dots,x_s)}\to R_{Q_i}$ is essentially \'{e}tale for every $i$, the discriminant $h\in A$ is not contained in $(p,x_1,\dots,x_s)$, and thus not in $Q$.
\end{proof}

\begin{remark}
It should be pointed out that the conclusion of \autoref{lemma: Cohen-Gabber} is not true if $R/p$ is not regular in codimension $0$. Consider $R=C_k\llbracket x,y\rrbracket/(p-xy^2)$ and $Q=(x)$, we have $p\in Q$ and $(R/p)_Q$ is a field. But there does not exist a complete and unramified regular local ring $A$ with $A\to R$ module-finite with an element $h\in A-Q$ such that $A_h\to R_h$ is finite \'etale. If such $A$ exists, then as $(y)\cap A=(p)$ and $h\notin (p)$, $A_{(p)}\to R_{(y)}$ is unramified, which is a contradiction since $p\in (y^2)$ in $R$.
\end{remark}

\begin{theorem}
\label{thm.GenSingularlocus}
Let $(R,\m,k)$ be a complete local domain of mixed characteristic $(0,p)$, such that $R/p$ is reduced.  Then there exists an ideal $J\subseteq R$ such that $\sqrt{J(R/p)}$ defines the singular locus of $R/p$, and such that $J$ multiplies $\omega_R$ into $\mytau_\scr{B}(\omega_R)$.
\end{theorem}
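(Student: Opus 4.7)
The plan is to combine the Cohen--Gabber style \autoref{lemma: Cohen-Gabber} with the uniform annihilation result \autoref{theorem: uniform annihilation}, and to define $J$ as an intersection that simultaneously respects the annihilator of $\omega_R/\mytau_\scr{B}(\omega_R)$ and the preimage of the singular locus in $R/p$. Since $R/p$ is reduced, it is generically regular, so \autoref{lemma: Cohen-Gabber} will apply once the residue field is arranged to be infinite. I will first reduce to that case via a faithfully flat gonflement $R \hookrightarrow R^g$, where $R^g$ is a complete local domain of mixed characteristic $(0,p)$ whose residue field is an infinite separable algebraic extension of $k$ and for which $R^g/pR^g$ remains reduced; the singular locus of $R/p$ pulls back to that of $R^g/pR^g$, and one can transport an ideal for $R^g$ back to $R$ by contraction.

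Assume now that the residue field $k$ is infinite. Let $I_{\mathrm{sing}} \subseteq R/pR$ denote the radical defining ideal of the singular locus of $R/pR$, and let $\widetilde{I}_{\mathrm{sing}} \subseteq R$ be its preimage (so $p \in \widetilde{I}_{\mathrm{sing}}$). Define
\[
J := \widetilde{I}_{\mathrm{sing}} \cap \Ann_R\bigl(\omega_R/\mytau_\scr{B}(\omega_R)\bigr).
\]
Then $J\omega_R \subseteq \mytau_\scr{B}(\omega_R)$ is automatic, and the inclusion $J \subseteq \widetilde{I}_{\mathrm{sing}}$ immediately gives $\sqrt{J(R/p)} \subseteq I_{\mathrm{sing}}$. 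It remains to establish the reverse containment $\sqrt{J(R/p)} \supseteq I_{\mathrm{sing}}$, equivalently that $J \not\subseteq Q$ for every prime $Q \subseteq R$ with $p \in Q$ whose image in $\Spec(R/p)$ lies in the regular locus.

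For such a $Q$, \autoref{lemma: Cohen-Gabber} produces a complete unramified regular local ring $A_Q$, a module-finite inclusion $A_Q \hookrightarrow R$, and an element $h_Q \in A_Q \setminus Q$ with $(A_Q)_{h_Q} \to R_{h_Q}$ finite \'etale. The uniform annihilation \autoref{theorem: uniform annihilation} then supplies an integer $N_Q \geq 1$ with $h_Q^{N_Q}\omega_R \subseteq \mytau_\scr{B}(\omega_R)$, i.e.\ $h_Q^{N_Q} \in \Ann_R(\omega_R/\mytau_\scr{B}(\omega_R))$. Since $Q$ is in the regular locus, $\widetilde{I}_{\mathrm{sing}} \not\subseteq Q$, so we can choose $f_Q \in \widetilde{I}_{\mathrm{sing}} \setminus Q$. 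The product $h_Q^{N_Q} f_Q$ lies in both factors of the intersection defining $J$ (hence in $J$), but does not lie in the prime $Q$, so $J \not\subseteq Q$, as required. The main obstacle in carrying this out will be the reduction to an infinite residue field, and in particular verifying that $\mytau_\scr{B}(\omega_R)$ is controlled well enough under the gonflement $R \hookrightarrow R^g$ for the descent of $J^g$ to $R$ to retain its two defining properties; once that reduction is established, the rest of the argument is a direct assembly of the previously constructed machinery.
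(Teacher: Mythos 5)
The half of your argument carried out under the assumption that $k$ is infinite is correct and is essentially the paper's own first proof: \autoref{lemma: Cohen-Gabber} plus \autoref{theorem: uniform annihilation}, assembled prime by prime; your device of defining $J$ as $\widetilde{I}_{\mathrm{sing}} \cap \Ann_R\bigl(\omega_R/\mytau_\scr{B}(\omega_R)\bigr)$ and using the product $h_Q^{N_Q}f_Q$ is a clean way to get the zero locus of $J(R/p)$ to be exactly the singular locus. The problem is the step you yourself flag as the main obstacle: the reduction to an infinite residue field is not a routine verification, and as proposed it does not go through.

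Two concrete issues. First, a gonflement whose residue field is a (separable) algebraic extension of $k$ need not preserve the hypothesis that $R$ is a domain: for $p$ odd and $u\in \bZ_p^\times$ a unit that is not a square mod $p$, the ring $R=\bZ_p\llbracket x,y\rrbracket/(x^2-uy^2)$ is a complete local domain with $R/p$ reduced, yet any flat complete local extension whose residue field contains $\bF_{p^2}$ contains a square root of $u$ and hence the images of $x\pm\sqrt{u}\,y$ are nonzero zero divisors; so the $R^g$ you posit may simply not exist in the stated form (a purely transcendental extension $k(t)$ behaves better, but the domain claim then still requires an argument in mixed characteristic). Second, and more seriously, the ``transport by contraction'' has no support in the paper: to know that $J^g\cap R$ avoids a regular-locus prime $Q$ of $R/p$ you need, in effect, that $0^{\scr{B}}_{H^d_\m(R)}$ maps into $0^{\scr{B}}_{H^d_\m(R^g)}$, i.e.\ that every integral perfectoid big Cohen--Macaulay $R$-algebra $B$ admits a compatible map to an integral perfectoid big Cohen--Macaulay $R^g$-algebra; \autoref{theorem: Andre} only allows $B$ to be prescribed in advance when the map of complete local domains is \emph{surjective}, and the paper explicitly does not even know how $\mytau_\scr{B}(\omega_R)$ behaves under completion or localization (see \autoref{remark:completion} and the remark after \autoref{thm.GenSingularlocus}), let alone under the faithfully flat map $R\to R^g$. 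This is precisely why the paper gives a second proof avoiding infinite residue fields altogether: for a prime $Q\ni p$ with $(R/p)_Q$ regular, $R_Q$ is regular (hence Gorenstein), a big test element $c\notin Q$ gives $(\tau(\omega_{R/p}))_Q=\omega_{(R/p)_Q}$, and then \autoref{theorem: big CM restriction} together with \autoref{cor.big CM restriction to char p} and Nakayama's lemma yields $(\mytau_\scr{B}(\omega_R))_Q=\omega_{R_Q}$, so $\mytau_\scr{B}(\omega_R):\omega_R$ avoids $Q$. You should either adopt that route for general $k$ (your intersection trick then still upgrades the conclusion to exact equality of loci) or supply the missing base-change/functoriality statement for $\mytau_\scr{B}$, which is not available in the paper.
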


We begin with a proof with $k$ infinite.

\begin{proof}[Proof of \autoref{thm.GenSingularlocus} with infinite residue field]
By \autoref{lemma: Cohen-Gabber}, for every $Q\in \Spec R$ such that $p\in Q$ and $(R/p)_Q$ is regular, we can find $A\to R$ and $h\notin Q$ that satisfies the hypothesis of \autoref{theorem: uniform annihilation}. It follows that there exists a sequence of elements $h_1,\dots,h_s$ of $R$ such that $(\bar{h}_1,\dots,\bar{h}_s)$ generates the defining ideal of the singular locus of $\Spec (R/p)$ up to radical and such that there exists $N_1,\dots, N_s$ such that $h_i^{N_i}\omega_R\subseteq \mytau_\scr{B}(\omega_R)$ by \autoref{theorem: uniform annihilation}. Now take $J=(h_1^{N_1},\dots,h_s^{N_s})$ and the result follows.
\end{proof}

We also provide another proof of \autoref{thm.GenSingularlocus} that does not require $k$ to be infinite.

\begin{proof}[Proof of \autoref{thm.GenSingularlocus} without infinite residue field]
Assume $\dim R = d$.
For every $Q \in \Spec R$ where $(R/p)_Q$ is regular, it follows that $R_Q$ itself is regular and hence Gorenstein (so $R_Q \cong \omega_{R_Q}$).  Thus we have the factorization $\omega_{R_Q} \twoheadrightarrow \omega_{R_Q}/p \xrightarrow{\sim} \omega_{R_Q/p}$.
Since $(R/p)_Q$ is regular, there exists $c \in R \setminus Q$ so that $(R/p)_c$ is regular.  Thus $c^n \in R^{\circ}$ is a big test element in the sense of \cite{HochsterFoundations} for some $n > 0$.  It follows that $0 = c^n \cdot 0^*_{H^{d-1}_{\fram}(R/p)}$ where $0^*_{H^{d-1}_{\fram}(R/p)}$ is the tight closure.  Dually, this means that $c^n \omega_{R/p} \subseteq \tau(\omega_{R/p})$ and in particular that $(\tau(\omega_{R/p}))_Q = \omega_{R_Q/p}$.
By \autoref{theorem: big CM restriction} and \autoref{cor.big CM restriction to char p}, we have that the modulo-$p$ image of $\mytau_{\scr{B}}(\omega_R)$ in $\omega_{R/p}$ contains $\tau(\omega_{R/p})$.  Hence, localizing that map, the composition
\[
(\mytau_{\scr{B}}(\omega_R))_Q \subseteq \omega_{R_Q} \to \omega_{R_Q}/p \cong \omega_{R_Q/p}
\]
is surjective and so by Nakayama's lemma, $(\mytau_{\scr{B}}(\omega_R))_Q = \omega_{R_Q} \cong R_Q$.  It follows $\mytau_{\scr{B}}(\omega_R) : \omega_R$ is not contained in $Q$, and the proof is complete.
\end{proof}

\begin{remark}
We hope that the formation of $\mytau_{\scr{B}}(\omega_R)$ commutes with localization, in particular that $\mytau_{\scr{B}}(\omega_R)_Q=\mytau_{\scr{B}}(\omega_{R_Q})$  for all $Q\in \Spec R$. If this is true, then a power of the defining ideal of the singular locus will multiply $\omega_R$ into $\mytau_{\scr{B}}(\omega_R)$.
\end{remark}

\section{Perfectoid big Cohen-Macaulay test ideals of pairs}
\label{sec.BCMTestIdeals}

In this section we develop a theory of \BCM{} test ideals for a pair $(R, \Delta \geq 0)$ and also \BCM{} parameter test submodules for a pair $(\omega_R, \Gamma)$.  We fix a setting to work in, usually we consider the mixed characteristic case, but the setting that follows makes sense in all characteristics.

\begin{setting}
\label{set.PerfectoidTestIdealTransformationRules}
Let $(R,\m)$ be a complete normal local domain.

\begin{itemize}
\item[(i)] The divisor $\Gamma \geq 0$ will always denote an effective $\bQ$-Cartier divisor on $\Spec R$.
\item[(ii)]  The divisor $\Delta \geq 0$, will always denote an effective $\bQ$-divisor on $\Spec R$ such that $K_R + \Delta$ is $\bQ$-Cartier.
\item[(iii)]   If $\Delta \geq 0$ is defined, we fix an embedding $R \subseteq \omega_R \subseteq K(R)$ (and hence we also fix an effective canonical divisor $K_R$).  Since we are working locally, such an effective canonical divisor always exists.
\end{itemize}
Because of the assumptions above, the divisor $K_R + \Delta$ is an effective $\bQ$-Cartier divisor.  Therefore, in many situations, we will set $\Gamma = K_R + \Delta$.

We will use $B$ to denote a big Cohen-Macaulay ${R}^+$-algebra. By hypothesis $n \Gamma$ is Cartier for some integer $n > 0$ and so we can write $n \Gamma = \Div_R(f)$ where $f \in R$ (we have that $f \in R$ and not just in $K(R)$ by our effectivity hypotheses).  Since $B$ is an ${R}^+$-algebra, $f^{1/n}$ makes sense in $B$.  Furthermore, the different choices of $f^{1/n}$ differ by roots of unity, which are themselves units.
\end{setting}

\begin{definition}
\label{def.tauOmegaDelta}
With notation as in \autoref{set.PerfectoidTestIdealTransformationRules}, we define
\[
0^{B, \Gamma}_{H^d_{\m}(R)} = \ker\Big( H^d_{\fram}(R) \xrightarrow{f^{1/n}} H^d_{\fram}(B) \Big).
\]
Moreover, if $R$ has mixed characteristic $(0,p)$, we define
\[
\begin{array}{rl}
0^{\scr{B},\Gamma}_{H_\m^d(R)}=\{\eta\in H_\m^d(R) \hspace{0.5em} |  \hspace{0.5em}&  \exists B \text{ integral perfectoid big Cohen-Macaulay $R^+$-algebra}\\
& \text{ such that } f^{1/n}\eta=0 \text{ in } H_\m^d(B)\}.
\end{array}
\]
We then define
\[
\mytau_B(\omega_R, \Gamma)=\Ann_{\omega_R} 0^{B, \Gamma}_{H^d_{\m}(R)} \text{ and } \mytau_\scr{B}(\omega_R, \Gamma)=\Ann_{\omega_R}0^{\scr{B}, \Gamma}_{H_\m^d(R)}.
\]
We call $\mytau_B(\omega_R, \Gamma)$ the {\BCM{} parameter test submodule of $(\omega_R, \Gamma)$ with respect to $B$} and $\mytau_\scr{B}(\omega_R, \Gamma)$ the perfectoid \BCM{} parameter test submodule of $(\omega_R, \Gamma)$.
\end{definition}


\begin{remark}
\begin{enumerate}
  \item It is easy to see that these definitions are independent of the choice of $f$ or $f^{1/n}$ since any two choices only differ by a unit and so the choice does not impact $0^{B, \Gamma}_{H^d_{\m}(R)}$ and $0^{\scr{B},\Gamma}_{H_\m^d(R)}$.
  \item When $\Gamma=0$, $\mytau_B(\omega_R, 0)=\mytau_B(\omega_R)$ and $\mytau_\scr{B}(\omega_R, 0)=\mytau_\scr{B}(\omega_R)$ are the same as the ones given in \autoref{def.tauOmega} (see also \autoref{remark:completion}).
  \item In mixed characteristic, we have $0^{B, \Gamma}_{H^d_{\m}(R)} \subseteq 0^{\scr{B},\Gamma}_{H_\m^d(R)} \text{ and } \mytau_B(\omega_R, \Gamma)\supseteq \mytau_\scr{B}(\omega_R, \Gamma). $
\end{enumerate}
\end{remark}

We now prove the pair variant of \autoref{prop.tau=tauB}. This implies that our perfectoid \BCM{} parameter test submodule is stable under small perturbation, and thus  everything we will prove later in this section about $\mytau_B$ also holds for $\mytau_\scr{B}$.

\begin{proposition}
\label{prop.tausmallperturb}
With notation as in \autoref{set.PerfectoidTestIdealTransformationRules} and \autoref{def.tauOmegaDelta}, also suppose $(R,\m)$ has mixed characteristic $(0,p)$. Then there exists an integral perfectoid big Cohen-Macaulay $R^+$-algebra $B$ such that for any $0\neq g\in R$ and any rational number $0 < \varepsilon\ll 1$,
\[
0^{B,\Gamma}_{H_\m^d(R)}=0^{B,\Gamma+\varepsilon\Div_R(g)}_{H_\m^d(R)}=0^{\scr{B},\Gamma}_{H_\m^d(R)}=0^{\scr{B},\Gamma+\varepsilon\Div_R(g)}_{H_\m^d(R)}.
\]
It follows that
\[
\mytau_B(\omega_R, \Gamma)=\mytau_B(\omega_R, \Gamma+\varepsilon\Div_R(g))=\mytau_\scr{B}(\omega_R, \Gamma)=\mytau_\scr{B}(\omega_R, \Gamma+\varepsilon\Div_R(g)).
\]
\end{proposition}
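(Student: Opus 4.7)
The plan is to combine two ingredients: the global-choice + domination trick from the proof of \autoref{prop.tau=tauB}, and the fact that $H_\m^d(R)$ is Artinian so its submodules satisfy DCC. Writing $n\Gamma=\Div_R(f)$ and fixing $f^{1/n}\in R^+$, the submodule $0^{\scr{B},\Gamma+\varepsilon\Div_R(g)}_{H_\m^d(R)}$ consists of those $\eta$ for which some integral perfectoid BCM $R^+$-algebra annihilates $g^{\varepsilon}f^{1/n}\eta$. A direct multiplication argument ($g^{\varepsilon}=g^{\varepsilon-\varepsilon'}\cdot g^{\varepsilon'}$) shows that as $\varepsilon\to 0^+$ these form a decreasing chain of submodules with lower bound $0^{\scr{B},\Gamma}$; by DCC the chain is eventually constant, say equal to $N_g$ for $\varepsilon\leq\varepsilon_0(g)$.

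The key step is to identify $N_g$ with $0^{\scr{B},\Gamma}$. Given $\eta\in N_g$, for each $e\geq 1$ there is an integral perfectoid BCM $R^+$-algebra $B_e$ with $g^{1/p^e}f^{1/n}\eta=0$ in $H_\m^d(B_e)$. Applying \autoref{thm.DominateAnySetOfBCM} to the countable family $\{B_e\}_e$ yields a single integral perfectoid BCM $R^+$-algebra $B^{\ast}$ in which $g^{1/p^\infty}(f^{1/n}\eta)=0$. Then \autoref{lem.CanonicalBconstruction} furnishes an integral perfectoid BCM $B^{\ast}$-algebra $B^{\ast\ast}$ in which $f^{1/n}\eta$ itself vanishes, so $\eta\in 0^{B^{\ast\ast},\Gamma}\subseteq 0^{\scr{B},\Gamma}$. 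Combined with the stabilization above, this gives $0^{\scr{B},\Gamma+\varepsilon\Div_R(g)}=0^{\scr{B},\Gamma}$ for all $0<\varepsilon\leq\varepsilon_0(g)$.

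To produce a single $B$ that realizes the four-way equality simultaneously for all $g$ and all small $\varepsilon$, I would invoke global choice exactly as in \autoref{prop.tau=tauB}: for each triple $(\eta,g,\varepsilon)\in H_\m^d(R)\times R\times \bQ_{\geq 0}$ with $\eta\in 0^{\scr{B},\Gamma+\varepsilon\Div_R(g)}$ (the case $\varepsilon=0$ being included to control $0^{B,\Gamma}$), pick a witness BCM algebra $B_{\eta,g,\varepsilon}$, and let $B$ dominate this \emph{set} of algebras via \autoref{thm.DominateAnySetOfBCM}. Routine chasing of the induced maps gives $0^{B,\Gamma+\varepsilon\Div_R(g)}=0^{\scr{B},\Gamma+\varepsilon\Div_R(g)}$ for every $g$ and $\varepsilon$, and together with the previous paragraph this collapses all four submodules to $0^{\scr{B},\Gamma}$ whenever $\varepsilon\leq \varepsilon_0(g)$; applying $\Ann_{\omega_R}$ yields the $\mytau$ equalities. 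The main obstacle is the intersection identification in the second paragraph, where one must first dominate countably many BCM algebras and then apply the almost-to-honest passage of \autoref{lem.CanonicalBconstruction}; everything else is the same kind of set-theoretic bookkeeping already developed in \autoref{prop.tau=tauB}.
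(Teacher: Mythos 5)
Your proposal is correct and follows essentially the same route as the paper's proof: Artinian stabilization of $0^{\scr{B},\Gamma+\varepsilon\Div_R(g)}_{H_\m^d(R)}$ as $\varepsilon\to 0^+$, passage from almost vanishing to honest vanishing via \autoref{lem.CanonicalBconstruction}, and then the axiom of global choice together with \autoref{thm.DominateAnySetOfBCM} to produce one dominating $B$. The only difference is bookkeeping: you make explicit the domination of the countable family of witnesses $\{B_e\}$ before invoking \autoref{lem.CanonicalBconstruction} and run the choice argument over triples $(\eta,g,\varepsilon)$ rather than fixing $g$ first, both of which are harmless refinements of steps the paper treats tersely.
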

\begin{proof}
It suffices to prove the result for a fixed $g$. Notice first that $0^{B, \Gamma+\varepsilon\Div_R(g)}_{H^d_{\fram}(R)}$ stabilizes for $1 \gg \varepsilon > 0$ by the Artinian property of $H^d_{\fram}(R)$ (and likewise for the version with $\scr{B}$), and so working with an arbitrary sufficiently small $\varepsilon$ does make sense.
It is enough to find an integral perfectoid big Cohen-Macaulay $R^+$-algebra $B$ such that $0^{\scr{B},\Gamma+\varepsilon\Div_R(g)}_{H_\m^d(R)}\subseteq 0^{B,\Gamma}_{H_\m^d(R)}$ for all $\varepsilon\ll 1$. For every $\eta\in 0^{\scr{B},\Gamma+\varepsilon\Div_R(g)}_{H_\m^d(R)}$, we have $f^{1/n}g^{\varepsilon}\eta=0$ in $H_\m^d(C)$ for some integral perfectoid big Cohen-Macaulay $R^+$-algebra $C$ and for all $\varepsilon\ll 1$. Therefore by \autoref{lem.CanonicalBconstruction} there exists an integral perfectoid big Cohen-Macaulay $R^+$-algebra $C_\eta$ such that $f^{1/n}\eta=0$ in $H_\m^d(C_\eta)$. Now by the axiom of global choice and \autoref{thm.DominateAnySetOfBCM}, there exists an integral perfectoid big Cohen-Macaulay $R^+$-algebra $B$ such that $C_\eta\to B$ for all $\eta$. It follows that $f^{1/n}\eta=0$ in $H_\m^d(B)$ for all $\eta\in 0^{\scr{B},\Gamma+\varepsilon\Div_R(g)}_{H_\m^d(R)}$ and hence $0^{\scr{B},\Gamma+\varepsilon\Div_R(g)}_{H_\m^d(R)}\subseteq 0^{B,\Gamma}_{H_\m^d(R)}$ as desired.
\end{proof}

\begin{remark}[More general big Cohen-Macaulay algebras]
\label{rem.GeneralBForTestIdeals}
Note that in order for our definition to make sense we did not really need $B$ to be an ${R}^+$ algebra.  We only required $B$ to be a big Cohen-Macaulay $R$-algebra containing an element $f^{1/n}$ whose $n$th power is the image of $f$ inside of $B$.  The advantage of working with $R^+$ algebras is two-fold.  First, no matter which $\Delta$ you pick, if $K_R + \Delta$ is $\bQ$-Cartier, you can find a corresponding $f$.  Second, if one works with $R^+$ algebras, then we may pick our $f^{1/n} \in R^+$ and any such choice differs by a unit, an $n$th root of unity and hence it does not matter which $f^{1/n}$ you pick in the definition of the test ideal.  For a more general big Cohen-Macaulay algebra however, it may be hard to guarantee whether any two $n$th roots of $f$ differ by a unit, and so the choice of $f^{1/n}$ actually matters in the definition.  One can address it for instance by requiring that $B$ is an ${R[f^{1/n}]}^{\textnormal{N}}$ algebra, where ${R[f^{1/n}]}^{\textnormal{N}}\subseteq R^+$ denotes the normalization of $R[f^{1/n}]$.
\end{remark}

Next we prove the following very basic version of Skoda's theorem.

\begin{lemma}
\label{lem.skodatype}
With notation as in \autoref{set.PerfectoidTestIdealTransformationRules}, assume additionally that $\Gamma' = \Gamma + \Div_R(h)$ for some $0\neq h \in R$.  Then
\[
\mytau_B(\omega_R, \Gamma') = h \cdot \mytau_B(\omega_R, \Gamma) 
\]
\end{lemma}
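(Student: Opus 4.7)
The plan is to reduce the identity to a Matlis duality computation by expressing the defining section for $\Gamma'$ in terms of the one for $\Gamma$. Since $n\Gamma = \Div_R(f)$ gives $n\Gamma' = \Div_R(fh^n)$, we may take $(fh^n)^{1/n} = f^{1/n}h$ as our defining element, whence
\[
0^{B,\Gamma'}_{H^d_\m(R)} = \ker\bigl(H^d_\m(R) \xrightarrow{f^{1/n}h} H^d_\m(B)\bigr) = \bigl(0^{B,\Gamma}_{H^d_\m(R)} :_{H^d_\m(R)} h\bigr).
\]
Writing $M = H^d_\m(R)$ and $N = 0^{B,\Gamma}_{H^d_\m(R)}$, the lemma thus becomes the purely algebraic identity $\Ann_{\omega_R}(N :_M h) = h \cdot \Ann_{\omega_R}(N)$.

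The forward containment $h \cdot \mytau_B(\omega_R, \Gamma) \subseteq \mytau_B(\omega_R, \Gamma')$ is immediate: for $z \in \omega_R$ annihilating $N$ and any $\eta \in (N :_M h)$, one has $(hz)(\eta) = z(h\eta) = 0$, using that $h\eta \in N$.

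For the reverse inclusion, I would Matlis-dualize the short exact sequence
\[
0 \to (hM+N)/N \to M/N \to M/(hM+N) \to 0
\]
of Artinian $R$-modules, obtaining
\[
0 \to \bigl(M/(hM+N)\bigr)^\vee \to \mytau_B(\omega_R, \Gamma) \to \bigl((hM+N)/N\bigr)^\vee \to 0.
\]
The $R$-linear surjection $M \twoheadrightarrow (hM+N)/N$ defined by multiplication by $h$ has kernel exactly $(N :_M h)$, identifying the right-hand term with $\Ann_{\omega_R}(N :_M h) = \mytau_B(\omega_R, \Gamma')$. A short chase through the Matlis duality identifications shows that under the inclusions $\mytau_B(\omega_R, \Gamma), \mytau_B(\omega_R, \Gamma') \subseteq \omega_R$, the induced surjection $\mytau_B(\omega_R, \Gamma) \twoheadrightarrow \mytau_B(\omega_R, \Gamma')$ is itself multiplication by $h$.

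Finally, the kernel $(M/(hM+N))^\vee$ of this surjection sits inside $\mytau_B(\omega_R, \Gamma) \subseteq \omega_R$ and consists precisely of those elements killed by $h$; but $R$ is a normal domain and $\omega_R$ is a rank one fractional ideal in $K(R)$, hence torsion-free, so the kernel vanishes. Thus multiplication by $h$ yields an isomorphism $\mytau_B(\omega_R, \Gamma) \xrightarrow{\sim} \mytau_B(\omega_R, \Gamma')$, which is precisely the claim. The only step requiring genuine care is the bookkeeping in the previous paragraph: identifying the dualized map with multiplication by $h$ on $\omega_R$ (as opposed to some twist) is where all the identifications must be tracked consistently, and this is the main technical obstacle; it reduces to the pairing formula $(h\phi)(m) = \phi(hm)$ for $\phi \in \omega_R$ viewed as a functional on $H^d_\m(R)$.
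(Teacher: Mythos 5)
Your proposal is correct and follows essentially the same route as the paper: the paper likewise factors the map as $H^d_\m(R)\xrightarrow{h}H^d_\m(R)\xrightarrow{f^{1/n}}H^d_\m(B)$ to get $0^{B,\Gamma'}_{H^d_\m(R)}=0^{B,\Gamma}_{H^d_\m(R)}:_{H^d_\m(R)}h$ and then concludes by Matlis duality. Your additional duality bookkeeping (and the torsion-freeness remark showing multiplication by $h$ is injective on $\omega_R$) just makes explicit the step the paper leaves as "the result follows," and it is carried out correctly.
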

\begin{proof}
We notice that $0_{H^d_{\fram}(R)}^{B, \Gamma'}$ is the kernel of
\[
H^d_{\m}(R)  \xrightarrow{h \cdot f^{1/n}} H^d_{\fram}(B)
\]
which can be factored as
\[
H^d_{\m}(R) \xrightarrow{h} H^d_{\m}(R) \xrightarrow{f^{1/n}} H^d_{\m}(B).
\]
Thus
\[
0^{B, \Gamma'}_{H^d_{\m}(R)} = 0^{B, \Gamma}_{H^d_{\m}(R)} :_{H^d_{\m}(R)} h.
\]
The result follows.
\end{proof}

Next we switch to $\Gamma = K_R + \Delta$ as in \autoref{set.PerfectoidTestIdealTransformationRules}.  We prove that the definition is independence of the choice of $K_R$.

\begin{lemma}
Fix an embedding $R \subseteq \omega_R \subseteq K(R)$ and hence a choice of canonical divisor $K_R$.  Suppose we have another embedding $\omega_R \cong \omega_R' \subseteq K(R)$ (whose image also contains $R$, and also defines an effective $K_R'$).  Then
\[
\mytau_B(\omega_R, K_R + \Delta) = \mytau_B(\omega_R', K_R' + \Delta)
\]
as fractional ideals.
\end{lemma}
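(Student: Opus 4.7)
The plan is to deduce the equality by combining a simple rescaling identity for annihilators with a mild extension of the Skoda-type formula \autoref{lem.skodatype}. First I would observe that since $\omega_R$ and $\omega_R'$ are two embeddings of the same abstract canonical module as fractional ideals of $K(R)$, there exists $u \in K(R)^*$ with $\omega_R' = u\omega_R$; consequently $K_R' = K_R - \Div_R(u)$, and if $n(K_R + \Delta) = \Div_R(f)$, then we may take $f' = f/u^n$ (which lies in $R$ because $K_R' + \Delta$ is assumed effective) with $n(K_R' + \Delta) = \Div_R(f')$, choosing compatible $n$-th roots in $R^+$ so that $f^{1/n} = u \cdot (f')^{1/n}$.

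Next I would establish a rescaling identity: for any $R$-submodule $N \subseteq H^d_{\m}(R)$ and any $v \in K(R)^*$,
\[
\Ann_{v\omega_R}(N) = v \cdot \Ann_{\omega_R}(N)
\]
as fractional ideals in $K(R)$. This is immediate because multiplication by $v$ is an $R$-module isomorphism $\omega_R \to v\omega_R$ that identifies the two pairings with $H^d_{\m}(R)$, so $w \cdot \eta = 0$ in $H^d_{\m}(v\omega_R)$ if and only if $v^{-1}w \cdot \eta = 0$ in $H^d_{\m}(\omega_R)$. Applying this with $v = u$ and $N = 0^{B, K_R+\Delta}_{H^d_{\m}(R)}$ yields
\[
\mytau_B(\omega_R', K_R+\Delta) = u \cdot \mytau_B(\omega_R, K_R+\Delta).
\]

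Then I would prove a generalized Skoda formula: for any $h \in K(R)^*$ with $\Gamma + \Div_R(h)$ still effective and $\bQ$-Cartier,
\[
\mytau_B(\omega_R, \Gamma + \Div_R(h)) = h \cdot \mytau_B(\omega_R, \Gamma),
\]
with the same identity holding for $\omega_R'$ in place of $\omega_R$. Writing $h = h_1/h_2$ with $h_1, h_2 \in R \setminus \{0\}$, both $\Gamma + \Div_R(h_1) = (\Gamma + \Div_R(h)) + \Div_R(h_2)$ and $\Gamma + \Div_R(h)$ are effective $\bQ$-Cartier, so two applications of \autoref{lem.skodatype} give $h_1 \mytau_B(\omega_R, \Gamma) = h_2 \mytau_B(\omega_R, \Gamma + \Div_R(h))$, and solving yields the stated identity. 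Applying this with $h = u^{-1}$, $\Gamma = K_R + \Delta$, and with $\omega_R'$ as the ambient module yields
\[
\mytau_B(\omega_R', K_R'+\Delta) = u^{-1} \cdot \mytau_B(\omega_R', K_R+\Delta).
\]

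Finally, chaining the two identities produces
\[
\mytau_B(\omega_R', K_R'+\Delta) = u^{-1} \cdot u \cdot \mytau_B(\omega_R, K_R+\Delta) = \mytau_B(\omega_R, K_R+\Delta)
\]
as fractional ideals in $K(R)$, which is the desired conclusion. The only real obstacle is the extension of \autoref{lem.skodatype} from $h \in R$ to $h \in K(R)^*$; since $K(R) = \Frac(R)$ every such $h$ is a ratio of elements of $R$, and the effectivity hypothesis on $K_R' + \Delta$ ensures that the intermediate divisor $\Gamma + \Div_R(h_1)$ remains effective and $\bQ$-Cartier, so the extension reduces to two bookkeeping applications of the original Skoda lemma.
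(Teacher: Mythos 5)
Your argument is correct and follows essentially the same route as the paper's proof: both rest on the rescaling identity $\Ann_{u\omega_R}(N) = u\cdot \Ann_{\omega_R}(N)$ for the ambient rank-one module together with \autoref{lem.skodatype} to absorb the divisor shift by $\Div_R(u)$. The only difference is organizational: the paper reduces without loss of generality (by enlarging $K_R'$) to the case where the two canonical divisors differ by $\Div_R(h)$ with $h \in R$, whereas you treat a general $u \in K(R)^*$ directly by writing $u = h_1/h_2$ and applying the Skoda lemma twice, which is a harmless bookkeeping variant of the same idea.
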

\begin{proof}
Without loss of generality (choosing a bigger $K_R'$ if necessary) we may assume that $K_R  + \Div_R(h) = K_R'$ where $h \in R$, then $\omega_R = h \cdot \omega_R'$.  Therefore, for any $\bQ$-Cartier $\Gamma \geq 0$, we have $\mytau_B(\omega_R, \Gamma) = h \cdot \mytau_B(\omega_R', \Gamma)$.   On the other hand, it follows from \autoref{lem.skodatype} that $h \cdot \mytau_B(\omega_R', K_R + \Delta) = \mytau_B(\omega_R', K_R' + \Delta)$.  Combining these observations proves the lemma.
\end{proof}

Because of the previous lemma, the object $\mytau_B(\omega_R, K_R + \Delta)$ is independent of the choice of embedding $\omega_R \subseteq K(R)$ (which is essentially the same as the choice of a canonical divisor $K_R$).  Therefore, since the intersection $\bigcap_{K_R \geq 0} R(K_R) = R$ (since $R$ is normal), $\mytau_B(\omega_R, K_R + \Delta)$ is actually an ideal of $R$ (as pointed out by the referee).  However, we give an alternative proof below.

\begin{lemma}
\label{lem.TestIdealIsAnIdeal}
With notation as in \autoref{set.PerfectoidTestIdealTransformationRules}, $\mytau_B(\omega_R, K_R + \Delta) \subseteq \omega_R$ is in fact a subset and hence an ideal of $R$. The same is true for  $\mytau_\scr{B}(\omega_R, K_R + \Delta)$.
\end{lemma}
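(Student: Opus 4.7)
The plan is to combine the preceding lemma (independence of $\mytau_B(\omega_R, K_R + \Delta)$ under the choice of effective representative of the canonical class) with the classical identity $(R(D) :_{K(R)} R(D)) = R$ for any divisorial fractional ideal on the normal domain $R$.

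For each nonzero $h \in \omega_R = R(K_R) \subseteq K(R)$, the fractional ideal $\omega_R' := h^{-1}\omega_R$ still contains $R$ and corresponds to the effective canonical divisor $K_R' := K_R + \Div_R(h) \geq 0$. Applying the preceding lemma to this new embedding gives
\[
\mytau_B(\omega_R, K_R + \Delta) \;=\; \mytau_B(\omega_R', K_R' + \Delta) \;\subseteq\; \omega_R' \;=\; h^{-1}\omega_R,
\]
i.e.\ $h \cdot \mytau_B(\omega_R, K_R + \Delta) \subseteq \omega_R$ for every $0 \neq h \in \omega_R$. Intersecting over all such $h$ yields
\[
\mytau_B(\omega_R, K_R + \Delta) \;\subseteq\; \bigcap_{0 \neq h \in \omega_R} h^{-1}\omega_R \;=\; (\omega_R :_{K(R)} \omega_R).
\]

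The classical fact to finish is that for any Weil divisor $D$ on the normal domain $R$, $(R(D) :_{K(R)} R(D)) = R$: if $z \in K(R)$ satisfies $z \cdot R(D) \subseteq R(D)$, localizing at each height-one prime shows $\Div_R(z) \geq 0$, so $z \in R$ by normality. Applied to $D = K_R$, this forces $\mytau_B(\omega_R, K_R + \Delta) \subseteq R$ and hence makes it an ideal of $R$. The statement for $\mytau_{\scr{B}}$ is immediate from $\mytau_{\scr{B}}(\omega_R, K_R + \Delta) \subseteq \mytau_B(\omega_R, K_R + \Delta)$. The only substantive input is the preceding lemma; the rest is routine divisorial algebra, and I do not anticipate any real obstacle.
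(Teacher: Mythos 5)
Your argument is correct, and it is in fact the route the paper itself gestures at in the paragraph immediately preceding the lemma (attributed there to the referee): independence of the embedding plus the normality statement that the intersection of all effective canonical divisor embeddings is $R$. Your write-up fills in the details of that sketch correctly — for $0 \neq h \in \omega_R$ the fractional ideal $h^{-1}\omega_R$ contains $R$, corresponds to the effective divisor $K_R + \Div_R(h)$, and $K_R' + \Delta$ is still effective and $\bQ$-Cartier, so the previous lemma applies and gives $\mytau_B(\omega_R, K_R+\Delta) \subseteq h^{-1}\omega_R$; intersecting gives containment in $(\omega_R :_{K(R)} \omega_R) = R$, which holds because $\omega_R$ localizes to a principal fractional ideal at height-one primes and $R$ is normal. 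The proof the paper actually prints is different: it passes to $T$, the normalization of $R[f^{1/n}]$ inside $R^+$ (through which $R \to B$ factors), and argues by Matlis duality that it suffices to show $\Tr\colon f^{1/n}\,\omega_T \to \omega_R$ lands in $R$, which follows since $f^{1/n}\omega_T = \omega_T(-\pi^*(K_R+\Delta)) \subseteq \omega_{T/R}$ and $\Tr(\omega_{T/R}) \subseteq R$. Your approach is more elementary divisorial algebra and needs no finite cover; the paper's version foreshadows the trace/finite-map machinery used for the transformation rules in \autoref{theorem: finite mapPairsCanonical}. For the $\mytau_{\scr{B}}$ statement you invoke $\mytau_{\scr{B}} \subseteq \mytau_B$ (valid for any integral perfectoid big Cohen-Macaulay $R^+$-algebra $B$, which exists), while the paper cites \autoref{prop.tausmallperturb}; both suffice. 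No gaps.
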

\begin{proof}
Let $T \subseteq {R}^+$ denote the normalization of ${R}[f^{1/n}] \subseteq {R}^+$.
By Matlis duality and the fact that $R \to B$ factors through $T$, it suffices to show that $f^{1/n} \cdot \omega_T \xrightarrow{\Tr} \omega_R$ has image contained in $R$. However, this follows because $f^{1/n} \cdot \omega_T = \omega_T(-\pi^* (K_R + \Delta)) \subseteq \omega_{T/R}$ and $\Tr(\omega_{T/R})\subseteq R$. The second conclusion follows from the first and \autoref{prop.tausmallperturb}.
\end{proof}

Based on the above lemmas, we make the following definition.

\begin{definition}
\label{def.tauRDelta}
With notation as in \autoref{set.PerfectoidTestIdealTransformationRules}, we define
$$\mytau_B(R, \Delta)=\mytau_B(\omega_R, K_R + \Delta) \subseteq R ,$$
and in mixed characteristic we define
$$  \mytau_\scr{B}(R, \Delta)=\mytau_\scr{B}(\omega_R, K_R + \Delta) \subseteq R.$$
We call $\mytau_B(R, \Delta)$ the \emph{\BCM{} test ideal of $(R, \Delta)$ with respect to $B$} and $\mytau_\scr{B}(R, \Delta)$ the \emph{perfectoid \BCM{} test ideal of $(R,\Delta)$}.  We say that $(R, \Delta)$ is \emph{big Cohen-Macaulay-regular with respect to $B$} (or simply {\BCMReg{B}}) if $\mytau_B(R, \Delta) = R$, and we say $R$ is \BCMReg{B} if $(R,0)$ is \BCMReg{B}.\footnote{So in particular if $R$ is \BCMReg{B} then $R$ is $\mathbb{Q}$-Gorenstein. } In mixed characteristic we say $(R, \Delta)$ is \emph{perfectoid \BCMReg{}} if $\mytau_\scr{B}(R, \Delta)=R$.
\end{definition}


The following result summarizes basic properties of \BCM{} test ideals and perfectoid \BCMReg{} singularities.

\begin{proposition}
\label{prop.testidealsmallperturb}
With notation as in \autoref{set.PerfectoidTestIdealTransformationRules} and \autoref{def.tauRDelta}, also suppose $(R,\m)$ has mixed characteristic $(0,p)$. Then there exists an integral perfectoid big Cohen-Macaulay $R^+$-algebra $B$ such that for any $0\neq g\in R$ and any rational number $0 < \varepsilon\ll 1$,
$$\mytau_B(R, \Delta)=\mytau_B(R, \Delta+\varepsilon\Div_R(g))=\mytau_\scr{B}(R, \Delta)=\mytau_\scr{B}(R, \Delta+\varepsilon\Div_R(g)).$$
In particular, $R$ is perfectoid \BCMReg{} if and only if $R$ is \BCMReg{B} for one (large enough) integral perfectoid big Cohen-Macaulay $R^+$-algebra $B$ (and equivalently $R$ is \BCMReg{B} for every integral perfectoid big Cohen-Macaulay $R^+$-algebra $B$).
\end{proposition}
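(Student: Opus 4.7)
The plan is to reduce everything to \autoref{prop.tausmallperturb} applied with the specific choice $\Gamma = K_R + \Delta$. Recall that by \autoref{def.tauRDelta} we have $\mytau_B(R, \Delta) = \mytau_B(\omega_R, K_R + \Delta)$ and similarly $\mytau_\scr{B}(R, \Delta) = \mytau_\scr{B}(\omega_R, K_R + \Delta)$, and these are honest ideals of $R$ by \autoref{lem.TestIdealIsAnIdeal}. The key observation is that $(K_R + \Delta) + \varepsilon \Div_R(g) = K_R + (\Delta + \varepsilon \Div_R(g))$ as $\bQ$-divisors, and $K_R + \Delta + \varepsilon \Div_R(g)$ remains $\bQ$-Cartier since $\Div_R(g)$ is Cartier, so the right-hand side fits into the framework of \autoref{def.tauRDelta} with divisor part $\Delta + \varepsilon \Div_R(g)$.

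First I would invoke \autoref{prop.tausmallperturb} with $\Gamma := K_R + \Delta$. This yields an integral perfectoid big Cohen-Macaulay $R^+$-algebra $B$ such that for every $0 \neq g \in R$ and every rational $0 < \varepsilon \ll 1$,
\[
\mytau_B(\omega_R, K_R + \Delta) = \mytau_B(\omega_R, K_R + \Delta + \varepsilon \Div_R(g)) = \mytau_\scr{B}(\omega_R, K_R + \Delta) = \mytau_\scr{B}(\omega_R, K_R + \Delta + \varepsilon \Div_R(g)).
\]
Translating via \autoref{def.tauRDelta} gives exactly the four-fold equality asserted in the proposition.

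For the ``In particular'' part, I would argue as follows. By definition $\mytau_\scr{B}(R, \Delta) \subseteq \mytau_{B'}(R, \Delta)$ for every integral perfectoid big Cohen-Macaulay $R^+$-algebra $B'$, since the submodule $0^{\scr{B}, K_R+\Delta}_{H^d_\m(R)}$ contains $0^{B', K_R+\Delta}_{H^d_\m(R)}$. Therefore if $R$ is perfectoid \BCMReg{} (i.e.\ $\mytau_\scr{B}(R, \Delta) = R$), then $\mytau_{B'}(R, \Delta) = R$ for \emph{every} such $B'$, hence $R$ is \BCMReg{B'} for every such $B'$. Conversely, if $\mytau_B(R, \Delta) = R$ for some \emph{one} sufficiently large $B$ (meaning one for which the equalities in the first part of the proposition hold), then $\mytau_\scr{B}(R, \Delta) = \mytau_B(R, \Delta) = R$, so $R$ is perfectoid \BCMReg{}.

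There is essentially no obstacle in this argument beyond carefully matching definitions; the real content is hidden in \autoref{prop.tausmallperturb}, whose proof in turn rests on the $p^{1/p^\infty}$-almost vanishing trick of \autoref{lem.CanonicalBconstruction} and on the domination result \autoref{thm.DominateAnySetOfBCM} applied to the (set-indexed) collection of algebras $\{C_\eta\}$ that witness the vanishing for each $\eta$ in the (Artinian, hence set-sized) submodule $0^{\scr{B}, K_R + \Delta + \varepsilon \Div_R(g)}_{H^d_\m(R)}$.
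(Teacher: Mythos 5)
Your proposal is correct and follows exactly the route of the paper: the paper's proof is precisely "this follows immediately from \autoref{def.tauRDelta} and \autoref{prop.tausmallperturb}," i.e.\ apply \autoref{prop.tausmallperturb} with $\Gamma = K_R + \Delta$ and translate through \autoref{def.tauRDelta}, with the ``in particular'' clause handled by the containment $\mytau_\scr{B} \subseteq \mytau_{B'}$ just as you do. Your write-up simply makes explicit the bookkeeping the paper leaves implicit.
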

\begin{proof}
This follows immediately from \autoref{def.tauRDelta} and \autoref{prop.tausmallperturb}.
\end{proof}

\begin{lemma}
\label{lem.ComparisionDifferentDiv}
With notation as in \autoref{set.PerfectoidTestIdealTransformationRules} and \autoref{def.tauRDelta}, assume we have $0 \leq \Gamma \leq \Gamma'$, both $\bQ$-Cartier $\bQ$-divisors. Then we have
\[
\mytau_B(\omega_R, \Gamma) \supseteq \mytau_B(\omega_R, \Gamma') \text{ and so } \mytau_B(\omega_R, \Gamma) \supseteq f \mytau_B(\omega_R).
\]
In particular if $\Delta \leq \Delta'$ with $\Gamma := \Delta + K_R$ and $\Gamma' := \Delta' + K_R$, as above, then
 \[
 \mytau_B(R, \Delta) \supseteq \mytau_B(R, \Delta') 
 \]
In particular, in mixed characteristic, when $B$ is integral perfectoid, both $\mytau_B(\omega_R, \Gamma)$ and $\mytau_B(R, \Delta)$ are nonzero. The same conclusions hold for $\mytau_\scr{B}$.
\end{lemma}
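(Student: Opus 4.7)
The plan is to reduce the first inclusion to a direct kernel computation, deduce the second inclusion from Skoda (\autoref{lem.skodatype}), and obtain the remaining statements as consequences.

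First, for $\mytau_B(\omega_R, \Gamma) \supseteq \mytau_B(\omega_R, \Gamma')$: write $n\Gamma = \Div_R(f)$ and $n'\Gamma' = \Div_R(f')$, and pass to a common denominator $N$ divisible by both $n$ and $n'$. Set $a = f^{N/n}$ and $a' = (f')^{N/n'}$, so $\Div_R(a) = N\Gamma$ and $\Div_R(a') = N\Gamma'$. Since $\Gamma' - \Gamma \geq 0$ is $\bQ$-Cartier, the element $h := a'/a$ lies in $R$ (it is a regular element of $R$ because the divisor is effective and $R$ is normal). Lifting to $R^+ \subseteq B$ we then have $(f')^{1/n'} = h^{1/N} f^{1/n}$ up to a root of unity (which is a unit). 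Thus the map $H^d_{\fram}(R) \xrightarrow{(f')^{1/n'}} H^d_{\fram}(B)$ factors as $H^d_{\fram}(R) \xrightarrow{f^{1/n}} H^d_{\fram}(B) \xrightarrow{h^{1/N}} H^d_{\fram}(B)$, so $0^{B,\Gamma}_{H^d_{\fram}(R)} \subseteq 0^{B,\Gamma'}_{H^d_{\fram}(R)}$, and taking annihilators gives the desired containment. The same argument applied to $\Gamma = K_R + \Delta \leq K_R + \Delta' = \Gamma'$ yields $\mytau_B(R,\Delta) \supseteq \mytau_B(R,\Delta')$.

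Next, for $\mytau_B(\omega_R, \Gamma) \supseteq f\mytau_B(\omega_R)$: note that $n\Gamma \geq \Gamma$ since $n \geq 1$ and $\Gamma \geq 0$, so by the inclusion just proved, $\mytau_B(\omega_R, \Gamma) \supseteq \mytau_B(\omega_R, n\Gamma)$. But $n\Gamma = \Div_R(f)$ is Cartier, so \autoref{lem.skodatype} gives $\mytau_B(\omega_R, n\Gamma) = f \cdot \mytau_B(\omega_R)$.

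For the nonvanishing assertion, it suffices by the previous paragraph to show $\mytau_B(\omega_R) \neq 0$ when $B$ is integral perfectoid big Cohen-Macaulay (so that $f \cdot \mytau_B(\omega_R) \neq 0$, as $f$ is a nonzerodivisor in the domain $R$). Pick any system of parameters $x_1,\dots,x_d$ of $R$; since $B$ is big Cohen-Macaulay, the class $[1/(x_1\cdots x_d)]$ is nonzero in $H^d_{\fram}(B)$. Hence the map $H^d_{\fram}(R) \to H^d_{\fram}(B)$ is nonzero, so $0^B_{H^d_{\fram}(R)} \subsetneq H^d_{\fram}(R)$. By Matlis duality (using that $R$ is complete), $\mytau_B(\omega_R) \cong (H^d_{\fram}(R)/0^B_{H^d_{\fram}(R)})^\vee \neq 0$. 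The same nonvanishing then holds for $\mytau_B(R, \Delta) = \mytau_B(\omega_R, K_R + \Delta)$.

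Finally, for the $\mytau_\scr{B}$ versions of all the above statements, apply \autoref{prop.tausmallperturb} and \autoref{prop.testidealsmallperturb}: choose a sufficiently large integral perfectoid big Cohen-Macaulay $R^+$-algebra $B$ such that $\mytau_\scr{B}(\omega_R, \Gamma) = \mytau_B(\omega_R, \Gamma)$ (and similarly for $\Gamma'$, or for $\Delta$ and $\Delta'$, using \autoref{thm.DominateAnySetOfBCM} to dominate finitely many choices simultaneously), and then invoke the inclusions already established. The only subtlety is ensuring one $B$ works simultaneously for both divisors involved in each inclusion, which is precisely what \autoref{thm.DominateAnySetOfBCM} provides.
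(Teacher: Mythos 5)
Your proof is correct, and for the two containments it follows the same route as the paper: a common denominator so that $f$ divides $f'$ in $R^+$, the resulting factorization of multiplication by $(f')^{1/n'}$ through multiplication by $f^{1/n}$ on $H^d_{\fram}(B)$, and then \autoref{lem.skodatype} applied to $\Div_R(f)=n\Gamma\geq\Gamma$ for the second inclusion; the reduction of the $\mytau_B(R,\Delta)$ statement to \autoref{def.tauRDelta} and the passage to $\mytau_{\scr{B}}$ via \autoref{prop.tausmallperturb}/\autoref{prop.testidealsmallperturb} (with \autoref{thm.DominateAnySetOfBCM} to get one $B$ for both divisors) also match the paper's argument. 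Where you genuinely diverge is the nonvanishing: the paper chooses a regular subring $A\subseteq R$ by the Cohen structure theorem and invokes \autoref{theorem: uniform annihilation} to conclude $\mytau_B(\omega_R)\neq 0$, whereas you argue directly that the class $[1/(x_1\cdots x_d)]$ survives in $H^d_{\fram}(B)$ because $x_1,\dots,x_d$ is a regular sequence on $B$, so $0^B_{H^d_{\fram}(R)}$ is a proper submodule and Matlis duality over the complete ring gives $\mytau_B(\omega_R)\neq 0$, hence $f\,\mytau_B(\omega_R)\neq 0$ by torsion-freeness of $\omega_R$. Your argument is lighter and more general (it needs only that $B$ is big Cohen-Macaulay, not integral perfectoid or mixed characteristic, and it is exactly the socle argument the paper itself uses in \autoref{prop: big CM rational=F-rational} and \autoref{prop: big CM test ideal char p}), while the paper's appeal to the uniform annihilation theorem buys a stronger statement (a fixed multiplier $h^N$ with $h^N\omega_R\subseteq\mytau_{\scr{B}}(\omega_R)$) that is not actually needed here; for the $\mytau_{\scr{B}}$ nonvanishing both you and the paper ultimately reduce to a single sufficiently large $B$ via the stabilization results, so there is no gap.
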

\begin{proof}
The first part of the first line follows simply by writing $\Div_R(f) = n\Gamma, \Div_{R}(f') = n\Gamma'$ for a common sufficiently large $n$, and noting that $f$ divides $f'$ in $R^+$.  For the second part of the first line simply use \autoref{lem.skodatype}.  The second line follows from the first by \autoref{def.tauRDelta}.
To show that they are nonzero in mixed characteristic, choose a regular local subring $A \subseteq R$ by the Cohen-Structure theorem, and then apply \autoref{theorem: uniform annihilation}. We know that $\mytau_B(\omega_R)$ is nonzero, therefore so is $\mytau_B(\omega, \Gamma)\supseteq f\mytau_B(\omega_R)$. The result for $\mytau_\scr{B}$ follows from the result for $\mytau_B$ and \autoref{prop.testidealsmallperturb}.
\end{proof}

\subsection{Comparison between \BCMReg{B} and \BCMRat{B} singularities} In this subsection we relate \BCMReg{B} and \BCMRat{B} singularities, and in particular we will prove that \BCMReg{B} rings are Cohen-Macaulay.

\begin{theorem}
\label{thm.BCMRegularImpliesCM}
Suppose that $(R, \fram)$ is a complete normal local domain and that $\Delta \geq 0$ is an effective $\bQ$-divisor such that $K_R + \Delta$ is $\bQ$-Cartier. If $(R, \Delta)$ is \BCMReg{B} for some big Cohen-Macaulay $R^+$-algebra $B$, then $R \to B$ is pure and so $R$ is \BCMRat{B} and in particular, $R$ is Cohen-Macaulay.

It follows that if $(R,\m)$ has mixed characteristic $(0,p)$ and $R$ is perfectoid \BCMReg{}, then $R$ is \BCMRat{B} for every integral perfectoid big Cohen-Macaulay $R$-algebra $B$.
\end{theorem}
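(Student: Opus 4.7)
The plan is to Matlis-dualize the condition $\mytau_B(R,\Delta)=R$ into the injectivity of a certain map $\psi\colon E\to H_\m^d(B)$ out of the injective hull $E$ of the residue field, then to extract purity of $R\to B$ from that, and finally to derive Cohen-Macaulayness together with the \BCMRat{B} property from purity. To set this up, write $n(K_R+\Delta)=\Div_R(f)$ and factor $H_\m^d(R)\xrightarrow{\cdot f^{1/n}}H_\m^d(B)$ through $H_\m^d(R(K_R))\cong E$ exactly as in the diagram preceding the strong $F$-regularity definition in \autoref{defprop.CharpSings}; let $\psi$ denote the second arrow. Since $R$ is normal, $\omega_R/R$ is supported in dimension at most $d-1$, so $H_\m^d(R)\twoheadrightarrow E$ is surjective with some kernel $K_0$ whose annihilator in $\omega_R$ is precisely $R$. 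The kernel $\ker(\cdot f^{1/n})$ always contains $K_0$ (because the map factors through $E$), so via the Matlis-duality anti-isomorphism between submodules of $H_\m^d(R)$ and those of $\omega_R$, combined with $\mytau_B(R,\Delta)\subseteq R$ from \autoref{lem.TestIdealIsAnIdeal}, the equality $\mytau_B(R,\Delta)=R$ holds iff $\ker(\cdot f^{1/n})=K_0$, iff $\psi$ is injective. This bookkeeping is the step I expect to require the most care.

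Granting injectivity of $\psi$, I will factor it as $E\xrightarrow{e\mapsto e\otimes 1}E\otimes_RB\to H_\m^d(B)$, where the second arrow is the $B$-linear extension of $\psi$; hence $E\to E\otimes_RB$ is injective. By Hochster's standard characterization of purity for complete local rings (a map $R\to B$ is pure if and only if $E\to E\otimes_RB$ is injective), this forces $R\to B$ to be pure. Then for every system of parameters $x_1,\ldots,x_d$ of $R$, purity gives $(x_1,\ldots,x_i)B\cap R=(x_1,\ldots,x_i)R$, and combining with the regularity of $x_1,\ldots,x_d$ on $B$ shows the sequence is regular on $R$, so $R$ is Cohen-Macaulay. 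A parallel \v{C}ech computation (or purity applied directly to $H_\m^d(R)$, using that any class in the kernel of $H_\m^d(R)\to H_\m^d(B)$ gives a colon-relation in $B$ that descends to $R$) yields injectivity of $H_\m^d(R)\to H_\m^d(B)$, so $R$ is \BCMRat{B}.

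For the perfectoid consequence, suppose $R$ is perfectoid \BCMReg{} and let $B$ be any integral perfectoid big Cohen-Macaulay $R$-algebra. By \autoref{prop.testidealsmallperturb}, pick an integral perfectoid big Cohen-Macaulay $R^+$-algebra $B_0$ with $\mytau_{B_0}(R,0)=\mytau_\scr{B}(R,0)=R$. Use \autoref{theorem: AndreShimomoto} to map $B$ into an integral perfectoid big Cohen-Macaulay $R^+$-algebra $C$, and then apply \autoref{thm.DominateAnySetOfBCM} to produce an integral perfectoid big Cohen-Macaulay $R^+$-algebra $B'$ dominating both $B_0$ and $C$. The containment $0^{B'}_{H_\m^d(R)}\subseteq 0^{\scr{B}}_{H_\m^d(R)}$ yields $\mytau_{B'}(R,0)\supseteq\mytau_\scr{B}(R,0)=R$, so $(R,0)$ is \BCMReg{B'}; applying the first part of the theorem to $B'$ then makes $R$ Cohen-Macaulay and $H_\m^d(R)\to H_\m^d(B')$ injective. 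Since this map factors through $H_\m^d(R)\to H_\m^d(B)\to H_\m^d(C)\to H_\m^d(B')$, the intermediate map $H_\m^d(R)\to H_\m^d(B)$ is also injective, so $R$ is \BCMRat{B}, completing the argument.
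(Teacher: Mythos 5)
Your proof is correct and takes essentially the same route as the paper: you dualize $\mytau_B(R,\Delta)=R$ to the injectivity of $E\to E\otimes_R B$ (the paper gets this via the identification $H^d_{\m}(B\otimes\omega_R)\cong B\otimes E$ and Matlis duality), conclude purity/splitting of $R\to B$, and then extract Cohen-Macaulayness and the injectivity of $H^d_{\m}(R)\to H^d_{\m}(B)$, with the perfectoid case handled through \autoref{theorem: AndreShimomoto}. The only cosmetic differences are that you quote Hochster's purity criterion instead of re-deriving the splitting by dualizing, and in the final step your detour through $B_0$ and \autoref{thm.DominateAnySetOfBCM} is unnecessary (since $\mytau_{C}(R,0)\supseteq\mytau_{\scr{B}}(R,0)=R$ already holds for the $R^+$-algebra $C$ produced by \autoref{theorem: AndreShimomoto}), though harmless.
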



\begin{proof}
Assume the notations of \autoref{set.PerfectoidTestIdealTransformationRules} and in particular fix $R \subseteq \omega_R$.  We suppose $n > 0$ is the index of $K_R + \Delta$, and write $\Div_R(f) = n(K_R + \Delta)$ for some $f \in R$, we fix $f^{1/n} \in R^+$. Consider the following commutative diagram:
\[
\xymatrix{
R \ar[r] \ar@{^{(}->}[d] & B \ar[d] \ar@/^3pc/[dd]^{\cdot f^{1/n}}\\
\omega_R \ar[r] & B \otimes \omega_R \ar[d] \\
& B.
}
\]
Taking local cohomology we obtain
\[
\xymatrix{
H^d_{\fram}(R) \ar[r] \ar[d] & H^d_{\fram}(B) \ar[d] \ar@/^4pc/[dd]^{\cdot f^{1/n}}\\
H^d_{\fram}(\omega_R) \ar[r]_-{\psi} & H^d_{\fram}(B \otimes \omega_R) \ar[d]_{\mu} \\
& H^d_{\fram}(B).
}
\]
Note that $H^d_{\fram}(R) \to H^d_{\fram}(\omega_R)$ is Matlis dual to $R \subseteq \omega_R$.
Since $(R, \Delta)$ is \BCMReg{B}, we have $R = \mytau_B(R,\Delta)=\mytau_B(\omega_R, K_R+\Delta)\subseteq R$. By Matlis duality we know that if $K = \ker\big( H^d_{\fram}(R) \to H^d_{\fram}(B)\big)$, then we may identify the maps
 \[
 \xymatrix{
 H^d_{\fram}(R) \ar@{=}[d] \ar[r] & H^d_{\fram}(R)/K \ar[d]^{\sim}\\
 H^d_{\fram}(R) \ar[r] & H^d_{\fram}(\omega_R).
 }
 \]
Therefore, we know that $\mu \circ \psi$ is an injection and hence $\psi$ is injective.

Now we observe that, since $H_\m^d(B\otimes\omega_R)\cong B\otimes H_\m^d(\omega_R)$ (which follows from the \v{C}ech complex description of local cohomology \cite[page 130]{BrunsHerzog} and the right exactness of the tensor product) and $H_\m^d(\omega_R)\cong E$ (which is the injective hull of the residue field of $R$), $\psi$ can be identified with the canonical map $E\to E\otimes_R B$. Thus the Matlis dual $\psi^{\vee}$ of $\psi$ is
\[
R \xleftarrow{\psi^{\vee}} \Hom_R(E \otimes_R B, E) \cong \Hom_R(B, \Hom_R(E, E)) \cong \Hom_R(B, R).
\]
It is not difficult to see that this map $\Hom_R(B, R) \to R$ is evaluation at $1$.  Hence the map $R \to B$ splits.  This completes the proof of the first statement.

Finally, if $R$ is perfectoid \BCMReg{}, then the first statement shows that $R\to B$ is pure for every integral perfectoid big Cohen-Macaulay $R^+$-algebra $B$. But since every integral perfectoid big Cohen-Macaulay $R$-algebra maps to an integral perfectoid big Cohen-Macaulay $R^+$-algebra by \autoref{theorem: AndreShimomoto}, we know that $R\to B$ is pure for every integral perfectoid big Cohen-Macaulay algebra $B$. Hence $R$ is \BCMRat{B}.
\end{proof}


It is quite natural to ask whether the converse of \autoref{thm.BCMRegularImpliesCM} holds.

\begin{question}
Suppose $R$ is a complete local normal domain . Further suppose that $R \to B$ is pure for some (or sufficiently large) big Cohen-Macaulay $R^+$-algebra $B$. Does there exist an effective $\bQ$-divisor $\Delta$ such that $K_R + \Delta$ is $\bQ$-Cartier and $(R, \Delta)$ is \BCMReg{B}?
\end{question}

In fact, an affirmative answer to the above question in characteristic $p>0$ will imply that weakly $F$-regular and strongly $F$-regular are equivalent. Thus we expect this question is very difficult in general. We note that in characteristic $p > 0$, some related results were shown in \cite{SchwedeSmithLogFanoVsGloballyFRegular} (also see \cite{DeFernexHaconSingsOnNormal} in characteristic zero).

The next proposition shows that this question has an affirmative answer when $R$ is $\bQ$-Gorenstein.

\begin{proposition}
\label{prop: PureimpliesBCMregforQGor}
Suppose that $(R, \fram)$ is a normal complete domain and is $\bQ$-Gorenstein with index $n$.  Suppose that $B$ is a big Cohen-Macaulay $R^+$-algebra and that $R \to B$ is pure. Then $(R, 0)$ is \BCMReg{B}.
\end{proposition}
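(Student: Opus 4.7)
The plan is to reduce $\mytau_B(R,0)=R$ to an injectivity statement at the level of top local cohomology, and then to establish that injectivity using purity together with the $\bQ$-Gorenstein hypothesis.

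First, since $R\to B$ is pure and $B$ is big Cohen-Macaulay, the injections $H^i_\m(R)\hookrightarrow H^i_\m(B)=0$ for $i<d$ force $R$ to be Cohen-Macaulay. Writing $nK_R=\Div_R(f)$ and fixing $f^{1/n}\in R^+\subseteq B$, the ideal $\mytau_B(R,0)=\Ann_{\omega_R}(K)$ with $K:=\ker\big(H^d_\m(R)\xrightarrow{\,\cdot f^{1/n}\,}H^d_\m(B)\big)$ sits inside $R$ by \autoref{lem.TestIdealIsAnIdeal}, so one needs the reverse inclusion. I would factor $\cdot f^{1/n}$ as $H^d_\m(R)\xrightarrow{\iota_*}H^d_\m(\omega_R)\xrightarrow{\phi}H^d_\m(B)$, using the inclusion $\iota\colon R\hookrightarrow\omega_R$ and the $R$-linear map $\omega_R\to B$, $z\mapsto zf^{1/n}$. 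Since $\omega_R/R$ is supported in codimension $\geq 1$ (as $R$ is Gorenstein in codimension $0$), $H^d_\m(\omega_R/R)=0$, so $\iota_*$ is surjective; moreover, by Matlis duality, $\ker\iota_*$ is dual to $\omega_R/R$, and $\Ann_{\omega_R}(\ker\iota_*)=R$. Hence it suffices to prove $\phi$ is injective, which would yield $K=\ker\iota_*$ and $\mytau_B(R,0)=R$.

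Second, I would show $\phi$ is injective in two steps. Purity of $R\to B$ gives an $R$-linear splitting $\sigma\colon B\to R$; applying $\omega_R\otimes_R-$ yields a splitting $\sigma\otimes\id\colon\omega_R\otimes_R B\to\omega_R$ of $\omega_R\hookrightarrow\omega_R\otimes_R B$, so $H^d_\m(\omega_R)\hookrightarrow H^d_\m(\omega_R\otimes_R B)$ is split injective. Next, I would analyze the multiplication map $\mu\colon\omega_R\otimes_R B\to B$, $z\otimes b\mapsto zf^{1/n}b$. Because $R^+$ has trivial divisor class group, the fractional ideal $\omega_R\cdot R^+\subseteq K(R^+)$ equals $f^{-1/n}R^+$, so $\omega_R\otimes_R R^+\to R^+$ via $\cdot f^{1/n}$ is surjective with kernel $T$ an $R^+$-torsion module supported on the non-Gorenstein locus of $R$. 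Since $R$ is normal and Cohen-Macaulay, this locus has codimension $\geq 2$, so $\Ann_{R^+}(T)\cap R$ contains an element $g$ of positive height. Base-changing to $B$, the sequence $T\otimes_{R^+}B\to\omega_R\otimes_R B\to B\to 0$ is exact, and $g$ annihilates $T\otimes_{R^+}B$; as $g$ is a nonzerodivisor on $B$ and can be taken as part of a system of parameters of $R$, a standard \v{C}ech argument gives $H^d_\m(T\otimes_{R^+}B)=0$. Combined with $H^{d-1}_\m(B)=0$, this yields an isomorphism $H^d_\m(\omega_R\otimes_R B)\xrightarrow{\sim}H^d_\m(B)$, so $\phi$ is injective.

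The hardest part of the plan is rigorously justifying the identification $\omega_R\cdot R^+=f^{-1/n}R^+$ and controlling the torsion $T$ after tensoring with $B$. A cleaner alternative is to pass to the Gorenstein index-$1$ cover $S=R[f^{1/n}]^{\textnormal{N}}$: since $S$ is Gorenstein and $S=\bigoplus_{i=0}^{n-1}\omega_R^{[i]}f^{i/n}$ as an $R$-module, the map $\omega_R\hookrightarrow S$ via $z\mapsto zf^{1/n}$ is a split $R$-module inclusion, and one can reduce to showing $H^d_\m(S)\to H^d_\m(B)$ is injective using the $R$-linear splitting provided by purity. Either route should deliver the conclusion $\mytau_B(R,0)=R$.
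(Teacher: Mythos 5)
Your reduction is the same as the paper's: factor multiplication by $f^{1/n}$ as $H^d_\m(R)\twoheadrightarrow H^d_\m(\omega_R)\xrightarrow{\psi} H^d_\m(\omega_R\otimes_R B)\xrightarrow{\mu} H^d_\m(B)$, use purity of $R\to B$ to get injectivity of $\psi$ (the paper does exactly this, identifying $\psi$ with $E\to E\otimes_R B$), and then try to show that $\mu$ is injective. It is this last step where your argument has a genuine gap, in both of the routes you sketch.

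Route 1 rests on the claim that $\omega_R\cdot R^+=f^{-1/n}R^+$, i.e.\ that $\omega_R\otimes_R R^+\xrightarrow{\cdot f^{1/n}}R^+$ is surjective. This is false in general. Writing $S$ for the normalization of $R[f^{1/n}]$, the image of $\omega_R\otimes_R S\to S(\Div_S(f^{1/n}))\cong f^{-1/n}S$ is the reflexification map, whose cokernel is supported in codimension $\ge 2$ but is typically nonzero: for a quotient-type example such as the Veronese cone $k\llbracket x,y\rrbracket^{\mu_3}$ one gets $\omega_R\cdot S=(x,y)\,f^{-1/n}S\subsetneq f^{-1/n}S$. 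Since $f^{1/n}\omega_R\cdot S$ is then a proper ideal of $S$ of height $\ge 2$, its expansion to $R^+$ is still proper (it lies in the maximal ideal of $R^+$), so no appeal to triviality of class groups of $R^+$ can rescue surjectivity -- a height-two ideal does not become the unit ideal upon expansion. Without surjectivity your exact sequence $T\otimes B\to\omega_R\otimes B\to B\to 0$ is not available, and the argument collapses. Route 2 has a different gap: after splitting $\omega_R f^{1/n}$ off of $S$ you still need injectivity of $H^d_\m(S)\to H^d_\m(B)$ (or at least on that summand, which is the original problem again). Purity of $R\to B$ gives you nothing about the pair $(S,B)$; injectivity of $H^d_\m(S)\to H^d_\m(B)$ is essentially the assertion that $S$ is \BCMRat{B}-like and is exactly what must be proved, not assumed. (The decomposition $S=\bigoplus_i\omega_R^{[i]}f^{i/n}$ also needs justification in mixed characteristic, though that is secondary.)

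The paper's proof of \autoref{prop: PureimpliesBCMregforQGor} avoids both problems: it does not need the reflexification map $S\otimes_R\omega_R\to S(\Div_S(f^{1/n}))$ to be surjective, only that its kernel is torsion and its cokernel is supported in codimension $\ge 2$, which forces an isomorphism on $H^d_\m$ over $S$; hence $\cdot f^{1/n}\colon H^d_\m(S\otimes\omega_R)\to H^d_\m(S)$ is an isomorphism. Because top local cohomology is right exact, i.e.\ $H^d_\m(N)\cong H^d_\m(S)\otimes_S N$, this isomorphism base changes along $S\to B$ (isomorphisms survive any tensoring) to show that $\mu\colon H^d_\m(B\otimes\omega_R)\to H^d_\m(B)$ is an isomorphism, which together with injectivity of $\psi$ finishes the proof. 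If you want to salvage your route 1, you would have to replace the surjectivity claim by exactly this kind of codimension bookkeeping over $S$ and then transfer it to $B$ by an isomorphism (not a mere injection), since injectivity alone is not preserved under $-\otimes_S B$.
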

\begin{proof}
Write $n K_R = \Div_R(f)$ and choose $f^{1/n} \in R^+$.  We form the same diagram as in the proof of \autoref{thm.BCMRegularImpliesCM}.
\[
\xymatrix{
H^d_{\fram}(R) \ar[r] \ar[d] & H^d_{\fram}(B) \ar[d] \ar@/^4pc/[dd]^{\cdot f^{1/n}}\\
H^d_{\fram}(\omega_R) \ar[r]_-{\psi} & H^d_{\fram}(B \otimes \omega_R) \ar[d]_{\mu} \\
& H^d_{\fram}(B).
}
\]
To show that $(R, 0)$ is \BCMReg{B} we must show that $\mu \circ \psi$ is injective since $H^d_{\fram}(R) \twoheadrightarrow H^d_{\fram}(\omega_R)$ is the Matlis dual of $R \subseteq \omega_R$ (see the proof of \autoref{thm.BCMRegularImpliesCM}).  Since $R\to B$ is pure, $\psi$ is injective. Therefore, it suffices to show that $\mu : H_\m^d(B\otimes \omega_R)\to H_\m^d(B)$ is injective.

Let $S$ be the normalization of $R[f^{1/n}]$. Since $n K_R = \Div(f)$, $S(\Div_S(f^{1/n}))$ is the reflexification of $S\otimes\omega_R$. Thus $H_\m^d(S\otimes\omega_R)\to H_\m^d(S(\Div_S(f^{1/n})))$ is an isomorphism. This implies the second map in $$\cdot f^{1/n}: H_\m^d(S)\to H_\m^d(S\otimes\omega_R)\to H_\m^d(S)$$ is an isomorphism. Therefore since $B$ is an $S$-algebra, after base change to $B$, we know that $\mu$: $H_\m^d(B\otimes\omega_R)\to H_\m^d(B)$ is an isomorphism.
\end{proof}

\begin{corollary}
Suppose $(R,\m)$ is a complete normal Gorenstein local domain. If $R$ is \BCMRat{B} for some big Cohen-Macaulay $R^+$-algebra $B$, then $R$ is \BCMReg{B}. 
\end{corollary}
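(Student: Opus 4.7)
The plan is to exploit the principal nature of $\omega_R$ in the Gorenstein case and reduce the computation of $\tau_B(R,0)$ to an elementary Matlis duality computation that uses \BCMRat{B} as its only input. Concretely, because $R$ is Gorenstein, $\omega_R$ is a principal fractional ideal: after choosing the embedding $R \subseteq \omega_R \subseteq K(R)$, we may write $\omega_R = f^{-1}R$ for some $f \in R$ with $\Div_R(f) = K_R$. Thus $f$ is a nonzerodivisor and the pair $(R, 0)$ has index $n = 1$ with $K_R + \Delta = K_R$ Cartier, defined by $f$.

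By \autoref{def.tauRDelta},
\[
\tau_B(R, 0) \;=\; \tau_B(\omega_R, K_R) \;=\; \Ann_{\omega_R}\bigl(\ker(H^d_{\fram}(R) \xrightarrow{\cdot f} H^d_{\fram}(B))\bigr).
\]
The first key step is to factor this map as $H^d_{\fram}(R) \xrightarrow{\cdot f} H^d_{\fram}(R) \to H^d_{\fram}(B)$. Since $R$ is \BCMRat{B}, the second arrow is injective, so the kernel in question equals $K := \ker(H^d_{\fram}(R) \xrightarrow{\cdot f} H^d_{\fram}(R))$. This is where the hypothesis is used.

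Next, since $R$ is Cohen-Macaulay (again from \BCMRat{B}) and $f$ is a nonzerodivisor, $H^d_{\fram}(R) \xrightarrow{\cdot f} H^d_{\fram}(R)$ is surjective, so we have a short exact sequence
\[
0 \to K \to H^d_{\fram}(R) \xrightarrow{\cdot f} H^d_{\fram}(R) \to 0.
\]
Taking Matlis duals and using $H^d_{\fram}(R)^{\vee} = \omega_R$ (local duality), we get
\[
0 \to \omega_R \xrightarrow{\cdot f} \omega_R \to K^{\vee} \to 0.
\]
The image of $\cdot f$ on $\omega_R = f^{-1} R$ is exactly $R$, so $K^{\vee} \cong \omega_R/R$, and dually $\tau_B(R,0) = \Ann_{\omega_R}(K) = \ker(\omega_R \twoheadrightarrow \omega_R/R) = R$.

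The argument is essentially formal once the Gorenstein identification $\omega_R = f^{-1} R$ with $\Div_R(f) = K_R$ is made — the only nontrivial ingredient is that \BCMRat{B} allows one to drop the passage to $B$ and replace it by the self-map $H^d_{\fram}(R) \xrightarrow{\cdot f} H^d_{\fram}(R)$. The ``hard part'' is really just keeping straight the dualization of this self-map and confirming that its image in $\omega_R$ is precisely the subobject $R \subseteq \omega_R$; this could also be phrased as a direct corollary of \autoref{prop: PureimpliesBCMregforQGor} combined with the fact that \BCMRat{B} forces $R \to B$ to be pure (via the argument in \autoref{thm.BCMRegularImpliesCM}), but the Matlis-duality route is shorter and self-contained.
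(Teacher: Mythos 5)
Your argument is correct, and it takes a genuinely different route from the paper. The paper's proof uses the Gorenstein isomorphism $H^d_{\fram}(R)\cong E$ to translate \BCMRat{B} into injectivity of $E\to E\otimes_R B$, applies Matlis duality to conclude that $R\to B$ is split, and then invokes \autoref{prop: PureimpliesBCMregforQGor} (whose proof passes through the normalization of $R[f^{1/n}]$, trivial here since $n=1$) to get \BCMReg{B}; in other words it establishes purity of $R\to B$ first and quotes the $\bQ$-Gorenstein purity-implies-regularity statement. You instead compute $\mytau_B(R,0)=\mytau_B(\omega_R,K_R)$ directly from \autoref{def.tauRDelta}: writing $\omega_R=f^{-1}R$ with $K_R=\Div_R(f)$ Cartier, you factor the defining map as $H^d_{\fram}(R)\xrightarrow{\cdot f}H^d_{\fram}(R)\to H^d_{\fram}(B)$, use \BCMRat{B} only through injectivity of the second arrow, and then dualize the short exact sequence $0\to K\to H^d_{\fram}(R)\xrightarrow{\cdot f}H^d_{\fram}(R)\to 0$ (surjectivity needs only $H^d_{\fram}(R/fR)=0$, so the Cohen--Macaulay remark is not even necessary) to identify $\Ann_{\omega_R}K=f\omega_R=R$. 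This is self-contained, never establishes splitting of $R\to B$, and avoids citing \autoref{prop: PureimpliesBCMregforQGor} and the splitting step of \autoref{thm.BCMRegularImpliesCM}; what it gives up is the intermediate conclusion that $R\to B$ is pure, which the paper's route produces along the way and which is of independent interest. Both proofs are sound; yours is the more elementary duality computation in the index-one case, while the paper's generalizes more visibly to the $\bQ$-Gorenstein setting.
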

\begin{proof}
Since $R$ is Gorenstein, $H_\m^d(R)\cong E$ is the injective hull of $R/\m$. Therefore the injectivity of $H_\m^d(R)\to H_\m^d(B)$ implies that $E\to E\otimes B$ is injective. Applying Matlis duality shows that $R\to B$ is split (see the proof of \autoref{thm.BCMRegularImpliesCM}). But then $R$ is \BCMReg{B} by \autoref{prop: PureimpliesBCMregforQGor}. 
\end{proof}

\subsection{Transformation rule under finite maps}
\label{sec.TransformationRulesUnderFiniteMaps}
We can now state a transformation rule for \BCM{} test ideals under finite maps.  This is completely analogous to the main results of \cite{SchwedeTuckerTestIdealFiniteMaps}.  First we recall some setup.

\begin{setup}[\cf \cite{SchwedeTuckerTestIdealFiniteMaps}]
\label{setup.RamificationInGeneral}
For a separable finite extension $R \subseteq S$ of normal domains we have that the field trace $\Tr$ sends $S$ into $R$.  Furthermore, if we fix the ramification divisor $\Ram$ and set $K_S := \phi^* K_R + \Ram$, then the field trace also restricts to
\[
\Tr : \omega_S := S(K_S) \to R(K_R) := \omega_R
\]
which is also identified with the Grothendieck trace and hence is Matlis dual to $H^d_{\fram}(R) \to H^d_{\fram}(S)$.

More generally for an arbitrary finite extension $R \subseteq S$ (if $R \subseteq S$ is inseparable), we consider Grothendieck trace $\Tr \in \Hom_R(\omega_S, \omega_R) \cong S$ generating the set as an $S$-module, and then choose embeddings $R \subseteq \omega_R$ and $S \subseteq \omega_S$ so that $\Tr(S) \subseteq R$.  In this case, $\Tr \in \Hom_R(S, R) = \omega_{S/R}$ corresponds to a divisor which we also call $\Ram$. Note that if $S = R$ is quasi-Gorenstein and the inclusion is an iterate of Frobenius, then we may identify $R$ and $\omega_R$ in which case the corresponding $\Ram = 0$.
\end{setup}

We generalize \autoref{lem.FiniteMapsTransformationForParamTestIdeals} to the setting of pairs.

\begin{theorem}
\label{theorem: finite mapPairsCanonical}
Suppose that $R \subseteq S$ is a finite extension of complete normal local domains with induced $\phi : \Spec S \to \Spec R$.  Then for any $\bQ$-Cartier $\bQ$-divisor $\Gamma \geq 0$ as in \autoref{set.PerfectoidTestIdealTransformationRules} (where we implicitly choose a map $S \hookrightarrow R^+ \to B$ and so also view $B$ as an $S$-algebra) and $\Tr \in \Hom_R(\omega_S, \omega_R)$ the Grothendieck trace, we have
\begin{equation}
\label{eq.TransRuleOmega}
\Tr(\mytau_B(\omega_S, \phi^* \Gamma)) = \mytau_B(\omega_R, \Gamma).
\end{equation}
Further assume that $\Delta$ is a $\bQ$-divisor on $\Spec R$ satisfying the conditions of \autoref{set.PerfectoidTestIdealTransformationRules} and such that $\phi^* \Delta \geq \Ram$, the ramification divisor corresponding to the fixed $\Tr \in \Hom_R(S, R)$ as in \autoref{setup.RamificationInGeneral}.  Then:
\begin{equation}
\label{eq.TransRuleR2}
\mytau_B(R, \Delta) = \Tr( \mytau_B(S, \phi^* \Delta - \Ram) ).
\end{equation}
\end{theorem}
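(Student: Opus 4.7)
The plan is to reduce the second equality \autoref{eq.TransRuleR2} to the first equality \autoref{eq.TransRuleOmega}, and to prove the first via Matlis duality applied to local cohomology.

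First I would attack \autoref{eq.TransRuleOmega}. Write $n\Gamma = \Div_R(f)$ for some $f \in R$ and fix $f^{1/n} \in R^+ \supseteq S$. Since $\phi$ is finite and pullback of Cartier divisors is given by the same defining function, $n\phi^*\Gamma = \Div_S(f)$, so the same element $f^{1/n} \in R^+$ also serves to compute $0^{B,\phi^*\Gamma}_{H^d_{\fram}(S)}$ via the $S$-algebra structure of $B$. The key observation is that multiplication by $f^{1/n}$ on $H^d_{\fram}(B)$ takes place entirely inside $B$, so the map $H^d_{\fram}(R)\xrightarrow{\cdot f^{1/n}} H^d_{\fram}(B)$ factors as
\[
H^d_{\fram}(R)\xrightarrow{\iota} H^d_{\fram}(S)\xrightarrow{\cdot f^{1/n}} H^d_{\fram}(B),
\]
where $\iota$ is induced by $R\subseteq S$. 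This gives $0^{B,\Gamma}_{H^d_{\fram}(R)} = \iota^{-1}\bigl(0^{B,\phi^*\Gamma}_{H^d_{\fram}(S)}\bigr)$, and hence an injection
\[
H^d_{\fram}(R)/0^{B,\Gamma}_{H^d_{\fram}(R)}\hookrightarrow H^d_{\fram}(S)/0^{B,\phi^*\Gamma}_{H^d_{\fram}(S)}.
\]
Applying Matlis duality (and using $H^d_{\fram}(R)^{\vee}\cong \omega_R$, $H^d_{\fram}(S)^{\vee}\cong\omega_S$) turns this into a surjection $\mytau_B(\omega_S,\phi^*\Gamma)\twoheadrightarrow \mytau_B(\omega_R,\Gamma)$, where the map is the restriction of the Matlis dual of $\iota$. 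That Matlis dual is identified with the Grothendieck trace $\Tr\colon \omega_S\to\omega_R$ by local and Grothendieck duality (this is standard, compare with the discussion preceding \autoref{lem.FiniteMapsTransformationForParamTestIdeals}). This yields \autoref{eq.TransRuleOmega}.

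For \autoref{eq.TransRuleR2}, the idea is to fit $\Delta$ and $\Ram$ together via $K_S$. Following \autoref{setup.RamificationInGeneral}, we may choose effective $K_R$ and $K_S$ with $K_S = \phi^*K_R + \Ram$, so that $\Tr\in\Hom_R(\omega_S,\omega_R)$ corresponds to the chosen embeddings. Since $\phi^*\Delta\geq\Ram$, the divisor $\phi^*\Delta-\Ram$ is effective, and
\[
K_S + (\phi^*\Delta - \Ram) \;=\; \phi^*K_R + \phi^*\Delta \;=\; \phi^*(K_R+\Delta).
\]
Therefore, directly from \autoref{def.tauRDelta},
\[
\mytau_B(S,\phi^*\Delta-\Ram) \;=\; \mytau_B(\omega_S,\phi^*(K_R+\Delta)).
\]
Applying \autoref{eq.TransRuleOmega} with $\Gamma = K_R+\Delta$ (which satisfies the hypotheses of \autoref{set.PerfectoidTestIdealTransformationRules} since it is $\bQ$-Cartier and effective) gives
\[
\Tr(\mytau_B(S,\phi^*\Delta-\Ram)) \;=\; \mytau_B(\omega_R,K_R+\Delta) \;=\; \mytau_B(R,\Delta),
\]
as required.

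The main technical obstacle is justifying carefully the identification of $\iota^{\vee}$ with the Grothendieck trace compatibly with the embeddings $R\subseteq\omega_R$, $S\subseteq\omega_S$ chosen in \autoref{setup.RamificationInGeneral}; once that compatibility is pinned down, the rest is a formal diagram chase plus Matlis duality, and no new input from perfectoid big Cohen-Macaulay theory is needed beyond the existence of the factorization $R\to S\to B$. A secondary point is to verify that the choice of $f^{1/n}\in R^+$ is harmless both for $0^{B,\Gamma}_{H^d_{\fram}(R)}$ and for $0^{B,\phi^*\Gamma}_{H^d_{\fram}(S)}$, which follows from \autoref{rem.GeneralBForTestIdeals} applied to each of $R$ and $S$ inside $R^+$.
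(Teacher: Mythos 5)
Your proposal is correct and follows essentially the same route as the paper: factor $H^d_{\fram}(R)\to H^d_{\fram}(S)\xrightarrow{\cdot f^{1/n}} H^d_{\fram}(B)$, identify the kernel as a preimage, apply Matlis duality with the dual of the induced map identified with $\Tr$, and deduce \autoref{eq.TransRuleR2} as the special case $\Gamma=K_R+\Delta$ using $K_S=\phi^*K_R+\Ram$. You simply spell out in more detail the reduction and the duality identification that the paper treats as immediate.
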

\begin{proof}
Note \autoref{eq.TransRuleR2} is simply a special case of \autoref{eq.TransRuleOmega} based on our choice of $\mytau(R, \Delta)$ and so we only need to prove the first statement.
We first notice that $n\phi^* \Gamma = \Div_S(f)$.  Consider the factorization
\[
H^d_{\m}(R) \to H^d_{\m}(S) \xrightarrow{\cdot f^{1/n}} H^d_{\m}(B).
\]
The kernel of the composition is those elements of $H^d_{\m}(R)$ that map to the kernel of $\cdot f^{1/n}$.  The result follows by Matlis duality.
\end{proof}

\begin{remark}[The non-complete case]
\label{rem.NonCompleteFiniteTransformationRules}
Suppose $R \subseteq S$ is a finite extension of excellent normal domains with $(R, \fram)$ local and $S$ semi-local with maximal ideals $\frn_i$.  Completing this extension we obtain a map $\widehat{R} \to \widehat{S} = \prod_i \widehat{S}_i := \prod_i \widehat{S_{\frn_i}}$.  If one fixes $B$, a big Cohen-Macaulay $\widehat{R}^+$-algebra, then we can choose an embedding $\widehat{S}_i \subseteq \widehat{R}^+$ (which is determined only up to an element of the Galois group of $\widehat{S_i}/\widehat{R}$).  Based on such an embedding, we obtain that $B$ is an $\widehat{S}_i$-algebra for each $i$.

In this way, if $\Delta$ is a $\bQ$-divisor on $S$ such that $K_S + \Delta$ is $\bQ$-Cartier, we can define
\[
\mytau_B(S, \Delta) := S \cap \left( \bigcap_i \mytau_B(\widehat{S}_i, \Delta_i) \right)
\]
where $\Delta_i$ is the pullback of $\Delta$ to $\Spec S_i$.  In view of this definition and the fact that the trace map behaves well under completion, we expect that
\[
\label{eq.NonCompleteTransformationRule}
\mytau_B(R, \Delta) \overset{?}{=} \Tr( \mytau_B(S, \phi^* \Delta - \Ram) )
\]
holds even without the complete hypothesis.  However this is not clear to us since we do not know if the formation of $\mytau_B(R, \Delta)$ commutes with completion.
\end{remark}

We now obtain two corollaries analogous to those obtained in \cite{SchwedeTuckerTestIdealFiniteMaps}.

\begin{corollary}
\label{cor.BCMRegularImpliesPurityOfWildness}
Suppose $R \subseteq S$ is a finite extension of excellent normal domains with $R$ local and with induced $\phi : \Spec S \to \Spec R$.  Suppose $(R, \Delta)$ is a pair so that $(\widehat{R}, \widehat{\Delta})$ is \BCMReg{B} for some $B$ as in \autoref{set.PerfectoidTestIdealTransformationRules}, and $\phi^* \widehat \Delta \geq \widehat \Ram_{\phi} = \Ram_{\widehat{\phi}}$ (for instance if $R \subseteq S$ is \'etale-in-codimension 1).  Then $\Tr : S \to R$ is surjective and hence if $R \subseteq S$ is generically Galois, $R \subseteq S$ is tamely ramified everywhere in any of the senses of \cite{KerzSchmidtOnDifferentNotionsOfTameness}.
\end{corollary}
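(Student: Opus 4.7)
The plan is to derive both conclusions from the finite-map transformation rule \autoref{theorem: finite mapPairsCanonical}, applied after passing to completions, and then to descend using faithful flatness. The only technical wrinkle is that $\widehat{S}$ need not be a domain, so the transformation rule must be applied to each local factor separately as in \autoref{rem.NonCompleteFiniteTransformationRules}.

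First I would reduce to the complete setting. Since $R$ is excellent and normal, $\widehat{R}$ is a normal local domain, and $\widehat{S} \cong \prod_i \widehat{S}_i$, where the product is indexed by the maximal ideals $\n_i$ of $S$ and each $\widehat{S}_i = \widehat{S_{\n_i}}$ is a complete normal local domain finite over $\widehat{R}$. The trace base-changes to $\widehat{\Tr} = \sum_i \Tr_i : \prod_i \widehat{S}_i \to \widehat{R}$, and since $\Tr(S) \subseteq R$ is a finitely generated $R$-submodule and $R \to \widehat{R}$ is faithfully flat, it suffices to prove $\widehat{\Tr}$ is surjective onto $\widehat{R}$.

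Next, for each $i$ I would fix an embedding $\widehat{S}_i \hookrightarrow \widehat{R}^+$ compatible with the fixed $\widehat{R}$-algebra structure on $B$, so that $B$ becomes a $\widehat{S}_i$-algebra in the sense required by \autoref{rem.NonCompleteFiniteTransformationRules}. The hypothesis $\phi^* \widehat{\Delta} \geq \widehat{\Ram}_{\widehat{\phi}}$ restricts to each component as $\phi_i^* \widehat{\Delta} \geq \widehat{\Ram}_i$, so \autoref{theorem: finite mapPairsCanonical} yields
\[
\mytau_B(\widehat{R}, \widehat{\Delta}) \;=\; \Tr_i\bigl(\mytau_B(\widehat{S}_i,\, \phi_i^* \widehat{\Delta} - \widehat{\Ram}_i)\bigr).
\]
Since $(\widehat{R}, \widehat{\Delta})$ is \BCMReg{B} the left-hand side equals $\widehat{R}$, and as $\mytau_B(\widehat{S}_i, \phi_i^* \widehat{\Delta} - \widehat{\Ram}_i) \subseteq \widehat{S}_i$ we conclude $\Tr_i(\widehat{S}_i) = \widehat{R}$ for (any, hence every) $i$. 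In particular $\widehat{\Tr}$ is surjective, so by faithful flatness $\Tr : S \to R$ is surjective.

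For the tameness assertion, once we know that $\Tr : S \to R$ is surjective in a generically Galois extension of excellent normal domains, tame ramification at every height-one prime follows from the classical characterization of tameness via nonvanishing of the trace (compare \cite[Section 2]{SchwedeTuckerTestIdealFiniteMaps}), and the comparison of the various notions of tameness in \cite{KerzSchmidtOnDifferentNotionsOfTameness} then promotes this to tameness in any of their senses. The main obstacle in the whole argument is the bookkeeping around the product decomposition of $\widehat{S}$ and the simultaneous choice of embeddings into $\widehat{R}^+$; once that is set up, the surjectivity of the trace is essentially immediate from the transformation rule, and the tameness conclusion is a formal consequence.
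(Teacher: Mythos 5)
Your proposal is correct and follows essentially the same route as the paper: reduce via the completion framework of \autoref{rem.NonCompleteFiniteTransformationRules} to the case of complete $\widehat{R}$ and each local factor $\widehat{S}_i$, then apply the transformation rule \autoref{theorem: finite mapPairsCanonical} together with $\mytau_B(\widehat{R},\widehat{\Delta})=\widehat{R}$ to get $\widehat{R}=\Tr_i(\mytau_B(\widehat{S}_i,\phi_i^*\widehat{\Delta}-\Ram_i))\subseteq \Tr_i(\widehat{S}_i)$, and descend by faithful flatness. Your extra bookkeeping on the product decomposition, the base change of the trace, and the appeal to \cite{KerzSchmidtOnDifferentNotionsOfTameness} only spells out steps the paper leaves implicit.
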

\begin{proof}
We work using the \emph{notation} of \autoref{rem.NonCompleteFiniteTransformationRules}, note each $\widehat{S}_i$ is a finite extension of $\widehat{R}$, and so we may assume that $R$ is complete and that $S$ is local.  The result follows then since $R = \Tr( \mytau_B(S, \phi^* \Delta - \Ram) ) \subseteq \Tr(S)$.
\end{proof}

\begin{corollary}
Suppose $(R, \fram) \subseteq (S, \frn)$ is a finite and \'etale-in-codimension 1 extension of complete local normal domains with induced $\phi : \Spec S \to \Spec R$.  Suppose further that $(R, \Delta)$ is \BCMReg{B} for some $B$ as in \autoref{set.PerfectoidTestIdealTransformationRules}.  Then $(S, \phi^* \Delta)$ is also \BCMReg{B}.
\end{corollary}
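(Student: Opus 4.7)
The plan is to combine Theorem \ref{theorem: finite mapPairsCanonical}, the transformation rule for \BCM{} test ideals under finite maps, with a general algebraic fact about the trace map. Since $\phi$ is étale in codimension one, the ramification divisor $\Ram_\phi$ is zero, so the transformation rule specializes to
\[
\mytau_B(R, \Delta) \;=\; \Tr\bigl(\mytau_B(S, \phi^*\Delta)\bigr).
\]
The hypothesis that $(R, \Delta)$ is \BCMReg{B} gives $\mytau_B(R, \Delta) = R$. Writing $I := \mytau_B(S, \phi^*\Delta)$, which is an ideal of $S$ by \autoref{lem.TestIdealIsAnIdeal}, we therefore obtain $\Tr(I) = R$.

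The remaining task is to conclude $I = S$. I would do this by proving the general claim that $\Tr(\frn_S) \subseteq \fram$, where $\frn_S$ denotes the maximal ideal of $S$. Granted this, a proper ideal $I \subsetneq S$ would lie in $\frn_S$, giving $\Tr(I) \subseteq \Tr(\frn_S) \subseteq \fram \neq R$ and contradicting $\Tr(I) = R$. Hence $I = S$, which means exactly that $(S, \phi^*\Delta)$ is \BCMReg{B}.

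To verify $\Tr(\frn_S) \subseteq \fram$, I would exploit that étale-in-codimension-one makes the relative dualizing module $\omega_{S/R} = \Hom_R(S, R)$ isomorphic to $S$ as an $S$-module, with $\Tr$ corresponding to a generator. Any $s \in \frn_S$ is integral over $R$, so the characteristic polynomial $\chi_s(T)$ of multiplication by $s$ on $\Frac S$ (viewed as a $\Frac R$-vector space of dimension $n = [\Frac S : \Frac R]$) is monic with coefficients in $R$ (using normality of $R$). Reducing modulo $\fram$ and exploiting that the image $\bar s \in \bar S := S/\fram S$ is nilpotent in the local Artinian closed-fiber ring, one forces the coefficient $-\Tr(s)$ of $T^{n-1}$ in $\chi_s(T)$ to land in $\fram$, i.e., $\Tr(s) \in \fram$.

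The main obstacle is the subtle compatibility of the trace with base change to the closed fiber: when $S$ fails to be flat over $R$ (which can happen here since, even though $R$ is Cohen-Macaulay by \autoref{thm.BCMRegularImpliesCM}, finite extensions of a Cohen-Macaulay normal local domain need not be flat — as in Veronese-type examples), the reduction of $\chi_s(T)$ need not coincide with the characteristic polynomial of the reduced multiplication endomorphism on $\bar S$. This is where one must carefully use the identification $\omega_{S/R} \cong S$ together with the fact that $R \to B$ is pure (from \autoref{thm.BCMRegularImpliesCM}), which via the factorization $R \to S \hookrightarrow R^+ \to B$ forces $R \to S$ itself to be pure and hence split as $R$-modules, providing enough control on the reduction of $\Tr$ to complete the argument.
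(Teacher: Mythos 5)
Your overall skeleton is exactly the paper's: since $\phi$ is \'etale in codimension one the ramification divisor vanishes, so \autoref{theorem: finite mapPairsCanonical} gives $\mytau_B(R,\Delta)=\Tr(\mytau_B(S,\phi^*\Delta))$, and one concludes by noting that a \emph{proper} ideal of $S$ sits inside $\frn$ and that $\Tr(\frn)\subseteq\fram$, contradicting $\mytau_B(R,\Delta)=R$. The paper asserts $\Tr(\frn)\subseteq\fram$ without comment; the gap in your write-up is precisely your proposed justification of this inclusion. The route through reducing the characteristic polynomial of multiplication by $s$ on $\Frac S$ modulo $\fram$ only works if $S$ is free (or at least flat) over $R$, since otherwise the reduction of $\chi_s$ has nothing to do with the multiplication operator on $S/\fram S$ --- and flatness is genuinely unavailable here: $R$ is Cohen--Macaulay by \autoref{thm.BCMRegularImpliesCM}, but a finite normal extension $S$ need not be Cohen--Macaulay, hence need not be free over $R$. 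You recognize this obstacle, but the proposed repair is not an argument: purity of $R\to B$ does give that $R\to S$ splits as $R$-modules (it is a pure map of finitely presented modules), yet a decomposition $S\cong R\oplus M$ gives no control on how $\Tr$ or $\chi_s$ behaves after base change to the closed fiber, and the identification $\omega_{S/R}\cong S$ is never actually used to produce the needed containment. As stated, the key inequality $\Tr(\frn)\subseteq\fram$ is left unproved.

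The fact is true, and the standard proof avoids fibers and flatness entirely. Let $L'$ be a Galois closure of $\Frac S/\Frac R$ (separable since we are in mixed characteristic, so the fraction fields have characteristic $0$), and let $T$ be the integral closure of $R$ in $L'$. Because $S$ is local, every maximal ideal of $T$ contracts to $\frn$, so $\frn\subseteq \mathrm{Jac}(T)$; the Galois group permutes the maximal ideals of $T$, so every conjugate $\sigma(s)$ of an element $s\in\frn$ also lies in $\mathrm{Jac}(T)$. Hence the field trace, being a sum of conjugates, lies in $\mathrm{Jac}(T)\cap R\subseteq\fram$ (a unit of $R$ remains a unit in $T$ and cannot lie in the Jacobson radical), and by \autoref{setup.RamificationInGeneral} the Grothendieck trace agrees with the field trace here. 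Substituting this one-paragraph lemma for your characteristic-polynomial discussion makes your argument complete and identical in substance to the paper's proof, which also quietly uses surjectivity of $\Tr$ via \autoref{cor.BCMRegularImpliesPurityOfWildness} but derives the contradiction from the same chain $\mytau_B(R,\Delta)=\Tr(\mytau_B(S,\phi^*\Delta))\subseteq\Tr(\frn)\subseteq\fram$.
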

\begin{proof}
We notice that $\Tr : S \to R$ is surjective by \autoref{cor.BCMRegularImpliesPurityOfWildness}.  Furthermore, if we have $\mytau_B(S, \phi^* \Delta) \neq S$, then $\mytau_B(S, \phi^* \Delta) \subseteq \frn$.  It follows from \autoref{theorem: finite mapPairsCanonical} that
$$\mytau_B(R, \Delta)=\Tr(\mytau_B(S, \phi^* \Delta - \Ram))=\Tr(\mytau_B(S, \phi^* \Delta))\subseteq \Tr(\frn) \subseteq \fram$$ which contradicts that $(R, \Delta)$ is \BCMReg{B}.
\end{proof}

\subsection{Comparison with multiplier ideals and test ideals} In this subsection we prove that our perfectoid \BCM{} test ideal is always contained in the multiplier ideal and agrees with the test ideal in characteristic $p > 0$.
\begin{theorem}
\label{thm.TauBCMvsMult}
Suppose $R$ is a complete normal local domain and $\Delta \geq 0$ is a $\bQ$-divisor such that $K_R + \Delta$ is $\bQ$-Cartier.  If $\pi : Y \to X = \Spec R$ is a proper birational map with $Y$ normal,
then there exists an integral perfectoid big Cohen-Macaulay $R^+$-algebra $B$ such that
\[
\mytau_B(R, \Delta) \subseteq \pi_* \cO_Y(\lceil K_Y - \pi^* (K_X + \Delta) \rceil).
\]
Note in the case that $\pi$ is a resolution of singularities, the right side of the displayed equation is the multiplier ideal. It follows that
$$\mytau_\scr{B}(R, \Delta)\subseteq \pi_* \cO_Y(\lceil K_Y - \pi^* (K_X + \Delta) \rceil)$$ for all such $\pi$.
\end{theorem}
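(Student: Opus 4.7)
The plan is to adapt the proof of \autoref{prop: big CM test ideal birational} to the pair setting, by passing to a cyclic cover on which $K_R + \Delta$ becomes Cartier, applying the $\omega$-case on the cover, and descending via the Grothendieck trace. Since $\mytau_\scr{B}(R,\Delta) \subseteq \mytau_B(R,\Delta)$ for every integral perfectoid big Cohen-Macaulay $R^+$-algebra $B$, the final assertion is an immediate consequence of the first, so I focus on producing a single $B$ for a fixed $\pi$, and by Chow's lemma I may assume $\pi$ is projective.

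To eliminate the $\bQ$-Cartier issue, I pass to a finite cover $\phi \: \Spec S \to \Spec R$ along $f^{1/N}$, where $N(K_R+\Delta) = \Div_R(f)$: take $S$ to be a complete local domain component of the normalization of $R[f^{1/N}]$ inside $R^+$ at a maximal ideal lying over $\fram$, so that $\phi^*(K_R+\Delta) = \Div_S(f^{1/N})$ is Cartier. Let $Y_S$ be the normalization of the appropriate component of $Y \times_X \Spec S$, giving a projective birational map $\pi_S \: Y_S \to \Spec S$ and a finite map $\psi \: Y_S \to Y$, so $\pi_S^*\phi^*(K_R+\Delta) = \Div_{Y_S}(f^{1/N})$ is Cartier on $Y_S$.

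Applying \autoref{prop: big CM test ideal birational} to $(S,\pi_S)$ produces an integral perfectoid big Cohen-Macaulay $S^+$-algebra $B$ (also an $R^+$-algebra, since $S^+ = R^+$) with $\pi_{S,*}\omega_{Y_S} \supseteq \mytau_B(\omega_S)$.  The Cartier identity $\cO_{Y_S}(-\pi_S^*\phi^*(K_R+\Delta)) = f^{1/N}\cO_{Y_S}$ combined with \autoref{lem.skodatype} (applied with $h = f^{1/N}$ and $\Gamma = 0$) then upgrades this to
\[
\pi_{S,*}\cO_{Y_S}\!\big(K_{Y_S} - \pi_S^*\phi^*(K_R+\Delta)\big) \;=\; f^{1/N}\,\pi_{S,*}\omega_{Y_S} \;\supseteq\; \mytau_B\!\big(\omega_S,\phi^*(K_R+\Delta)\big).
\]
Applying the Grothendieck trace $\Tr_\phi \: \phi_*\omega_S \to \omega_R$ together with the transformation rule \autoref{theorem: finite mapPairsCanonical} (the identity labeled \eqref{eq.TransRuleR}), I get
\[
\mytau_B(R,\Delta) \;=\; \Tr_\phi\!\big(\mytau_B(\omega_S,\phi^*(K_R+\Delta))\big) \;\subseteq\; \Tr_\phi\!\big(\pi_{S,*}\cO_{Y_S}(K_{Y_S} - \pi_S^*\phi^*(K_R+\Delta))\big).
\]
To finish it suffices, via $\phi_*\pi_{S,*} = \pi_*\psi_*$ and the trace $\Tr_\psi \: \psi_*\omega_{Y_S} \to \omega_Y$, to establish the sheaf-level inclusion
\[
\Tr_\psi\!\big(\psi_*\cO_{Y_S}(K_{Y_S} - \pi_S^*\phi^*(K_R+\Delta))\big) \;\subseteq\; \cO_Y\!\big(\lceil K_Y - \pi^*(K_R+\Delta)\rceil\big),
\]
which reduces to a local computation using $K_{Y_S} = \psi^*K_Y + \Ram_\psi$ together with the general inequality $\psi^*\lfloor \pi^*(K_R+\Delta)\rfloor \leq \lfloor \psi^*\pi^*(K_R+\Delta)\rfloor$ governing divisor pullback under a finite map.

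The hard part will be verifying this final sheaf-level trace inclusion: identifying $\psi_*\omega_{Y_S}$ with $\sHom_{\cO_Y}(\psi_*\cO_{Y_S},\omega_Y)$ and tracking how the section $f^{1/N} \in \psi_*\cO_{Y_S}$ interacts with Grothendieck trace requires careful bookkeeping with the ramification of the cyclic cover and the behavior of rounding under pullback. Secondary but essentially formal issues include arranging that the chosen component of the normalization of $R[f^{1/N}]$ is a complete local normal domain (so that \autoref{theorem: finite mapPairsCanonical} applies directly) and that the $R^+$-algebra structures on $B$ are compatibly chosen when invoking the transformation rule.
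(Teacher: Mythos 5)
Your proposal is correct and follows essentially the same route as the paper's proof: pass to the normalization of $R[f^{1/N}]$, apply \autoref{prop: big CM test ideal birational} on the cover, multiply by $f^{1/N}$ and use \autoref{lem.skodatype} together with \autoref{theorem: finite mapPairsCanonical}, then descend via the trace. The only difference is that the final sheaf-level trace/rounding containment you plan to check by a local codimension-one computation is precisely what the paper outsources to the proof of \cite[Theorem 8.1]{BlickleSchwedeTuckerTestAlterations}, so no new difficulty remains there.
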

Compare the following proof also to \cite[Theorem 8.1]{BlickleSchwedeTuckerTestAlterations}.
\begin{proof}
Write $\Gamma = K_R + \Delta$ as in \autoref{set.PerfectoidTestIdealTransformationRules}.
It suffices to consider the case where $\pi$ is projective, the blowup of $J \subseteq R$, by Chow's lemma.
Let $T$ be the normalization of $\Spec R[f^{1/n}]$ and let $Z := \overline{Y \times_X T}$ denote the normalization of the blowup of $J \cdot \cO_T$.  Let $\pi_{Z/X} : Z \to X$ and $\pi_{Z/T} : Z \to T$ denote the induced maps and notice that $\pi_{Z/X}$ factors through $\pi$.

By \autoref{prop: big CM test ideal birational}, there exists an integral perfectoid big Cohen-Macaulay $R^+$-algebra $B$ such that
\[
\mytau_B(\omega_T) \subseteq \Gamma(Z, \omega_Z).
\]
Multiplying both sides by $f^{1/n}$ and applying $\Tr$, by \autoref{lem.skodatype} and \autoref{theorem: finite mapPairsCanonical}, we have
\[
\mytau_B(R, \Delta) = \mytau_B(\omega_R, \Gamma) \subseteq \Tr \left( \Gamma(Z, \cO_Z(K_Z - \pi_{Z/X}^*(K_X + \Delta))) \right).
\]
Consider the induced diagram:
\[
\xymatrix@R=25pt@C=45pt{
Z \ar[r]^{\eta} \ar[d]_{\phi} \ar[dr]^{\pi_{Z/X}} & T \ar[d]^{\psi} \\
Y \ar[r]_{\pi} & X
}
\]
The proof of \cite[Theorem 8.1]{BlickleSchwedeTuckerTestAlterations} is a proof of the containment:
\[
\Tr \left( \Gamma(Z, \cO_Z(K_Z - \eta^*(\psi^*\Delta - \Ram_{\psi} + K_T)) \right) \subseteq \Gamma(Y, \cO_Y(\lceil K_Y - \pi^*(K_Y + \Delta)\rceil)).
\]
But since $\Ram_{\psi} = K_T - \psi^*K_X$, and $\pi_{Z/X} = \psi \circ \eta$ we have immediately that
\[
\Tr \left( \Gamma(Z, \cO_Z(K_Z - \pi_{Z/X}^*(K_X + \Delta))) \right) \subseteq \Gamma(Y,\cO_Y(\lceil K_Y - \pi^* (K_X + \Delta) \rceil)).
\]
This completes the proof.
\end{proof}

\begin{corollary}
\label{cor.BCMRegImpliesKLT}
If $(X = \Spec R, \Delta)$ is a pair with $R$ an excellent normal local domain such that $(\widehat{X} = \Spec \widehat{R}, \widehat{\Delta})$ is perfectoid \BCMReg{}, then $(X, \Delta)$ is \emph{KLT}.
\end{corollary}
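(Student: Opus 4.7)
The plan is to split the argument into two steps: first, use Theorem \ref{thm.TauBCMvsMult} to deduce that $(\widehat R, \widehat \Delta)$ is KLT; second, descend KLT from $(\widehat R, \widehat \Delta)$ to $(R, \Delta)$ using the faithful flatness of the completion $R \to \widehat R$ together with the excellence of $R$.

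For the first step, by Definition \ref{def.tauRDelta} the hypothesis that $(\widehat R, \widehat \Delta)$ is perfectoid \BCMReg{} means $\mytau_{\scr{B}}(\widehat R, \widehat \Delta) = \widehat R$. For any proper birational map $\pi : Y \to \widehat X$ with $Y$ normal, Theorem \ref{thm.TauBCMvsMult} gives
\[
\widehat R \;=\; \mytau_{\scr{B}}(\widehat R, \widehat \Delta) \;\subseteq\; \pi_* \cO_Y\bigl(\lceil K_Y - \pi^*(K_{\widehat X} + \widehat \Delta)\rceil\bigr) \;\subseteq\; \widehat R,
\]
where the last containment is by the convention used in the definition of $\mJ(\widehat R, \widehat \Delta)$ following \autoref{defProp.Char0}. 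All three are therefore equal, which is condition (c) of \autoref{defProp.Char0} for $\pi$; since this holds for every such $\pi$, the pair $(\widehat X, \widehat \Delta)$ is KLT.

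For the descent step, let $\pi : Y \to X$ be any proper birational map with $Y$ normal, and consider the base change $\widehat \pi : \widehat Y := Y \times_X \widehat X \to \widehat X$. Because $R$ is excellent, $R \to \widehat R$ has geometrically regular fibers, so $\widehat Y$ remains normal and $\widehat \pi$ is proper birational. The divisors $K_Y$ and $\pi^*(K_X + \Delta)$ pull back to $K_{\widehat Y}$ and $\widehat \pi^*(K_{\widehat X}+\widehat \Delta)$ respectively, and the round-up commutes with this flat base change. Since $(\widehat X, \widehat \Delta)$ is KLT by the previous step, $\lceil K_{\widehat Y} - \widehat \pi^*(K_{\widehat X} + \widehat \Delta)\rceil \geq 0$, and faithful flatness of $R \to \widehat R$ descends effectivity of divisors to give $\lceil K_Y - \pi^*(K_X+\Delta)\rceil \geq 0$. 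As this holds for every such $\pi$, $(X,\Delta)$ is KLT.

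The main obstacle is the clean execution of step two: one must verify that normality and the round-up behave well under the flat base change, both of which use excellence of $R$ in an essential way (via geometrically regular fibers of completion). This is standard but worth spelling out, and it is the only place where the excellence hypothesis is needed.
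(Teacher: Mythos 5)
Your proof is correct and follows essentially the same route as the paper: apply \autoref{thm.TauBCMvsMult} to conclude $(\widehat{X},\widehat{\Delta})$ is KLT, and then use that the KLT condition can be checked after completion. The only difference is that the paper cites the completion-descent step as a known fact, while you spell out the base-change argument (normality of $\widehat{Y}$ via geometrically regular fibers, descent of effectivity by faithful flatness), which is a fine elaboration but not a different approach.
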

\begin{proof}
This follows immediately from \autoref{thm.TauBCMvsMult} by noting that being KLT can be checked after completion.
\end{proof}

It follows directly from our \autoref{defprop.CharpSings} and \autoref{def.tauRDelta} that our \BCM{} test ideal is the same as the test ideal in equal characteristic $p>0$ (and hence is the same as the usual test ideal when $R$ is $F$-finite by \autoref{prop.DefFregularSameClassical}).
\begin{corollary}
\label{cor.TauBCMVsTestIdeal}
With notation as in \autoref{set.PerfectoidTestIdealTransformationRules}, suppose $(R, \fram)$ is a complete local normal domain of characteristic $p > 0$.  Then $\mytau_B(R, \Delta)=\tau(R, \Delta)$ for every big Cohen-Macaulay $R^+$-algebra $B$. In particular, $R$ is \BCMReg{B} (for some $B$ or for every $B$) if and only if $R$ is strongly $F$-regular in characteristic $p>0$.
\end{corollary}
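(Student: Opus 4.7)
The claim has two parts: the equality $\mytau_B(R,\Delta)=\tau(R,\Delta)$ of ideals, and the characterization of \BCMReg{B} as strong $F$-regularity. The second part follows immediately from the first once one recalls from \autoref{defprop.CharpSings} and \autoref{prop.DefFregularSameClassical} that strong $F$-regularity is the condition $\tau(R,\Delta)=R$, together with the fact, noted in the proof of \autoref{defprop.CharpSings}, that the relevant kernel is independent of the choice of big Cohen-Macaulay $R^+$-algebra. So the substance is the first half.

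My plan is to write both ideals as the Matlis dual of the same submodule of $H^d_\fram(B)$. The key observation is the factorization
\[
H^d_\fram(R) \xrightarrow{\iota} H^d_\fram(\omega_R) = H^d_\fram(R(K_R)) \xrightarrow{\psi} H^d_\fram(B)
\]
recorded in \autoref{defprop.CharpSings}, which identifies the map $H^d_\fram(R) \xrightarrow{\cdot f^{1/n}} H^d_\fram(B)$ used in \autoref{def.tauOmegaDelta} with $\psi \circ \iota$; here $\iota$ is induced by the chosen inclusion $R \hookrightarrow \omega_R$ from \autoref{set.PerfectoidTestIdealTransformationRules}. The map $\iota$ is surjective, since $\omega_R/R$ is a torsion $R$-module (both $R$ and $\omega_R$ have rank one over the normal domain $R$), hence supported in dimension at most $d-1$, so $H^d_\fram(\omega_R/R) = 0$. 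Let $I \subseteq H^d_\fram(B)$ denote the common image of $\psi$ and $\psi \circ \iota$.

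By \autoref{def.tauRDelta}, $\mytau_B(R,\Delta) = \Ann_{\omega_R}\ker(\psi \circ \iota)$, which under Matlis duality is the finitely generated $R$-module $I^\vee$, embedded in $\omega_R = H^d_\fram(R)^\vee$ via the dual of the surjection $H^d_\fram(R) \twoheadrightarrow I$. Similarly, by \autoref{defprop.CharpSings}, $\tau(R,\Delta) = \Ann_R \ker\psi$ is the same $I^\vee$, but embedded in $R = H^d_\fram(\omega_R)^\vee$ via the dual of $H^d_\fram(\omega_R) \twoheadrightarrow I$. Because the first surjection factors through $\iota$, the corresponding embedding $I^\vee \hookrightarrow \omega_R$ factors as $I^\vee \hookrightarrow R \hookrightarrow \omega_R$, where the last map is the Matlis dual of $\iota$, that is, the fixed inclusion $R \hookrightarrow \omega_R$. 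Consequently $\tau(R,\Delta)$ and $\mytau_B(R,\Delta)$ coincide as subsets of $\omega_R$, and since both are known to lie in $R$ (the latter by \autoref{lem.TestIdealIsAnIdeal}), they agree as ideals of $R$.

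The only delicate point is verifying that $\iota$ is surjective and that the two Matlis-dual embeddings of $I^\vee$ are compatible with the chosen $R \hookrightarrow \omega_R$; both are straightforward consequences of the functoriality of $\Hom_R(-,E)$. I do not anticipate any substantive obstacle beyond this bookkeeping, and once the identification is established, the final sentence about \BCMReg{B} being equivalent to strong $F$-regularity---independently of which big Cohen-Macaulay $R^+$-algebra $B$ is chosen---is immediate from $\mytau_B(R,\Delta) = \tau(R,\Delta)$ and the $B$-independence of $\tau(R,\Delta)$.
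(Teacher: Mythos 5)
Your proposal is correct and is essentially the argument the paper intends: the paper states the corollary follows directly from \autoref{defprop.CharpSings} and \autoref{def.tauRDelta}, and your write-up simply supplies the Matlis-duality bookkeeping (surjectivity of $H^d_\fram(R)\to H^d_\fram(\omega_R)$ and the compatible dual embeddings of $I^\vee$) that the paper leaves implicit.
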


\subsection{More general restriction theorems}

Our goal in this subsection is to prove a general restriction theorem for \BCM{} test ideals.
We first include lemmas on the construction of canonical divisors.  These are obvious to experts but we do not know a reference in this generality.  The first is a slight divisorial variation of the usual prime avoidance lemma.

\begin{lemma}
\label{lem.CanonicalDivisorCanAvoid}
Suppose $R$ is a normal domain and $\omega$ is a rank one reflexive module. Further suppose that $D_1, \ldots, D_n$ are a list of distinct prime divisors.  Then there exists an effective divisor $D$ with no common components with the $D_i$ and such that $R(D)\cong \omega$.
\end{lemma}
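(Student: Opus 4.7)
Since $\omega$ is a rank one reflexive $R$-module and $R$ is a normal domain, we may write $\omega \cong R(D')$ as a fractional ideal for some Weil divisor $D'$ (fixing an embedding $\omega \hookrightarrow K(R)$). The goal is to replace $D'$ by a linearly equivalent effective divisor $D = D' + \Div_R(f)$, with $f \in K(R)^\times$, which has no components in common with $D_1,\dots,D_n$. Since $R(D) = f^{-1}\cdot R(D') \cong R(D') \cong \omega$, the only real content is to produce such an $f$. Letting $P_i$ denote the height-one prime corresponding to $D_i$, what is required is $f \in R(D')$ (so that $\Div_R(f) + D' \geq 0$) with $v_{P_i}(f) = -v_{P_i}(D')$ for every $i$ (so that $D_i$ does not appear in $D$). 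Equivalently, we need $f \in R(D')$ that does not lie in any of the submodules $R(D'-D_i) \subseteq R(D')$.

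First I would observe that at each $P_i$ the quotient $R(D')_{P_i}/R(D'-D_i)_{P_i}$ is nonzero: both localizations are rank-one free $R_{P_i}$-modules and the inclusion corresponds to multiplication by a uniformizer of $P_i$, so $R(D'-D_i)_{P_i} = P_i R(D')_{P_i}$. Choosing any generator of $R(D')_{P_i}$ of the form $f_i/s_i$ with $f_i \in R(D')$ and $s_i \in R\setminus P_i$, we obtain a global $f_i \in R(D')$ with $v_{P_i}(f_i) = -v_{P_i}(D')$.

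Next I would invoke ordinary prime avoidance (using that the distinct height-one primes $P_1,\dots,P_n$ are pairwise incomparable) to produce, for each $i$, an element $c_i \in \bigcap_{j\neq i} P_j$ with $c_i \notin P_i$. Then I would set
\[
f \ :=\ \sum_{i=1}^n c_i^N f_i \ \in\ R(D')
\]
for any integer $N \geq 1$. For each fixed $j$, the summand $c_j^N f_j$ has $v_{P_j}$-valuation equal to $-v_{P_j}(D')$, while every other summand $c_i^N f_i$ with $i\neq j$ satisfies $v_{P_j}(c_i^N f_i) \geq N - v_{P_j}(D') > -v_{P_j}(D')$. The minimum-valuation term is therefore unique, so $v_{P_j}(f) = -v_{P_j}(D')$, and since each $c_i^N f_i$ already lies in $R(D')$ we still have $v_Q(f) \geq -v_Q(D')$ at every other height-one prime $Q$. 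Setting $D := D' + \Div_R(f)$ then yields the required effective divisor with $R(D) \cong \omega$ and no component among $D_1,\dots,D_n$.

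The only mildly delicate point is making sure the local generators $f_i$ can be taken inside the global module $R(D')$ rather than merely inside $R(D')_{P_i}$, but this is handled by the clearing-denominators step above. Everything else is just the standard prime avoidance trick transposed to the reflexive module $R(D')$.
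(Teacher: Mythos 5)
Your proof is correct, and it reaches the same reformulation as the paper's argument — namely, that it suffices to produce a section $f \in R(D') \cong \omega$ lying in none of the submodules $R(D'-D_i)$, i.e.\ with $v_{P_i}(f) = -v_{P_i}(D')$ for every $i$ — but the avoidance itself is carried out by a different mechanism. The paper proves the avoidance softly, by induction on $n$ exactly as in the classical proof of prime avoidance: given $z \in \omega$ avoiding $\omega(-D_1), \ldots, \omega(-D_{n-1})$, it either keeps $z$ or replaces it by $z + y$ with $y \in \omega(-D_1 - \cdots - D_{n-1}) \setminus \omega(-D_n)$, never invoking valuations or local structure beyond the inclusions $\omega(-D_i) \subseteq \omega$. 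You instead build the section in one shot: local generators $f_i$ of $R(D')_{P_i}$ (using that each $R_{P_i}$ is a DVR and that divisorial ideals localize well), separating elements $c_i \in \bigcap_{j \neq i} P_j \setminus P_i$ coming from incomparability of the distinct height-one primes, and the sum $f = \sum_i c_i^N f_i$, with the ultrametric "unique minimal valuation term" observation giving $v_{P_j}(f) = -v_{P_j}(D')$ (note $N = 1$ already suffices). Your route costs a bit more machinery — freeness of the localized divisorial ideals and explicit valuation bookkeeping — but produces an explicit CRT-style formula and avoids the induction; the paper's route is more elementary and works purely at the level of the module inclusions. Both correctly conclude by taking $D = D' + \Div_R(f)$ (equivalently, the divisor of zeros of the chosen section), which is effective, misses every $D_i$, and satisfies $R(D) \cong R(D') \cong \omega$.
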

\begin{proof}
We proceed by induction on $n$.  In the case that $n = 1$, simply choose an element $z \in \omega \setminus \omega(-D_1)$.  Then the effective divisor corresponding to the section $z \in \omega$ does not have $D_1$ as a component.  For the inductive case, suppose we have a section $z \in \omega$ such that
\[
z \notin \omega(-D_i)
\]
for $i = 1, \dots, n-1$.  If $z \notin \omega(-D_n)$, we are done, so suppose $z \in \omega(-D_n)$.  Next notice that $\omega(-D_1 - \dots -D_{n-1}) \not \subseteq \omega(-D_n)$ since the $D_i$ are distinct, we choose
\[
y \in \omega(-D_1 - \dots -D_{n-1}) \setminus \omega(-D_n).
\]
Finally we consider $y + z$, this cannot be in $\omega(-D_i)$ for $i = 1, \dots, n$ by construction.  The divisor corresponding to the section $y + z \in \omega$ is effective and has no common components with the $D_i$.
\end{proof}

\begin{lemma}
\label{lem.setupCanonical}
Suppose that $(R, \m)$ is a normal local ring of mixed characteristic $(0, p)$ with a canonical module $\omega_R$ and an element $0 \neq v \in R$.  Additionally fix $\Delta \geq 0$ a $\bQ$-divisor on $\Spec R$ such that $\Delta$ and $\Div(v)$ have no common components, and suppose that $K_R + \Delta$ is $\bQ$-Cartier.  Then there exists a choice of canonical divisor $K_R \geq 0$ (corresponding to an embedding $R \subseteq \omega_R$) satisfying the following conditions.
\begin{enumerate}
\item The cokernel $\omega_R / R$ is unmixed (all associated primes are minimal primes of height $1$ in $R$).
\label{lem.setupCanonical.UnmixedCokernel}
\item $v$ is a regular element on $\omega_R / R$.
\label{lem.setupCanonical.VarpiRegularElt}
\item If additionally, the index of $K_R + \Delta$ is not divisible by $p > 0$, then we may form the normalization of a local cyclic index-1 cover $R \subseteq R'$ that is \'etale over the generic points of $V(v)$ and such that if $\pi : \Spec R' \to \Spec R$ is the induced map, then $\pi^* (K_R + \Delta)$ is Cartier.
\label{lem.setupCanonical.FiniteExtension}
\end{enumerate}
\end{lemma}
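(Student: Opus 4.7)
The plan is first to apply \autoref{lem.CanonicalDivisorCanAvoid} with the list of divisors formed from the components of $\Div_R(v)$ together with those of $\Delta$. This produces an effective $K_R$ with $R(K_R)\cong \omega_R$ whose support shares no component with either $\Div_R(v)$ or $\Delta$; in particular, this fixes a distinguished embedding $R\hookrightarrow \omega_R$.

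For \autoref{lem.setupCanonical.UnmixedCokernel}, I would apply the depth lemma to the short exact sequence $0\to R\to \omega_R\to \omega_R/R\to 0$: since $R$ is normal, at any prime $Q\subset R$ of height $\geq 2$ both $R_Q$ and $(\omega_R)_Q$ have depth $\geq 2$, so $(\omega_R/R)_Q$ has depth $\geq 1$. Hence no prime of height $\geq 2$ is associated to $\omega_R/R$, and because the support of $\omega_R/R$ is the codimension-one divisor $K_R$, its associated primes are all of height one and are minimal primes of $\omega_R/R$. Part~\autoref{lem.setupCanonical.VarpiRegularElt} follows immediately: the associated primes of $\omega_R/R$ are exactly the components of $K_R$, none of which contain $v$ by our initial avoidance choice, so $v$ is a nonzerodivisor on $\omega_R/R$.

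For \autoref{lem.setupCanonical.FiniteExtension} I would build a Kummer-type index-one cover. Write $n(K_R+\Delta)=\Div_R(f)$; the components of $\Div_R(f)$ coincide with those of $K_R+\Delta$, so by construction none of them lies over $V(v)$. Form $S=R[T]/(T^n-f)$ and let $R'$ be the localization at a maximal ideal of the normalization of $S$, with $\pi\colon \Spec R'\to \Spec R$ the induced finite map. Since $n\pi^*(K_R+\Delta)=\Div_{R'}(T^n)$, dividing by $n$ (which is valid as $R'$ is a normal domain) gives $\pi^*(K_R+\Delta)=\Div_{R'}(T)$, a Cartier divisor. Because $p\nmid n$ makes $n$ a unit in $R$, and $f$ is a unit at every generic point $P$ of $V(v)$, the standard Kummer criterion shows that $R_P\to R_P[T]/(T^n-f)$ is already finite \'etale; the normalization therefore does nothing at such $P$, and $\pi$ is \'etale over the generic points of $V(v)$. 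The main subtlety I anticipate is precisely this last interaction between the normalization step and the \'etale locus above $V(v)$, but it is controlled entirely by the avoidance choice made in the first paragraph, which is what forces all three conclusions to fit together.
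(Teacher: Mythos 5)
Your proposal is correct and follows essentially the same route as the paper: choose $K_R$ via the avoidance lemma so it shares no components with $\Delta$ or $\Div_R(v)$, rule out associated primes of height $\geq 2$ of $\omega_R/R$ by the depth argument on $0 \to R \to \omega_R \to \omega_R/R \to 0$ (the paper phrases this via $H^0_Q(\omega_R)=0$ and $H^1_Q(R)=0$), and then take the normalization of a cyclic cover along $K_R+\Delta$, verifying Cartierness of the pullback and \'etaleness over the generic points of $V(v)$ exactly as you do. The only cosmetic differences are that the paper cites the Koll\'ar--Kov\'acs cyclic cover rather than writing $R[T]/(T^n-f)$ explicitly, and it does not localize the normalization at a maximal ideal (that extra localization would make $\pi$ non-finite unless $R$ is Henselian or complete, which is the case in the paper's application).
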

\begin{proof}
First use \autoref{lem.CanonicalDivisorCanAvoid} to choose an effective canonical divisor $K_R$ with no common components with any component of $\Delta$ or $\Div(v)$.  The choice of $K_R$ fixes an embedding $R \subseteq \omega_R = R(K_R)$ and so we have
\[
0 \to R \to \omega_R \to \omega_R/R \to 0.
\]
We claim that $\omega_R/R$ has no associated primes $Q$ of height $\geq 2$ in $R$.  Indeed if $Q$ is a height 2 prime,
\[
0 = H^0_Q(\omega_R) \to H^0_Q(\omega_R/R) \to H^1_Q(R) = 0
\]
where the first vanishing is since $\omega_R$ is torsion free and $H^1_Q(R) = 0$ since $R$ is S2.  It follows that $H^0_Q(\omega_R/R) = 0$ and so \eqref{lem.setupCanonical.UnmixedCokernel} holds. Then \eqref{lem.setupCanonical.VarpiRegularElt} follows since $\Supp \omega_R/R$ is $\Supp K_R$, thus any associated prime of $\omega_R/R$ which contains $v$ also has height at least 2 in $R$ (and there are no such primes by \eqref{lem.setupCanonical.UnmixedCokernel}).

Finally, we prove \eqref{lem.setupCanonical.FiniteExtension}.  We fix $R''$ to be a ramified cyclic cover of $R$ along the $\bQ$-Cartier $\bQ$-divisor $K_R + \Delta$, see for instance \cite[Section 2.3]{KollarKovacsSingularitiesBook}.  Further let $R'$ denote the normalization of $R''$.  Since $K_R + \Delta$ has no common components with $V(v)$, we see that $R \subseteq R''$ is \'etale where claimed, and so $R''$ is normal over the generic points of $V(v)$, and thus $R \subseteq R'$ is \'etale over the generic points of $V(v)$ as well.  Since the pullback of $K_R + \Delta$ to $\Spec R''$ is Cartier, it remains Cartier after pullback to $R'$.  This proves \eqref{lem.setupCanonical.FiniteExtension}.
\end{proof}

Next we generalize \autoref{theorem: big CM restriction} to the context of pairs.

\begin{theorem}
\label{thm.BCMTestIdeal1minusEpsilon}
Let $(R,\m)$ be a complete normal local domain of dimension $d$ and let $B$ be a big Cohen-Macaulay $R^+$-algebra. Then for every nonzero element $x\in R$ and every rational $1 > \varepsilon > 0$, the image of $\mytau_B(\omega_R, (1-\varepsilon)\Div_R(x))$ under the natural map $\omega_R\to \omega_R/x\omega_R\to\omega_{R/xR}$ equals $\mytau_{B/x^{\varepsilon}B}(\omega_{R/xR})$.

In particular, if $C$ is a big Cohen-Macaulay $R/xR$-algebra which is also canonically a $B/x^{\varepsilon}B$-algebra, then we have that the image of $\mytau_B(\omega_R, (1-\varepsilon)\Div_R(x))$ under the natural map $\omega_R\to \omega_R/x\omega_R\to\omega_{R/xR}$ contains $\mytau_{C}(\omega_{R/xR})$.
\end{theorem}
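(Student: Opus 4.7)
The plan is to adapt the diagram chase from \autoref{theorem: big CM restriction}, twisting the middle columns by $x^{\varepsilon}$ on the $B$-side and $x^{1-\varepsilon}$ on the $R$-side so that the product is still multiplication by $x$. First I need to check that $x^{\varepsilon} \in R^{+}$ is a nonzerodivisor on $B$ and that $B/x^{\varepsilon}B$ is a big Cohen-Macaulay $R/xR$-algebra, so that $\mytau_{B/x^{\varepsilon}B}(\omega_{R/xR})$ is defined via \autoref{rem.GeneralBForTestIdeals}. Writing $\varepsilon = a/b$ in lowest terms, fix $x^{1/b} \in R^{+}$ and let $S$ be the normalization of $R[x^{1/b}]$ inside $R^{+}$. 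Since $S$ is a module-finite normal domain extension of $R$ and $x^{1/b}$ is a nonzerodivisor in $S$, one may extend $x^{1/b}, x_2,\dots,x_d$ to a system of parameters of $S$ where $x, x_2, \dots, x_d$ is a system of parameters of $R$. Because $B$ is a big Cohen-Macaulay $R^{+}$-algebra, it is big Cohen-Macaulay over every such module-finite subextension, so $x^{1/b}, x_2, \dots, x_d$ is regular on $B$. Then $x^{\varepsilon} = (x^{1/b})^{a}$ is a nonzerodivisor on $B$ and $x^{\varepsilon}, x_2,\dots,x_d$ remains regular, so $B/x^{\varepsilon}B$ is big Cohen-Macaulay over $R/xR$.

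Now I will set up the ladder of short exact sequences
\[
\xymatrix{
0 \ar[r] & R \ar[r]^{\cdot x} \ar[d]_{\cdot x^{1-\varepsilon}} & R \ar[d] \ar[r] & R/xR \ar[r]\ar[d] & 0\\
0 \ar[r] & B \ar[r]^{\cdot x^{\varepsilon}} & B \ar[r] & B/x^{\varepsilon}B \ar[r] & 0
}
\]
whose left square commutes because $x^{\varepsilon} \cdot x^{1-\varepsilon} = x$. Applying $H^{\bullet}_{\m}$ and using $H^{d-1}_{\m}(B)=0$ (big Cohen-Macaulayness) and $H^d_{\m}(R/xR)=0$ (dimension) gives a commuting diagram
\[
\xymatrix{
 & H^{d-1}_{\m}(R/xR) \ar[r]^{\delta_R} \ar[d]_{\beta'} & H^d_{\m}(R) \ar[d]^{\alpha'} \ar[r]^{\cdot x} & H^d_{\m}(R) \ar[r] & 0\\
0 \ar[r] & H^{d-1}_{\m}(B/x^{\varepsilon}B) \ar@{^{(}->}[r]^-{\delta_B} & H^d_{\m}(B) \ar[r]^{\cdot x^{\varepsilon}} & H^d_{\m}(B) &
}
\]
in which $\alpha'$ is the twisted map whose kernel defines $\mytau_B(\omega_R,(1-\varepsilon)\Div_R(x))$ and $\beta'$ is the map whose kernel defines $\mytau_{B/x^{\varepsilon}B}(\omega_{R/xR})$. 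The identity $\delta_B \circ \beta' = \alpha' \circ \delta_R$ together with the injectivity of $\delta_B$ forces $\ker(\beta') = \delta_R^{-1}(\ker(\alpha'))$, so the composite $H^{d-1}_{\m}(R/xR) \xrightarrow{\delta_R} H^d_{\m}(R) \twoheadrightarrow H^d_{\m}(R)/\ker(\alpha')$ has kernel exactly $\ker(\beta')$, producing an injection $H^{d-1}_{\m}(R/xR)/\ker(\beta') \hookrightarrow H^d_{\m}(R)/\ker(\alpha')$.

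Taking Matlis duals (all objects are Artinian over the complete local ring $R$) and observing that the dual of $\delta_R$ is precisely the natural map $\omega_R \to \omega_{R/xR}$, the previous injection becomes a surjection $\mytau_B(\omega_R,(1-\varepsilon)\Div_R(x)) \twoheadrightarrow \mytau_{B/x^{\varepsilon}B}(\omega_{R/xR})$ realized as the restriction of $\omega_R \to \omega_{R/xR}$; this gives the displayed equality. The ``in particular'' statement is then immediate: if $B/x^{\varepsilon}B \to C$ respects the $R/xR$-structure, then $H^{d-1}_{\m}(R/xR) \to H^{d-1}_{\m}(C)$ factors through $H^{d-1}_{\m}(B/x^{\varepsilon}B)$, so the kernel of the former contains $\ker(\beta')$ and hence $\mytau_C(\omega_{R/xR}) \subseteq \mytau_{B/x^{\varepsilon}B}(\omega_{R/xR})$, which by the first part lies in the image of $\mytau_B(\omega_R,(1-\varepsilon)\Div_R(x))$. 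The main obstacle is the verification in Step 1 that the fractional power $x^{\varepsilon}$ is regular on $B$ and that the quotient inherits enough Cohen-Macaulayness to make $\mytau_{B/x^{\varepsilon}B}(\omega_{R/xR})$ well behaved; everything that follows is a mild twist on the chase in \autoref{theorem: big CM restriction}.
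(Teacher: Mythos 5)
Your proof is correct and follows essentially the same route as the paper: the same ladder of short exact sequences with the left vertical twisted by $x^{1-\varepsilon}$, the same diagram chase identifying $\ker(\beta')=\delta_R^{-1}(\ker(\alpha'))$ (equivalently $\im(\beta)\hookrightarrow\im(\alpha)$), and the same Matlis duality step identifying the duals with $\mytau_{B/x^{\varepsilon}B}(\omega_{R/xR})$ and $\mytau_B(\omega_R,(1-\varepsilon)\Div_R(x))$. Your Step 1, checking that $x^{\varepsilon}$ is a nonzerodivisor on $B$ so that the bottom row is genuinely exact, is a detail the paper leaves implicit, and your argument for it is fine.
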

\begin{proof}

We first note that $B/x^{\varepsilon}B$ is canonically an algebra over $R/xR$ via the map $R/xR\to B/xB\to B/x^{\varepsilon}B$. The remainder of the proof is essentially the same as that of \autoref{theorem: big CM restriction} but we start instead with the diagram
\[
\xymatrix{
0 \ar[r] & R \ar[r]^{\cdot x} \ar[d]_{\cdot x^{1-\varepsilon}} & R \ar[d] \ar[r] & R/xR \ar[r]\ar[d] & 0\\
0\ar[r] & B\ar[r]^{\cdot x^{\varepsilon}} & B\ar[r] & B/x^{\varepsilon}B \ar[r] & 0
}
\]
which induces
\[
\xymatrix{
{} & H_\m^{d-1}(R/xR) \ar[r] \ar[d]_{\beta} & H_\m^d(R) \ar[d]_{\alpha = (\cdot x^{1-\varepsilon}) \circ \gamma} \ar[r]^{\cdot x} & H_\m^d(R) \ar[r]\ar[d]^{\gamma} & 0\\
0\ar[r] & H_\m^{d-1}(B/x^{\varepsilon}B) \ar[r] & H_\m^d(B) \ar[r]^{\cdot x^{\varepsilon}} & H_\m^d(B) \ar[r] & 0.
}
\]
We know that $\im(\beta) \hookrightarrow \im(\alpha)$.  But the Matlis dual of $\im(\beta)$ is $\mytau_{B/x^{\varepsilon}B}(\omega_{R/xR})$ and the Matlis dual of $\im(\alpha)$ is $\mytau_B(\omega_R, (1-\varepsilon)\Div_R(x))$.  The result follows.
\end{proof}


Using the above and a cyclic cover, we obtain the following rather general restriction theorem.

\begin{theorem}
\label{thm.TestRestriction}
Suppose that $R$ is a complete normal local domain of mixed characteristic $(0, p)$, and that $0\neq h \in R$ is such that $R/hR$ is also a normal domain.  Additionally fix $\Delta$ a $\bQ$-divisor on $R$ such that $K_R + \Delta$ is $\bQ$-Cartier with index not divisible by $p$ and that $V(h)$ and $\Delta$ have no common components.  Then for every integral perfectoid big Cohen-Macaulay $R^+$-algebra $B$ and any $1 > \varepsilon > 0$, there exists a big Cohen-Macaulay $({R}/h{R})^+$-algebra $C$ (that is integral perfectoid if $R/hR$ has mixed characteristic) together with a compatible map $B \to C$ so that:
\[
\mytau_{C}(R/hR, \Delta|_{R/hR}) \subseteq \mytau_B(R, \Delta + (1-\varepsilon)\Div_R(h)) \cdot (R/hR).
\]
It follows that if $R/hR$ has mixed characteristic $(0,p)$, we have
\[
\mytau_{\scr{B}}(R/hR, \Delta|_{R/hR}) \subseteq \mytau_\scr{B}(R, \Delta + (1-\varepsilon)\Div_R(h)) \cdot (R/hR);
\]
and if $R/hR$ has equal characteristic $p>0$, then we have
\[
\tau(R/hR, \Delta|_{R/hR}) \subseteq \mytau_\scr{B}(R, \Delta + (1-\varepsilon)\Div_R(h)) \cdot (R/hR).
\]
In particular, if $(R/hR, \Delta|_{R/hR})$ is perfectoid \BCMReg{} in mixed characteristic or strongly $F$-regular in equal characteristic $p>0$, then $(R, \Delta + (1-\varepsilon)\Div_R(h))$ is perfectoid \BCMReg{} for every $1 > \varepsilon > 0$.
\end{theorem}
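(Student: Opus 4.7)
The plan is to reduce to the index-one case via a cyclic cover, apply the restriction theorem already established for the canonical module (\autoref{thm.BCMTestIdeal1minusEpsilon}), and then descend via the Grothendieck trace and the transformation rule \autoref{theorem: finite mapPairsCanonical}.  The consequences in the last assertion will follow formally once the displayed containment is in hand.

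First, using \autoref{lem.setupCanonical}, I would fix an effective canonical divisor $K_R\geq 0$ (equivalently, an embedding $R\subseteq \omega_R$) so that neither $K_R$ nor $\Delta$ shares a component with $V(h)$ and so that $h$ is a nonzerodivisor on $\omega_R/R$.  Since $p$ does not divide the index of $K_R+\Delta$, I would construct the local cyclic index-one cover $\pi\colon \Spec R'\to \Spec R$ from \autoref{lem.setupCanonical}\eqref{lem.setupCanonical.FiniteExtension}, localized at a single maximal ideal if necessary, so that $R'$ is a complete normal local domain, $\pi$ is \'etale over the generic points of $V(h)$, and $\pi^*(K_R+\Delta)=\Div_{R'}(f)$ is Cartier with $f\in R'$ sharing no components with $\Div_{R'}(h)$.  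Setting $\Delta':=\pi^*\Delta-\Ram_\pi$ gives $K_{R'}+\Delta'=\Div_{R'}(f)$.  Next, fixing an embedding $R'\subseteq R^+$, I would invoke \autoref{theorem: Andre} together with \autoref{rem.functorialityOfNonReducedAndre} (to deal with the possibility that $R'/hR'$ is only reduced, not a domain) to enlarge $B$ and produce a compatible commutative square of integral perfectoid big Cohen-Macaulay algebras yielding $B\to C$ over $(R/hR)^+$ and $B\to C'$ over $(R'/hR')^+$ (or a product of $(R'/\bar Q_i)^+$) that commute with the other maps in sight.  Since $C,C'$ are reduced and $h$ maps to zero in each, so does $h^{\varepsilon}\in R^+$, hence $B\to C$ and $B\to C'$ factor through $B/h^{\varepsilon}B$.

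For the main computation, the $\omega$-version of the transformation rule \autoref{theorem: finite mapPairsCanonical} applied to $\pi$ (using $\pi^*\Div_R(h)=\Div_{R'}(h)$, which holds by \'etaleness of $\pi$ at the height-one primes containing $h$) together with the Skoda-type identity of \autoref{lem.skodatype} gives
\[
\mytau_B\!\bigl(R,\,\Delta+(1-\varepsilon)\Div_R(h)\bigr)
\;=\;
\Tr_{\pi}\!\Bigl(\,f\cdot \mytau_B\!\bigl(\omega_{R'},\,(1-\varepsilon)\Div_{R'}(h)\bigr)\Bigr).
\]
Applying \autoref{thm.BCMTestIdeal1minusEpsilon} to $R'$ with $x=h$, the image of $\mytau_B(\omega_{R'},(1-\varepsilon)\Div_{R'}(h))$ in $\omega_{R'/hR'}$ equals $\mytau_{B/h^{\varepsilon}B}(\omega_{R'/hR'})$, which contains $\mytau_{C'}(\omega_{R'/hR'})$ via the map $B/h^{\varepsilon}B\to C'$ constructed above.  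Reducing the previous display modulo $h$, multiplying by the image $\bar f\in R'/hR'$, and applying the induced trace $\bar\Tr_{\bar\pi}\colon \omega_{R'/hR'}\to \omega_{R/hR}$ (which is the mod-$h$ reduction of $\Tr_{\pi}$ and, by the choice of $K_R$ making $h$ regular on $\omega_R/R$, coincides with the Grothendieck trace for $\bar\pi\colon \Spec R'/hR'\to \Spec R/hR$), a second use of the $\omega$-version of \autoref{theorem: finite mapPairsCanonical} for $\bar\pi$, combined with the adjunction identities $K_{R/hR}+\Delta|_{R/hR}=(K_R+\Delta)|_{R/hR}$ and $\Div_{R'/hR'}(\bar f)=\bar\pi^*((K_R+\Delta)|_{R/hR})$, yields
\[
\mytau_B\!\bigl(R,\,\Delta+(1-\varepsilon)\Div_R(h)\bigr)\cdot (R/hR)
\;\supseteq\;
\bar\Tr_{\bar\pi}\bigl(\bar f\cdot \mytau_{C'}(\omega_{R'/hR'})\bigr)
\;=\;
\mytau_C\!\bigl(R/hR,\,\Delta|_{R/hR}\bigr),
\]
which is the desired containment.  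For the consequences, if $(R/hR,\Delta|_{R/hR})$ is perfectoid \BCMReg{} (respectively strongly $F$-regular in equal characteristic $p>0$, handled via \autoref{cor.TauBCMVsTestIdeal}), then enlarging $B$ per \autoref{prop.testidealsmallperturb} so that $\mytau_B=\mytau_{\scr{B}}$ on the relevant pair and invoking Nakayama's lemma (using $h\in\fram$) forces $\mytau_{\scr{B}}(R,\Delta+(1-\varepsilon)\Div_R(h))=R$, i.e., perfectoid \BCMReg{}ity.

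The main obstacle I anticipate is the adjunction bookkeeping in the last display: one must verify that reducing $\Tr_{\pi}$ modulo $h$ genuinely produces the Grothendieck trace of $\bar\pi$, and must interpret the transformation rule and the divisor identities correctly when $R'/hR'$ is reduced but reducible (writing $R'/hR'\hookrightarrow \prod R'/\bar Q_i$ and summing over minimal primes).  The \'etaleness of $\pi$ in codimension one over $V(h)$, together with the carefully chosen $K_R$, make all these compatibilities morally automatic, but a careful audit of the divisor-theoretic and base-change statements will be needed.
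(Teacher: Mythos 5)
Your proposal follows essentially the same route as the paper: choose $K_R$ avoiding $V(h)$ via \autoref{lem.setupCanonical}, pass to the index-one cyclic cover $R'$ (\'etale over the generic points of $V(h)$), build a compatible big Cohen-Macaulay algebra over $(R'/hR')^+$ using \autoref{theorem: Andre} and \autoref{rem.functorialityOfNonReducedAndre}, apply \autoref{thm.BCMTestIdeal1minusEpsilon} over $R'$, and descend with \autoref{lem.skodatype} and \autoref{theorem: finite mapPairsCanonical}; this is exactly the paper's argument, phrased with equalities rather than the paper's commutative diagrams. Two small points of caution. First, \autoref{theorem: finite mapPairsCanonical} requires the \emph{same} big Cohen-Macaulay algebra on both sides of the finite map, so your final display with $C'$ over $R'/hR'$ produces $\mytau_{C'}(R/hR,\Delta|_{R/hR})$, not $\mytau_{C}$ for a separately constructed $(R/hR)^+$-algebra $C$; the correct move (and the paper's) is to use a single algebra, i.e.\ take the theorem's $C$ to be $C'$ itself, which is automatically a big Cohen-Macaulay $(R/hR)^+$-algebra receiving a map from $B$. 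Second, the ``obstacles'' you flag are precisely where the paper does its remaining work: since the transformation rule and \autoref{lem.skodatype} are stated for normal domains, one must route the trace through the normalization $S=\prod S_i$ of the reduced ring $R'/hR'$ and sum over components (not merely over $R'/\bar Q_i$), verify the identity $(K_R+\Delta)|_{V(h)}=K_{R/hR}+\Delta|_{V(h)}$ by computing at the codimension-one (hence regular) points of $V(h)$, and use the compatibility of the trace with reduction modulo $h$, which is the commutativity of the paper's diagrams. With those details filled in as in the paper, your outline is correct.
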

\begin{proof}
The statements on $\mytau_\scr{B}$, $\tau$, perfectoid \BCM{}-regularity and strongly $F$-regularity follow immediately from the statement on $\mytau_B$ by considering \autoref{prop.testidealsmallperturb} and \autoref{cor.TauBCMVsTestIdeal}. Therefore we only need to prove the containment involving $\mytau_C$ and $\mytau_B$.

We fix some rational $\varepsilon > 0$ for the rest of the proof.
We first describe $\Delta|_{R/hR}$.  Since $R/hR$ is normal, we know that each point of $V(h) \subseteq \Spec R$, which has height $\leq 2$ as an ideal of $R$, is regular.  Hence each point in $\Supp(\Delta) \cap V(h)$ that has codimension $1$ in $V(h)$ is regular in $\Spec R$.  It follows that one may define $\Delta|_{R/hR}$ in codimension $1$ and so we can define it everywhere.  Finally, set $K_R + \Delta = {1 \over n}\Div_R(f)$ where $f$ has no common components with $V(h)$ (we can do this by \autoref{lem.CanonicalDivisorCanAvoid} and \autoref{lem.setupCanonical}).

\begin{claim}
If $\overline{f} \in R/hR$ denotes the image of $f$, then we may choose $K_{R/hR}$ so that
\[
(K_R + \Delta)|_{V(h)} = {1 \over n}\Div_R(f)|_{V(h)} =  {1 \over n} \Div_{R/hR}(\overline{f}) = K_{R/hR} + \Delta|_{V(h)}.
\]
\end{claim}
\begin{proof}[Proof of Claim]
This is obvious for regular schemes, set $K_{R/hR} = K_R|_{V(h)}$ and use the fact that we are working locally.
However, the computation can be done at codimension $1$ points of $V(h)$, which correspond to codimension-2 regular points of $\Spec R$.  In particular, we can reduce to the regular case.
This proves the claim.
\end{proof}

Fix a normalization of a ramified cyclic cover $R \subseteq R'$ with respect to $K_R + \Delta$ and $h =: v$ as in \autoref{lem.setupCanonical}.  Because $R'$ is normal and $R' \supseteq R$ is \'etale over $V(h)$, we see that $R'/hR'$ is generically reduced and all the irreducible components of $\Spec(R'/hR')$ are of dimension $\dim R - 1$.  Because $R'$ is S2, we see that $R'/hR'$ is S1, and so $R'/hR'$ is reduced.  Finally, observe that $B$ is also an $R'$ algebra since we may embed $R' \subseteq R^+$.  Now using the weakly functorial big Cohen-Macaulay algebras in the form of \autoref{rem.functorialityOfNonReducedAndre}, we fix $C$ a big Cohen-Macaulay $(R'/hR')^+$-algebra (that is integral perfectoid in mixed characteristic) which admits a map from $B$ and so satisfies the role of $C$ in the diagram in \autoref{theorem: Andre}.  Notice that since $C$ is an $(R'/hR')^+$-algebra, when we view $C$ as an $R^+=R'^+$-algebra, $h^{\varepsilon} C = 0$ because $h^{\varepsilon}\cdot (R'/hR')^+=0$. Thus $C$ is a $B/h^{\varepsilon} B$-algebra as well. 

Next notice that we have an induced finite generically \'etale map $R/hR \hookrightarrow R'/hR'$ and so $C$ is also a big Cohen-Macaulay $R/hR$-algebra.  If $S$ is the normalization of $R'/hR'$, then it is a product of complete normal local domains $S = \prod S_i$, each of which is a finite extension of $R/hR$.  By our construction from \autoref{rem.functorialityOfNonReducedAndre}, $C$ is equal to a product of $S_i^+$-algebras.  By \autoref{lem.FiniteMapsTransformationForParamTestIdeals} and \autoref{theorem: finite mapPairsCanonical} 
we obtain the following diagram:
\[
\xymatrix{
\mytau_B(\omega_R, (1-\varepsilon)\Div_{R}(h)) \ar@/_2pc/[dd]_{\phi} \ar@{^{(}->}[d] &  \ar@{->>}[l]_{\Tr} \mytau_B(\omega_{R'}, (1-\varepsilon)\Div_{R'}(h)) \ar@{^{(}->}[d] \ar@/^2pc/[dd]^{\psi} \\
\omega_R \ar[d] & \ar[l] \omega_{R'} \ar[d] \\
\omega_{R/hR} & \ar[l] \omega_{R'/hR'} & \ar[l] \omega_S\\
\mytau_{C}(\omega_{R/hR}) \ar@{_{(}->}[u]^{\mu} & \ar@{_{(}->}[u]^{\nu} \ar@{->>}[l]_{\Tr} \mytau_{C}(\omega_{R'/hR'}) & \ar@{->>}[l] \mytau_C(\omega_S) \ar@{_{(}->}[u]_{\rho} \\
}
\]
where by \autoref{thm.BCMTestIdeal1minusEpsilon} the maps $\phi$ and $\psi$ contain the images of $\mu$ and $\nu$ respectively.

We will now consider the map $\Tr : f^{1/n} \cdot \mytau_C(\omega_S) \to \mytau_C(\omega_{R/hR})$.
By construction, note that $\mytau_C(\omega_{R/hR}) = \sum_{i = 1}^n \mytau_{C_i}(\omega_{R/hR})$ and likewise $\mytau_{C}(R/hR, \Delta|_{V(h)}) = \sum_{i = 1}^n \mytau_{C_i}(R/hR, \Delta|_{V(h)})$.  Next observe that by \autoref{lem.skodatype}, $f^{1/n} \cdot \mytau_{C_i}(\omega_{S_i}) = \mytau_{C_i}(\omega_{S_i}, \psi_i^* (K_R + \Delta)|_{V(h)})$ where $\psi_i: \Spec S_i \to \Spec R/hR$ is the induced map.  But now by \autoref{theorem: finite mapPairsCanonical}
\[
\Tr\big(f^{1/n} \cdot \mytau_{C_i}(\omega_{S_i})\big) = \mytau_{C_i}(\omega_{R/hR}, (K_R + \Delta)|_{V(h)}) = \mytau_{C_i}(R/hR, \Delta|_{V(h)}).
\]
Summing over all $S_i$ we see that $\Tr : f^{1/n} \cdot \mytau_C(\omega_S) \twoheadrightarrow \mytau_{C}(R/hR, \Delta|_{V(h)})$ is surjective. Since this factors through $f^{1/n} \cdot \mytau_C(\omega_S) \twoheadrightarrow f^{1/n} \cdot \mytau_C(\omega_{R'/hR'})$, hence we have that
\[
\Tr (f^{1/n} \cdot \mytau_C(\omega_{R'/hR'})) = \mytau_{C}(R/hR, \Delta|_{V(h)}).
\]

Multiplying the middle column of the large diagram above by $f^{1/n}$ and using again \autoref{lem.skodatype} and \autoref{theorem: finite mapPairsCanonical} we may factor the rows of the above diagram as follows:
\[
\xymatrix{
\mytau_B(\omega_R, (1-\varepsilon)\Div_{R}(h)) \ar@/_3pc/[dd]_{\phi} \ar@{^{(}->}[d] & \ar@{^{(}->}[d] \ar@{_{(}->}[l] \mytau_B(R, \Delta+(1-\varepsilon)\Div_{R}(h)) & \ar@{->>}[l]_-{\Tr} f^{1/n} \cdot \mytau_B(\omega_{R'}, (1-\varepsilon)\Div_{R'}(h)) \ar@{^{(}->}[d] \ar@/^3pc/[dd]^{\psi} \\
\omega_R \ar[d] & \ar@{_{(}->}[l] R \ar[d] & \ar[l] f^{1/n} \cdot \omega_{R'} \ar[d] \\
\omega_{R/hR} & \ar@{_{(}->}[l] R/hR & \ar[l] f^{1/n} \cdot \omega_{R'/hR'} \\
\mytau_{C}(\omega_{R/hR}) \ar@{_{(}->}[u]^{\mu} & \ar@{_{(}->}[u] \ar@{_{(}->}[l] \mytau_{C}(R/hR, \Delta|_{V(h)}) & \ar@{_{(}->}[u]^{\nu} \ar@{->>}[l]_-{\Tr} f^{1/n} \cdot \mytau_{C}(\omega_{R'/hR'}) \\
}
\]
Since the image of $\psi$ contains the image of $\nu$, it follows from the diagram that the image of $\mytau_B(R, \Delta+(1-\varepsilon)\Div_{R}(h))$ in $R/hR$ contains $\mytau_{C}(R/hR, \Delta|_{V(h)})$.  This proves the theorem.
\end{proof}


Combining \autoref{thm.TestRestriction} and \autoref{cor.BCMRegImpliesKLT}, we immediately obtain the following result.
\begin{corollary}
\label{cor.FregDeforms}
Use the notation of \autoref{thm.TestRestriction} but instead of assuming that $R$ is complete assume that $R$ is only excellent. Assume additionally that $R/hR$ has equal characteristic $p > 0$ (e.g., $h=p$). If the pair $(R/hR, \Delta|_{R/hR})$ is strongly $F$-regular, then the pair $(R, \Delta+(1-\varepsilon)\Div_{R}(h))$ is \emph{KLT} for every $\varepsilon > 0$ and hence $(R, \Delta + \Div_R(h))$ is log canonical.
\end{corollary}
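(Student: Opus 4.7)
The plan is to chain together the Restriction Theorem (\autoref{thm.TestRestriction}) with the fact that perfectoid \BCMReg{} implies KLT (\autoref{cor.BCMRegImpliesKLT}), then pass from the one-parameter family of KLT pairs to log canonicity by taking a limit.

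First I would reduce to the complete case. Since the statement involves strong $F$-regularity of $(R/hR,\Delta|_{R/hR})$ and KLT (resp.\ log canonicity) of $(R,\Delta+(1-\varepsilon)\Div_R(h))$, and both of these properties deform along the faithfully flat map with geometrically regular fibers $R\to\widehat{R}$ (using excellence of $R$ to guarantee geometric regularity of the fibers of completion), it suffices to prove the assertion after replacing $R$ by $\widehat{R}$. For the strong $F$-regularity side, $(R/hR,\Delta|_{R/hR})\to(\widehat{R}/h\widehat{R},\widehat{\Delta}|_{\widehat{R}/h\widehat{R}})$ is faithfully flat with geometrically regular fibers, so strong $F$-regularity ascends. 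For the KLT/log canonical side, given any proper birational $\pi\colon Y\to\Spec R$ with $Y$ normal, base change yields $\widehat{\pi}\colon Y\times_R\Spec\widehat{R}\to\Spec\widehat{R}$ which is again proper birational with normal source, and the log discrepancies along corresponding divisors are preserved; so KLT and log canonicity descend from $\widehat{R}$ to $R$.

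Once we are in the complete case, I would apply \autoref{thm.TestRestriction} with $C$ chosen as in the hypothesis (using the weakly functorial \BCM{} machinery from \autoref{sec.AndreWeaklyFunctorial}). The strong $F$-regularity hypothesis on $(R/hR,\Delta|_{R/hR})$ says $\tau(R/hR,\Delta|_{R/hR})=R/hR$, and \autoref{thm.TestRestriction} then gives
\[
R/hR \;=\; \tau(R/hR,\Delta|_{R/hR})\;\subseteq\;\mytau_{\scr{B}}\bigl(R,\Delta+(1-\varepsilon)\Div_R(h)\bigr)\cdot(R/hR),
\]
so $\mytau_{\scr{B}}(R,\Delta+(1-\varepsilon)\Div_R(h))$ is not contained in $\m$, i.e.\ equals $R$. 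Thus $(R,\Delta+(1-\varepsilon)\Div_R(h))$ is perfectoid \BCMReg{}, and \autoref{cor.BCMRegImpliesKLT} gives that it is KLT for every rational $\varepsilon>0$.

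Finally I would pass to log canonicity of $(R,\Delta+\Div_R(h))$ by a limiting argument. For any proper birational $\pi\colon Y\to\Spec R$ with $Y$ normal and any prime divisor $E$ on $Y$, the discrepancy with respect to $(R,\Delta+(1-\varepsilon)\Div_R(h))$ equals $a_E(R,\Delta+\Div_R(h))+\varepsilon\cdot\ord_E(\pi^{*}\Div_R(h))$, which by KLT is strictly greater than $-1$ for every $\varepsilon>0$. Letting $\varepsilon\to 0^{+}$ gives $a_E(R,\Delta+\Div_R(h))\geq -1$, so $(R,\Delta+\Div_R(h))$ is log canonical. The only real subtlety is verifying the deformation properties of strong $F$-regularity and KLT under completion in the excellent (but not necessarily $F$-finite) setting; everything else is a direct plug-in of previously established machinery.
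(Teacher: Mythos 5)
Your proposal is correct and follows essentially the same route as the paper: complete $R$, note strong $F$-regularity passes to the completion, apply \autoref{thm.TestRestriction} to get that the completed pair $(\widehat{R},\widehat{\Delta}+(1-\varepsilon)\Div(h))$ is perfectoid \BCMReg{}, and conclude KLT via \autoref{cor.BCMRegImpliesKLT} (whose statement already encapsulates the descent from $\widehat{R}$ to $R$ that you spell out by hand). Your explicit $\varepsilon\to 0^{+}$ discrepancy computation for the final log canonicity claim is a correct filling-in of a step the paper leaves implicit.
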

\begin{proof}
Since strongly $F$-regularity is preserved by passing to completion, we know that $(\widehat{R}/h\widehat{R}, \widehat{\Delta}|_{\widehat{R}/h\widehat{R}})$ is strongly $F$-regular. By \autoref{thm.TestRestriction}, $(\widehat{R}, \widehat{\Delta}+(1-\varepsilon)\Div_{\widehat{R}}(h))$ is perfectoid \BCMReg{} and hence $(R, \Delta+(1-\varepsilon)\Div_{R}(h))$ is KLT by \autoref{cor.BCMRegImpliesKLT}.
\end{proof}

\begin{remark}
One should also expect more general inversion of adjunction / restriction theorem statements where the subscheme we are restricting to is a Weil divisor and not necessarily Cartier.  For example, if $D \subseteq \Spec R$ is a normal prime $\bQ$-Cartier divisor and $K_R + \Delta$ is also $\bQ$-Cartier, then if $(D, \mathrm{Diff}_D (K_R + \Delta))$ is perfectoid \BCMReg{} in mixed characteristic or strongly $F$-regular in characteristic $p>0$, we should expect that $(R, \Delta+(1-\varepsilon) D)$ is perfectoid \BCMReg{}.  In fact, one would even expect that $(R, \Delta+D)$ is an analog of PLT in mixed characteristic, compare with \cite[Theorem 5.50]{KollarMori}.  Likewise one would expect sharper restriction theorems analogous to \cite[Theorem 4.4]{TakagiPLTAdjoint} or \cite[Theorem 9.5.16]{LazarsfeldPositivity2}. 
\end{remark}

In view of this remark, we prove one more result in this direction in the situation when there is no Shokurov's different \cite{ShokurovThreeDimensionalLogFlips}.

\begin{proposition}
Suppose $(R, \fram)$ is a complete S2 local domain of dimension $d$ and $I \cong \omega_R$ is a proper canonical ideal (note if $R$ is normal and $D = \Div_R(I)$ is an anti-canonical divisor, then $K_R + D = \Div_R(f)$, thus this fits into the framework of pairs we discussed above).  Suppose that $R/I$ is \BCMRat{C} for some big Cohen-Macaulay $R/I$-algebra $C$ admitting a compatible map from a big Cohen-Macaulay $R$-algebra $B$.  Then $R \to B$ splits.  In particular, if $R$ is $\bQ$-Gorenstein of mixed characteristic $(0,p)$ and $R/I$ is \BCMRat{}, then $R$ is \emph{KLT}.
\end{proposition}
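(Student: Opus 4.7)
The plan is to produce the splitting by a local-cohomology diagram chase combined with Matlis duality. Start from the short exact sequence $0 \to I \to R \to R/I \to 0$ and its pushforward $0 \to IB \to B \to B/IB \to 0$ along $R \to B$. Since $B$ is big Cohen--Macaulay we have $H^{d-1}_\fram(B) = 0$, and since $B/IB$ is annihilated by $I$ it has Krull dimension $\leq d-1$ so $H^d_\fram(B/IB) = 0$. This yields a commutative diagram with exact rows
\[
\xymatrix@C=12pt{
H^{d-1}(R) \ar[r] \ar[d] & H^{d-1}(R/I) \ar[r]^{\delta} \ar[d]^{\alpha} & H^d(I) \ar[r]^{\gamma} \ar[d]^{\beta} & H^d(R) \ar[r] \ar[d]^{\phi} & 0 \\
0 \ar[r] & H^{d-1}(B/IB) \ar@{^(->}[r]^{\delta'} & H^d(IB) \ar[r]^{\phi'} & H^d(B) \ar[r] & 0.
}
\]
The hypothesis that $R/I$ is \BCMRat{C}, together with the factorization $H^{d-1}(R/I) \to H^{d-1}(B/IB) \to H^{d-1}(C)$ whose composition is injective, forces $\alpha$ to be injective. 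Commutativity and $H^{d-1}(B)=0$ then force $H^{d-1}(R) \to H^{d-1}(R/I)$ to vanish, so $\delta$ is injective as well.

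Next I would apply Matlis duality: since $R$ is $S_2$ one has $\End_R(\omega_R) = R$, giving $H^d(I)^\vee = R$ and $H^d(R)^\vee = \omega_R \cong I$, and the Matlis dual of $\gamma$ becomes the inclusion $I \hookrightarrow R$. Hence $\ker\gamma$ is Matlis dual to $R/I$, namely the $I$-torsion submodule $E[I]$ of $E := H^d(I)$. Combined with exactness of the top row and injectivity of $\delta$, this gives $H^{d-1}(R/I) \cong E[I]$; Matlis-dualizing yields $R/I \cong \omega_{R/I}$, i.e.\ $R/I$ is Gorenstein. Once this is known, the \BCMRat{C} hypothesis on $R/I$ becomes equivalent to \BCMReg{C} (since $\omega_{R/I}=R/I$ makes the factorization in \autoref{defprop.CharpSings} trivial), which is equivalent to $R/I \to C$ being pure and hence split. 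Fix a splitting $\sigma \colon C \to R/I$, and let $\tilde\sigma \colon B \to C \xrightarrow{\sigma} R/I$ be the composite, which is an $R$-linear map extending the quotient $R \twoheadrightarrow R/I$.

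The main obstacle will be to lift $\tilde\sigma$ to $\hat\sigma \colon B \to R$ with $\hat\sigma|_R = \id_R$; the obstruction lies in $\Ext^1_R(B, I) = \Ext^1_R(B, \omega_R)$, and the plan is to show this Ext vanishes. A feasible route is to first deduce that $R$ itself is Cohen--Macaulay: the long exact sequence (together with $R/I$ now known to be Cohen--Macaulay of dimension $d-1$) produces isomorphisms $H^i_\fram(R) \cong H^i_\fram(\omega_R)$ for $0 \leq i \leq d-1$, which combined with Cohen--Macaulayness of $\omega_R$ (since $R/I$ is Gorenstein and Schenzel's criterion pushes this through) forces $R$ to be Cohen--Macaulay; at that point local duality paired with the big-Cohen--Macaulay vanishing $H^i_\fram(B)=0$ for $i<d$ gives $\Ext^1_R(B, \omega_R) = 0$. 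The resulting $\hat\sigma$ is the desired $R$-linear splitting of $R \to B$. (If necessary, one may enlarge $B$ along the way via \autoref{theorem: Andre} to control the obstruction; a splitting of $R \to B'$ composed with $B \to B'$ yields a splitting of $R \to B$, so this is harmless.)

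For the ``in particular'' statement, given that $R$ is $\bQ$-Gorenstein of mixed characteristic $(0,p)$ and $R/I$ is \BCMRat{}, I would use the weak functoriality of \autoref{theorem: Andre} to produce a compatible pair $(B, C)$ with $B$ an integral perfectoid big Cohen--Macaulay $R^+$-algebra and $C$ a big Cohen--Macaulay $(R/I)^+$-algebra. The hypothesis then supplies the input for the first part, yielding $R \to B$ pure. Finally \autoref{prop: PureimpliesBCMregforQGor} gives that $(R, 0)$ is \BCMReg{B}, and \autoref{cor.BCMRegImpliesKLT} concludes that $R$ is \emph{KLT}.
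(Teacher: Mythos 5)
Your opening diagram chase is sound and in fact overlaps with the paper's: the compatible map $B\to C$ makes $\alpha\colon H^{d-1}_{\fram}(R/I)\to H^{d-1}_{\fram}(B/IB)$ injective, hence $\delta$ is injective, and your further observation that $\ker\gamma\cong (R/I)^\vee$ (using $\Hom_R(\omega_R,\omega_R)=R$ from S2, so the Matlis dual of $\gamma$ is the inclusion $I\subseteq R$) so that $R/I\cong\omega_{R/I}$ is Gorenstein, and hence $R/I\to C$ splits, is correct. The genuine gap is in the final lifting step. To kill the obstruction in $\Ext^1_R(B,I)$ you assert that $R$ is Cohen--Macaulay, justified by ``$\omega_R$ is CM since $R/I$ is Gorenstein and Schenzel's criterion pushes this through.'' But given the isomorphisms $H^i_{\fram}(R)\cong H^i_{\fram}(\omega_R)$ for $i\le d-1$ that you have just established, ``$\omega_R$ is CM'' is \emph{equivalent} to ``$R$ is CM,'' which is precisely a consequence of the splitting you are trying to build (a direct summand of a big Cohen--Macaulay algebra is CM) and is not available beforehand; no Schenzel-type statement supplies it --- ``canonically Cohen--Macaulay'' ($\omega_R$ CM) is strictly weaker than CM, and a canonical ideal with Gorenstein quotient does not by itself force CM-ness without exactly the kind of big Cohen--Macaulay input being used here. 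So the argument is circular at its crucial point. A secondary issue: even granting $R$ CM, the vanishing $\Ext^1_R(B,\omega_R)=0$ for the huge, non-finitely-generated module $B$ is not the standard local duality statement; it requires the derived form over the complete ring (the Matlis dual of $\myR\Gamma_{\fram}(B)$ computes $\myR\Hom_R(B,\omega_R^{\mydot})$), which you only gesture at.

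The paper's proof avoids this entirely, and you already have every ingredient for it. Since the Matlis dual of $\gamma\colon H^d_{\fram}(I)\to H^d_{\fram}(R)$ is the inclusion of the \emph{proper} ideal $I$, $\gamma$ is not injective, so the socle generator $z$ of $H^d_{\fram}(I)\cong E$ satisfies $z=\delta(y)$ for some $y\in H^{d-1}_{\fram}(R/I)$; the \BCMRat{C} hypothesis makes the image of $y$ in $H^{d-1}_{\fram}(C)$, hence in $H^{d-1}_{\fram}(B/IB)$, nonzero, and $H^{d-1}_{\fram}(B)=0$ makes $H^{d-1}_{\fram}(B/IB)\to H^d_{\fram}(I\otimes_R B)$ injective, so $\beta(z)\neq 0$. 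Since every nonzero submodule of $E$ contains the socle, $E\cong H^d_{\fram}(I)\to H^d_{\fram}(I\otimes_R B)\cong E\otimes_R B$ is injective, and Matlis duality then gives the surjectivity of $\Hom_R(B,R)\to R$, i.e.\ the splitting, with no Cohen--Macaulayness of $R$, no Gorensteinness of $R/I$, and no Ext-obstruction needed. Your concluding paragraph for the ``in particular'' statement (weak functoriality to produce a compatible pair $(B,C)$, then \autoref{prop: PureimpliesBCMregforQGor} and \autoref{cor.BCMRegImpliesKLT}) matches the paper and is fine.
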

\begin{proof}
We begin with a claim.
\begin{claim}
$H^d_{\fram}(I \otimes_R B) \cong H^d_{\fram}(I \cdot B)$ via the canonical map.
\end{claim}
\begin{proof}[Proof of claim]
It suffices to proves that for every finitely generated $R$-module $N \subseteq B$ we have that $H^d_{\fram}(I \otimes_R N) \to H^d_{\fram}(I \cdot N)$ is an isomorphism.  We notice that if we write $0 \to K \to I \otimes N \to I \cdot N \to 0$, then $K$ is supported on a set of codimension $\geq 1$ (since $I$ is free in codimension 0).    Hence $H^d_{\fram}(K) = 0$.
This proves the claim.
\end{proof}
Consider the diagram
\[
\xymatrix{
0 \ar[r] & I \ar[d] \ar[r] & R \ar[d]\ar[r] &  R/I \ar[d]\ar[r] & 0\\
0 \ar[r] & I \cdot B \ar[r] & B \ar[r] & B/(I\cdot B) \ar[r] & 0.\\
}
\]
Taking local cohomology and using the claim we obtain
\[
\xymatrix{
& H^{d-1}_{\fram}(R/I) \ar[d] \ar[r] & H^d_{\fram}(I) \ar[d]\ar[r] & H^d_{\fram}(R) \ar[d]\\
0 \ar[r] & H^{d-1}_{\fram}(B/(I \cdot B)) \ar[d] \ar[r] & H^d_{\fram}(I \otimes B) \ar[r] & H^d_{\fram}(B). \\
& H^{d-1}_{\fram}(C)
}
\]
We next claim that $H^d_{\fram}(I) \to H^d_{\fram}(I \otimes B)$ is injective.
To see this, note that $H^d_{\fram}(I) \cong E$, the injective hull of the residue field, and hence it has a $1$-dimension socle.  By local duality, the Matlis dual of $H^d_{\fram}(I) \to H^d_{\fram}(R)$ is $R \leftarrow \omega_R$, and this is the embedding of $I \cong \omega_R$ as a proper ideal.  Thus $H^d_{\fram}(I) \to H^d_{\fram}(R)$ is not injective and so a socle generator $z$ is mapped to zero in $H^d_{\fram}(R)$ and so has a pre-image $y \in H^{d-1}_{\fram}(R/I)$.  Since $H^{d-1}_{\fram}(R/I) \to H^{d-1}_{\fram}(C)$ is injective, $y$ has nonzero image also in $H^{d-1}_{\fram}(B/(I \cdot B))$.  In conclusion, $z$ has nonzero image in $H^d_{\fram}(I \tensor B)$ and so
\[
H^d_{\fram}(I) \hookrightarrow H^d_{\fram}(I \otimes B)
\]
is injective as claimed. Since $H_\m^d(I)\cong E$ is the injective hull of the residue field of $R$ and $H_\m^d(I\otimes B)\cong H_\m^d(I)\otimes B\cong  E\otimes B$ (see \cite[page 130]{BrunsHerzog}), the above injection says that $E\to E\otimes B$ is injective. Then, exactly as in the proof of \autoref{thm.BCMRegularImpliesCM}, taking the Matlis dual tells us that $R\to B$ is split, as desired.  The final statement follows from \autoref{prop: PureimpliesBCMregforQGor} and \autoref{cor.BCMRegImpliesKLT}.
\end{proof}

We conclude the section with a corollary which follows immediately from combining \autoref{cor.BCMRegularImpliesPurityOfWildness} and \autoref{thm.TestRestriction}.

\begin{corollary}[Purity of the tamely ramified locus]
Suppose $R \subseteq S$ is a finite generically Galois extension of complete normal domains of mixed characteristic $(0,p)$ such that $R$ is $\bQ$-Gorenstein.  Suppose $0\neq f \in R$ is an element so that $R/fR$ is perfectoid \BCMReg{} in mixed characteristic or strongly $F$-regular in equal characteristic $p>0$. Suppose also that $R[f^{-1}] \subseteq S[f^{-1}]$ is \'etale and $R \subseteq S$ is tamely ramified in codimension $1$.  Then $\Tr : S \to R$ is surjective and so $R \subseteq S$ is tame everywhere.
\end{corollary}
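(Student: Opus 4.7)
The plan is to combine \autoref{thm.TestRestriction} with \autoref{cor.BCMRegularImpliesPurityOfWildness}, working with the pair $(R,\Delta)$ where $\Delta = (1-\varepsilon)\Div_R(f)$ for a judicious choice of $\varepsilon$. First I would observe that, since $R/fR$ is either perfectoid \BCMReg{} or strongly $F$-regular, $R/fR$ is automatically a normal domain. Taking $h=f$ and starting with the trivial boundary on $R$, \autoref{thm.TestRestriction} (together with \autoref{cor.BCMRegImpliesKLT} / \autoref{cor.TauBCMVsTestIdeal} to match the two characteristic regimes for $R/fR$) yields that the pair $(R,(1-\varepsilon)\Div_R(f))$ is perfectoid \BCMReg{} for every rational $0 < \varepsilon < 1$.

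Next I would analyze the ramification divisor $\Ram_\phi$ of $\phi\colon \Spec S\to \Spec R$. Since $R[f^{-1}]\subseteq S[f^{-1}]$ is \'etale, $\Ram_\phi$ is supported on $\phi^{-1}(V(f))$. Tameness in codimension $1$ means that at each height-one prime $Q$ of $S$ lying over a height-one prime $P$ of $R$ containing $f$, the extension $R_P\subseteq S_Q$ is tamely ramified with some ramification index $e_Q$ coprime to the residue characteristic, so the coefficient of $Q$ in $\Ram_\phi$ equals $e_Q-1$. On the other hand, the coefficient of $Q$ in $\phi^*\Div_R(f)$ equals $e_Q\cdot v_P(f)\geq e_Q$, so the coefficient of $Q$ in $\phi^*\bigl((1-\varepsilon)\Div_R(f)\bigr)$ is at least $(1-\varepsilon)e_Q$. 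The inequality $(1-\varepsilon)e_Q\geq e_Q-1$ is equivalent to $\varepsilon\leq 1/e_Q$. Since there are only finitely many height-one primes $Q$ of $S$ above $V(f)$ (by finiteness of $\phi$), I may fix a single rational $\varepsilon>0$ smaller than $1/e_Q$ for all such $Q$, giving the divisorial comparison
\[
\phi^*\bigl((1-\varepsilon)\Div_R(f)\bigr)\geq \Ram_\phi.
\]

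With these ingredients in hand, I would now apply \autoref{cor.BCMRegularImpliesPurityOfWildness} to the pair $(R,(1-\varepsilon)\Div_R(f))$ and the finite map $\phi$: the pair is (perfectoid) \BCMReg{} by the first step, the divisorial hypothesis on $\phi^*\Delta\geq\Ram_\phi$ was verified in the second step, and $R\subseteq S$ is generically Galois by hypothesis. The conclusion of \autoref{cor.BCMRegularImpliesPurityOfWildness} is precisely that $\Tr\colon S\to R$ is surjective and that $R\subseteq S$ is tame everywhere in the sense of Kerz--Schmidt, which is the statement of the corollary.

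I expect the only real subtlety is the ramification calculation comparing $(1-\varepsilon)\phi^*\Div_R(f)$ with $\Ram_\phi$ at codimension-one points lying over $V(f)$: one needs tameness in codimension one precisely to bound the coefficients of $\Ram_\phi$ by $e_Q-1$ rather than something larger (wild ramification would introduce arbitrarily large contributions that no choice of $\varepsilon<1$ could absorb), and this is where the tameness-in-codimension-one hypothesis is used in an essential way. A minor bookkeeping point is that \autoref{thm.TestRestriction} as stated requires the index of $K_R$ to be coprime to $p$; when $R/fR$ has mixed characteristic this is automatic from the setup, and when $R/fR$ has equal characteristic $p$ the $F$-regular version of the restriction theorem proceeds via the same cyclic-cover construction without that restriction.
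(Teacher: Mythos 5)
Your argument is correct and is essentially the paper's own proof: deform via \autoref{thm.TestRestriction} with $h=f$ and $\Delta=0$ to conclude that $(R,(1-\varepsilon)\Div_R(f))$ is perfectoid \BCMReg{}, verify $\phi^*\bigl((1-\varepsilon)\Div_R(f)\bigr)\geq \Ram_\phi$ from \'etaleness off $V(f)$ plus tameness in codimension one, and finish with \autoref{cor.BCMRegularImpliesPurityOfWildness}; your explicit choice of a sufficiently small $\varepsilon$ is a welcome precision that the paper leaves implicit. The one caveat is your closing remark about the index of $K_R$: the claims that coprimality to $p$ is ``automatic'' when $R/fR$ has mixed characteristic, or dispensable via the cyclic-cover construction when $R/fR$ has equal characteristic $p$, are not justified as stated, but since the paper's own proof invokes \autoref{thm.TestRestriction} with exactly the same silent reliance on that hypothesis, this is an imprecision inherited from the paper rather than a gap peculiar to your argument.
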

\begin{proof}
By \autoref{thm.TestRestriction}, $(R, (1-\varepsilon) \Div_R(f))$ is perfectoid \BCMReg{} for all $1 > \varepsilon > 0$.  Since $R \subseteq S$ is tamely ramified, if $\pi : \Spec S \to \Spec R$ is the induced map, then $\pi^*(1-\varepsilon)\Div_R(f) \geq \Ram_{\pi}$.  The result follows by \autoref{cor.BCMRegularImpliesPurityOfWildness}.
\end{proof}

\section{Application: $F$-rational and strongly $F$-regular singularities in families}
\label{sec.ApplicationToArithmeticFamilies}

Our goal in this section is to study the following question:
\begin{center}
\emph{How do $F$-rationality and $F$-regularity vary as one varies the characteristic?}
\end{center}
We begin with a simple case which will illustrate the larger point.


\begin{setting}
\label{set.DedekindDomainSetting}
Suppose a Dedekind domain $D$ is a quasi-finite extension of $\bZ$.  In particular, $D$ is a localization of a finite extension of $\bZ$.  We fix $K$ to be the fraction field, a field of characteristic zero.  Let $\phi : X \to U := \Spec D$ be a flat family essentially of finite type.  When talking about a $\bQ$-divisor $\Delta$ on $X$, we always assume that $\Delta$ is effective and has no vertical components (in other words, every component of $\Delta$ dominates $D$).  For any point $p \in D$, we use $X_p$ (resp. $\Delta_p$) to denote the fiber.
\end{setting}

Little is lost if you simply assume that $D = \bZ$ (or a localization of $\bZ$).

\begin{theorem}
\label{thm.ProjectiveFamilyFratModPImpliesChar0}
With notation as in \autoref{set.DedekindDomainSetting}, let $\phi: X\to U := \Spec D$ be a proper flat family.  Suppose $X_p$ is $F$-rational for some closed point $p \in U$.  Then $X_{K}$ has rational singularities.  Furthermore $X_q$ is $F$-rational for a Zariski dense and open set $V$ of closed points $q \in U$.
\end{theorem}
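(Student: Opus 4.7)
The plan is to combine the paper's mixed-characteristic deformation result \autoref{theorem: F-ratPseudorat} with openness of the pseudo-rational locus to establish the rationality of $X_K$, and then invoke the classical theorem of Hara/Mehta--Srinivas to descend this back to a Zariski dense open set of closed fibers. First I would fix any point $x \in X_p$ and set $R = \mathcal{O}_{X,x}$. Since $D$ is a Dedekind domain, $D_p$ is a DVR with some uniformizer $\pi \in D$. Flatness of $\phi$ makes $\pi$ a nonzerodivisor in $R$, and by hypothesis $R/\pi R = \mathcal{O}_{X_p, x}$ is $F$-rational. Since $R$ is an excellent local ring of mixed characteristic $(0, \ell)$ (with $\ell$ the residual characteristic of $p$), \autoref{theorem: F-ratPseudorat} gives that $R$ is pseudo-rational. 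Thus $X$ is pseudo-rational at every point of $X_p$.

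Next I would invoke the openness of the pseudo-rational locus for excellent schemes admitting a dualizing complex (Lipman--Teissier and refinements; note $X$ is essentially of finite type over the regular excellent base $D$, so locally admits a dualizing complex). This produces a Zariski open $V \subseteq X$ containing $X_p$ along which $X$ is everywhere pseudo-rational. Because $\phi$ is proper, the image $\phi(X \setminus V) \subseteq U$ is closed and misses $p$; since $U$ is one-dimensional, $\phi(X \setminus V)$ must be a finite set of closed points, so $X_K \subseteq V$. Hence every local ring of $X_K$ is pseudo-rational. As $X_K$ is essentially of finite type over the characteristic zero field $K$, pseudo-rationality coincides with the usual notion of rational singularities (via resolution), so $X_K$ has rational singularities, proving the first assertion.

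For the second assertion, the family $\phi : X \to U$ (after undoing the localization defining $D$) realizes $X_K$ as a flat spreading-out over a finitely generated $\bZ$-algebra, so the main theorem of \cite{HaraRatImpliesFRat,MehtaSrinivasRatImpliesFRat} applies and produces a Zariski dense open set of closed points of the base over which the fibers are $F$-rational; pulling this back to $U$ yields the desired open $V$. The main obstacle is arranging the openness/propagation step in paragraph two: all of the other ingredients are either supplied directly by \autoref{theorem: F-ratPseudorat} or by the classical Hara/Mehta--Srinivas theorem, so the proof really hinges on being able to spread pointwise pseudo-rationality along $X_p$ out to the whole generic fiber, which is where properness of $\phi$ and one-dimensionality of $U$ are essential.
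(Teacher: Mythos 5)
Your first step (pointwise pseudo-rationality along $X_p$ via \autoref{theorem: F-ratPseudorat}) and your treatment of the last clause via Hara/Mehta--Srinivas match the paper, but the hinge of your argument --- ``openness of the pseudo-rational locus for excellent schemes admitting a dualizing complex (Lipman--Teissier and refinements)'' --- is not an available result, and this is a genuine gap. Lipman--Teissier do not prove that the pseudo-rational locus is open; in characteristic zero openness of the rational locus comes from resolution of singularities, and in mixed characteristic no such openness statement is known (at least within the toolkit of this paper). Indeed, the paper flags this explicitly at the start of its proof: it introduces the non-pseudo-rational locus $Y$ and says that, absent resolution of singularities, it is not clear this set is closed, which is exactly why it does \emph{not} argue by taking an open pseudo-rational neighborhood of $X_p$ and pushing its complement down by properness, as you do. So your second paragraph, as written, cites a theorem that does not exist, and without it you only know pseudo-rationality at the points of $X_p$, with no mechanism to propagate it to the generic fiber.

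The paper's actual route around this is worth internalizing, since it only uses openness where it is legitimately available (characteristic zero). One passes to the closure $\overline{Y}$ of the bad locus, assumes $\overline{Y}\cap X_p\neq\emptyset$, and takes an irreducible component $W$ through such a point; properness and one-dimensionality of $U$ force $\phi(W)=U$ (otherwise $W\subseteq X_p$, contradicting pointwise pseudo-rationality there). The generic point $\eta$ of $W$ then lies over $(0)\subseteq D$, so $\O_{X,\eta}$ has characteristic zero and is rational; by openness of the rational locus \emph{in characteristic zero} one finds an affine neighborhood $\Spec R$ of $\eta$ whose generic fiber is rational, and Hara/Mehta--Srinivas plus \autoref{theorem: F-ratPseudorat} show that $\phi^{-1}(V)\cap\Spec R$ consists of pseudo-rational points for a dense open $V\subseteq U$; since $\eta$ lies in this open set and in $\overline{Y}$, one gets a contradiction. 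Only then does one conclude $\overline{Y}\cap X_p=\emptyset$ and push $\overline{Y}$ down by properness, which is the step you wanted to perform directly. If you want to salvage your write-up, you should replace your appeal to openness of the pseudo-rational locus with an argument of this contradiction/closure type (or supply a proof of that openness in mixed characteristic, which would be a new result, not a citation).
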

\begin{proof}
Let $Y$ be the non-pseudo-rational locus of $X$. Since we do not have resolution of singularities, it is not clear to us that this set is closed.   Hence, we let $\overline{Y}$ denote the closure of $Y$ and we will eventually show that $\overline{Y}\cap X_p=\emptyset$. So suppose not, then we pick $z\in\overline{Y}\cap X_p$ viewed as a point of $X$, and let $W$ be an irreducible component of $\overline{Y}$ that contains $z$.
\begin{claim}
We may assume that $\phi(W) = U$
\end{claim}
\begin{proof}[Proof of Claim]
Note that $\phi(W)$ is closed since $W$ is closed and $\phi$ is proper.  If $\phi(W) \subsetneq U$ then $\phi(W)$ is discrete, which means $W\subseteq X_{p_1}\cup X_{p_2}\cup\cdots\cup X_{p_n}$ for some points $p_1=p, p_2,\dots,p_n \in U$. Since $W$ is irreducible and $X_{p_i}$ are disjoint, we must have $W \subseteq X_p$ (since $z\in W$ and $z\in X_p$). But for every $y\in X_p$, $\cO_{X,y}/p$ is a local ring of $X_p$ and thus is $F$-rational by hypothesis. Hence $\cO_{X,y}$ is pseudo-rational for every $y\in  X_p$ by \autoref{theorem: F-ratPseudorat}. However, then we have $X_p \cap Y\supseteq W\cap Y\neq \emptyset$, a contradiction.  Thus we may assume $\phi(W) = U$.
\end{proof}

Now we fix $\eta$ to be the generic point of $W$.  We see that the local ring $\cO_{X, \eta}$ is pseudo-rational (because this is a localization of $\cO_{X, z}$) and hence rational since $\cO_{X, \eta}$ has characteristic $0$. The key point for us is that being rational is an open condition in characteristic zero since resolution of singularities exists.  We next choose $\Spec R$, an open neighborhood of $\eta \in X$, such that $\Spec (R \otimes_{D} K)$ has rational singularities.  By the main result of \cite{HaraRatImpliesFRat,MehtaSrinivasRatImpliesFRat}, for an open dense set $V$ of points $\mathfrak{q}$ of $U$, $R/\mathfrak{q}R$ has $F$-rational singularities.  It follows from \autoref{theorem: F-ratPseudorat} that all points in the \emph{open} set $\phi^{-1} V \cap \Spec R$ have pseudo-rational singularities.  In particular, $\overline{Y} \cap (\phi^{-1} V \cap \Spec R) = \emptyset$ and so $\overline{Y}$ is a \emph{proper} closed subset of $X$.
However, we notice that $\eta \in \overline{Y}$, $\eta \in \Spec R$ and $\eta \in \phi^{-1}(V)$ (since $\eta$ lies over $(0) \subseteq D$), but then $\eta \in \overline{Y} \cap (\phi^{-1} V \cap \Spec R)$ a contradiction.  Therefore we conclude that $X_p \cap \overline{Y} = \emptyset$.

Finally, since $\overline{Y}\cap X_p=\emptyset$, $\phi(\overline{Y})$ is a closed subset of $U$ that does not contain $p$. Thus $\phi(\overline{Y})$ does not contain the generic point of $U$ and hence $X_{K}$ is pseudo-rational, therefore has rational singularities.
\end{proof}

Without the properness hypothesis (or a suitable locality hypothesis, see \autoref{thm.LocalOpenFrationalOverZ} below), the above result is false.  Consider the following example:

\begin{example}
\label{ex.BadAffineExample}
Consider the diagram of rings
\[
\big\{ \bZ[x] \xrightarrow{x \mapsto \overline{x}} \bZ[x]/( px - 1 ) \xleftarrow{\overline{x} \mapsfrom y} \bZ[y] \big\}
\]
and let $R$ denote the pullback.  In other words
\[
\begin{array}{rl}
R =   & \big\{ (f(x), g(y)) \in \bZ[x] \times \bZ[y]\;\big|\; f(1/p) = g(1/p) \in \bZ[1/p] \big\}\\
\cong & \ker \Big( \bZ[x] \times \bZ[y] \xrightarrow{(f(x), g(y)) \mapsto f(1/p) - g(1/p)} \bZ[1/p] \Big)
\end{array}
\]
First observe that $R/p \cong \bF_p[x] \oplus \bF_p[y]$ and in particular, $R/p$ has $F$-rational singularities.  However, for any prime $t \neq p$,
\[
\begin{array}{rl}
R/t \cong & \big\{ (f(x), g(y)) \in \bF_t[x] \times \bF_t[y]\;\big|\; f(1/p) = g(1/p) \in \bF_t \big\}\\
\cong & \ker\Big(\bF_t[x] \times \bF_t[y] \xrightarrow{(f(x), g(y)) \mapsto f(1/p) - g(1/p)} \bF_t\Big).
\end{array}
\]
But this is a nodal singularity, and in particular not normal or $F$-rational.  In particular $R/p$ is $F$-rational but $R/t$ is not $F$-rational for any other prime $t \in \bZ$.
\end{example}

Of course, the same sort of example as the one above occurs in characteristic zero.  Consider an affine family over a smooth curve, $X \to C$.  One could picture the badly singular locus (for example, the locus with non-rational singularities) of $X$ as a hyperbola which has a vertical asymptote over the closed fiber $X_c$ (for some $c \in C$).  Then it can happen that the closed fiber is smooth but no other fiber is smooth.  Of course, this cannot happen if the family is projective or proper, since then the hyperbola describing the singular locus would intersect the closed fiber $X_c$ at infinity.
\begin{center}
\begin{tikzpicture}
\draw[scale=0.5,thick,domain=0.2:15,samples=100,range=-5:5smooth,variable=\x,red] plot ({\x},{1/\x});
\draw[scale=0.5,thick, domain=-15:-0.2,samples=100,range=-5:5smooth,variable=\x,red] plot ({\x},{1/\x});
\draw[very thick] (0,-2.5) -- (0, 2.5);
\draw (-7.6, 2.6) -- (7.6, 2.6);
\draw (-7.6, -2.6) -- (7.6, -2.6);
\draw (-7.6, 2.6) -- (-7.6, -2.6);
\draw (7.6, 2.6) -- (7.6, -2.6);
\draw[->] (0, -2.8) -- (0, -3.5);
\draw[very thick] (-7.5, -3.75) -- (7.5, -3.75);
\node at (8.0, -3.75) {$C$};
\node at (8.0, 1.0) {$X$};
\node at (0, -4.05) {$c$};
\fill[black] (0, -3.75) circle (0.1);
\node at (-0.35, 1) {$X_c$};
\node[red] at (-4.5, 0.3) {badly singular locus};
\end{tikzpicture}
\end{center}
There is another way to obtain the openness of the $F$-rational locus, and that is by working even more locally.

\begin{theorem}
\label{thm.LocalOpenFrationalOverZ}
Suppose that $R$ is of finite type and flat over $D$ as in \autoref{set.DedekindDomainSetting} and choose a prime ideal $Q \subseteq R$ with $(0) = Q \cap D$.  For each $t \in \Spec D$, write $\sqrt{t R + Q} = \bigcap_{i= 1}^{n_t} \bq_{t,i}$ a decomposition into minimal primes.  Then the following set is open in $\mSpec D$:
 \[
 W = \big\{ t \in \mSpec D \;\big|\; \{ \bq_{t, i} \}_{i=1}^{n_t} \text{ is nonempty and } (R/t)_{\bq_{t,i}} \text{ is $F$-rational for all $\bq_{t,i}$} \big \}.
 \]
\end{theorem}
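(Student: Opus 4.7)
The plan is to mirror the argument of \autoref{thm.ProjectiveFamilyFratModPImpliesChar0} but replace properness with a constructibility/dimension argument. First note that $\Spec D$ is a $1$-dimensional Noetherian scheme, so a subset of $\mSpec D$ is open (in the subspace topology from $\Spec D$) if and only if it is either empty or cofinite. Hence it suffices to assume $W$ is nonempty, fix $t_0 \in W$, and show that $W$ is cofinite in $\mSpec D$.

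For each $i$, since $D_{t_0}$ is a DVR with uniformizer $\pi$ generating $t_0 D_{t_0}$, flatness of $R$ over $D$ implies that $\pi$ is a nonzerodivisor on $R_{\bq_{t_0,i}}$ generating $t_0 R_{\bq_{t_0,i}}$. By hypothesis $(R/t_0)_{\bq_{t_0,i}} = R_{\bq_{t_0,i}}/\pi$ is $F$-rational of characteristic $p$, so \autoref{theorem: F-ratPseudorat} yields that $R_{\bq_{t_0,i}}$ is pseudo-rational. Since $Q \subseteq \bq_{t_0,i}$, localizing further at $Q$ (using that pseudo-rationality is preserved under localization of excellent rings, as tacitly used in the proof of \autoref{thm.ProjectiveFamilyFratModPImpliesChar0}) shows that $R_Q$ is pseudo-rational. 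As $R_Q$ is essentially of finite type over $K := \Frac(D)$ and of characteristic zero, it has rational singularities.

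Because the rational-singularities locus of $\Spec R_K$ (with $R_K := R \otimes_D K$) is open in characteristic zero by resolution of singularities, I can choose an open $\mathcal{U} \subseteq \Spec R$ containing $Q$ so that $R_K$ has rational singularities at every point of $\mathcal{U} \cap \Spec R_K$. Then by \cite{HaraRatImpliesFRat,MehtaSrinivasRatImpliesFRat}, there is an open dense $U_2 \subseteq \Spec D$ such that for each $t \in U_2 \cap \mSpec D$, $R/tR$ is $F$-rational at every prime of $\mathcal{U} \cap V(tR)$.

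The key remaining step — and the main technical obstacle — is to show that for cofinitely many $t \in \mSpec D$, every generic point $\bq_{t,i}$ of $Y_t := V(tR+Q)$ lies in $\mathcal{U}$. For this I will use generic flatness: set $Y := V(Q)$, which is integral (since $Q$ is prime) and dominates $\Spec D$ (since $Q \cap D = 0$), with $\dim Y = 1 + d$ where $d := \dim Y_K$. Since $Q \in \mathcal{U}$, $Y \setminus \mathcal{U}$ is a proper closed subset of $Y$, so each of its irreducible components has dimension at most $d$; moreover those components dominating $\Spec D$ have generic fiber of dimension at most $d-1$, while the remaining components lie in finitely many fibers $Y_t$. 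Applying generic flatness to each dominating component, I find an open dense $U_3 \subseteq \Spec D$ on which $\dim(Y \setminus \mathcal{U})_t \leq d-1 < d = \dim Y_t$; hence no generic point of $Y_t$ can lie in $Y \setminus \mathcal{U}$. Finally, Chevalley's theorem applied to $Y \to \Spec D$ produces an open dense $U_1 \subseteq \Spec D$ over which $Y_t \neq \emptyset$. Combining, for every $t$ in the cofinite set $U_1 \cap U_2 \cap U_3 \cap \mSpec D$ all the conditions defining $W$ are satisfied, so $W$ contains a cofinite subset of $\mSpec D$ and is therefore open.
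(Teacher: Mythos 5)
Your proof is correct and shares the paper's overall skeleton: reduce openness in $\mSpec D$ to cofiniteness, use \autoref{theorem: F-ratPseudorat} to pass from $F$-rationality of $(R/t_0)_{\bq_{t_0,i}}$ to pseudo-rationality of $R_{\bq_{t_0,i}}$ and hence rational singularities of $R_Q$, spread this out to an open neighborhood of $Q$ in the generic fiber, and invoke \cite{HaraRatImpliesFRat,MehtaSrinivasRatImpliesFRat} to get $F$-rationality of almost all reductions there. Where you genuinely diverge is the key step that the minimal primes $\bq_{t,i}$ eventually lie in the good open set: the paper takes the open to be $\Spec R[h^{-1}]$ with $h \notin Q$ and argues via a density claim --- any infinite collection of the $\bq_{t,i}$ (necessarily lying over infinitely many $t$) is dense in $V(Q)$, since each has height one over $Q$, so $V(h)$ can contain only finitely many of them --- whereas you bound the fibers of the complement $Y\setminus\mathcal{U}$, using generic flatness to get $\dim (Y\setminus\mathcal{U})_t \le d-1$ over a dense open of $\Spec D$. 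Both are dimension counts at heart; yours works verbatim for an arbitrary open $\mathcal{U}\ni Q$ rather than a basic open, but it requires one standard fact you should make explicit: every irreducible component of every closed fiber $Y_t$ has dimension exactly $d$ (each $\bq_{t,i}$ has height one over $Q$ by Krull's principal ideal theorem, and the dimension formula over the universally catenary $D$ then gives $\dim \overline{\{\bq_{t,i}\}} = d$). Without this equidimensionality the inference from $\dim(Y\setminus\mathcal{U})_t < \dim Y_t$ to ``no generic point of $Y_t$ lies in $Y\setminus\mathcal{U}$'' does not follow; with it, your argument is complete and matches the paper's conclusion.
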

Note a picture outlining the idea of the proof follows the proof.
\begin{proof}
Let $\rho : \Spec R \to \Spec D$ be the induced map. Note that $D\to R/Q$ is flat since $Q \cap D = (0)$. Thus $\rho(V(Q))$ is open and nonempty in $D$.  Hence the set
\[
\rho(V(Q)) \cap \mSpec(D) = \big\{ t \in \mSpec D\;\big|\; \{ \bq_{t, i} \}_{i=1}^{n_t} \text{ is non-empty } \}
\]
is nonempty and open in $\mSpec D$.  Since $Q \cap D = (0)$ we have $K \subseteq R_Q$ is of characteristic zero.  Further, we may assume that for some $t \in \mSpec D$, $(R/t)_{\bq_{t,i}}$ is $F$-rational for all $\bq_{t,i}$ since if not, then the set $W$ is empty.

Since $(R/t)_{\bq_{t,i}}$ is $F$-rational we see by \autoref{theorem: F-ratPseudorat} that $R_{\bq_{t,i}}$ is pseudo-rational, and so the further localization $R_Q$ has rational singularities (since it has characteristic $0$).  By inverting an element of $R$ not contained in $Q$, we can choose $R' \supseteq R$ a finitely generated $D$-algebra such that $\Spec (R' \otimes_{D} K)$ is an open neighborhood of $Q \in \Spec (R \otimes_D K)$ that has rational singularities (we abuse notation here and refer to more than one ideal as $Q$).  It follows from \cite{HaraRatImpliesFRat,MehtaSrinivasRatImpliesFRat} that for all but finitely many $t \in \Spec D$, $R'/t$ has $F$-rational singularities.

\begin{claim}
For all but finitely many $t$, $\{ \bq_{t, i} \}_{i=1}^{n_t} \subseteq \Spec R' \subseteq \Spec R$.
\end{claim}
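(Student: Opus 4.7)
The plan is to unravel what the claim really says and then apply Krull's height theorem in the domain $R/Q$. Recall that $R'$ was obtained from $R$ by inverting a single element $f \in R \setminus Q$, so $\Spec R' = D(f) \subseteq \Spec R$. Thus the assertion $\{\bq_{t,i}\}_{i=1}^{n_t} \subseteq \Spec R'$ is simply the assertion that $f \notin \bq_{t,i}$ for every $i$, and what must be proved is that this failure can occur for only finitely many closed points $t \in \mSpec D$.

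To check this, I would work in the domain $S := R/Q$ and let $\bar f \in S$ denote the image of $f$; by choice of $f$ we have $\bar f \neq 0$. The hypothesis $Q \cap D = (0)$ together with the fact that $S$ is a finite type $D$-algebra that is torsion-free (being a domain dominating $D$) gives that $D \to S$ is flat. In particular, for every nonzero $t \in D$, the element $t$ is a nonzerodivisor on $S$, so by Krull's Hauptidealsatz each minimal prime of $tS$ has height exactly one in $S$. The minimal primes $\bq_{t,i}$ of $tR + Q$ correspond exactly to these height-one primes of $S$ containing $t$.

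Now suppose for some $i$ we have $\bar f \in \bq_{t,i}/Q$. Since this prime has height one in $S$ and contains the nonzero element $\bar f$, it is in fact a minimal prime of the principal ideal $\bar f S$. But in a Noetherian domain the principal ideal $\bar f S$ has only finitely many minimal primes, say $\mathfrak{p}_1,\dots,\mathfrak{p}_N$. Hence the only primes of $S$ that can ever occur as a $\bq_{t,i}/Q$ containing $\bar f$ lie in this finite list.

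Finally, for each such $\mathfrak{p}_j$ the contraction $\mathfrak{p}_j \cap D$ is a prime of the Dedekind domain $D$, hence either $(0)$ or a single maximal ideal; and $\mathfrak{p}_j$ contains $t \in \mSpec D$ if and only if $\mathfrak{p}_j \cap D = (t)$. Therefore the set of closed points $t$ for which some $\bq_{t,i}$ contains $\bar f$ is contained in the finite set $\{\mathfrak{p}_j \cap D : \mathfrak{p}_j \cap D \neq (0)\}$, proving the claim. The only subtlety worth flagging is that one must ensure $t$ is a nonzerodivisor on $R/Q$ to invoke Krull's theorem — but this is exactly what flatness of $R/Q$ over $D$ provides, and is the reason the argument requires passing from $R$ to $R/Q$ rather than working directly in $R$.
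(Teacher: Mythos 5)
Your proof is correct, but it runs on different machinery than the paper's. The paper disposes of the claim with a purely topological remark: any infinite collection of the $\bq_{t_j,i}$ (necessarily lying over infinitely many distinct $t_j$) is dense in $V(Q)$, so if the closed set $V(h)$ contained infinitely many of them it would contain the generic point $Q$, contradicting $h\notin Q$. You instead pass to the domain $S=R/Q$, observe that each $\bq_{t,i}/Q$ is a height-one prime of $S$ (being minimal over $tS$ with $t$ a nonzerodivisor), and conclude that any such prime containing $\bar h$ must be one of the finitely many minimal primes of $\bar h S$, each of which contracts to a single maximal ideal of $D$; this pins down the bad closed points $t$ to an explicit finite set. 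Your route is more elementary and in fact more precise---it identifies the exceptional set of $t$ rather than merely bounding its cardinality, and it supplies the kind of argument one would need anyway to make the paper's parenthetical ``(since it must vary over an infinite set of $t_j$s)'' completely rigorous. Two small remarks: the appeal to flatness of $S$ over $D$ is unnecessary, since $t\neq 0$ is automatically a nonzerodivisor on the domain $S\supseteq D$; and since $t$ is a maximal ideal of a Dedekind domain it is only locally principal, so to get height exactly one from Krull's Hauptidealsatz you should localize at $\bq_{t,i}/Q$ (where $tD_t$ is principal) --- a one-line fix, and vacuous in the main case $D=\bZ$ or a localization thereof, where $t$ is principal.
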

\begin{proof}[Proof of Claim]
Note that $R' = R[h^{-1}]$ for some $h \in R \setminus Q$.  Observe that any set of infinitely many $\bq_{t_j, i}$s, is dense in $V(Q) \subseteq \Spec R$ (since it must vary over an infinite set of $t_j$s). Thus if $V(h)$ contains infinitely many $\bq_{t_j, i}$, then $V(h)$ contains $Q$, a contradiction. This proves the claim.
\end{proof}

Now that we have the claim, we simply observe that $(R'/t)_{\bq_{t,i}} = (R/t)_{\bq_{t,i}}$ for all $i = 1, \dots, n_t$ and all but finitely many $t$.
\end{proof}

\begin{remark}
The picture to keep in mind for the above theorem is the following:
\begin{center}
\begin{tikzpicture}
\draw[scale=0.5,thick,domain=0.2:15,samples=100,range=-5:5smooth,variable=\x,red] plot ({\x},{1/\x});
\draw[scale=0.5,thick, domain=-15:-0.2,samples=100,range=-5:5smooth,variable=\x,red] plot ({\x},{1/\x});
\draw[scale=0.5,thick, domain=-15:15,samples=200, range=-5:5smooth, variable=\x,blue] plot ({\x}, {0.001*\x^3+0.003*\x^2-0.05*\x-1.5});
\draw[scale=0.5,thick, domain=-15:15,samples=200, range=-5:5smooth, variable=\x,blue] plot ({\x}, {-0.0005*\x^3+0.002*\x^2-0.001*\x+3.5});
\draw[very thick] (6.01,-2.5) -- (6.01, 2.5);
\draw[very thick] (-1.8,-2.5) -- (-1.8, 2.5);
\draw (-7.6, 2.6) -- (7.6, 2.6);
\draw (-7.6, -2.6) -- (7.6, -2.6);
\draw (-7.6, 2.6) -- (-7.6, -2.6);
\draw (7.6, 2.6) -- (7.6, -2.6);
\draw[->] (0, -2.8) -- (0, -3.5);
\draw[very thick] (-7.5, -3.75) -- (7.5, -3.75);
\node at (8.0, -3.75) {$C$};
\node at (8.0, 1.0) {$X$};
\node at (6.01, -4.05) {$t_2$};
\node at (-1.8, -4.05) {$t_1$};
\fill[black] (6.01, -3.75) circle (0.1);
\fill[black] (-1.8, -3.75) circle (0.1);
\fill[black] (6.01, 0.05) circle (0.1);
\fill[black] (6.01, 1.45) circle (0.1);
\fill[black] (-1.8, -0.71) circle (0.1);
\fill[black] (-1.8, 1.75) circle (0.1);
\node at (6.4, -1.4) {$X_{t_2}$};
\node at (6.45, -0.2) {$\bq_{t_2, 2}$};
\node at (5.6, 1.15) {$\bq_{t_2, 1}$};
\node at (-2.19, -1.05) {$\bq_{t_1, 2}$};
\node at (-1.39, 1.45) {$\bq_{t_1, 1}$};
\node[red] at (-4.5, 0.3) {badly singular locus};
\node[blue] at (3.7, -1.1) {$V(Q)$};
\end{tikzpicture}
\end{center}
Here a point $\bq_{t_2, 2} \in X_{t_2} = \Spec (R/t_2)$ is not $F$-rational but those $t_2 \in \Spec D$ are a closed subset that do not contain most points (like $t_1$).
\end{remark}

A slight rephrasing yields the following corollary.

\begin{corollary}
With notation as in \autoref{set.DedekindDomainSetting}, suppose that $(R_{K}, \fram_{K})$ is a local ring essentially of finite type over $K$.  Suppose that $(R_{D}, Q_{D} \in \Spec R_D)$ is a finite type and flat-over-$D$ model for $(R_{K}, \fram_K)$, so that $(R_D)_{Q_D} = (R_{D} \otimes_{D} K)_{Q_{D}} = R_{K}$.  Then the set of primes $t \in \mSpec D$ that satisfy the following condition
\begin{itemize}
\item{} $t R + Q_{D} \neq R_{D}$ and if $\bq_{t, i}$ is a minimal prime of $t R + Q_{D}$, then $R_{\bq_{t,i}}/t$ is $F$-rational.
\end{itemize}
is an open set of $\mSpec D$.
\end{corollary}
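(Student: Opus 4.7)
The plan is to reduce this directly to \autoref{thm.LocalOpenFrationalOverZ}. Set $R := R_D$ and $Q := Q_D$ in the notation of that theorem; then $R$ is flat and of finite type over $D$, so the only hypothesis still to be checked is $Q \cap D = (0)$. This I would verify by a one-line observation: the localization $(R_D)_{Q_D}$ equals $R_K$, which is a ring of characteristic zero, so every nonzero element of $D$ must become a unit in $(R_D)_{Q_D}$. Hence $D\setminus\{0\} \subseteq R_D\setminus Q_D$, i.e.\ $Q_D \cap D = (0)$.

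Once that is in hand, I would unpack the two descriptions of the candidate open set. In \autoref{thm.LocalOpenFrationalOverZ} the set $W$ is defined using the minimal primes $\bq_{t,i}$ of $\sqrt{tR + Q}$, but the minimal primes of an ideal coincide with the minimal primes of its radical; so the $\bq_{t,i}$ appearing here are literally the same as those in the statement of the corollary. The nonemptiness condition $\{\bq_{t,i}\} \neq \emptyset$ is in turn simply the statement that $tR + Q \neq R_D$. Therefore the set described in the corollary is equal, on the nose, to the set $W$ of \autoref{thm.LocalOpenFrationalOverZ}.

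The openness of $W$ in $\mSpec D$ is precisely the conclusion of \autoref{thm.LocalOpenFrationalOverZ}, so the corollary follows without further argument. I do not expect any real obstacle here: the result is essentially a dictionary between a local ring essentially of finite type over $K$ and a finite-type flat model for it over $D$, and all the substantive content---the use of Hara--Mehta--Srinivas spreading out of rational singularities together with \autoref{theorem: F-ratPseudorat}---has already been absorbed into the proof of \autoref{thm.LocalOpenFrationalOverZ}.
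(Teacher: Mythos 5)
Your proposal is correct and matches the paper's intent exactly: the paper offers no separate argument, introducing the corollary with ``A slight rephrasing yields the following corollary,'' which is precisely your dictionary (the hypothesis $(R_D)_{Q_D}=R_K$ forces $Q_D\cap D=(0)$ since nonzero elements of $D$ are units in the $K$-algebra $R_K$, and the minimal primes of $tR+Q_D$ agree with those of $\sqrt{tR+Q_D}$, with nonemptiness equivalent to $tR+Q_D\neq R_D$). Nothing further is needed.
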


\subsection{$F$-regularity and log terminal singularities}

With minimal change, the results we have already obtained for $F$-rational/rational singularities also hold for $F$-regular/log terminal singularities.

\begin{theorem}
\label{thm.ProjectiveFamilyOneFRegImpliesAlmostall}
  Let $\phi: (X,\Delta \geq 0)\to U := \Spec D$ be a proper and flat family of pairs with notation as in \autoref{set.DedekindDomainSetting}. Suppose $K_X+\Delta$ is $\mathbb{Q}$-Cartier of index $N$ and suppose $(X_p, \Delta_p)$ is strongly $F$-regular for some closed point $p \in U$ whose residual characteristic does not divide $N$.  Then $(X_K, \Delta_K)$ is \emph{KLT}.  Furthermore $(X_q, \Delta_q)$ is \emph{KLT} for a Zariski dense and open set $V$ of closed points $q \in U$.
\end{theorem}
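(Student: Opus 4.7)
The proof closely parallels that of \autoref{thm.ProjectiveFamilyFratModPImpliesChar0}, with pseudo-rationality replaced by KLT and $F$-rationality by strong $F$-regularity. The two key inputs are the deformation statement \autoref{cor.FregDeforms}, which lifts strong $F$-regularity of a divisor $V(h)$ to KLT of the ambient pair provided the residual characteristic does not divide the index of $K_X+\Delta$; and the reduction-mod-$p$ result of \cite{HaraWatanabeFRegFPure,TakagiInterpretationOfMultiplierIdeals}, which asserts that KLT pairs in characteristic zero reduce to strongly $F$-regular pairs modulo $q$ for a dense open set of closed points $q$ whose residual characteristic does not divide the index.

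Let $Y$ denote the non-KLT locus of $(X,\Delta)$ and $\overline{Y}$ its closure in $X$. The plan is to show $\overline{Y}\cap X_p=\emptyset$. Since $\phi$ is flat, $p$ is a nonzerodivisor on every local ring $\mathcal{O}_{X,z}$ for $z\in X_p$, and $(\mathcal{O}_{X,z}/p,\Delta_z)$ is strongly $F$-regular by hypothesis; applying \autoref{cor.FregDeforms} with $h=p$ (using that $p\nmid N$), each pair $(\mathcal{O}_{X,z},\Delta_z)$ is KLT, so $Y\cap X_p=\emptyset$. Now suppose for contradiction that there exists $z\in \overline{Y}\cap X_p$, and let $W$ be an irreducible component of $\overline{Y}$ through $z$. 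By properness of $\phi$, $\phi(W)$ is closed in $U$. If $\phi(W)\neq U$ then since $U$ is one-dimensional, $\phi(W)$ is a finite set of closed points, and by irreducibility $W$ lies in a single fiber, necessarily $X_p$, contradicting what we just showed. Hence $\phi(W)=U$. Let $\eta$ be the generic point of $W$; it lies over $(0)\in U$, so $\mathcal{O}_{X,\eta}$ has characteristic zero and is a localization of the KLT ring $\mathcal{O}_{X,z}$, hence is itself KLT.

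Since being KLT is an open condition in characteristic zero, I would choose an open affine neighborhood $\Spec R\subseteq X$ of $\eta$ such that $(R\otimes_D K,\Delta|_{R_K})$ is KLT. By \cite{HaraWatanabeFRegFPure,TakagiInterpretationOfMultiplierIdeals} there is then a dense open set $V$ of closed points $q\in U$ with residual characteristic not dividing $N$ such that $(R/qR,\Delta|_{R/q})$ is strongly $F$-regular; applying \autoref{cor.FregDeforms} once more, the local rings of $(R,\Delta)$ at points of $\Spec(R/qR)$ are KLT for all $q\in V$. Consequently $\overline{Y}$ is disjoint from the open subset $\Spec R\cap \phi^{-1}(V\cup\{(0)\})$ of $X$, which contains $\eta$; this contradicts $\eta\in\overline{Y}$. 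Therefore $\overline{Y}\cap X_p=\emptyset$, so $\phi(\overline{Y})\subseteq U$ is closed and does not contain $p$; being a proper closed subset of the one-dimensional $U$, it omits the generic point as well. This gives that $(X_K,\Delta_K)$ is KLT, and the final dense-open-set assertion then follows from a further application of \cite{HaraWatanabeFRegFPure,TakagiInterpretationOfMultiplierIdeals} to $(X_K,\Delta_K)$.

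The main technical obstacle I anticipate is tracking the index-coprimality hypothesis $p\nmid N$ carefully through each application of \autoref{cor.FregDeforms}: that corollary requires the index of $K_R+\Delta$ not to be divisible by the residual characteristic, which holds at $p$ by assumption and at the $q\in V$ by construction, but one must make sure the $V$ produced by Hara--Watanabe/Takagi can indeed be arranged to avoid the finitely many residual characteristics dividing $N$. A minor subtlety is that the non-KLT locus need not be closed a priori in positive or mixed characteristic, which is why we pass throughout to the closure $\overline{Y}$.
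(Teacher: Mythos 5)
Your proof is correct and takes essentially the same route as the paper, which simply runs the argument of \autoref{thm.ProjectiveFamilyFratModPImpliesChar0} with KLT in place of pseudo-rational, \autoref{cor.FregDeforms} in place of \autoref{theorem: F-ratPseudorat}, and \cite{TakagiInterpretationOfMultiplierIdeals} supplying the reduction-mod-$q$ step (your index-coprimality bookkeeping is the right thing to check there). The only point the paper makes explicit that you leave implicit is that the hypothesis in \autoref{set.DedekindDomainSetting} that $\Delta$ has no vertical components is precisely what guarantees the no-common-components condition required each time \autoref{cor.FregDeforms} is invoked with $h = p$ or $h = q$.
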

\begin{proof}
Let $Y$ be the non-KLT locus of $(X, \Delta)$.  The proof then follows essentially as in \autoref{thm.ProjectiveFamilyFratModPImpliesChar0} with the following modifications.
\begin{itemize}
\item{} We replace pseudo-rational/rational with KLT.
\item{} We replace \autoref{theorem: F-ratPseudorat} with \autoref{cor.FregDeforms} and notice that we must use the hypothesis that $\Delta$ has no vertical components to guarantee that it and anything we mod out by have no common components.
\item{} We replace \cite{HaraRatImpliesFRat,MehtaSrinivasRatImpliesFRat} by \cite{TakagiInterpretationOfMultiplierIdeals} (or see \cite[Proposition 6.10]{SchwedeRefinementsOfFRegularity} for the non-local case).
\end{itemize}
\end{proof}

Likewise we also obtain the following.

\begin{theorem}
\label{thm.LocalOpenFregularOverZ}
Suppose that $R$ is of finite type, flat, and generically geometrically normal over $D$ and $\Delta$ an effective $\bQ$-divisor as in \autoref{set.DedekindDomainSetting}.  Suppose that $K_R + \Delta$ is $\bQ$-Cartier with index $N$.   Finally choose a prime ideal $Q \subseteq R$ such that $Q \cap D = (0)$.  For each $t \in \Spec D$, write $\sqrt{t R + Q} = \bigcap_{i = 1}^{n_t} \bq_{t,i}$ a decomposition into minimal primes.  Then the set $W$ below is open in $\mSpec D$:
\[
\Big\{ t \in \mSpec D \;\Big|\; \mathrm{char}(D/t) \nmid N, \{ \bq_{t, i} \}_{i = 1}^{n_t} \neq \emptyset \text{ and each } ( (R/tR)_{\bq_{t,i}}, \Delta_t) \text{ is strongly $F$-regular} \Big\}.
\]
\end{theorem}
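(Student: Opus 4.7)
The plan is to adapt the argument of \autoref{thm.LocalOpenFrationalOverZ} by replacing pseudo-rational/rational with KLT, using \autoref{cor.FregDeforms} in place of \autoref{theorem: F-ratPseudorat}, and invoking \cite{TakagiInterpretationOfMultiplierIdeals} in place of \cite{HaraRatImpliesFRat,MehtaSrinivasRatImpliesFRat} for the spreading out. First I would handle the easy openness conditions: the set where $\{\bq_{t,i}\}$ is nonempty is simply $\rho(V(Q)) \cap \mSpec D$, which is open because $Q \cap D = (0)$ forces $D \to R/Q$ to be flat; the set $\{t \in \mSpec D : \charact(D/t) \nmid N\}$ is cofinite, hence open. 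If $W$ itself is empty we are done, so pick $t_0 \in W$ and a uniformizer $\pi \in D$ at $t_0$ (so $t_0 R = \pi R$ locally).

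Next I would use \autoref{cor.FregDeforms} to lift strong $F$-regularity of the fibers to a characteristic-zero statement. Since $Q \subseteq \bq_{t_0,i}$ for each $i$ (because $\sqrt{t_0R + Q} \subseteq \bq_{t_0,i}$), the localization $R_Q$ is a further localization of each $R_{\bq_{t_0,i}}$. For each $i$, the hypotheses of \autoref{cor.FregDeforms} are met with $h = \pi$: $R/\pi R$ has characteristic $p > 0$, the index $N$ of $K_R + \Delta$ is not divisible by $p$, and $V(\pi)$ shares no components with $\Delta$ because $\Delta$ has no vertical components in the sense of \autoref{set.DedekindDomainSetting}. Hence $(R_{\bq_{t_0,i}}, \Delta + (1-\varepsilon)\Div(\pi))$ is KLT for all $0 < \varepsilon \ll 1$, and further localizing, $(R_Q, \Delta|_{R_Q})$ is KLT.

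Then I would spread out in characteristic zero: because resolutions of singularities exist over the characteristic-zero field $K$, KLT is an open condition on the generic fiber $\Spec(R \otimes_D K)$. Hence there exists $h \in R \setminus Q$ such that, setting $R' = R[h^{-1}]$, the pair $(R' \otimes_D K, \Delta_K)$ is KLT. By the reduction-mod-$p$ comparison of KLT and strong $F$-regularity \cite{TakagiInterpretationOfMultiplierIdeals} (or \cite[Proposition 6.10]{SchwedeRefinementsOfFRegularity} for the non-local formulation), there is a cofinite set of primes $t \in \mSpec D$ with $\charact(D/t) \nmid N$ such that $(R'/tR', \Delta_t)$ is strongly $F$-regular everywhere on $\Spec(R'/tR')$.

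Finally, I would transfer this back to $R$ using the same density argument as in the proof of \autoref{thm.LocalOpenFrationalOverZ}: if infinitely many $\bq_{t,i}$ contained $h$, they would accumulate to give $Q \subseteq V(h)$, contradicting $h \notin Q$. Hence for all but finitely many $t$, every minimal prime $\bq_{t,i}$ of $\sqrt{tR + Q}$ lies in $\Spec R'$, so $(R/tR)_{\bq_{t,i}} = (R'/tR')_{\bq_{t,i}}$, which is strongly $F$-regular. Intersecting the three cofinite/open conditions (nonempty fiber, $\charact(D/t) \nmid N$, strong $F$-regularity on $R'$) produces an open neighborhood of $t_0$ inside $W$. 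I do not see a real obstacle; the one point requiring care is checking that the ``no common component'' hypotheses for \autoref{cor.FregDeforms} and for the spreading-out theorem hold at each relevant $\bq_{t,i}$, which is guaranteed by the setup of \autoref{set.DedekindDomainSetting} together with the condition $\charact(D/t) \nmid N$.
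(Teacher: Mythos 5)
Your proposal is correct and follows essentially the same route as the paper: apply \autoref{cor.FregDeforms} at the primes $\bq_{t_0,i}$ to conclude $(R_Q,\Delta|_{R_Q})$ is KLT, spread out to an open neighborhood $R'=R[h^{-1}]$ with KLT generic fiber, invoke the KLT-implies-strongly-$F$-regular-for-almost-all-$p$ results of Hara/Takagi, and then use the same density claim that all but finitely many $\bq_{t,i}$ lie in $\Spec R'$. The extra bookkeeping you include (cofiniteness of the $\charact(D/t)\nmid N$ condition, choice of the uniformizer $\pi$ as the element $h$) matches what the paper leaves implicit.
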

\begin{proof}
As in \autoref{thm.LocalOpenFrationalOverZ}, the set
\[
\rho(V(Q)) \cap \mSpec(D) = \big\{ t \in \mSpec D\;\big|\; \{ \bq_{t, i} \}_{i=1}^{n_t} \text{ is non-empty } \}
\]
is nonempty and open in $\mSpec D$.  We may further assume that there is some $t \in W$ as otherwise the statement is vacuous.

By \autoref{cor.FregDeforms} we know that $(R_Q, \Delta|_{R_Q})$ is KLT.  By inverting an element of $R$ not contained in $Q$, we can choose $R' \supseteq R$ a finitely generated $D$-algebra such that $\Spec (R' \otimes_{D} K)$ is an open neighborhood of $Q \in \Spec (R \otimes_D K)$ so that $(R', \Delta|_{R'})$ has KLT singularities (we abuse notation here and refer to more than one ideal as $Q$).  It follows from \cite[Theorem 5.2]{HaraRatImpliesFRat} or \cite[Corollary 3.4]{TakagiInterpretationOfMultiplierIdeals} that for all but finitely many $t \in \Spec D$, $(R'/t, \Delta_t)$ has strongly $F$-regular singularities.

The following claim from \autoref{thm.LocalOpenFrationalOverZ} is unchanged.

\begin{claim}
For all but finitely many $t$, $\{ \bq_{t, i} \} \subseteq \Spec R' \subseteq \Spec R$.
\end{claim}

Now that we have the claim, we simply see that $(R'/t)_{\bq_{t_i}} = (R/t)_{\bq_{t_i}}$ for all but finitely many $t$ and the result follows as in \autoref{thm.LocalOpenFrationalOverZ}.
\end{proof}

We make the following conjecture over higher dimensional bases.  We do not pursue this here however.

\begin{conjecture}
If one is working over a higher dimension base $D$ (instead of simply the spectrum of a Dedekind domain), then for a general finite type family $X \to \Spec D$, the locus of $t \in \mSpec D$ with $F$-rational $X_t$ is constructible.  Likewise if the family is flat and proper (or sufficiently local), then the same locus is open.
\end{conjecture}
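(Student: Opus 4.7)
The plan is to extend the techniques of \autoref{thm.ProjectiveFamilyFratModPImpliesChar0} and \autoref{thm.LocalOpenFrationalOverZ} to a higher-dimensional base by combining a stratification of $\mSpec D$ by residual characteristic with noetherian induction on closed subsets of $\Spec D$. For concreteness, I would first work under the hypothesis that $D$ is excellent and of finite type over $\bZ$, so that all closed points of $\mSpec D$ have finite residue fields and a characteristic assignment $t \mapsto \charact(D/t) = p$ is well-defined, and then try to relax this at the end.

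First I would tackle constructibility. The set $\mSpec D$ decomposes as a disjoint union over primes $p$ of the locally closed subsets $S_p := V(p) \cap \mSpec D$, and for each $p$ the base change $X \times_{\Spec D} \Spec(D/p)$ is essentially of finite type (hence $F$-finite) over $\bF_p$. Standard openness of the $F$-rational locus in $F$-finite families in fixed characteristic (see \cite{HochsterHunekeTC1}) then yields that the $F$-rational locus on fibers inside each $S_p$ is open in $S_p$. To promote this countably infinite disjoint union of openness statements into honest constructibility, I would invoke \cite{HaraRatImpliesFRat,MehtaSrinivasRatImpliesFRat}: the $F$-rational locus over characteristic-$p$ fibers for $p \gg 0$ is governed by the rational singularity locus on the characteristic-zero generic fiber $X \otimes_D \bQ$, which is open by resolution of singularities. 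Thus only finitely many primes $p$ contribute genuinely new locally closed pieces, and the total $F$-rational locus should emerge as a finite Boolean combination of locally closed sets, i.e.\ constructible.

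For the openness statement under the proper (resp.\ sufficiently local) hypothesis, the plan is to imitate the proof of \autoref{thm.ProjectiveFamilyFratModPImpliesChar0}. Let $Y \subseteq X$ denote the non-pseudo-rational locus and $\overline{Y}$ its closure. Fix a closed point $t_0 \in \mSpec D$ at which $X_{t_0}$ is $F$-rational. In the proper case, $\phi(\overline{Y})$ is closed in $\Spec D$, and one would argue $t_0 \notin \phi(\overline{Y})$ by inspecting irreducible components $W$ of $\overline{Y}$ meeting $X_{t_0}$. As in the Dedekind case, the generic point of each such $W$ yields a local ring whose singularity type can be compared, via \autoref{theorem: F-ratPseudorat} combined with the Hara--Mehta--Srinivas reduction mod $p$ comparison, to the behavior of the characteristic-zero generic fiber, leading to a contradiction on $t_0 \in \phi(\overline{Y})$. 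The sufficiently local version would follow the template of \autoref{thm.LocalOpenFrationalOverZ}, exploiting flatness of $D \to R/Q$ to spread out the relevant characteristic-zero data and then applying \cite{HaraRatImpliesFRat,MehtaSrinivasRatImpliesFRat} to the finitely generated $D$-algebra model.

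The main obstacle, and the reason this is stated as a conjecture, is the presence of generic points of $\Spec D$ of positive residual characteristic; these arise precisely when $D$ itself has genuine mixed-characteristic behavior over higher-dimensional pieces (e.g.\ $D = \bZ_p\langle x_1,\dots,x_n\rangle$ or $D = \bZ[x_1, \dots, x_n]_{\mathfrak{q}}$ with $p \in \mathfrak{q}$). For such generic points, the Hara--Mehta--Srinivas bridge does not apply directly because neither endpoint of the relevant specialization has characteristic zero. A natural strategy to bypass this obstacle is to deploy the perfectoid \BCM{} theory developed in \autoref{sec.BCMParamTest} and \autoref{sec.BCMTestIdeals}: \autoref{theorem: uniform annihilation} produces, fiberwise, a uniform annihilator $h$ of the bad locus, and one would need a families variant of this statement in which $h$ can be chosen to spread out coherently across an open neighborhood in $\Spec D$ (equivalently, where the formation of $\mytau_{\scr{B}}(\omega_R)$ commutes with base change in a controlled way). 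Establishing such a family version of uniform annihilation appears to be the technical heart of the conjecture.
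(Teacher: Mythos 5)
There is no proof of this statement in the paper to compare against: the paper records it explicitly as an open conjecture and says it will not be pursued there, so any complete argument would have to supply genuinely new input. Your proposal does not do this, and you essentially concede as much in your final paragraph. The two places where the plan currently has real gaps are the following. First, for constructibility you reduce to the strata $S_p = V(p)\cap\mSpec D$ and assert that ``standard openness of the $F$-rational locus'' handles each stratum; but what is needed is openness (or at least constructibility) in the \emph{base} of the locus of points $t$ whose \emph{fiber} $X_t$ is $F$-rational, for an equal-characteristic-$p$ family over a positive-dimensional base, possibly with imperfect residue fields. That is a statement about $F$-rationality in families, not about the $F$-rational locus of a single $F$-finite scheme, and it is not a formal consequence of \cite{HochsterHunekeTC1}; moreover, for non-proper families it can fail to be open even in fixed characteristic, exactly in the spirit of \autoref{ex.BadAffineExample}, so the claim that ``only finitely many primes contribute genuinely new locally closed pieces'' is not justified by \cite{HaraRatImpliesFRat,MehtaSrinivasRatImpliesFRat} alone, since those results only compare a characteristic-zero generic fiber with its reductions and say nothing about how the locus varies \emph{within} a fixed positive-characteristic stratum.

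Second, for the openness statement your imitation of \autoref{thm.ProjectiveFamilyFratModPImpliesChar0} breaks precisely at the step where that proof passes through characteristic zero: over a Dedekind base every non-vertical irreducible component of the bad locus has a characteristic-zero generic point, where rationality is open by resolution, and \autoref{theorem: F-ratPseudorat} plus Hara--Mehta--Srinivas transport this back to closed fibers. Over a higher-dimensional base a closed point $t_0$ of residual characteristic $p$ may sit on a positive-dimensional component of $V(p)$, and the nearby points one must control are again of characteristic $p$; the ``bridge through characteristic zero'' is simply unavailable for such components, and you do not supply a substitute. Your suggested remedy --- a family version of \autoref{theorem: uniform annihilation} in which the annihilator $h$ spreads out over an open neighborhood of $\Spec D$, equivalently good base-change/localization behavior of $\mytau_{\scr{B}}(\omega_R)$ --- is a sensible direction (the paper itself flags that even commutation of $\mytau_{\scr{B}}(\omega_R)$ with localization is unknown), but stating the needed ingredient is not the same as proving it. As it stands the proposal is a research plan with the central difficulty left open, not a proof.
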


\section{Discussion of algorithmic consequences}
\label{sec.AlgorithmicConsequences}

Suppose that $R$ is a ring of finite type over $\bQ$.  We would like to decide if $R$ has rational or log terminal singularities using a computer algebra system such as Macaulay2 \cite{M2}.  This appears to be quite difficult (not least because the most obvious strategy requires that one must first implement resolution of singularities in these environments, and that can be quite slow) and, at least to the authors' knowledge, has not been done outside of the case where the ring is a complete intersection (done via $D$-module techniques, see \cite{DmodulesSource}).  However, we do have methods for verifying that a ring is $F$-rational in a fixed characteristic $p > 0$.  And indeed, by \cite{SmithFRatImpliesRat}, it was known that if $(R_{\bZ})/p$ has $F$-rational singularities for some $p \gg 0$, then $R_{\bQ}$ has rational singularities as well.  The problem is that to determine if $p > 0$ is big enough, one had to already compute $\pi_* \omega_{\tld X_{\bZ}} \subseteq \omega_{X_{\bZ}}$ where $\pi: \tld X_{\bZ} \to X_{\bZ} = \Spec R_{\bZ}$ is a resolution of singularities.

The main results of this paper imply that one can use the following method to verify that a ring has rational singularities (although it cannot be used to show that a ring does not have rational singularities).  Note that related results for the log canonical threshold vs the $F$-pure threshold in a regular ambient ring were obtained in \cite{ZhuLCThresholdsInPositiveChar}.

\begin{algorithm}
With $R$ as above, choose a prime $Q \subseteq R$ such that you want to verify $R_Q$ has rational singularities:
\begin{description}
\item[Step 1]  Spread out $R$ to a domain $R_{\bZ} \subseteq R$ over $\bZ$ and also spread out $Q$ to a prime $Q_{\bZ}$ of $R_{\bZ}$.
\item[Step 2]  Choose a prime $p$ such that $Q_{\bZ} + (p) \neq R_{\bZ}$.
\item[Step 3]  Use the {\tt TestIdeals} package of Macaulay2 to check if $(R_{\bZ})/p$ has $F$-rational singularities.
\item[Step 4a]  If the answer to Step 3 is affirmative, then $R_Q$ has rational singularities.
\item[Step 4b]  If the answer to Step 3 is not affirmative, then return to Step 2 and choose a different prime.
\end{description}
\end{algorithm}

\begin{caveat}
Note that spreading out $R$ to a \emph{domain} already requires some checking, as one needs to verify that the given presentation $R_{\bZ}$ is in fact a domain.
\end{caveat}

As mentioned, this algorithm can verify that a ring has rational singularities but cannot show that a ring is not $F$-rational.  However, there are already a number of ways to do that.

\begin{itemize}
\item{} Show that $R$ is not Cohen-Macaulay.
\item{} Find a blowup $Y \to \Spec R$ such that $\pi_* \omega_Y \subsetneq \omega_R$ (this can be done without a resolution of singularities).
\end{itemize}

\bibliographystyle{skalpha}
\bibliography{MainBib}
\end{document}